\newcommand{\E}{\mathbf{E}}
\renewcommand{\GL}{\mathbf{GL}_3}
\newcommand{\ty}[1]{\text{\Small\ensuremath{#1}}}
\newcommand{\crit}{\ensuremath{\sr T}}
\newcommand{\gl}{\mathfrak{gl}_3}
\begin{document}

\title[Components of spaces of curves on flat surfaces]{Components of spaces of curves with constrained curvature on flat surfaces}
\author{Nicolau C.~Saldanha}
\author{Pedro Z\"{u}hlke}
\subjclass[2010]{Primary: 53A04. Secondary: 53C42, 57N20.}
\keywords{Curve; Curvature; Dubins path; Flat surface; Topology of infinite-dimensional manifolds}
\maketitle

\begin{abstract}
	Let $S$ be a complete flat surface, such as the Euclidean plane. We obtain direct characterizations of the connected components of the space of all curves on $S$ which start and end at given points in given directions, and whose curvatures are constrained to lie in a given interval, in terms of all parameters involved. Many topological properties of these spaces are investigated. Some conjectures of L.\thinspace E.\thinspace Dubins are proved.
\end{abstract}

\setcounter{tocdepth}{1}
\tableofcontents

\setcounter{section}{-1}

\section{Introduction}\label{S:introduction}
To abbreviate the notation, we shall identify $\R^2$ with $\C$ throughout. A curve $\ga\colon [0,1]\to \C$ is called \tdef{regular} if its derivative is continuous and never vanishes. Its \tdef{unit tangent} is then defined as
\begin{equation*}
	\ta_\ga\colon [0,1]\to \Ss^1,\quad \ta_\ga(t)=\frac{\dot\ga(t)}{\abs{\dot\ga(t)}}.
\end{equation*}
Lifting $\ga$ to the unit tangent bundle $UT\C\equiv \C\times \Ss^1$, we obtain its \tdef{frame} 
\begin{equation}\label{E:frame}
	\Phi_\ga\colon [0,1]\to \C\times \Ss^1,\quad \Phi_\ga(t)=(\ga(t),\ta_\ga(t)).
\end{equation}
Let $P=(p,w),~Q=(q,z)\in \C\times \Ss^1$ and consider the spaces of curves 
\begin{equation}\label{E:spaces}
	\begin{alignedat}{9}
		\sr S(P,Q)&=\set{\ga\colon [0,1]\overset{C^r}{\longrightarrow} \C}{\text{\,$\ga$ is regular, }\Phi_\ga(0)=P\text{ and }\Phi_\ga(1)=Q}\\
		\Om UT\C(P,Q)&=\set{\om\colon [0,1]\overset{C^{r-1}}{\longrightarrow} UT\C}{\,\om(0)=P\text{ and }\om(1)=Q}
	\end{alignedat}
\end{equation}
endowed with the $C^r$ (resp.~$C^{r-1}$) topology ($1\leq r\in \N$). In 1956, S.~Smale proved that the map 
\begin{equation*}
	\Phi\colon \sr S(P,Q)\to \Om UT\C(P,Q),\quad \ga\mapsto \Phi_\ga
\end{equation*}
is a weak homotopy equivalence (that is, it induces isomorphisms on homotopy groups).  Actually, Smale's theorem (\cite{Smale}, Theorem C) is much more general in that it holds for any manifold, not just $\C$. Using standard results on Banach manifolds which were discovered later, one can conclude that the spaces in \eqref{E:spaces} are in fact homeomorphic, and that the value of $r$ is unimportant.

Given a regular plane curve $\ga$, an \tdef{argument} of $\ta_\ga$ is a continuous function $\theta_\ga\colon [0,1]\to \R$ such that $\ta_\ga=e^{i\theta_\ga}$. The \tdef{total turning} of $\ga$ is defined to be $\theta_\ga(1)-\theta_\ga(0)$; note that this is independent of the choice of $\theta_\ga(0)$. It is easy to see that $\Om UT\C(P,Q)$ is homotopy equivalent to $\Om \Ss^1 (w,z)$. The latter possesses infinitely many connected components, one for each $\theta_1$ satisfying $e^{i\theta_1}=z\bar w=zw^{-1}$, all of which are contractible.  Therefore, the components of $\sr S(P,Q)$ are all contractible as well, and two curves in $\sr S(P,Q)$ lie in the same component if and only if they have the same total turning. This generalizes the Whitney-Graustein theorem (\cite{WhiGra}, Theorem 1) to non-closed curves.

The main purpose of this work is to investigate the topology of subspaces of $\sr S(P,Q)$ obtained by imposing constraints on the curvature of the curves. 

\begin{defn}\label{D:planecurve}
	Let $-\infty\leq \ka_1<\ka_2\leq  +\infty$ and $r\in \se{2,3,\dots,\infty}$. For $P=(p,w),~Q=(q,z)\in \C\times \Ss^1$, define $\sr C_{\ka_1}^{\ka_2}(P,Q)$ to be the set of all $C^r$ regular curves $\ga\colon [0,1]\to \C$ such that:
\begin{enumerate}
	\item [(i)] $\Phi_\ga(0)=P$ and $\Phi_\ga(1)=Q$;
	\item [(ii)]  The curvature $\ka_\ga$ of $\ga$ satisfies $\ka_1<\ka_\ga(t)<\ka_2$ for each $t\in [0,1]$.
\end{enumerate}
Let this set be furnished with the $C^r$ topology.
\end{defn}

Condition (i) means that $\ga$ starts at $p$ in the direction of $w$ and ends at $q$ in the direction of $z$.  In this notation, $\sr S(P,Q)$ becomes $\sr C_{-\infty}^{+\infty}(P,Q)$.  The connected components of $\sr C_{-\ka_0}^{+\ka_0}(P,Q)$ ($\ka_0>0$) were first studied by L.\,E.~Dubins in \cite{Dubins1}. His main result (Theorem 5.3, slightly rephrased) implies that there may exist curves with the same total turning which are not homotopic within this space.

\begin{uthm}[Dubins]\label{P:Dubinsintro}
	Let $x>0$ and $Q_x=(x,1)$,\,$O=(0,1)\in \C\times \Ss^1$. Let $\eta \in \sr C_{-1}^{+1}(O,Q_x)$ be the line segment parametrized by $\eta(t)=xt$. Then the concatenation of $\eta$ with a  figure eight curve lies in the same connected component of $\sr C_{-1}^{+1}(O,Q_x)$ as $\eta$ if and only if $x>4$.
\end{uthm}

By a ``figure eight'' curve it is meant any closed curve of total turning 0 whose curvature takes values in $(-1,1)$, such as the one depicted in \fref{F:spreading}\?(d). 


Naturally, we always have the following decomposition of $\sr C_{\kappa_1}^{\kappa_2}(P,Q)$ into closed-open subspaces:
\begin{equation*}
	\sr C_{\ka_1}^{\ka_2}(P,Q)=\bigsqcup_{\theta_1}\sr C_{\kappa_1}^{\kappa_2}(P,Q;\theta_1),
\end{equation*}
where $\sr C_{\kappa_1}^{\kappa_2}(P,Q;\theta_1)$ consists of those curves in $\sr C_{\kappa_1}^{\kappa_2}(P,Q)$ which have total turning equal to $\theta_1$ and the union is over all $\theta_1\in \R$ satisfying $e^{i\theta_1}=z\bar w$.

If $\ka_1\ka_2\geq 0$, it will be shown that each $\sr C_{\kappa_1}^{\kappa_2}(P,Q;\theta_1)$ is either empty or a contractible connected component of $\sr C_{\kappa_1}^{\kappa_2}(P,Q)$.\footnote{In determining the sign of $\ka_1\ka_2$, we adopt the convention that $0(\pm \infty)=0$.} 
If $\ka_1\ka_2<0$, then $\sr C_{\kappa_1}^{\kappa_2}(P,Q;\theta_1)$ is never empty, and it is a contractible connected component provided that $\abs{\theta_1}\geq \pi$. However, 
the remaining subspace $\sr C_{\kappa_1}^{\kappa_2}(P,Q;\theta_1)$ with $\abs{\theta_1}<\pi$ may not be contractible, nor even connected, as implied by Dubins' theorem. It turns out that one can obtain simple and explicit characterizations of its components in terms of $\ka_1,\,\ka_2,\,P$ and $Q$ by using a homeomorphism with a space of the form $\sr C_{-1}^{+1}(P_0,Q_0;\theta_1)$ and an elementary geometric construction (see Figure \ref{F:S0}).

\begin{figure}[ht]
	\begin{center}
		\includegraphics[scale=.32]{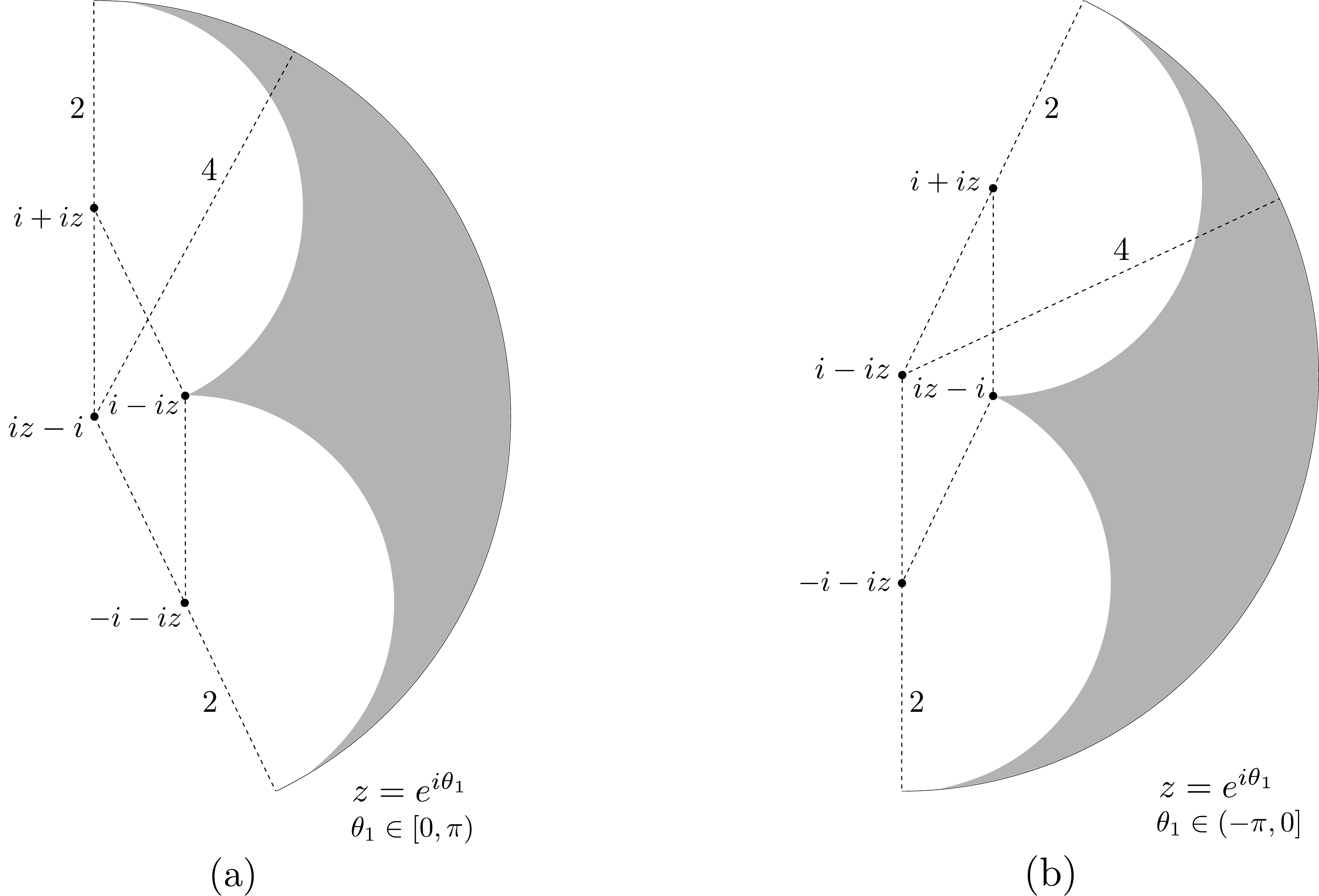}
		\caption{Let $\theta_1\in \R$ be fixed, $z=e^{i\theta_1}$ and $Q=(q,z)$. Then $\sr C_{-1}^{+1}(Q;\theta_1)$ is disconnected if and only if $\abs{\theta_1}<\pi$ and $q$ lies in the gray region. The region contains the arc of circle of radius 4, but not the arcs of circle of radius 2. Figure (a) depicts the case $\theta_1\in [0,\pi)$, and (b) the case $\theta_1\in (-\pi,0]$ (here $\theta_1\approx \pm 26^\circ$). The  theorem of Dubins stated above corresponds to the case where $\theta_1=0$ and $q\in \R$.}
		\label{F:S0}
	\end{center}
\end{figure}

This paper is close in spirit to Dubins' \cite{Dubins1}, and some of his conjectures will be settled; it is \textsl{not} assumed, however, that the reader is familiar with his work. In the sequel \cite{SalZueh2} to this article, we determine the homotopy type of $\sr C_{\kappa_1}^{\kappa_2}(P,Q)$. Granted the results described above, the only remaining task is the determination of the homotopy type of the exceptional subspace $\sr C_{\ka_1}^{\ka_2}(P,Q;\theta_1)\subs \sr C_{\ka_1}^{\ka_2}(P,Q)$ with $\abs{\theta_1}<\pi$ ($\ka_1\ka_2<0$), containing the curves in the latter of least total turning.  It is proved in \cite{SalZueh2} that this subspace may be homotopy equivalent to an $n$-sphere for any $n\in \se{0,1,\dots,\infty}$ (recall that $\Ss^\infty$ is contractible). The value of $n$ can be determined in terms of all parameters by first reducing to the case where $\ka_1=-1$, $\ka_2=+1$ through the homeomorphism mentioned above, and then using a construction extending the one depicted in Figure \ref{F:S0} (which only tells whether $n=0$ or not).

\subsection*{Outline of the sections}
Many useful constructions, such as the concatenation of elements of $\sr C_{\ka_1}^{\ka_2}(P,Q)$ and $\sr C_{\ka_1}^{\ka_2}(Q,R)$, yield curves which need not be of class $C^2$. To avoid having to smoothen curves all the time, we work with curves which have a continuously varying unit tangent at all points, but whose curvatures are defined only almost everywhere. The resulting spaces, denoted by $\sr L_{\kappa_1}^{\kappa_2}(P,Q)$, are defined in \S \ref{S:plane}, where it will also be seen that the set inclusion $\sr C_{\kappa_1}^{\kappa_2}(P,Q)\to \sr L_{\kappa_1}^{\kappa_2}(P,Q)$ is a homotopy equivalence with dense image and that these spaces are homeomorphic.


Let $O=(0,1)\in \C\times \Ss^1$ denote the canonical element of $UT\C$, and let us denote $\sr C_{\kappa_1}^{\kappa_2}(O,Q)$ simply by $\sr C_{\kappa_1}^{\kappa_2}(Q)$. Using Euclidean motions, dilatations and a construction called \tdef{normal translation} (see Figure \ref{F:translation} on p.~\pageref{F:translation}), we obtain in \tref{T:normalized} an explicit homeomorphism between any space $\sr C_{\kappa_1}^{\kappa_2}(P_0,Q_0)$ and a space of one of the following types: $\sr C_{0}^{+\infty}(Q)$, $\sr C_{1}^{+\infty}(Q)$ or $\sr C_{-1}^{+1}(Q)$, according as $\ka_1\ka_2=0$, $\ka_1\ka_2>0$ or $\ka_1\ka_2<0$, respectively. Moreover, this homeomorphism preserves the total turning of curves up to sign.  Among these three, $\sr C_{-1}^{+1}(Q)$ has the most interesting topological properties.

We call a regular curve $\ga\colon [0,1]\to \C$ \tdef{condensed}, \tdef{critical} or \tdef{diffuse}, according as its \tdef{amplitude}
	\begin{equation*}
		\om=\sup_{t\in [0,1]}\theta_\ga(t)-\inf_{t\in [0,1]}\theta_\ga(t)
	\end{equation*}
satisfies $\om<\pi$, $\om=\pi$ or  $\om>\pi$, respectively. Let $Q=(q,z)\in \C\times \Ss^1$ and $\theta_1$ be such that $e^{i\theta_1}=z$. Let $\sr U_c,\,\sr U_d\subs \sr C_{-1}^{+1}(Q;\theta_1)$ denote the subspaces consisting of all condensed (resp.~diffuse) curves. Both are open and $\sr U_d\neq \emptyset$, since we may always concatenate a curve in $\sr C_{-1}^{+1}(Q;\theta_1)$ with a curve of total turning 0 (an eight curve, as in Figure \ref{F:spreading}\?(e) on p.~\pageref{F:spreading}). Clearly, $\sr U_c$ must be empty if $\abs{\theta_1}\geq \pi$, but it may also be empty otherwise, depending on $Q$. We determine exactly when this occurs in \S\ref{S:condensed}. 

A condensed curve may be viewed as the graph of a function with respect to some axis. This leads to a direct, albeit involved, proof that $\sr U_c$ is contractible, when nonempty. In fact, if the curvatures are allowed to be discontinuous and to take values in the {closed} interval $[-1,1]$, then one can exhibit a contraction of the subspace of condensed curves to the unique curve of minimal length (\tdef{Dubins path}) in the corresponding space. This is also done in \S\ref{S:condensed}. 

In \S\ref{S:diffuse} an indirect proof that $\sr U_d$ is contractible is obtained. If $\ga$ is diffuse, then we can ``graft'' straight lines segment onto $\ga$, as illustrated in Figure \ref{F:grafting}, p.~\pageref{F:grafting}. Such a segment can be deformed so that in the end an eight curve of large radius traversed a number $n$ of times has been attached to it. These eights are then spread along the curve, as in Figure \ref{F:spreading}\?(f). If $n\in \N$ is large enough, then the whole process can be carried out within $\sr C_{-1}^{+1}(Q)$. The result is a curve whose curvature is uniformly small, and hence easily deformable.

In \S\ref{S:critical} we determine when the set $\sr T$ of all critical curves in $\sr C_{-1}^{+1}(Q;\theta_1)$ is empty. The main result in this section is that  $\sr T=\bd \sr U_c=\bd \sr U_d$. When $\sr T\neq \emptyset$, a finer analysis of how $\bd \sr U_c$ and $\bd \sr U_d$ fit together is required to determine the homeomorphism class of $\sr C_{-1}^{+1}(Q;\theta_1)$. This problem will be treated in \cite{SalZueh2}.

In \tref{T:criterion} we obtain various characterizations of the connected components of $\sr C_{-1}^{+1}(Q;\theta_1)$. Perhaps the simplest one is the following: this space is disconnected if and only if $\abs{\theta_1}<\pi$ and $q$ lies in the region illustrated in Figure \ref{F:S0}, or, equivalently, its subset $\sr T$ is empty, but $\sr U_c$ is not. In this case, it has exactly two components, $\sr U_c$ and $\sr U_d$, which are contractible. As mentioned previously, this is sufficient to determine explicitly the components of any space $\sr C_{\kappa_1}^{\kappa_2}(P_0,Q_0)$ with $\ka_1\ka_2<0$.

In \S\ref{S:locallyconvex} it is established that when $\ka_1\ka_2\geq 0$, the space $\sr C_{\kappa_1}^{\kappa_2}(P,Q)$ has one connected component for each realizable total turning, and they are all contractible. The set of possible total turnings can be described in terms of all parameters using normal translation and elementary geometry. The detailed solution to this problem is not carried out to shorten the paper, but it can be found in the earlier unpublished version \cite{SalZuehold}.  

In \S\ref{S:flat} these results are extended to spaces of curves with constrained curvature on any complete flat surface $S$ (orientable or not), using the fact that if $S$ is connected then it must be the quotient of $\C$ by a group of isometries.

Even though we have imposed that the curvatures should lie in an open interval, the main results obtained here have analogues for spaces (defined in \S\ref{S:plane}) where the curvature is constrained to lie in $[\ka_1,\ka_2]$. For $\ka_1=-\ka_2$, this is the class that Dubins actually worked with in \cite{Dubins} and \cite{Dubins1}. The necessary modifications in the statements and proofs are sketched in \S\ref{S:final}, where we also prove some conjectures appearing in \cite{Dubins1} and discuss a few additional conjectures on curves of minimal length.

\subsection*{Related work} The problem treated here and in \cite{SalZueh2} for flat surfaces can be generalized to any smooth (or even $C^2$) surface $S$ equipped with a Riemannian metric: If $u,v$ are elements of its unit tangent bundle $UTS$, then one can study the space $\sr CS_{\ka_1}^{\ka_2}(u,v) $ of curves on $S$ whose lift to $UTS$ joins $u$ to $v$ and whose geodesic curvature takes values in $(\ka_1,\ka_2)$. When $S$ is nonorientable, only the unsigned curvature makes sense, so in this case we require that $\ka_2=-\ka_1>0$ (cf. \S \ref{S:flat} below). This topic is largely unexplored, and even the problem of determining when $\sr CS_{\ka_1}^{\ka_2}(u,v)\neq \emptyset$ is open (and probably difficult). The topology of these spaces is very closely related to the geometry of $S$.

A special case which has been more intensively studied is that of the space of \tdef{nondegenerate} curves on $S$, i.e., curves of nonvanishing curvature. In our notation, this corresponds to $\sr CS_{0}^{+\infty}(u,v)\du \sr CS_{-\infty}^{0}(u,v)$. There is also an obvious generalization to higher-dimensional manifolds, obtained by replacing the (geodesic) curvature by the generalized curvature of a curve $\ga\colon [0,1]\to M^n$. To say that the latter does not vanish is equivalent to requiring that the first $n$ (covariant) derivatives of $\ga$ at $\ga(t)$ span the tangent space to $M$ at this point, for each $t\in [0,1]$. Some papers treating this problem, especially for spaces of closed curves on the simplest manifolds, such as $\R^n$, $\Ss^n$ or $\RP^n$, include \cite{Anisov}, \cite{Feldman}, \cite{Feldman1}, \cite{KheSha}, \cite{KheSha1}, \cite{Little}, \cite{Little1}, \cite{MosSad} \cite{Saldanha3}, \cite{SalSha}, \cite{ShaSha}, \cite{ShaTop}. Most of these are concerned with obtaining characterizations of the connected components of the corresponding spaces. 

In \cite{Saldanha3} the homotopy type of spaces of (not necessarily closed) nondegenerate curves on $\Ss^2$ is determined, and in \cite{SalZueh} the connected components of spaces of closed curves on $\Ss^2$ with curvature in an arbitrary interval $(\ka_1,\ka_2)$ are characterized. In the sequel \cite{SalZueh2} we determine the homotopy type of $\sr CS_{\ka_1}^{\ka_2}(u,v)$ for any flat surface $S$ in terms of $\ka_1,\ka_2$ and $u,v\in UTS$. Many of the ideas appearing in the present paper (normal translation, diffuse vs.~condensed, grafting, curvature spreading, etc.) appear in \cite{Saldanha3} or \cite{SalZueh} in some form as well, although sometimes the connection is only heuristical.


\section{Spaces of plane curves}\label{S:plane}
\subsection*{Basic terminology} 
Let $\ga\colon [a,b]\to \C$ be a regular curve. 
The \tdef{unit normal} $\no=\no_\ga\colon [a,b]\to \Ss^1$ is given by $\no=i\ta$, where $i\in \C$ denotes the imaginary unit and $\ta=\ta_\ga$ is the unit tangent to $\ga$. 
The \tdef{arc-length parameter} $s$ of $\ga$ is defined by 
\[
s(t)=\int_a^t \abs{\dot{\ga}(\tau)}\?d\tau\quad (t\in [a,b])
\]
and $L=\int_a^b \abs{\dot \ga(\tau)}\?d\tau$ is the \tdef{length} of $\ga$. Assuming $\ga$ is twice differentiable, its \tdef{curvature} $\ka=\ka_\ga$ is given by
\begin{equation}\label{E:curvature1}
	\ka(s)=\big\langle\ta'(s),\no(s)\big\rangle\quad (s\in [0,L]).
\end{equation}
In terms of a general parameter,
\begin{equation}\label{E:curvature}
	\ka=\frac{1}{\abs{\dot\ga}}\big\langle\dot\ta,\no\big\rangle=\frac{1}{\abs{\dot\ga}^2}\big\langle\ddot\ga,\no\big\rangle=\frac{\det(\dot\ga,\ddot \ga)}{\abs{\dot\ga}^3}.
\end{equation}
[We denote derivatives with respect to arc-length by a $\vphantom{\ga}'$ (prime) and derivatives with respect to other parameters by a $\dot{\vphantom{\ga}}$ (dot).] Notice that the curvature at each point is not altered by an orientation-preserving reparametrization of the curve, while its sign changes if the reparametrization is orientation-reversing. It follows from \eqref{E:curvature1} that if $\theta_\ga\colon [0,L]\to \R$ is an argument of $\ta$, then 
\begin{equation}\label{E:rate}
	\ka(s)=\theta_\ga'(s).
\end{equation}

	The following example illustrates one reason why it is more convenient to require that curvatures lie in an open interval, as in \dref{D:planecurve}.
\begin{exm}\label{E:closed}
	Consider the space of all $C^2$ regular curves $\ga\colon [0,1]\to \C$ whose curvatures are restricted to lie in $[-1,1]$ and which satisfy $\Phi_\ga(0)=(1,i)$, $\Phi_\ga(1)=(i,-1)$, where we have identified $UT\C$ with $\C\times \Ss^1$. The arc $\al$ of the unit circle given by $t\mapsto \exp(\frac{\pi i}{2}t)$ ($t\in [0,1]$) is a curve in this space. In fact, it is not hard to see that $\al$ is an isolated point, i.e., its connected component does not contain any other curve. 
\end{exm}
In contrast, the spaces $\sr C_{\ka_1}^{\ka_2}(P,Q)^r$ are Banach manifolds (for $r\neq \infty$). Still, some useful constructions, such as the concatenation of curves, lead out of this class of spaces. To avoid having to smoothen curves all the time, we shall work with another class of spaces, which possess the additional advantage of being Hilbert manifolds.

\subsection*{The group structure of $UT\C$}The group of all orientation-preserving isometries of $\C$ (i.e., proper Euclidean motions) acts simply transitively on $UT\C$.  An element of this group is thus uniquely determined by where it maps $(0,1)\in UT\C$, and may be identified with this image. Therefore, $UT\C$ carries a natural Lie group structure as a semidirect product $\C\sd \Ss^1$, wherein the operation is:
\begin{equation*}
\qquad	(p,w)\cdot(q,z)=(p+wq,wz)\qquad (p,q\in \C,~w,z\in \Ss^1).
\end{equation*}
Accordingly, viewed as a one-parameter family of Euclidean motions, the frame $\Phi_\ga$ of a regular curve $\ga\colon [0,1]\to \C$ operates on $\C$ through
\begin{equation}\label{E:group}
	\Phi_\ga(t)a=\ga(t)+\ta(t)a\qquad (a\in \C,~t\in [0,1]).
\end{equation}
If we identify the Lie algebra of $UT\C$ with $\C\times \R$, then the bracket operation is given by
\begin{equation}\label{E:bracket}
\qquad	[(a,\theta)\?,\?(b,\vphi)]=\big(i(\theta b-\vphi a),0\big)\qquad (a,b\in \C,~\theta,\vphi\in \R).
\end{equation}
We can also realize $UT\C$ as a matrix group if we identify
\begin{equation*}
	P=(p,w)\text{\ \,with\ \,}\begin{pmatrix}
	\cos\theta & -\sin\theta & x \\
	\sin\theta & \cos\theta & y \\
	0 & 0 & 1
	\end{pmatrix},\text{\ where\ \,$p=x+iy$,\ $w=e^{i\theta}$.}
\end{equation*}
Then $\Phi_\ga$ corresponds to the map
\begin{equation}\label{E:frame2}
	\Phi_\ga\colon [0,1]\to \GL,\quad \Phi_\ga(t)=\begin{pmatrix}
	\cos\theta_\ga(t) & -\sin\theta_\ga(t) & \ga_1(t) \\
	\sin\theta_\ga(t) & \phantom{-}\cos\theta_\ga(t) & \ga_2(t) \\
	0 & 0 & 1
	\end{pmatrix},
\end{equation}
where $\theta_\ga\colon [0,1]\to \R$ is an argument of $\ta_\ga$ and $\ga(t)=\ga_1(t)+i\ga_2(t)$.\footnote{Notice that the first column of $\Phi_\ga$ gives the coordinates of $\ta_\ga$, the second the coordinates of $\no_\ga$ and the third the coordinates of $\ga$. This justifies our terminology ``frame'' for $\Phi_\ga$.} Moreover, under this identification the Lie algebra $\mathfrak{a}$ of $UT\C$ becomes a subalgebra of $\gl$,  generated by
\begin{equation}\label{E:AandB}
	A=\begin{pmatrix}
	0 & -1 & 0 \\
	1 & 0 & 0 \\
	0 & 0 & 0
	\end{pmatrix},\ \ B=\begin{pmatrix}
	0 & 0 & 1 \\
	0 & 0 & 0 \\
	0 & 0 & 0
	\end{pmatrix}\text{\ \ and\ \ }C=\begin{pmatrix}
	0 & 0 & 0 \\
	0 & 0 & 1 \\
	0 & 0 & 0
	\end{pmatrix}.
\end{equation}
The expression for the bracket in \eqref{E:bracket} can be easily derived from this.  

\subsection*{Spaces of admissible curves}Suppose now that $\ga\colon [0,1]\to \C$ is not only regular, but also smooth. Let $\ka$ denote its curvature and $\sig=\abs{\dot\ga }$ its speed. Using \eqref{E:rate}, we deduce that
\begin{equation*}
	\dot\Phi_\ga=\abs{\dot\ga}\begin{pmatrix}
	-\ka\sin\theta_\ga & -\ka\cos\theta_\ga & \cos\theta_\ga \\
	\phantom{-}\ka\cos\theta_\ga & -\ka\sin\theta_\ga & \sin\theta_\ga \\
	0 & 0 & 0
	\end{pmatrix}=\Phi_\ga\La_\ga, \text{\ \ where\ \ }\La_\ga=\sig\begin{pmatrix}
	0 & -\ka & 1 \\
	\ka & 0 & 0 \\
	0 & 0 & 0
	\end{pmatrix}.
\end{equation*}
Let $\mathfrak{h}\subs \mathfrak{a}$ denote the half-plane 
\begin{equation}\label{E:hp}
	\mathfrak{h}=\set{aA+bB}{a\in \R,~b>0}.
\end{equation}
The map $\La_\ga\colon [0,1]\to \mathfrak{h}$ is called the \tdef{logarithmic derivative} of $\ga$. The crucial observation for us is that $\Phi_\ga$ (and hence $\ga$) is uniquely determined as the solution of an initial value problem 
\begin{equation}\label{E:ivp}
	\Phi(0)=P\in UT\C,\ \ \dot\Phi=\Phi\La,\ \ \text{where}\ \ \La\colon [0,1]\to \mathfrak{h},\ \ \La=\sig\begin{pmatrix}
	0 & -\ka & 1 \\
	\ka & 0 & 0 \\
	0 & 0 & 0
	\end{pmatrix}.
\end{equation}
Equivalently, $\ga$ is uniquely determined by $P=\Phi_\ga(0)$ and the pair of functions $\ka\colon [0,1]\to \R$ and $\sig\colon [0,1]\to \R^+$. Our preferred class of spaces is obtained by relaxing the requirements that $\sig$ and $\ka$ be smooth.

Let $h=h_{0,+\infty}\colon (0,+\infty)\to \R$ be the smooth diffeomorphism
\begin{equation*}\label{E:thereason}
	h(t)=t-t^{-1}.
\end{equation*} 
More generally, for each pair $\ka_1<\ka_2\in \R$, let $h_{\ka_1,\,\ka_2} \colon (\ka_1,\ka_2)\to \R$ be the smooth diffeomorphism
\begin{equation*}\label{E:hfunctions}
	h_{\ka_1,\,\ka_2}(t)=(\ka_1-t)^{-1}+(\ka_2-t)^{-1}
\end{equation*}
and, similarly, set
\begin{alignat*}{10}
&h_{-\infty,+\infty}\colon \R\to \R\qquad  & &h_{-\infty,+\infty}(t)=t \\
 &h_{-\infty,\ka_2}\colon (-\infty,\ka_2)\to \R\qquad &  &h_{-\infty,\ka_2}(t)=t+(\ka_2-t)^{-1} \\
 &h_{\ka_1,+\infty}\colon (\ka_1,+\infty)\to \R \qquad & &h_{\ka_1,+\infty}(t)=t+(\ka_1-t)^{-1}.
\end{alignat*}
\begin{rem}\label{R:preservesL2ness}
	All of these functions are monotone increasing, hence so are their inverse functions. Also, if $\hat{\ka}\in L^2[0,1]$, then $\ka=h_{\ka_1,\ka_2}^{-1}\circ \hat\ka\in L^2[0,1]$ as well. This is obvious if $(\ka_1,\ka_2)$ is bounded, and if one of $\ka_1,\ka_2$ is infinite then it is a consequence of the fact that $h_{\ka_1,\ka_2}^{-1}(t)$ diverges linearly to $\pm \infty$ with respect to $t$.
\end{rem}

In all that follows, $\E$ denotes the separable Hilbert space $L^2[0,1]\times L^2[0,1]$. The $(i,j)$-entry of a matrix $A$ will be denoted by $A^{(i,j)}$.
\begin{defn}
Let $-\infty\leq \ka_1<\ka_2\leq +\infty$ and $P\in UT\C$. A curve $\ga\colon [0,1]\to \C$, $\ga=\ga_1+i\ga_2$, will be called \tdef{$(\ka_1,\ka_2)$-admissible} if $\ga_1=\Phi^{(1,3)},\,\ga_2=\Phi^{(2,3)}$ for $\Phi\colon [0,1]\to UT\C$ satisfying \eqref{E:ivp}, with
\begin{equation}\label{E:Sobolev}
	\sig=h^{-1}\circ \hat{\sig},\quad \ka=\?h^{-1}_{\ka_1,\,\ka_2}\circ \hat{\ka},\quad (\hat\sig,\hat\ka)\in \E.
\end{equation}
When it is not important to keep track of the bounds $\ka_1,\ka_2$, we will simply say that $\ga$ is \tdef{admissible}.
\end{defn}

The differential equation \eqref{E:ivp} has a unique solution $\Phi$ for any $(\hat \sig,\hat \ka)\in \E$ and $P\in UT\C$. This follows from Theorem C.3 on p.~386 of \cite{Younes}, using the fact that $\sig, \ka\in L^2[0,1]\subs L^1[0,1]$. Moreover, $\Phi$ is absolutely continuous (see p.~385 of \cite{Younes}) and defined over all of $[0,1]$  (since $\C$ is complete). The resulting maps $\ta\colon [0,1]\to \Ss^1$, $\no\colon [0,1]\to \Ss^1$ and $\ga\colon [0,1]\to \C$, obtained from the first, second and third columns of $\Phi$, respectively, are thus absolutely continuous. It follows from \eqref{E:ivp} that 
\begin{equation}\label{E:together}
	\dot\ga=\sig \ta,\quad \dot\ta=\sig\ka\no\quad\text{and}\quad\dot\no=-\sig\ka\ta.
\end{equation}
Furthermore, if $\Psi$ denotes the $2\times 2$ matrix obtained from $\Phi$ by discarding its third column and line, then $\Psi\colon [0,1]\to \SO_2$, as one sees by differentiating $\Psi\Psi^T$, using \eqref{E:ivp} and noting that  $\Psi(0)\in \SO_2$. Hence, $\no=i\ta$. Differentiation of $\abs{\ta}^2$ yields that 
\begin{equation*}
	\abs{\no(t)}=\abs{\ta(t)}=\abs{\ta(0)}=1\ \ \text{for all $t\in [0,1]$}.
\end{equation*}
Comparing \eqref{E:together}, it is thus natural to define $\ta_\ga=\ta$, $\no_\ga=\no$, $\Phi_\ga=\Phi$, and to call $\sig$ and $\ka$ the \tdef{speed} and \tdef{curvature} of $\ga$, respectively, even though $\sig,\ka\in L^2[0,1]$. With this definition, $\ta_\ga$, $\no_\ga$, $\Phi_\ga$ and any argument $\theta_\ga=\arg\circ \ta_\ga$ are  absolutely continuous functions, as remarked above. Although $\dot \ga=\sig\ta_\ga$ is defined only almost everywhere on $[0,1]$, if we reparametrize $\ga$ by arc-length then it becomes a regular curve, since $\ga'=\ta_\ga$. Instead of thinking of $\ga$ as corresponding to a pair of $L^2$ functions, it is more helpful to regard $\ga$ as a regular curve whose curvature is defined only a.e.. In fact, all of the concrete examples of admissible curves considered below are piecewise $C^2$ curves.

\begin{defn}\label{D:loose}
Let $-\infty\leq \ka_1<\ka_2\leq +\infty$. For $P\in UT\C$, define $\sr L_{\ka_1}^{\ka_2}(P,\cdot)$ to be the set of all $(\ka_1,\ka_2)$-admissible curves $\ga\colon [0,1]\to \C$ with $\Phi_\ga(0)=P$. This set is identified with $\E$ via the correspondence $\ga\leftrightarrow (\hat \sig,\hat \ka)$, thus furnishing $\sr L_{\ka_1}^{\ka_2}(P,\cdot)$ with a trivial Hilbert manifold structure.
\end{defn}
The `$\sr L$' is intended to remind one of $L^2$ functions.

\begin{lem}\label{L:submersion}
	Let $-\infty\leq \ka_1<\ka_2\leq +\infty$ and $P\in UT\C$. Then 
	\begin{equation*}
		F\colon \sr L_{\ka_1}^{\ka_2}(P,\cdot)\to UT\C,\quad \ga\mapsto \Phi_\ga(1),
	\end{equation*}
	is a submersion. Consequently, it is an open map.
\end{lem}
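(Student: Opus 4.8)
The plan is to compute the derivative of $F$ at an arbitrary $\ga\in\sr L_{\ka_1}^{\ka_2}(P,\cdot)$ directly and exhibit enough tangent vectors in the source mapping onto all of $T_{\Phi_\ga(1)}UT\C$. Since $\sr L_{\ka_1}^{\ka_2}(P,\cdot)$ is identified with $\E=L^2\times L^2$ via $\ga\leftrightarrow(\hat\sig,\hat\ka)$, and $F(\ga)=\Phi_\ga(1)$ where $\Phi_\ga$ solves the linear ODE $\dot\Phi=\Phi\La_\ga$ with $\Phi(0)=P$, this is a smooth-dependence-on-parameters computation. First I would recall the variation-of-parameters formula: if $\La_\ga$ is perturbed to $\La_\ga+\ve\,\dot\La$ (where $\dot\La$ is the first-order change in the logarithmic derivative), then the corresponding first-order change in $\Phi_\ga(1)$ is
\begin{equation*}
	DF_\ga(\dot\La)=\Phi_\ga(1)\int_0^1 \Ad\big(\Phi_\ga(t)^{-1}\big)\dot\La(t)\,dt
	=\Phi_\ga(1)\int_0^1 \Phi_\ga(t)^{-1}\dot\La(t)\Phi_\ga(t)\,dt,
\end{equation*}
which lands in $T_{\Phi_\ga(1)}UT\C=\Phi_\ga(1)\cdot\mathfrak a$ because $\mathfrak a$ is $\Ad$-invariant under the subgroup $UT\C$. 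So it suffices to show the linear map $\dot\La\mapsto\int_0^1\Ad(\Phi_\ga(t)^{-1})\dot\La(t)\,dt$ from admissible variations into $\mathfrak a\cong\R^3$ (coordinates from $A$, $B$, $C$) is onto.

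Next I would pin down which $\dot\La$ are attainable. Differentiating the defining relations $\sig=h^{-1}\circ\hat\sig$, $\ka=h_{\ka_1,\ka_2}^{-1}\circ\hat\ka$, a variation $(\delta\hat\sig,\delta\hat\ka)\in\E$ produces $\delta\sig=(h^{-1})'(\hat\sig)\,\delta\hat\sig$ and $\delta\ka=(h_{\ka_1,\ka_2}^{-1})'(\hat\ka)\,\delta\ka$, and since $h^{-1}$, $h_{\ka_1,\ka_2}^{-1}$ are diffeomorphisms these multipliers are strictly positive (a.e. finite, never zero). Hence $\delta\sig$ ranges over all of $L^2$ as $\delta\hat\sig$ does (at least locally; more simply, $\{f\cdot\delta\hat\sig:\delta\hat\sig\in L^2\}$ is dense and in any case contains $C^\infty$ perturbations when we multiply by a bump, which is all we need), and likewise for $\delta\ka$. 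From $\La_\ga=\sig\!\left(\begin{smallmatrix}0&-\ka&1\\\ka&0&0\\0&0&0\end{smallmatrix}\right)$ we get $\dot\La=\delta\sig\!\left(\begin{smallmatrix}0&-\ka&1\\\ka&0&0\\0&0&0\end{smallmatrix}\right)+\sig\,\delta\ka\!\left(\begin{smallmatrix}0&-1&0\\1&0&0\\0&0&0\end{smallmatrix}\right)=\delta\sig\,(C_0)+\sig\,\delta\ka\,A$ where $C_0=B-\ka A$ — the point being that, pointwise in $t$, the attainable $\dot\La(t)$ span the $2$-dimensional subspace $\mathrm{span}\{A,B\}=\mathfrak h$'s linear span inside $\mathfrak a$, with independently choosable $L^2$ coefficients.

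It remains to see that integrating $\Ad(\Phi_\ga(t)^{-1})$ applied to this two-real-parameter family of pointwise directions fills out all of $\mathfrak a\cong\R^3$. I would argue by contradiction: if the image were contained in a plane, there would be a nonzero linear functional $\ell$ on $\mathfrak a$ with $\ell\big(\Ad(\Phi_\ga(t)^{-1})v\big)=0$ for a.e.\ $t$ and every $v\in\mathrm{span}\{A,B\}$. Since $\Phi_\ga$ is absolutely continuous and $\Phi_\ga(0)=P$, the map $t\mapsto\Ad(\Phi_\ga(t)^{-1})$ is continuous with value $\Ad(P^{-1})$ at $t=0$, so the vanishing would hold on a whole interval; differentiating in $t$ and using $\dot\Phi_\ga=\Phi_\ga\La_\ga$ with $\La_\ga(t)\in\mathfrak h$ (so $\sig>0$) then forces $\ell$ to annihilate an ever-larger subspace generated by iterated brackets $[\La_\ga,\cdot]$ of $\mathrm{span}\{A,B\}$ — but $[A,B]$ together with $A,B$ span $\mathfrak a$ (indeed $[A,B]=C$ from \eqref{E:AandB}), so $\ell=0$, a contradiction. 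Thus $DF_\ga$ is surjective for every $\ga$, i.e.\ $F$ is a submersion; openness is then the standard consequence of the submersion/implicit function theorem for Hilbert manifolds.

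\medskip
\noindent\textbf{Main obstacle.} The only genuinely delicate point is handling the low regularity: $\La_\ga\in L^2$, not continuous, so "differentiate the vanishing identity in $t$" has to be done carefully (e.g.\ via a density/approximation argument, or by testing the $L^2$ identity against suitable variations rather than literally differentiating). A cleaner route that sidesteps this is to choose, for the surjectivity argument, three \emph{explicit} concrete variations — supported on a single small subinterval where one can take $\ga$ to be, say, a short arc with nearly constant frame — so that the three integrals $\int\Ad(\Phi_\ga^{-1})\dot\La$ are visibly close to three linearly independent vectors ($A$, $B$, and something with a $C$-component coming from the moment $\int t\,(\cdots)\,dt$), making the $3\times 3$ determinant nonzero by a perturbation estimate. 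I would likely present this concrete version to keep the proof short.
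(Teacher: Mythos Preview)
Your proposal is essentially correct. The variation-of-constants formula you wrote has the conjugation on the wrong side --- the paper's computation gives $\Phi'(0)(1)\Phi(1)^{-1}=\int_0^1\Phi(\tau)\dot\La(\tau)\Phi(\tau)^{-1}\,d\tau$, so after left-trivialization the integrand is $\Ad\big(\Phi_\ga(1)^{-1}\Phi_\ga(t)\big)\dot\La(t)$ rather than $\Ad(\Phi_\ga(t)^{-1})\dot\La(t)$ --- but this is harmless, since a fixed conjugation by $\Phi_\ga(1)$ does not affect surjectivity.

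Your primary argument (the bracket-generating contradiction) is a genuinely different route from the paper's. The paper goes directly for what you call the ``concrete version'': it takes $\dot\La$ to be a sum of three narrow bumps at times $t_i$ close to $1$, so that the conjugations $\Ad\big(\Psi(t_i)^{-1}\Psi(1)\big)$ are small perturbations of the identity, and then checks by hand that the conjugated plane $\mathrm{span}\{A,B\}$ is tilted away from itself by an angle $\sim(1-t_i)$, yielding three linearly independent vectors. Your Lie-algebraic argument is more conceptual and generalizes immediately (it is really just the observation that $\{A,B\}$ bracket-generates $\mathfrak a$, i.e., a Chow--Rashevskii style statement); the paper's version is more hands-on and avoids the regularity bookkeeping you flagged, since the bumps can be taken smooth and the computation is purely finite-dimensional linear algebra at the end. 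Your acknowledged ``cleaner route'' is exactly what the paper does.
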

\begin{proof}
	Let $\de>0$, $r\in (-\de,\de)$ and $\hat\sig(r),\,\hat\ka(r) \in L^2[0,1]$ be one-parameter families of functions; set $\sig(r)=h^{-1}\circ \hat\sig(r),~\ka(r)=h_{\ka_1,\ka_2}^{-1}\circ \hat\ka(r)$. Define a corresponding family of curves $\La(r)\colon [0,1]\to \mathfrak{h}$ by
	\begin{equation*}
		\La(r)=\sig(r)\begin{pmatrix}
	0 & -\ka(r) & 1 \\
	\ka(r) & 0 & 0 \\
	0 & 0 & 0
	\end{pmatrix}.
	\end{equation*}
	Denoting derivatives with respect to $t$ (resp.~$r$) by a $\dot{\phantom{a}}$ (resp.~${\phantom{a}}'$), let $\Phi(r)\colon [0,1]\to UT\C$, $t\mapsto \Phi(r)(t)$, be the solution of $\dot\Phi(r)=\Phi(r)\La(r)$.  A straightforward computation shows that 
	\begin{equation*}
		\Phi'(r)(t)\big[\Phi(r)(t)\big]^{-1}=\int_0^t\Phi(r)(\tau)\, \La'(r)(\tau) \,\big[\Phi(r)(\tau)\big]^{-1}\,d\tau\quad (r\in (-\de,\de),~t\in [0,1]).
	\end{equation*}
	Let $\La'(0)$ consist of three smooth narrow bumps at times $t=t_0$, $t=t_1$ and $t=t_2$, with each $t_i\in (0,1)$ close to 1.
Let $\Psi = \Phi(0)$;
setting $r=0$ in the previous expression, we deduce that
	\begin{equation*}
\Psi(1)^{-1}\Phi'(0)(1)\approx \sum_{i=1}^3
\big[\Psi(t_i)^{-1}\Psi(1)\big]^{-1}\,\La'(0)(t_i)\,\big[\Psi(t_i)^{-1}\Psi(1)\big].
	\end{equation*}
	Since each $\La(r)$ is a curve in the open convex cone
	\begin{equation*}
		\set{aA+bB}{a\in \R,~b>0\text{ and }\ka_1b<a<\ka_2b},
	\end{equation*} 
	we can make $\La'(0)(t_i)$ assume any value in the vector subspace $\mathfrak{v}$
generated by $A$ and $B$ (with $A,B$ as in \eqref{E:AandB}). Another computation using the fact that $\sig(0)>0$ a.e.~shows that the planes $\mathfrak{v}$ and $\big[\Psi(t_i)^{-1}\Psi(1)\big]^{-1}\mathfrak{v}\big[\Psi(t_i)^{-1}\Psi(1)\big]$ are transversal for small $1-t_i$, with the angle between them proportional to $(1-t_i)+o(1-t_i)$. Hence, any vector in $\mathfrak{a}$ can be written in the form $\Psi(1)^{-1}\Phi'(0)(1)$ for a suitable choice of $\La'(0)$, which shows that $F$ is a submersion. 
\end{proof}		
	
\begin{defn}\label{D:Lurvespace}
	Let $-\infty\leq \ka_1<\ka_2\leq +\infty$ and $P,Q\in UT\C$. Define $\sr L_{\ka_1}^{\ka_2}(P,Q)$ to be the subspace of $\sr L_{\ka_1}^{\ka_2}(P,\cdot)$ consisting of all $\ga\in \sr L_{\ka_1}^{\ka_2}(P,\cdot)$ such that $\Phi_\ga(1)=Q$.
\end{defn}

It follows from \lref{L:submersion} that $\sr L_{\ka_1}^{\ka_2}(P,Q)$ is a closed submanifold of codimension 3 in $\sr L_{\ka_1}^{\ka_2}(P,\cdot)\equiv \E$; the proof that $\sr L_{\ka_1}^{\ka_2}(P,Q)$ is always nonempty is postponed to \S\ref{S:diffuse}.

The following lemmas contain all the results on infinite-dimensional manifolds that we shall use.

\begin{lem} Let $\sr M,\,\sr N$ be (infinite-dimensional) separable Banach manifolds. Then:\label{L:Hilbert}
	\begin{enumerate}
		\item [(a)] $\sr M$ is locally path-connected. In particular, its connected and path components coincide. 
		\item [(b)]  If $F\colon \sr M\to \sr N$ is a weak homotopy equivalence,  then $F$ is homotopic to a homeomorphism. 
		\item [(c)] Let $\E$ and $\mathbf{F}$ be separable Banach spaces. Suppose $i\colon \mathbf{F}\to \E$ is a bounded, injective linear map with dense image and $\sr M\subs \E$ is a smooth closed submanifold of finite codimension.  Then $\sr N=i^{-1}(\sr M)$ is a smooth closed submanifold of\, $\mathbf{F}$ and $i\colon \sr N\to \sr M$ is a homotopy equivalence.
	\end{enumerate}
\end{lem}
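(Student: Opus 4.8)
The plan is to prove the three parts largely by invoking known results from the literature on infinite-dimensional manifolds, since each statement is essentially classical. For part (a), I would argue that a separable Banach manifold is locally homeomorphic to an open subset of a Banach space, which is locally convex and hence locally path-connected; local path-connectedness is inherited by the manifold. The standard consequence is that connected components are open, and that a space which is connected and locally path-connected is path-connected, so the two notions of component agree. This is a short topological argument and I expect no difficulty here.

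For part (b), the key input is that separable infinite-dimensional Banach manifolds are dominated by CW complexes (indeed, by results of Palais and of Henderson, such a manifold is homotopy equivalent to a CW complex, even to an open subset of the model space, and separable infinite-dimensional Fréchet/Banach spaces are homeomorphic to Hilbert space by the Anderson--Kadec theorem and its manifold generalizations due to Henderson). Granting that $\sr M$ and $\sr N$ have the homotopy type of CW complexes, a weak homotopy equivalence between them is a genuine homotopy equivalence by Whitehead's theorem. To upgrade ``homotopy equivalence'' to ``homotopic to a homeomorphism,'' I would cite Henderson's classification theorem: separable infinite-dimensional Hilbert (or Banach) manifolds are classified up to homeomorphism by their homotopy type, and in fact a homotopy equivalence between two such manifolds is homotopic to a homeomorphism. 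So the chain is: weak homotopy equivalence $\Rightarrow$ homotopy equivalence (Whitehead + CW homotopy type) $\Rightarrow$ homotopic to a homeomorphism (Henderson).

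For part (c), I would first check the submanifold claim: near a point of $\sr N = i^{-1}(\sr M)$, choose a submersion chart for $\sr M$ in $\E$, i.e.\ locally $\sr M = g^{-1}(0)$ for a submersion $g$ onto $\R^k$ ($k$ the codimension); then $g\circ i$ is a smooth map on $\mathbf F$, and one must verify it is a submersion at points of $\sr N$. Surjectivity of its derivative $(Dg)\circ i$ onto $\R^k$ follows because $i$ has dense image and $Dg$ is an open (surjective) linear map onto a finite-dimensional space, so the image of a dense subspace under a surjective bounded operator onto $\R^k$ is still all of $\R^k$; splitting of the kernel is automatic in finite codimension. Hence $\sr N$ is a smooth closed submanifold of finite codimension $k$ in $\mathbf F$. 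For the homotopy equivalence $i\colon \sr N \to \sr M$, I would show $i$ is a weak homotopy equivalence and then invoke part (b). To see it is a weak homotopy equivalence, one uses that $i\colon \mathbf F \to \E$ is a weak homotopy equivalence (both are contractible, being Banach spaces) together with a relative/fiberwise version: locally $i$ looks like the inclusion of $i^{-1}(U\times \R^k)$, which fibers over $\R^k$ compatibly, reducing the statement to the density/approximation argument that continuous maps and homotopies of compact (hence finite-dimensional, e.g.\ spheres and disks) polyhedra into $\sr M$ can be approximated, rel a suitable subcomplex, by maps into $i(\sr N)$ because $\mathbf F$ is dense in $\E$ and $\sr M$ is locally modeled on a complemented subspace. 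This density/approximation step is the technical heart.

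The main obstacle I anticipate is precisely this last approximation argument in part (c): making rigorous that a compact family (a sphere or disk, or a homotopy thereof) mapping into the submanifold $\sr M\subs\E$ can be pushed, up to homotopy, into the image $i(\sr N)$. The clean way is to combine the submersion chart for $\sr M$ with the fact that $\mathbf F$ is dense in $\E$ and that $i$ restricted to the relevant local model is itself a weak homotopy equivalence onto its (dense) image; an Urysohn-type patching over a finite cover of the compact parameter space then yields the global statement. Alternatively, one can quote the relevant density theorem for submanifolds directly from the literature on Hilbert manifolds (e.g.\ the Eells--Elworthy or Palais framework, or Bonic--Frampton), which is the route I would take to keep the paper self-contained in citations rather than arguments. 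Everything else — parts (a) and (b), and the submanifold portion of (c) — is routine given the cited classification and approximation theorems.
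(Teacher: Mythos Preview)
Your proposal is correct and follows essentially the same route as the paper: the paper's proof is purely by citation, declaring (a) obvious, deducing (b) from Palais's theorem that Banach manifolds have CW homotopy type together with the Burghelea--Kuiper and Henderson classification results, and for (c) citing directly Theorem~2 of Burghelea--Saldanha--Tomei, which packages exactly the submanifold-plus-density-approximation argument you sketch. Your outline of (c) (checking that $g\circ i$ is still a submersion via density of $i(\mathbf F)$ and finite-dimensionality of the target, then running an approximation argument for compact families) is precisely what that reference does, so there is no genuine divergence.
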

\begin{proof} 
Part (a) is obvious. Part (b) follows from Theorem 15 in \cite{Palais}, Theorem 9 in \cite{BurKui} and cor.~3 in \cite{Henderson}. Part (c) is Theorem 2 in \cite{BurSalTom}. 
\end{proof}

\begin{lem}\label{L:net}
	Let $\E$ be a separable Hilbert space, $D\subs \E$ a dense vector subspace,  $L\subs \E$ a submanifold of finite codimension and $U$ an open subset of $L$.  If $K$ is a finite simplicial complex and $f\colon \abs{K}\to U$ a continuous map, then $f$ is homotopic within $U$ to a map $\abs{K}\to D\cap U$.
	\end{lem}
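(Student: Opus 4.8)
The plan is to exploit two features of the setting: the target $U$ is an open subset of a finite-codimension submanifold $L$ of a separable Hilbert space, and $D$ is dense in $\E$. First I would reduce to a \emph{local} statement by a standard compactness/partition argument: cover $f(|K|)$ by finitely many open sets $V_1,\dots,V_m\subs U$, each small enough that on $V_j$ the submanifold $L$ is described by a chart straightening it to an open subset of a closed linear subspace $\E_0\subs\E$ of finite codimension, and such that moreover $V_j$ is diffeomorphic (via this chart) to a convex (or at least starlike) open set. Using a subdivision of $K$ fine enough that each closed simplex maps into some $V_j$, one obtains a finite ``nerve'' along which to run an induction over the skeleta of $K$.

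Next I would carry out the skeletal induction. Over the $0$-skeleton one simply moves each vertex image $f(v)\in U$ to a nearby point of $D\cap U$; this is possible because $D$ is dense in $\E$, hence $D\cap L$ is dense in $L$ (the straightening chart is a diffeomorphism of open sets of $\E$ and $\E_0$, and density is preserved under restriction to a dense-in-finite-codimension subspace — here Remark-type care is needed, see the obstacle below), and $D\cap L\cap U$ is dense in $U$. Assuming the map has been homotoped (within $U$) so that the $k$-skeleton lands in $D\cap U$, I would extend over each $(k+1)$-simplex $\sigma$: the restriction $f|_{\partial\sigma}$ already lies in $D\cap V_j$ for the relevant chart domain, and inside the convex model of $V_j$ one can cone off using the straight-line (affine) homotopy, which stays in $V_j\subs U$; a further small perturbation, supported in the interior of $\sigma$ and rel $\partial\sigma$, pushes the image of $\sigma$ into $D$, again using density of $D$ together with the fact that a generic finite-dimensional affine/simplicial picture can be perturbed into a dense subset. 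Patching these simplexwise homotopies (which agree on overlaps because they are rel the lower skeleton) gives a homotopy of $f$, within $U$, to a map $|K|\to D\cap U$.

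The main obstacle is the interaction between ``dense in $\E$'' and ``lying in the finite-codimension submanifold $L$'': a dense subspace $D$ of $\E$ need not meet $L$ densely, or even at all, for an arbitrary closed finite-codimension $L$. The lemma is nonetheless true because $L$ is a \emph{submanifold} and, more to the point, in the intended application $L = \sr L_{\ka_1}^{\ka_2}(P,Q)$ is cut out by the submersion $F$ of Lemma~\ref{L:submersion}, whose derivative can be realized by smooth compactly-supported bump perturbations of the controls $(\hat\sigma,\hat\ka)$; such bumps lie in $D$ when $D$ is, say, the space of curves with smooth controls. So the honest version of the density step is: given $\ga\in L\cap U$, perturb its controls by an element of $D$ in the directions spanning a complement to $\ker dF$ to stay on $L$ to first order, then correct by the implicit function theorem — the correction term is small and can itself be taken in $D$ because $D$ is dense and $dF$ restricted to $D$-directions is already surjective. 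I would isolate this as the key sublemma (``$D\cap L$ is dense in $L$, and every finite complex in $U$ can be pushed into $D\cap L\cap U$''), prove it first in a neighborhood chart, and then feed it into the skeletal induction above. The convexity/coning step and the patching are then routine.
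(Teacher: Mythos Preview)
The paper itself does not prove this lemma; it simply cites lemma~1.10 of \cite{SalZueh}. However, the proof of the subsequent corollary on smooth approximation reveals something about that argument: the homotoped map $g=H_2$ has image contained in a \emph{finite-dimensional} linear subspace of $D$, generated by explicit vectors $v_{ij}$. This points to a global partition-of-unity approach rather than skeletal induction: one chooses finitely many points of $D$ near $f(\abs{K})$, takes convex combinations via a partition of unity on $\abs{K}$ to obtain $\tilde f\colon \abs{K}\to D$ close to $f$ (but possibly off $L$), and then corrects $\tilde f$ back onto $L$ using finitely many fixed vectors $w_1,\dots,w_n\in D$ spanning a complement to the tangent spaces of $L$ along $f(\abs{K})$. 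The correction is supplied by the implicit function theorem and remains in $D$ because the $w_j$ lie in the linear subspace $D$.

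Your skeletal-induction outline is a different route, but there is a gap in how you handle the density obstacle. You correctly flag that $D$ dense in $\E$ does not tautologically give $D\cap L$ dense in $L$, and you then retreat to the \emph{specific} submanifold $L=\sr L_{\ka_1}^{\ka_2}(P,Q)$ and its defining submersion, speaking of ``controls'' and ``bumps''. But the lemma is stated for an arbitrary finite-codimension submanifold, so this does not establish what is claimed. The retreat is also unnecessary: the very implicit-function-theorem correction you sketch works in full generality. Any finite-codimension submanifold is locally $F^{-1}(0)$ for some submersion $F\colon W\to\R^n$; since $dF_p\colon \E\to\R^n$ is surjective onto a finite-dimensional space and $D$ is dense, automatically $dF_p(D)=\R^n$, so one can choose the correction directions $w_1,\dots,w_n$ in $D$ without knowing anything further about $L$ or $F$. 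Once you recognize this, your sublemma (``$D\cap L$ is dense in $L$'') is provable in general, and either the partition-of-unity argument above or a repaired version of your induction finishes the proof.
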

	\begin{proof}
		See \cite{SalZueh}, lemma 1.10.
	\end{proof}

\begin{cor}\label{L:dense}
	Let  $\ka_1<\ka_2$ and $P,Q\in UT\C$. Then the subset of all smooth curves in $\sr L_{\ka_1}^{\ka_2}(P,Q)$ is dense in the latter.
\end{cor}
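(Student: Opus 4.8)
The plan is to deduce the corollary from \lref{L:net} by taking $D$ to be the dense subspace of $\E$ consisting of pairs $(\hat\sig,\hat\ka)$ of smooth functions, and applying the lemma to a point, i.e. with $K$ a single vertex.

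First I would identify the relevant objects. Recall that $\sr L_{\ka_1}^{\ka_2}(P,\cdot)$ is identified with $\E = L^2[0,1]\times L^2[0,1]$ via $\ga\leftrightarrow(\hat\sig,\hat\ka)$, and that $\sr L_{\ka_1}^{\ka_2}(P,Q)$ is, by \lref{L:submersion} and the remark following \dref{D:Lurvespace}, a closed submanifold of finite codimension ($3$) in $\E$. Let $D\subs\E$ be the set of pairs $(\hat\sig,\hat\ka)$ with both components in $C^\infty[0,1]$; this is a dense vector subspace of $\E$, since smooth functions are dense in $L^2[0,1]$. I claim that if $(\hat\sig,\hat\ka)\in D$, then the corresponding admissible curve $\ga$ is smooth: indeed $\sig = h^{-1}\circ\hat\sig$ and $\ka = h^{-1}_{\ka_1,\ka_2}\circ\hat\ka$ are then smooth (as $h^{-1}$ and $h^{-1}_{\ka_1,\ka_2}$ are smooth diffeomorphisms onto their domains), hence $\La$ in \eqref{E:ivp} depends smoothly on $t$, and therefore the solution $\Phi$ of the linear ODE $\dot\Phi=\Phi\La$ is smooth, whence so are its entries $\ga_1 = \Phi^{(1,3)}$, $\ga_2=\Phi^{(2,3)}$. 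Thus $D\cap\sr L_{\ka_1}^{\ka_2}(P,Q)$ is contained in the set of smooth curves in $\sr L_{\ka_1}^{\ka_2}(P,Q)$.

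Next I would run the density argument. Fix any $\ga_0\in\sr L_{\ka_1}^{\ka_2}(P,Q)$ and any open neighborhood $U$ of $\ga_0$ in $\sr L_{\ka_1}^{\ka_2}(P,Q)$. Apply \lref{L:net} with $\E$ as above, the dense subspace $D$, the submanifold $L=\sr L_{\ka_1}^{\ka_2}(P,Q)$, the open set $U$, the complex $K$ a single point, and $f$ the map sending that point to $\ga_0$. The lemma produces a homotopy within $U$ from $f$ to a map into $D\cap U$; in particular $U$ contains a point of $D\cap\sr L_{\ka_1}^{\ka_2}(P,Q)$, hence a smooth curve. Since $\ga_0$ and $U$ were arbitrary, the smooth curves are dense in $\sr L_{\ka_1}^{\ka_2}(P,Q)$.

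There is essentially no obstacle here: the content is all in \lref{L:net} (itself quoted from \cite{SalZueh}) and in \lref{L:submersion} guaranteeing that $\sr L_{\ka_1}^{\ka_2}(P,Q)$ is a finite-codimension submanifold. The only point requiring a line of verification is the claim that elements of $D$ yield smooth curves, which is immediate from the smoothness of the diffeomorphisms $h^{-1}$, $h^{-1}_{\ka_1,\ka_2}$ and the smooth dependence of solutions of a linear ODE with smooth coefficients on $t$. One could alternatively phrase the conclusion without invoking a path: since $D$ is dense in $\E$ and $\sr L_{\ka_1}^{\ka_2}(P,Q)$ is a submanifold, $D\cap\sr L_{\ka_1}^{\ka_2}(P,Q)$ is dense in $\sr L_{\ka_1}^{\ka_2}(P,Q)$ — but the cleanest fully rigorous route in the infinite-dimensional setting, where intersecting a dense subspace with a submanifold need not obviously be dense in the submanifold, is precisely the appeal to \lref{L:net}.
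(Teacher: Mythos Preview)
Your proof is correct and follows exactly the same approach as the paper: take $\E=L^2[0,1]\times L^2[0,1]$, $D=C^\infty[0,1]\times C^\infty[0,1]$, $L=\sr L_{\ka_1}^{\ka_2}(P,Q)$, and apply \lref{L:net} with $K$ a point to conclude $D\cap U\neq\emptyset$ for any nonempty open $U\subs L$. You have in fact supplied a detail the paper leaves implicit, namely the verification that pairs in $D$ correspond to smooth curves via the smoothness of $h^{-1}$, $h^{-1}_{\ka_1,\ka_2}$ and of solutions to the linear ODE \eqref{E:ivp}.
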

\begin{proof}
	Take $\E=L^2[0,1]\times L^2[0,1]$, $D=C^\infty[0,1]\times C^\infty[0,1]$ and  $U$ an open subset of $L=\sr L_{\kappa_1}^{\kappa_2}(P,Q)$. Then it is a trivial consequence of \lref{L:net} that $D\cap U\neq \emptyset$ if $U\neq \emptyset$.
\end{proof}

\begin{lem}\label{L:CtoLinclusion}
	Let $(\ka_1,\ka_2)\subs (\bar \ka_1,\bar \ka_2)$ and $P,Q\in UT\C$. Then 
	\begin{equation}\label{E:injection}
		j\colon \sr C_{\kappa_1}^{\kappa_2}(P,Q)^r\to \sr L_{\bar\kappa_1}^{\bar\kappa_2}(P,Q),\quad \ga\mapsto (\hat\sig,\hat\ka),
	\end{equation}
	where $\hat\sig=h\circ \abs{\dot \ga}$ and $\hat\ka=h_{\bar\ka_1,\bar\ka_2}\circ \ka_\ga$, is a continuous injection for all $r\geq 2$. Furthermore, the actual curve in $\C$ corresponding to $j(\ga)\in \sr L_{\bar\kappa_1}^{\bar\kappa_2}(P,Q)$ is $\ga$ itself.
\end{lem}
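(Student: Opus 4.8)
The plan is to verify the three assertions separately: that the formula $\ga\mapsto(\hat\sig,\hat\ka)$ lands in $\sr L_{\bar\ka_1}^{\bar\ka_2}(P,Q)$, that the underlying curve in $\C$ is unchanged, and that the resulting map is continuous and injective. First I would check that $j$ is well-defined. Given $\ga\in\sr C_{\ka_1}^{\ka_2}(P,Q)^r$ with $r\geq 2$, its speed $\abs{\dot\ga}$ is a positive function of class $C^{r-1}$ and its curvature $\ka_\ga$ is a function of class $C^{r-2}$ taking values in $(\ka_1,\ka_2)\subs(\bar\ka_1,\bar\ka_2)$. Hence $\hat\sig=h\circ\abs{\dot\ga}$ and $\hat\ka=h_{\bar\ka_1,\bar\ka_2}\circ\ka_\ga$ are continuous, so in particular lie in $L^2[0,1]$ (indeed they are bounded on the compact interval $[0,1]$, using that $\abs{\dot\ga}$ is bounded away from $0$ and $\infty$ and that $\ka_\ga$ has compact image inside the open interval $(\bar\ka_1,\bar\ka_2)$); thus $(\hat\sig,\hat\ka)\in\E$ and defines an element of $\sr L_{\bar\ka_1}^{\bar\ka_2}(P,\cdot)$.

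Next I would identify the underlying curve. By construction of admissible curves, the element of $\sr L_{\bar\ka_1}^{\bar\ka_2}(P,\cdot)$ associated to $(\hat\sig,\hat\ka)$ is the curve obtained from the third column of the solution $\Phi$ of the initial value problem \eqref{E:ivp} with $\Phi(0)=P$, speed $\sig=h^{-1}\circ\hat\sig$ and curvature $\ka=h_{\bar\ka_1,\bar\ka_2}^{-1}\circ\hat\ka$. But $h$ and $h_{\bar\ka_1,\bar\ka_2}$ are diffeomorphisms, so $\sig=\abs{\dot\ga}$ and $\ka=\ka_\ga$; and since $\ga$ is a $C^r$ regular curve, its frame $\Phi_\ga$ satisfies exactly this initial value problem (this is the computation displayed before \eqref{E:ivp}, which shows $\dot\Phi_\ga=\Phi_\ga\La_\ga$ with $\La_\ga$ built from $\abs{\dot\ga}$ and $\ka_\ga$). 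By uniqueness of solutions of \eqref{E:ivp}, $\Phi=\Phi_\ga$, so the associated curve is the third column of $\Phi_\ga$, namely $\ga$ itself. In particular $\Phi_\ga(1)=Q$, so $j(\ga)\in\sr L_{\bar\ka_1}^{\bar\ka_2}(P,Q)$, and this also makes injectivity immediate: if $j(\ga)=j(\tilde\ga)$ then the two curves coincide as parametrized maps $[0,1]\to\C$, and since elements of $\sr C_{\ka_1}^{\ka_2}(P,Q)^r$ are such parametrized curves, $\ga=\tilde\ga$.

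Finally I would address continuity. The map $\abs{\dot\ga}$ depends continuously on $\ga$ in the $C^r$ topology (it involves only $\dot\ga$, hence is controlled by the $C^1$ part of the topology), with values in $C^0[0,1]$; likewise $\ka_\ga$ depends continuously on $\ga$ via formula \eqref{E:curvature}, which is a continuous expression in $\dot\ga$ and $\ddot\ga$, so $\ka_\ga$ depends continuously on $\ga$ with values in $C^0[0,1]$ (using $r\geq 2$ and $\abs{\dot\ga}>0$). Composing with the fixed smooth functions $h$ and $h_{\bar\ka_1,\bar\ka_2}$ and then including $C^0[0,1]\hookrightarrow L^2[0,1]$ preserves continuity, so $\ga\mapsto(\hat\sig,\hat\ka)$ is continuous from the $C^r$ topology to $\E$, hence to $\sr L_{\bar\ka_1}^{\bar\ka_2}(P,Q)$ with its submanifold (trivialized Hilbert) topology. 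I expect the only mildly delicate point to be checking that $h_{\bar\ka_1,\bar\ka_2}\circ\ka_\ga$ stays in $L^2$ when one of $\bar\ka_1,\bar\ka_2$ is infinite: here one uses that $\ka_\ga([0,1])$ is a compact subset of the open interval $(\bar\ka_1,\bar\ka_2)$, so $h_{\bar\ka_1,\bar\ka_2}$ is bounded on it, and in fact no genuine obstacle arises. Everything else is routine verification that the given formula is compatible with the definitions, so there is no serious obstruction; the ``main step'' is simply the uniqueness argument pinning down the underlying curve.
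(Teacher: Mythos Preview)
Your proof is correct and follows essentially the same approach as the paper: identify the underlying curve via uniqueness of solutions to the initial value problem \eqref{E:ivp}, deduce injectivity and that the image lies in $\sr L_{\bar\ka_1}^{\bar\ka_2}(P,Q)$, and check continuity by noting that $C^r$-closeness of curves implies $C^0$-closeness of speed and curvature, hence $L^2$-closeness of $(\hat\sig,\hat\ka)$. Your write-up is in fact somewhat more detailed than the paper's (e.g., you explicitly verify $(\hat\sig,\hat\ka)\in\E$), but the substance is identical.
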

\begin{proof}
	The curve corresponding to the right side of \eqref{E:injection} in $\sr L_{\bar\ka_1}^{\bar\ka_2}(P,\cdot)$ is the solution of \eqref{E:ivp} with
	\begin{equation*}
			\sig=h^{-1}\circ \hat \sig=\abs{\dot\ga} \text{\quad and\quad }\ka=h^{-1}_{\bar\ka_1,\,\bar\ka_2}\circ \hat\ka =\ka_\ga.
	\end{equation*}
	By uniqueness, this solution must equal $\ga$. In particular, $j$ is injective and its image is indeed contained in $\sr L_{\bar\kappa_1}^{\bar\kappa_2}(P,Q)$. Continuity of $j$ is clear: If $\eta$ is $C^r$-close to $\ga$, then $\sig_{\eta}$ (resp.~$\ka_{\eta}$) is $C^1$-close (resp.~$C^0$-close) to $\sig_\ga$ (resp.~$\ka_\ga$), hence $j(\eta)$ is close to $j(\ga)$ in the $L^2$-norm.
\end{proof}

\begin{cor}\label{L:smoothie}
	Let $\ka_1<\ka_2$, $P,Q\in UT\C$ and $\sr U\subs \sr L_{\ka_1}^{\ka_2}(P,Q)$ be open. Let $K$ be a finite simplicial complex and $f\colon \abs{K}\to \sr U$ be a continuous map. Then there exists a continuous $g\colon \abs{K}\to \sr U$ such that:
	\begin{enumerate}
		\item [(i)] $f\iso g$ within $\sr U$.
		\item [(ii)]  $g(a)$ is a smooth curve for all $a\in K$.
		\item [(iii)] All derivatives of $g(a)$ with respect to $t$ depend continuously on $a\in K$.
	\end{enumerate}
	In particular, the set inclusion $j\colon \sr C_{\ka_1}^{\ka_2}(P,Q)\inc \sr L_{\ka_1}^{\ka_2}(P,Q)$ in \eqref{E:injection} induces surjections $\pi_k(j^{-1}(\sr U))\to \pi_k(\sr U)$ for all $k\in \N$.
\end{cor}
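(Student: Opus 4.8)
The plan is to get (i) straight from \lref{L:net}, to extract (ii) and (iii) by unwinding the initial value problem \eqref{E:ivp}, and then to read off the statement about $\pi_k$.

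First I would apply \lref{L:net} with $\E=L^2[0,1]\times L^2[0,1]$, $D=C^\infty[0,1]\times C^\infty[0,1]$ (a dense linear subspace), $L=\sr L_{\ka_1}^{\ka_2}(P,Q)$ (a closed submanifold of codimension $3$, by \lref{L:submersion}), $U=\sr U$, and the given $K$ and $f$. This produces a continuous $g\colon\abs{K}\to D\cap\sr U$ with $f\iso g$ within $\sr U$, which is (i). For (ii) I would note that if the data $(\hat\sig,\hat\ka)$ of some $\ga\in\sr L_{\ka_1}^{\ka_2}(P,Q)$ lie in $D$, then $\sig=h^{-1}\circ\hat\sig$ and $\ka=h_{\ka_1,\ka_2}^{-1}\circ\hat\ka$ are smooth functions on $[0,1]$ (composites of the smooth diffeomorphisms $h^{-1},\,h_{\ka_1,\ka_2}^{-1}$ with smooth functions), with $\sig>0$ and $\ka_1<\ka<\ka_2$; hence $\La$ in \eqref{E:ivp} is smooth, so the solution $\Phi$ of \eqref{E:ivp} is smooth, and so is $\ga$ (formed from entries of the third column of $\Phi$). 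Since $\dot\ga=\sig\ta_\ga$ with $\sig>0$ and $\abs{\ta_\ga}\equiv 1$, this $\ga$ is regular, with curvature $\ka_\ga=\ka\in(\ka_1,\ka_2)$, $\Phi_\ga(0)=P$ and $\Phi_\ga(1)=Q$; thus $g(a)\in\sr C_{\ka_1}^{\ka_2}(P,Q)$ for every $a$.

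For (iii) I would check that $a\mapsto g(a)$ is continuous into $\sr C_{\ka_1}^{\ka_2}(P,Q)$ with the $C^\infty$ (hence also the $C^r$) topology. Differentiating $\dot\ga=\sig\ta_\ga$ and using \eqref{E:together} repeatedly writes each $\ga^{(k)}$ as a fixed polynomial in $\sig$, $\ka$, their $t$-derivatives of order below $k$, and the columns $\ta_\ga,\no_\ga$ of $\Phi_\ga$ (which enter only to zeroth order and have norm $1$); moreover $\Phi_\ga$ depends continuously on $(\sig,\ka)$ for the $C^0$ topology, by continuous dependence on the coefficients for \eqref{E:ivp}, and composition with $h^{-1}$ and $h_{\ka_1,\ka_2}^{-1}$ is continuous on $C^\infty[0,1]$. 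So $(\hat\sig,\hat\ka)\mapsto\ga$ is continuous from $C^\infty[0,1]\times C^\infty[0,1]$, with its Fr\'echet topology, to $C^\infty([0,1],\C)$, and (iii) would follow \emph{as soon as} the $g$ furnished by \lref{L:net} can be taken continuous into $D$ for that Fr\'echet topology, rather than merely for the $L^2$ topology. This last upgrade is where the real work sits. It is automatic if the homotopy built in the proof of \lref{L:net} is produced by mollification, so that it depends continuously, in every $C^k$-seminorm, on the simplicial variable; failing that, I would post-compose $g$ with a mollifier $M_\epsilon\colon\E\to D$ (for each fixed $\epsilon>0$ a bounded operator into $C^\infty[0,1]\times C^\infty[0,1]$, tending strongly to the identity as $\epsilon\to 0$) and then repair the small resulting failure of the codimension-$3$ constraint $\Phi_\ga(1)=Q$ by a perturbation in the span of finitely many fixed smooth curves, chosen via a partition of unity on $\abs{K}$ so that $dF$ (with $F$ as in \lref{L:submersion}) restricts to an isomorphism there; for $\epsilon$ small this keeps the image inside $\sr U$ and homotopic to $g$.

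Finally, the $\pi_k$ statement is formal: representing a class in $\pi_k(\sr U)$ by a map $f\colon\Ss^k\to\sr U$, triangulating $\Ss^k$, and applying (i)--(iii), one gets $g\iso f$ within $\sr U$ with $g(\Ss^k)\subs\sr C_{\ka_1}^{\ka_2}(P,Q)$ and $a\mapsto g(a)$ continuous for the $C^r$ topology, so that $g=j\circ\tilde g$ with $\tilde g\colon\Ss^k\to j^{-1}(\sr U)$ continuous, whence $j_*[\tilde g]$ is the given class; the basepoint is handled in the usual way, using that smooth curves are dense (Corollary~\ref{L:dense}) and that the spaces are locally path-connected (\lref{L:Hilbert}(a)). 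I expect the main obstacle to be exactly the continuity upgrade singled out above: making the smoothing produced by \lref{L:net} depend continuously, in \emph{all} $t$-derivatives, on $a\in\abs{K}$, while keeping the whole family inside $\sr L_{\ka_1}^{\ka_2}(P,Q)\cap\sr U$.
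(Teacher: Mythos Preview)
Your treatment of (i) and (ii) matches the paper's exactly: apply \lref{L:net} with $\E=L^2[0,1]^2$, $D=C^\infty[0,1]^2$, $L=\sr L_{\ka_1}^{\ka_2}(P,Q)$, $U=\sr U$.

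For (iii), however, you work much harder than necessary. The paper's proof exploits a specific feature of the construction in the proof of \lref{L:net} (i.e., lemma~1.10 of \cite{SalZueh}): the map $g$ produced there has image contained in a \emph{finite-dimensional} linear subspace of $D$, namely the span of finitely many fixed smooth vectors $v_{ij}$. On any finite-dimensional subspace of $C^\infty[0,1]^2$ all the relevant topologies (the $L^2$ topology and every $C^k$ topology) coincide, so continuity of $g$ in $L^2$ automatically yields continuity in every $C^k$ seminorm, and (iii) follows immediately from the chain of continuous dependences you already wrote out. There is no need for mollification or for correcting the constraint $\Phi_\ga(1)=Q$ afterwards.

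Your fallback argument (mollify, then repair the codimension-3 constraint via the submersion \lref{L:submersion} and a partition of unity) is a reasonable strategy and could be pushed through, but it is considerably more delicate: you must check that the correction step can be done uniformly over $\abs{K}$, that it keeps the whole family inside the open set $\sr U$, and that the combined operation is continuous into $C^\infty$. None of this is needed once you know the image of $g$ is finite-dimensional. Your guess that the proof of \lref{L:net} proceeds by mollification is what led you astray; it does not.
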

\begin{proof}
	Parts (i) and (ii) follow immediately from \lref{L:net} by setting $\E=L^2[0,1]\times L^2[0,1]$, $D=C^\infty[0,1]\times C^\infty[0,1]$,  $L=\sr L_{\kappa_1}^{\kappa_2}(P,Q)$ and $U=\sr U$. The image of the function $g=H_2\colon \abs{K}\to \sr U$ constructed in the proof of \lref{L:net} (see \cite{SalZueh}, lemma 1.10) is contained in a finite-dimensional vector subspace of $D$, namely, the one generated by all $\te{v}_{ij}$, so (iii) also holds.
\end{proof}

\begin{lem}\label{L:C^2}
	Let $\ka_1<\ka_2$ and $P,Q\in UT\C$. Then the inclusion $j\colon \sr C_{\ka_1}^{\ka_2}(P,Q)^r\to \sr L_{\ka_1}^{\ka_2}(P,Q)$  of \eqref{E:injection} is a  homotopy equivalence for any $r\in \N$, $r\geq 2$. Consequently, $\sr C_{\kappa_1}^{\kappa_2}(P,Q)^r$ is homeomorphic to $\sr L_{\kappa_1}^{\kappa_2}(P,Q)$ for any $r\in \N$, $r\geq 2$.
\end{lem}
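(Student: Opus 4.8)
The plan is to deduce the statement from the tools already assembled in this section, chiefly \lref{L:Hilbert}(b)(c), \lref{L:smoothie}, and \lref{L:C^2}'s hypotheses (the injection $j$ of \eqref{E:injection}). The strategy is: first upgrade \cref{L:smoothie} from ``surjections on homotopy groups over open sets'' to ``$j$ is a weak homotopy equivalence,'' and then invoke \lref{L:Hilbert}(b) to turn the weak homotopy equivalence into a homeomorphism. Since $\sr C_{\ka_1}^{\ka_2}(P,Q)^r$ is a separable Banach manifold for $r\neq \infty$ (it is an open subset of the Banach manifold cut out by the frame endpoint conditions inside the $C^r$ curves, and \lref{L:Hilbert}(b) applies), and $\sr L_{\ka_1}^{\ka_2}(P,Q)$ is a separable Hilbert manifold by Definitions \ref{D:loose}--\ref{D:Lurvespace} together with \lref{L:submersion}, the hypotheses of \lref{L:Hilbert}(b) are met once we know $j$ is a weak homotopy equivalence.

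First I would establish \emph{surjectivity} of $j_*\colon \pi_k(\sr C_{\ka_1}^{\ka_2}(P,Q))\to \pi_k(\sr L_{\ka_1}^{\ka_2}(P,Q))$ for all $k$: this is exactly the last sentence of \cref{L:smoothie} applied with $\sr U$ the full space $\sr L_{\ka_1}^{\ka_2}(P,Q)$, using that $S^k$ is a finite simplicial complex. Next, \emph{injectivity}: given a map $f\colon S^k\to \sr C_{\ka_1}^{\ka_2}(P,Q)$ whose composite with $j$ is nullhomotopic in $\sr L_{\ka_1}^{\ka_2}(P,Q)$, extend the nullhomotopy to $\bar f\colon D^{k+1}\to \sr L_{\ka_1}^{\ka_2}(P,Q)$, then apply \lref{L:smoothie} with $K$ the disc $D^{k+1}$ and $\sr U$ the whole space, obtaining $\bar g\colon D^{k+1}\to \sr L_{\ka_1}^{\ka_2}(P,Q)$ homotopic to $\bar f$ with $\bar g(a)$ smooth for all $a$ and all $t$-derivatives depending continuously on $a$ (parts (ii)--(iii) of \cref{L:smoothie}). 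By \lref{L:CtoLinclusion} the curve underlying $\bar g(a)$ lies in $\sr C_{\ka_1}^{\ka_2}(P,Q)^r$, and the continuous dependence of all $t$-derivatives on $a$ is precisely what makes $a\mapsto \bar g(a)$ continuous into the $C^r$ topology; thus $\bar g$ factors through $j$ and witnesses that $f$ was already nullhomotopic in $\sr C_{\ka_1}^{\ka_2}(P,Q)$. One subtlety: \cref{L:smoothie}(i) only asserts $\bar f\iso \bar g$ \emph{within} $\sr U=\sr L_{\ka_1}^{\ka_2}(P,Q)$, so the homotopy happens downstairs, but that is all that is needed to conclude $j_*[f]=0$ implies $[f]=0$. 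Combining the two, $j$ is a weak homotopy equivalence; \lref{L:Hilbert}(b) then gives that $j$ is homotopic to a homeomorphism, so in particular $\sr C_{\ka_1}^{\ka_2}(P,Q)^r\cong \sr L_{\ka_1}^{\ka_2}(P,Q)$.

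Finally, the independence of $r$: for any two values $r,r'\geq 2$ the above yields $\sr C_{\ka_1}^{\ka_2}(P,Q)^r\cong \sr L_{\ka_1}^{\ka_2}(P,Q)\cong \sr C_{\ka_1}^{\ka_2}(P,Q)^{r'}$, and one checks the identity map $\sr C_{\ka_1}^{\ka_2}(P,Q)^{r}\to \sr C_{\ka_1}^{\ka_2}(P,Q)^{r'}$ ($r\geq r'$) is a continuous bijection fitting into a commuting triangle with the two $j$'s, hence itself a weak homotopy equivalence and so homotopic to a homeomorphism by \lref{L:Hilbert}(b) again. I expect the main obstacle to be the bookkeeping in the injectivity argument: one must be careful that the homotopy produced by \cref{L:smoothie} is genuinely a free homotopy in $\sr L_{\ka_1}^{\ka_2}(P,Q)$ (not rel basepoint automatically), that basepoints can be handled by the usual local-path-connectedness argument (\lref{L:Hilbert}(a)), and — the truly delicate point — that part (iii) of \cref{L:smoothie} really does upgrade $L^2$-continuity in $a$ to $C^r$-continuity in $a$, i.e. that ``all $t$-derivatives depend continuously on $a$'' is the correct hypothesis to feed into the definition of the $C^r$ topology on $\sr C_{\ka_1}^{\ka_2}(P,Q)^r$. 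Modulo that, everything is a formal consequence of the lemmas already in place.
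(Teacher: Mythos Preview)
Your approach differs from the paper's, and the injectivity step contains a genuine gap. When you apply \lref{L:smoothie} to $\bar f\colon D^{k+1}\to \sr L_{\ka_1}^{\ka_2}(P,Q)$, you obtain $\bar g$ factoring through $\sr C_{\ka_1}^{\ka_2}(P,Q)^r$, but the homotopy $\bar f\simeq \bar g$ lives only in $\sr L_{\ka_1}^{\ka_2}(P,Q)$ and does \emph{not} fix the boundary sphere. Thus $\bar g|_{S^k}$ is nullhomotopic in $\sr C_{\ka_1}^{\ka_2}(P,Q)^r$, and $j\circ\bar g|_{S^k}\simeq j\circ f$ in $\sr L_{\ka_1}^{\ka_2}(P,Q)$; from this you can only deduce $j_*[\bar g|_{S^k}]=j_*[f]$, which is precisely the hypothesis you started with, not that $[f]=0$ in $\pi_k\big(\sr C_{\ka_1}^{\ka_2}(P,Q)^r\big)$. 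The argument is circular. A repair would require a \emph{relative} version of \lref{L:smoothie} (equivalently of \lref{L:net}) that leaves the restriction to $S^k$ unchanged when that restriction already consists of smooth curves; no such statement is available from the lemmas as stated.

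The paper sidesteps this entirely by invoking \lref{L:Hilbert}(c) rather than arguing about $\pi_k$. It takes $\E=L^2[0,1]\times L^2[0,1]$, $\mathbf F=C^{r-1}[0,1]\times C^{r-2}[0,1]$, and $i\colon \mathbf F\hookrightarrow\E$ the set inclusion, which is bounded, injective, with dense image. With $\sr M=\sr L_{\ka_1}^{\ka_2}(P,Q)$ (a finite-codimension closed submanifold of $\E$ by \lref{L:submersion}), part (c) gives that $\sr N=i^{-1}(\sr M)$ is a closed submanifold of $\mathbf F$ and that $i\colon\sr N\to\sr M$ is already a homotopy equivalence. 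One then checks, by solving \eqref{E:ivp} in one direction and reading off $(\abs{\dot\ga},\ka_\ga)$ in the other, that $\sr N$ is homeomorphic to $\sr C_{\ka_1}^{\ka_2}(P,Q)^r$ via a map making the triangle with $i$ and $j$ commute; \lref{L:Hilbert}(b) then upgrades the homotopy equivalence to a homeomorphism. No weak-equivalence bookkeeping is needed.
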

\begin{proof}Let $\E=L^2[0,1]\times L^2[0,1]$, let $\tbf{F}=C^{r-1}[0,1]\times C^{r-2}[0,1]$ (where $C^k[0,1]$ denotes the set of all $C^k$ functions $[0,1]\to \R$, with the $C^k$ norm) and let $i\colon \tbf{F}\to \E$ be set inclusion. Setting $\sr M=\sr L_{\ka_1}^{\ka_2}(P,Q)$, we conclude from \lref{L:Hilbert}\?(c) that $i\colon \sr N=i^{-1}(\sr M)\inc \sr M$ is a homotopy equivalence. We claim that $\sr N$ is homeomorphic to $\sr C_{\ka_1}^{\ka_2}(P,Q)^r$, where the homeomorphism $G$ is obtained by associating a pair $(\hat \sig,\hat \ka)\in \sr N$ to the curve $\ga$ obtained by solving \eqref{E:ivp}, with $\sig$ and $\ka$ as in \eqref{E:Sobolev}. The lemma will follow from this and the easily verified commutativity of 
\begin{equation*}
	\xymatrix{
		\sr N \ar[dr]_{i}\ar[r]^{G\qquad } & \sr C_{\kappa_1}^{\kappa_2}(P,Q)^r \ar[d]^{j}  \\
		 	&	\sr L_{\kappa_1}^{\kappa_2}(P,Q) 
	}
\end{equation*}
	
	Suppose first that $\ga\in \sr C_{\ka_1}^{\ka_2}(P,Q)^r$. Then $\abs{\dot\ga}$ (resp.~$\ka$) is a function $[0,1]\to \R$ of class $C^{r-1}$ (resp.~$C^{r-2}$). Hence, so are $\hat{\sig}=h\circ \abs{\dot\ga}$ and $\hat \ka=h_{\ka_1}^{\ka_2}\circ \ka$, since $h$ and $h_{\ka_1}^{\ka_2}$ are smooth. Moreover, if $\ga,\,\eta\in \sr C_{\ka_1}^{\ka_2}(P,Q)^r$ are close in $C^r$ topology, then $\hat \ka_\ga$ is $C^{r-2}$-close to $\hat\ka_\eta$ and $\hat \sig_\ga$ is $C^{r-1}$-close to $\hat \sig_\eta$.
	
	Conversely, if $(\hat{\sig},\hat{\ka})\in \sr N$, then $\sig=h^{-1}\circ \hat \sig$ is of class $C^{r-1}$ and $\ka=(h_{\ka_1}^{\ka_2})^{-1}\circ \hat \ka$ of class $C^{r-2}$. Since all functions on the right side of \eqref{E:ivp} are of class (at least) $C^{r-2}$, the solution $\ta=\ta_\ga$ to this initial value problem is of class $C^{r-1}$. Moreover, $\dot\ga=\sig\ta$, hence the velocity vector of $\ga$ is seen to be of class $C^{r-1}$. We conclude that $\ga$ is a curve of class $C^r$. Further, continuous dependence on the parameters of a differential equation shows that the correspondence $(\hat \sig,\hat \ka)\mapsto \ta_\ga$ is continuous. Since $\ga$ is obtained by integrating $\sig\ta_\ga$, we deduce that  the map $(\hat\sig,\hat\ka)\mapsto \ga$ is likewise continuous.
	
	The last assertion of the lemma follows from \lref{L:Hilbert}\?(b).
\end{proof}

 \begin{defn}
Let $P=(p,w),Q=(q,z)\in \C\times \Ss^1$. Given $\theta_1\in \R$ satisfying $e^{i\theta_1}=z\bar w$, we denote by $\sr L_{\kappa_1}^{\kappa_2}(P,Q;\theta_1)$ the subspace of $\sr L_{\kappa_1}^{\kappa_2}(P,Q)$ consisting of all curves which have total turning equal to $\theta_1$. When $P=(0,1)$, the space ${\sr L}_{\ka_1}^{\ka_2}(P,Q)$ (resp.~$\sr L_{\kappa_1}^{\kappa_2}(P,Q;\theta_1)$) will be denoted simply by ${\sr L}_{\ka_1}^{\ka_2}(Q)$ (resp.~$\sr L_{\kappa_1}^{\kappa_2}(Q;\theta_1)$).
\end{defn}

Notice that $(0,1)\in \C\times \Ss^1$ corresponds to the identity element in the group structure of $UT\C$. It will be proved in \S\ref{S:diffuse} that $\sr L_{\kappa_1}^{\kappa_2}(P,Q;\theta_1)$ is never empty if $\ka_1\ka_2<0$, but may be empty if $\ka_1\ka_2\geq 0$, depending on the value of $\theta_1$.

The next two results allow us to reparametrize a family of curves to better suit our needs.

\begin{lem}\label{L:reparametrize}
	Let $\sr M=\sr L_{\kappa_1}^{\kappa_2}(P,Q)$ or $\sr M=\sr C_{\kappa_1}^{\kappa_2}(P,Q)$. Let $A$ be a topological space and $A\to \sr M$, $a\mapsto \ga_a$, be a continuous map.  Then there exists a homotopy $\ga_a^r\colon [0,1] \to \sr M$, $r\in [0,1]$, such that for any $a\in A$:
	\begin{enumerate}
		\item [(i)] $\ga_a^0=\ga_a$ and $\ga_a^1$ is parametrized so that $\vert\dot{\ga}_a^1(t)\vert$ is independent of $t$.
		\item [(ii)]  $\ga_a^r$ is an orientation-preserving reparametrization of $\ga_a$, for all $r\in [0,1]$. 
	\end{enumerate} 
\end{lem}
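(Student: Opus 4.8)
The plan is to construct the reparametrization homotopy explicitly by linearly interpolating between the given parametrization and the constant-speed one, and to check that this interpolation stays inside $\sr M$ and varies continuously with $(a,r)$. First I would fix notation: for each $a\in A$ let $L_a=\int_0^1\abs{\dot\ga_a(t)}\,dt$ be the length of $\ga_a$ (this is positive and depends continuously on $a$, since $a\mapsto\ga_a$ is continuous into $\sr M$), and let $s_a\colon[0,1]\to[0,1]$ be the normalized arc-length function $s_a(t)=\tfrac1{L_a}\int_0^t\abs{\dot\ga_a(\tau)}\,d\tau$. Each $s_a$ is a homeomorphism of $[0,1]$ fixing the endpoints; let $\phi_a=s_a^{-1}$. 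Then $\ga_a\circ\phi_a$ is the constant-speed reparametrization with $\abs{(\ga_a\circ\phi_a)\dot{}(t)}\equiv L_a$.

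Next I would define the interpolating family of reparametrizations. For $r\in[0,1]$ set $\phi_a^r\colon[0,1]\to[0,1]$ by $\phi_a^r(t)=(1-r)\,t+r\,\phi_a(t)$, and put $\ga_a^r=\ga_a\circ\phi_a^r$. Since $\phi_a^r$ is a convex combination of two increasing homeomorphisms of $[0,1]$ fixing $0$ and $1$, it is again an increasing homeomorphism fixing $0$ and $1$ — note $\dot\phi_a^r=(1-r)+r\dot\phi_a>0$ wherever $\dot\phi_a$ exists, and $\phi_a^r$ is Lipschitz — so $\ga_a^r$ is an orientation-preserving reparametrization of $\ga_a$; this gives (ii), and $\ga_a^0=\ga_a$, $\ga_a^1=\ga_a\circ\phi_a$ gives (i). Because reparametrization leaves the unit tangent, the frame endpoints, the total turning, and the pointwise curvature values unchanged (as recorded after \eqref{E:curvature}, the curvature at a point is unaltered by an orientation-preserving reparametrization), each $\ga_a^r$ lies in the same space $\sr M$ as $\ga_a$: if $\sr M=\sr C_{\kappa_1}^{\kappa_2}(P,Q)$ one checks $\ga_a^r$ is still $C^r$ with curvature in $(\ka_1,\ka_2)$, and if $\sr M=\sr L_{\kappa_1}^{\kappa_2}(P,Q)$ one checks the associated pair $(\hat\sig,\hat\ka)$ still lies in $\E$ — here $\hat\ka$ is just precomposed with $\phi_a^r$, and $\hat\sig$ is transformed by the usual chain-rule factor, both of which preserve membership in $L^2[0,1]$ since $\dot\phi_a^r$ and its reciprocal are bounded.

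Finally I would verify joint continuity of $(a,r)\mapsto\ga_a^r\in\sr M$. The map $a\mapsto\abs{\dot\ga_a}\in L^2[0,1]$ (or $\in C^{r-1}[0,1]$ in the smooth case) is continuous, hence so are $a\mapsto L_a$, $a\mapsto s_a$, and $a\mapsto\phi_a$ in the sup-norm (inverting a family of monotone homeomorphisms that converges uniformly yields uniform convergence of the inverses), and therefore $(a,r)\mapsto\phi_a^r$ is continuous; composing with the continuous evaluation/solution maps that define $\sr M$ then gives continuity of $(a,r)\mapsto\ga_a^r$. The main obstacle I anticipate is purely technical and lives in this last step for the $\sr L$-case: one must confirm that precomposition by a continuously varying family of bi-Lipschitz homeomorphisms of $[0,1]$ induces a continuous operation on the $L^2$-data $(\hat\sig,\hat\ka)$ — i.e. that the change-of-variables formula for $\hat\sig$ and the precomposition of $\hat\ka$ behave continuously in the $L^2$ topology as $(a,r)$ varies — which requires a short argument approximating $\hat\ka$ by continuous functions (using \cref{L:dense} or \cref{L:smoothie}) and controlling the error uniformly in $(a,r)$. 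Everything else is routine bookkeeping with reparametrizations.
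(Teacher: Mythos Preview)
Your proposal is correct and follows essentially the same route as the paper: the paper also sets $\ga_a^r(t)=\ga_a\big((1-r)t+r\,\tau_a(L_a t)\big)$, where $\tau_a$ is the inverse of the arc-length function, i.e., exactly your $\phi_a$. The paper simply declares this to be the desired homotopy without spelling out the continuity verification you (rightly) flag as the only nontrivial point in the $\sr L$-case.
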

\begin{proof}
	Let $s_a(t)=\int_0^t\abs{\dot\ga_a(\tau)}\,d\tau$ be the arc-length parameter of $\ga_a$, $L_a$ its length and $\tau_a\colon [0,L_a]\to [0,1]$ the inverse function of $s_a$. Define $\ga_a^r\colon [0,1]\to M$ by:
	\begin{equation*}
\qquad		\ga_a^r(t)=\ga_a\big((1-r)t+r\tau_a(L_at)\big) \qquad (r,t\in [0,1],~a\in A).
	\end{equation*}
Then $\ga_a^r$ is the desired homotopy.
\end{proof}

\begin{cor}\label{C:reparametrize}
	Let $\sr M=\sr L_{\kappa_1}^{\kappa_2}(P,Q)$ or $\sr C_{\kappa_1}^{\kappa_2}(P,Q)$. Let $A$ be a topological space and $f\colon \Ss^0\times A \to \sr M$ a continuous map such that, for all $a\in A$, $f(1,a)$ is an orientation-preserving reparametrization of $f(-1,a)$. Then $f$ admits a continuous extension $F\colon A\times [-1,1]\to \sr M$ with the property that $f(r,a)$ is an orientation-preserving reparametrization of $f(-1,a)$ for all $r\in [-1,1]$, $a\in A$.\qed
\end{cor}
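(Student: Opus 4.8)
The plan is to reduce to Lemma \ref{L:reparametrize} by first collapsing, for each $a$, the reparametrizations $f(-1,a)$ and $f(1,a)$ to a common one, and then using the resulting homotopies to fill in the interval $[-1,1]$. Concretely, apply Lemma \ref{L:reparametrize} to the map $\Ss^0\times A\to\sr M$, $(\epsilon,a)\mapsto f(\epsilon,a)$: this produces a homotopy $f(\epsilon,a)^r$, $r\in[0,1]$, through orientation-preserving reparametrizations of $f(\epsilon,a)$, with $f(\epsilon,a)^0=f(\epsilon,a)$ and $f(\epsilon,a)^1$ parametrized with constant speed. Since $f(1,a)$ and $f(-1,a)$ have the same image traced in the same direction, their constant-speed reparametrizations coincide: $f(1,a)^1=f(-1,a)^1$ for every $a\in A$. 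This is the key point that lets the two ends be joined.

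Next I would define $F\colon A\times[-1,1]\to\sr M$ by splicing these two homotopies together, running $r$ from $0$ to $1$ on $[-1,0]$ using the $\epsilon=-1$ family and from $1$ back to $0$ on $[0,1]$ using the $\epsilon=1$ family. Explicitly, set
\begin{equation*}
	F(r,a)=\begin{cases}
		f(-1,a)^{\,r+1} & \text{if }r\in[-1,0],\\[2pt]
		f(1,a)^{\,1-r} & \text{if }r\in[0,1].
	\end{cases}
\end{equation*}
The two definitions agree at $r=0$ precisely because $f(-1,a)^1=f(1,a)^1$, so $F$ is well defined; continuity in $(r,a)$ follows from continuity of the homotopy in Lemma \ref{L:reparametrize} jointly in all variables. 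At the endpoints $F(-1,a)=f(-1,a)^0=f(-1,a)$ and $F(1,a)=f(1,a)^0=f(1,a)$, so $F$ extends $f$. Finally, for every fixed $a$ each $f(\epsilon,a)^r$ is an orientation-preserving reparametrization of $f(\epsilon,a)$, and since $f(1,a)$ is itself an orientation-preserving reparametrization of $f(-1,a)$ by hypothesis, transitivity of this relation shows that $F(r,a)$ is an orientation-preserving reparametrization of $f(-1,a)$ for all $r\in[-1,1]$.

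The only genuine subtlety is the matching condition $f(-1,a)^1=f(1,a)^1$, i.e.\ that two regular curves with the same oriented image admit the same constant-speed parametrization on $[0,1]$; this is immediate from the explicit formula in the proof of Lemma \ref{L:reparametrize}, since there $\ga_a^1(t)=\ga_a(\tau_a(L_a t))$ depends only on the oriented trace of $\ga_a$ (the arc-length reparametrization is intrinsic), and $f(1,a)$ and $f(-1,a)$ have identical oriented traces. Everything else is a routine continuity check, which I will not spell out.
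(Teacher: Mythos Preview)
Your argument is correct and is exactly the approach the paper has in mind: the corollary is marked with a \qed\ immediately after its statement, meaning it is left as a direct consequence of Lemma~\ref{L:reparametrize}, and the natural way to derive it is precisely what you wrote---homotope both $f(-1,a)$ and $f(1,a)$ to their common constant-speed reparametrization and concatenate the two homotopies. Your verification of the matching condition $f(-1,a)^1=f(1,a)^1$ via the explicit formula in the proof of Lemma~\ref{L:reparametrize} is the right justification.
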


It is assumed in \lref{L:reparametrize} and \cref{C:reparametrize} that the parametrizations of all curves have domain $[0,1]$, but it is clearly possible to have these intervals depend (continuously) on $a$. A typical application is to reparametrize all curves in a family by arc-length, not just proportionally to arc-length as in \lref{L:reparametrize}.

\subsection*{Spaces of curves with curvature in a closed interval}

\begin{defn}\label{D:main}
Let $P,Q\in UT\C$ and $-\infty< \ka_1<\ka_2< +\infty$. Define $\hat{\sr L}_{\ka_1}^{\ka_2}(P,Q)$ to be the set of all $C^1$ regular plane curves $\ga\colon [0,1]\to \C$ satisfying:
\begin{enumerate}
	\item [(i)] $\Phi_\ga(0)=P$ and $\Phi_\ga(1)=Q$;
	\item [(ii)] $\ka_1\leq \frac{\theta(s_1)-\theta(s_2)}{s_1-s_2}\leq \ka_2$ for any $s_1\neq s_2\in [0,L]$. (Here the parameter is the arc-length of $\ga$, $L$ is its length and $\theta\colon [0,L]\to \R$ an argument of $\ta_\ga$.)
\end{enumerate}
Condition (ii) implies that $\theta$ is a Lipschitz function. In particular, it is absolutely continuous, and its derivative $\ka_\ga$ lies in $L^2$, since it is bounded. We give this set the topology induced by the following distance function $d$: Given $\ga,\eta\in \hat{\sr L}_{\kappa_1}^{\kappa_2}(P,Q)$, set
\begin{equation*}
	d(\ga,\eta)=\norm{\ga-\eta}_2+\norm{\dot\ga-\dot\eta}_2+\norm{\ka_\ga-\ka_\eta}_2.
\end{equation*}
For $P=(0,1)\in \C\times \Ss^1$, we will denote $\hat{\sr L}_{\ka_1}^{\ka_2}(P,Q)$ simply by $\hat{\sr L}_{\ka_1}^{\ka_2}(Q)$.
\end{defn}

\begin{urem}
	 This definition is essentially due to L.\,E.\,Dubins, who studied paths of minimal length, now called \tdef{Dubins paths}, in $\hat{\sr L}_{-\ka_0}^{+\ka_0}(P,Q)$ ($\ka_0>0$). Such shortest paths always exist, but may not be unique in some special cases (see Proposition 1 and the corollary to Theorem I of \cite{Dubins}). His main result states that any Dubins path is the concatenation of at most three pieces, each of which is either a line segment or an arc of circle of radius $\frac{1}{\ka_0}$ (see Theorem I of \cite{Dubins} for the precise statement). Dubins paths and variations thereof have many applications in engineering and are the subject of a vast literature. The space $\hat{\sr L}_{\kappa_1}^{\kappa_2}(P,Q)$ will play a minor role in our investigations. Its topology has been chosen to ensure that the following result holds.
\end{urem}	

\begin{lem}\label{L:inclusions}
	Let $(\ka_1,\ka_2)\subs [\bar \ka_1,\bar \ka_2]\subs (\bar{\bar \ka}_1,\bar{\bar \ka}_2)$ and $P,Q\in UT\C$. Then the set inclusions $\sr C_{\kappa_1}^{\kappa_2}(P,Q) \to \hat{\sr L}_{\bar \kappa_1}^{\bar \kappa_2}(P,Q)$  and $\hat{\sr L}_{\bar \kappa_1}^{\bar \kappa_2}(P,Q)\to \sr L_{\bar{\bar{\kappa}}_1}^{\bar{\bar{\kappa}}_2}(P,Q)$ are continuous injections.
\end{lem}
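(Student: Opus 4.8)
The plan is to prove the two claimed inclusions are continuous and injective by tracking how the three topologies — the $C^r$ topology on $\sr C_{\ka_1}^{\ka_2}$, the metric $d$ on $\hat{\sr L}$, and the Hilbert ($L^2\times L^2$) topology on $\sr L$ — compare. Injectivity is the easy half: in all three spaces a curve is determined by its underlying map $[0,1]\to\C$ (for $\hat{\sr L}$ this is by definition a set of regular curves; for $\sr L$ it follows from \lref{L:C^2} and \lref{L:CtoLinclusion}, where it is already observed that the actual curve corresponding to $j(\ga)$ is $\ga$ itself), so both maps are genuine set inclusions between sets of curves and are therefore injective. It remains to check continuity of each.

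For the inclusion $\sr C_{\ka_1}^{\ka_2}(P,Q)\to\hat{\sr L}_{\bar\ka_1}^{\bar\ka_2}(P,Q)$, I would first note that it is well-defined: if $\ga$ is $C^r$ ($r\ge 2$) with curvature in $(\ka_1,\ka_2)\subs[\bar\ka_1,\bar\ka_2]$, then the mean-value theorem applied to the argument $\theta$ of $\ta_\ga$ (which is $C^1$ in arc-length with $\theta'=\ka_\ga$) gives condition (ii) of \dref{D:main}, and condition (i) is immediate. For continuity, suppose $\eta\to\ga$ in $C^r$. Then $\eta\to\ga$ uniformly, so $\norm{\eta-\ga}_2\to 0$; $\dot\eta\to\dot\ga$ uniformly, so $\norm{\dot\eta-\dot\ga}_2\to 0$; and $\ka_\eta\to\ka_\ga$ uniformly (since curvature is a continuous function of the $2$-jet of the curve by \eqref{E:curvature}, and $\abs{\dot\ga}$ is bounded away from $0$), so $\norm{\ka_\eta-\ka_\ga}_2\to 0$. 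Hence $d(\eta,\ga)\to 0$. This is essentially the same argument as the one at the end of the proof of \lref{L:CtoLinclusion}, now applied in the $d$-metric rather than in $L^2\times L^2$.

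For the inclusion $\hat{\sr L}_{\bar\ka_1}^{\bar\ka_2}(P,Q)\to\sr L_{\bar{\bar\ka}_1}^{\bar{\bar\ka}_2}(P,Q)$, well-definedness is the point where a little care is needed, and I expect this to be the main obstacle. An element $\ga\in\hat{\sr L}_{\bar\ka_1}^{\bar\ka_2}(P,Q)$ is a priori only $C^1$, and it is parametrized on $[0,1]$, not by arc-length; to land it in $\sr L$ I must exhibit a pair $(\hat\sig,\hat\ka)\in\E$ solving the initial value problem \eqref{E:ivp} with underlying curve $\ga$. The cleanest route is to first reparametrize $\ga$ by a constant multiple of arc-length (using \cref{C:reparametrize}, or rather its pointwise version — this does not change membership in either space and depends continuously on $\ga$), so that $\sig\equiv L$ is constant; then $\ka_\ga=\theta'/L$ is in $L^2$ by the remark following \dref{D:main} (it is bounded, as $\theta$ is Lipschitz with constant $\le\max(\abs{\bar\ka_1},\abs{\bar\ka_2})\cdot L$), and since $[\bar\ka_1,\bar\ka_2]\subs(\bar{\bar\ka}_1,\bar{\bar\ka}_2)$ the function $\hat\ka=h_{\bar{\bar\ka}_1,\bar{\bar\ka}_2}\circ\ka_\ga$ is bounded, hence in $L^2$, and likewise $\hat\sig=h\circ L$ is constant. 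Uniqueness of solutions to \eqref{E:ivp} then identifies this data with $\ga$ itself, as in \lref{L:CtoLinclusion}. Continuity is then straightforward: if $d(\eta,\ga)\to 0$ then in particular $\norm{\ka_\eta-\ka_\ga}_2\to 0$ and $\norm{\dot\eta-\dot\ga}_2\to 0$; since $\abs{\dot\ga}$ and $\abs{\dot\eta}$ are bounded (by $\max$ of the lengths, which vary continuously), $\norm{\,\abs{\dot\eta}-\abs{\dot\ga}\,}_2\to 0$, and composing with the smooth, locally Lipschitz functions $h$ and $h_{\bar{\bar\ka}_1,\bar{\bar\ka}_2}$ on the relevant compact range preserves $L^2$-convergence; thus $(\hat\sig_\eta,\hat\ka_\eta)\to(\hat\sig_\ga,\hat\ka_\ga)$ in $\E$.

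The one subtlety worth flagging explicitly is the reparametrization step: strictly, the $d$-metric on $\hat{\sr L}$ and the identification of $\sr L$ with $\E$ both depend on a choice of parametrization, and one must observe that the inclusion is compatible with reparametrizing both sides by (a constant multiple of) arc-length, which is a continuous operation in each topology by \lref{L:reparametrize} and \cref{C:reparametrize}. Alternatively — and this is perhaps cleaner for the write-up — one avoids reparametrization entirely by checking directly that for $\ga$ parametrized on $[0,1]$ with $\abs{\dot\ga}$ merely continuous and positive and $\ka_\ga$ bounded, the pair $(h\circ\abs{\dot\ga},\,h_{\bar{\bar\ka}_1,\bar{\bar\ka}_2}\circ\ka_\ga)$ already lies in $\E$ and solves \eqref{E:ivp}; here $h\circ\abs{\dot\ga}$ is continuous hence in $L^2$ without any boundedness-away-from-zero hypothesis beyond positivity on the compact interval $[0,1]$, which regularity guarantees. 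Either way, the proof reduces to the elementary observation that postcomposition with the fixed smooth diffeomorphisms $h$, $h_{\bar{\bar\ka}_1,\bar{\bar\ka}_2}$ is continuous from uniformly-bounded $L^2$-convergent sequences to $L^2$-convergent sequences, combined with uniqueness for \eqref{E:ivp}.
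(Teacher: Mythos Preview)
Your proposal is essentially correct and is precisely the kind of direct verification the paper has in mind: the paper's own ``proof'' reads, in full, ``The proof is a straightforward verification, which will be left to the reader.'' Your write-up supplies exactly those details --- well-definedness via the mean-value theorem and the boundedness of $\ka$ in $[\bar\ka_1,\bar\ka_2]\subs(\bar{\bar\ka}_1,\bar{\bar\ka}_2)$, and continuity by tracking $L^2$-convergence through the smooth diffeomorphisms $h$ and $h_{\bar{\bar\ka}_1,\bar{\bar\ka}_2}$ --- so there is nothing substantive to compare.

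One small technical point worth tightening: in your continuity argument for $\hat{\sr L}_{\bar\ka_1}^{\bar\ka_2}\to\sr L_{\bar{\bar\ka}_1}^{\bar{\bar\ka}_2}$, the claim that ``$\abs{\dot\ga}$ and $\abs{\dot\eta}$ are bounded (by max of the lengths)'' is not quite right --- the length controls the $L^1$ norm of the speed, not its $L^\infty$ norm, and more importantly $d$-convergence gives only $L^2$-convergence of $\dot\eta$, which does not by itself supply a uniform positive lower bound on $\abs{\dot\eta_n}$ needed to control $1/\abs{\dot\eta_n}$ in $h(\abs{\dot\eta_n})=\abs{\dot\eta_n}-1/\abs{\dot\eta_n}$. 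Your reparametrization route (pass to constant-speed parametrizations first, so that $\sig\equiv L$ and continuity of $\hat\sig$ reduces to continuity of the length) sidesteps this cleanly and is the safer argument to present.
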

\begin{proof}
	The proof is a straightforward verification, which will be left to the reader.
\end{proof}


\section{Normal translation}\label{S:normal}The \tdef{radius of curvature} $\rho$ of an admissible curve $\ga$ is given by $\rho=\frac{1}{\ka}$; when $\ka(t)=0$, it is to be understood that $\rho(t)=\infty$ (unsigned infinity). An analogue of the following construction has already appeared in \cite{SalZueh}. It can be used to uniformly shift the radii of curvature of a family of curves.

\begin{defn}
Let $\ga\colon [0,1]\to \C$ be admissible and $u\in \R$. The \tdef{normal translation} $\ga_u$ of $\ga$ by $u$ is the curve given by
\begin{equation*}
	\ga_u(t)=\ga(t)+u\no(t)\quad (t\in [0,1]).
\end{equation*}
\end{defn}

Observe that the normal translation $\al_u$ of a circle $\al$ of radius of curvature $\rho\in \R\ssm \se{0}$ is a circle of radius of curvature $\rho-u$ for any $u$ in the component of $\R\ssm \se{\rho}$ containing $0$ (see Figure \ref{F:translation}). The following lemma generalizes this to arbitrary curves.


\begin{lem}\label{L:normal}
	Let $\ga\in \sr L_{\kappa_1}^{\kappa_2}(P,Q)$ be parametrized proportionally to arc-length and let $\ta,\ka,\rho$ denote its unit tangent, curvature and radius of curvature, respectively. Suppose  $u\in \R$ satisfies $1-uk>0$ for all $k\in (\ka_1,\ka_2)$ and set
	\begin{equation}\label{E:barka}
		\bar\ka_i=\frac{\ka_i}{1-u\ka_i}\qquad (i=1,2).
	\end{equation}
	Then the normal translation $\ga_u$ of $\ga$ by $u$ has the following properties: 
	\begin{enumerate}
		\item [(a)] $\ga_u\in \sr L_{\bar\ka_1}^{\bar\ka_2}(\bar P,\bar Q)$ for $\bar P=(p+iuw,w)$, $\bar Q=(q+iuz,z)$ and its unit tangent $\bar\ta$ satisfies $\bar \ta(t)=\ta(t)$ for each $t\in [0,1]$. In particular, $\ga$ and $\ga_u$ have the same total turning.
		\item [(b)]  $(\ga_u)_{-u}=\ga$.
		\item [(c)] If $\eta$ is a reparametrization of $\ga$, then $\eta_u$ is a reparametrization of $\ga_u$.
		\item [(d)]  For almost every $t\in [0,1]$, the curvature $\bar\ka$ of $\ga_u$ is given by:
		\begin{equation*}
			\bar\ka(t)=\frac{\ka(t)}{1-u\ka(t)}
		\end{equation*}
		and its radius of curvature $\bar\rho$ by:
		\begin{equation*}
			\bar\rho(t)=\rho(t)-u.
		\end{equation*}
	\end{enumerate}
\end{lem}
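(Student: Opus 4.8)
The plan is to compute directly with the defining differential equations for admissible curves, exploiting the fact that normal translation is, up to reparametrization, a purely pointwise operation on the frame.

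First I would set up notation: write $\ga$ in terms of its frame $\Phi_\ga$ with $\dot\Phi_\ga = \Phi_\ga\La_\ga$, where $\La_\ga = \sig\bigl(\begin{smallmatrix}0&-\ka&1\\\ka&0&0\\0&0&0\end{smallmatrix}\bigr)$ and $\sig$ is constant (parametrization proportional to arc-length). Since $\no = i\ta$, the curve $\ga_u = \ga + u\no$ has derivative $\dot\ga_u = \dot\ga + u\dot\no = \sig\ta + u(-\sig\ka\ta) = \sig(1-u\ka)\ta$ by \eqref{E:together}. The hypothesis $1-uk>0$ for all $k\in(\ka_1,\ka_2)$ guarantees $\sig(1-u\ka)>0$ a.e., so $\ga_u$ is a regular curve with the same unit tangent $\bar\ta = \ta$ (hence the same normal, the same argument $\theta$, and the same total turning). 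This proves most of part (a) and sets up (d): differentiating $\bar\ta = \ta$ gives $\dot{\bar\ta} = \dot\ta = \sig\ka\no$, while by definition of curvature $\dot{\bar\ta} = \bar\sig\bar\ka\bar\no = \bar\sig\bar\ka\no$ with $\bar\sig = \abs{\dot\ga_u} = \sig(1-u\ka)$; equating gives $\bar\ka = \ka/(1-u\ka)$ a.e., and then $\bar\rho = 1/\bar\ka = (1-u\ka)/\ka = \rho - u$. The endpoint conditions follow by evaluating $\ga_u(0) = p + u\no(0) = p + uiw$ and $\ga_u(1) = q + uiz$, and since $\bar\ta(0)=w$, $\bar\ta(1)=z$ we get $\Phi_{\ga_u}(0) = \bar P$, $\Phi_{\ga_u}(1)=\bar Q$.

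Next I would check that $\ga_u$ is genuinely $(\bar\ka_1,\bar\ka_2)$-admissible, not merely a regular curve with curvature in the right range: one must verify that $\bar\sig = h^{-1}\circ\hat{\bar\sig}$ and $\bar\ka = h_{\bar\ka_1,\bar\ka_2}^{-1}\circ\hat{\bar\ka}$ for some $(\hat{\bar\sig},\hat{\bar\ka})\in\E$. Since $t\mapsto t/(1-ut)$ is a smooth diffeomorphism of $(\ka_1,\ka_2)$ onto $(\bar\ka_1,\bar\ka_2)$ (here one checks that it really carries the open interval onto the claimed open interval, using \eqref{E:barka} and the sign hypothesis), and $\bar\sig = (1-u\ka)\sig$ stays in $(0,\infty)$ with $1-u\ka$ bounded above and below away from $0$, the reparametrization invariance (\cref{L:reparametrize}, or rather its pointwise consequence) together with \rref{R:preservesL2ness}-type reasoning shows $\hat{\bar\ka},\hat{\bar\sig}\in L^2$. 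Part (b) is the observation that $\bar\ka_i$ solves back to $\ka_i$ under the inverse substitution $k\mapsto k/(1+uk)$ — a one-line algebraic identity — combined with $(\ga_u)_{-u} = \ga + u\no + (-u)\bar\no = \ga$ since $\bar\no = \no$. Part (c) follows because normal translation commutes with reparametrization: $(\ga\circ\phi)(t) + u\no_{\ga\circ\phi}(t) = \ga(\phi(t)) + u\no_\ga(\phi(t)) = \ga_u(\phi(t))$, using that the unit normal is reparametrization-invariant (for orientation-preserving $\phi$).

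The main obstacle I anticipate is bookkeeping rather than conceptual: (i) confirming that $\ga_u$ lands in $\sr L$ — i.e. that the new speed and curvature arise from genuine $L^2$ data after composing with the appropriate $h$-diffeomorphisms — which forces one to track how the substitution $k\mapsto k/(1-uk)$ and the factor $(1-u\ka)$ interact with $L^2$-membership, much as in \rref{R:preservesL2ness}; and (ii) the fact that $\ga_u$ as literally written may not be parametrized proportionally to arc-length (its speed $\sig(1-u\ka)$ is non-constant), so strictly speaking one should either reparametrize or, better, simply note that admissibility and all the claimed properties are invariant under orientation-preserving reparametrization, so it is harmless to state them for $\ga_u$ as defined. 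Everything else is a direct application of \eqref{E:together} and the definition of curvature.
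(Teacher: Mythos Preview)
Your computation of $\dot\ga_u$, $\bar\ta$, $\bar\ka$ and the endpoint frames is correct and essentially the same as the paper's (the paper packages it as a matrix ODE for the frame, but the content is identical). Parts (b) and (c) are fine.

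The genuine gap is in your admissibility check. You write that ``$1-u\ka$ stays in $(0,\infty)$ with $1-u\ka$ bounded above and below away from $0$'', but this is false in general. The hypothesis is only that $1-uk>0$ for all $k$ in the \emph{open} interval $(\ka_1,\ka_2)$; nothing prevents $u$ from equalling $\rho_2=1/\ka_2$ (with $\ka_2$ finite), in which case $1-u\ka(t)\to 0$ whenever $\ka(t)\to\ka_2$. In that boundary situation $\bar\ka_2=+\infty$, the new speed $\bar\sig=\sig(1-u\ka)$ can approach $0$, and neither $h\circ\bar\sig$ nor $h_{\bar\ka_1,\bar\ka_2}\circ\bar\ka$ is bounded. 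So the ``$\rref{R:preservesL2ness}$-type reasoning'' you invoke does not apply directly: you actually need to show that $(1-u\ka)^{-1}\in L^2$ and that $h_{\bar\ka_1,\bar\ka_2}\circ\bar\ka\in L^2$ using the specific information that $h_{\ka_1,\ka_2}\circ\ka\in L^2$, i.e., that $(\ka_2-\ka)^{-1}\in L^2$. The paper spends most of its proof on exactly this point, with a case analysis according to whether $u$ lies in the interior or on the boundary of the admissible interval and whether $\ka_1,\ka_2$ are finite or infinite. This boundary case is not a technicality you can wave away: it is precisely the case used in \tref{T:normalized}\?(b)--(e), where one translates by $u=\rho_2$ (or $\rho_1$) to send one curvature bound to infinity.
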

	In eq.~\eqref{E:barka} above, it should be understood that $\bar\ka_i=-\frac{1}{u}$ if $\ka_i$ is infinite and that $\bar\ka_i=\pm \infty$ has the same sign as $\ka_i$ if $1-u\ka_i=0$.
	
\begin{figure}[ht]
	\begin{center}
		\includegraphics[scale=.40]{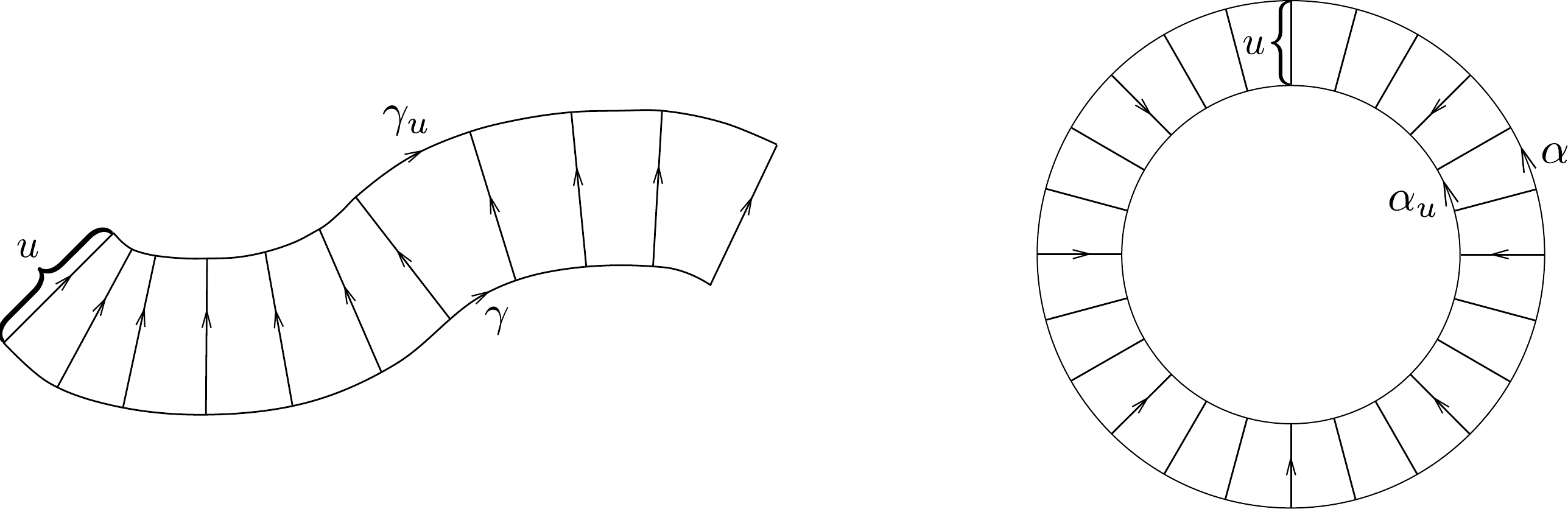}
		\caption{The normal translation of a general curve $\ga$ and of a circle $\al$.}
		\label{F:translation}
	\end{center}
\end{figure}
	\begin{proof}
	Let $\theta_\ga\colon [0,1]\to \R$ be an argument of $\ta=\ta_\ga$ and define  $\Psi\colon [0,1]\to \GL$ by 
	\begin{equation}\label{E:third}
		\Psi=\begin{pmatrix}
		\cos\theta_\ga & -\sin\theta_\ga & \ga_1-u\sin\theta_\ga \\
		\sin\theta_\ga & \cos\theta_\ga & \ga_2+u\cos\theta_\ga \\
		0 & 0 & 1
		\end{pmatrix}.
	\end{equation}
	Let $L$ be the length of $\ga$. Since $\ga$ is parametrized proportionally to arc-length,  a straightforward calculation shows that $\Psi$ satisfies $\dot\Psi=\Psi\La$, for 
	\begin{equation}\label{E:lambdao}
		\La\colon [0,1]\to \mathfrak{a}\subs \gl,\quad
		\La=L\begin{pmatrix}
		0 & -\ka & 1-u\ka \\
		\ka & 0 & 0 \\
		0 & 0 & 0
		\end{pmatrix}.
	\end{equation}
	By hypothesis, the image of $\La$ is contained in the half-plane $\mathfrak{h}$ of \eqref{E:hp}. Comparing the third column of \eqref{E:third} with the definition of $\ga_u$, we deduce that $\Psi$ is the frame of $\ga_u$. Further, looking at the first and second columns we deduce that $\bar\ta=\ta$ and $\bar\no=\no$. That $\Phi_{\ga_u}(0)=\bar P$ and $\Phi_{\ga_u}(1)=\bar Q$ then follows immediately from the definition. This establishes (a) except for the fact that $\ga_u$ is $(\bar\ka_1,\bar\ka_2)$-admissible, which will be proved below. 
	
	Part (b) is an easy verification:
		\begin{equation*}
			(\ga_u)_{-u}=\ga_u-u\bar\no=(\ga+u\no)-u\no=\ga.
		\end{equation*}
	 
	 Part (c) is obvious. 
	 
	 We know that the curvature $\bar\ka$ of $\ga_u$ is given by the quotient of $\La^{(2,1)}$ by $\La^{(1,3)}$, that is, 
	\begin{equation*}
		\bar\ka=\frac{\ka}{1-u\ka}=\frac{1}{\rho-u}=\frac{1}{\bar\rho}.
	\end{equation*}
	This proves (d). 

	It is straightforward to check that $u\in \R$ satisfies $1-uk>0$ for all $k\in (\ka_1,\ka_2)$ if and only if $u$ lies in the maximal closed interval $J$ containing $0$ and not containing any number of the form $\frac{1}{k}$ for $k\in (\ka_1,\ka_2)$. More explicitly:
	\begin{enumerate}
		\item [(i)] If $0\leq \ka_1<\ka_2$ then $J=(-\infty,\rho_2]$;
		\item [(ii)]  If $\ka_1<0<\ka_2$ then $J=[\rho_1,\rho_2]$;
		\item [(iii)] If $\ka_1<\ka_2\leq 0$ then $J=[\rho_1,+\infty)$.
	\end{enumerate}
	By \eqref{E:lambdao}, $\abs{\dot\ga_u}=L(1-u\ka)$. To establish that $\ga_u$ is $(\bar\ka_1,\bar\ka_2)$-admissible, it suffices to prove that:
	\begin{alignat}{10}\label{E:needs}
		&h_{0,+\infty}\circ (1-u\ka)=(1-u\ka)-(1-u\ka)^{-1}\in L^2[0,1]\qquad\text{and}\\\label{E:needs2}
		&h_{\bar\ka_1,\bar\ka_2}\circ \bar\ka\in L^2[0,1]. 
	\end{alignat}
	By \rref{R:preservesL2ness}, $\ka\in L^2[0,1]$, hence so does $(1-u\ka)$. Moreover, $(1-u\ka)^{-1}$ is bounded unless $u$ is one of the endpoints of $J$, but we claim that even in this case $(1-u\ka)^{-1}\in L^2[0,1]$. Suppose for concreteness that $u=\rho_2\in \bd J$. If $\ka_2=+\infty$ ($\rho_2=0$) then there is nothing to prove, and otherwise
	\begin{equation}\label{E:lies}
		(1-u\ka)^{-1}=(1-\rho_2\ka)^{-1}=\ka_2(\ka_2-\ka)^{-1}.
	\end{equation}
	Now by hypothesis $\ga\in \sr L_{\kappa_1}^{\kappa_2}(P,Q)$, therefore
	\begin{equation*}
		h_{\ka_1,\ka_2}\circ \ka=(\ka_1-\ka)^{-1}+(\ka_2-\ka)^{-1}\in L^2[0,1].
	\end{equation*}
	This implies that both
	\begin{equation}\label{E:integ}
		(\ka_1-\ka)^{-1}\in L^2[0,1]\ \ \text{and}\ \ (\ka_2-\ka)^{-1}\in L^2[0,1],
	\end{equation}  since as one of them increases in absolute value, the other one decreases. Consequently, \eqref{E:lies} lies in $L^2[0,1]$ and \eqref{E:needs} follows from Minkowski's inequality.

	The proof of \eqref{E:needs2} involves the tedious consideration of several cases, because it depends on which of the four $h$ functions defined on p.~\pageref{E:hfunctions} is used, both for $(\ka_1,\ka_2)$ and $(\bar\ka_1,\bar\ka_2)$. Assume first that $\ka_1<\ka_2$ are both finite. If $u\nin \bd J$, then $\bar\ka_1,\bar\ka_2$ are also finite, so
	\begin{alignat*}{9} 
		h_{\bar\ka_1,\bar\ka_2}\circ \bar\ka=(1-u\ka)(1-u\ka_1)(\ka_1-\ka)^{-1}+(1-u\ka)(1-u\ka_2)(\ka_2-\ka)^{-1}.
	\end{alignat*}
	Since $\ka\in (\ka_1,\ka_2)$ is bounded, this is a sum of two functions in $L^2[0,1]$ by \eqref{E:integ}, hence it lies in $L^2[0,1]$. If $u$ is an endpoint $\rho_i$ of $J$ then $\bar\ka_i$ is infinite. For instance, if $u=\rho_2$ then
	\begin{alignat*}{9} 
		h_{\bar\ka_1,\bar\ka_2}\circ \bar\ka=h_{\bar\ka_1,+\infty}\circ \bar\ka=(1-\rho_2\ka)(1-\rho_2\ka_1)(\ka_1-\ka)^{-1}+\ka_2\ka(\ka_2-\ka)^{-1}.
	\end{alignat*}
	Because $\ka$ is bounded, we conclude from \eqref{E:integ} that \eqref{E:needs2} holds in this case also.
	
	If one of the $\ka_i$, say $\ka_2$, is infinite, then the hypothesis that $\ga$ is admissible implies that
	\begin{equation*}
		h_{\ka_1,+\infty}\circ \ka=(\ka_1-\ka)^{-1}+\ka\in L^2[0,1].
	\end{equation*}
	As above, it follows that each of the summands lies in $L^2[0,1]$. If $u\neq \rho_1$ then $\bar\ka_1=\frac{\ka_1}{1-u\ka_1}$, $\bar\ka_2=-\frac{1}{u}$ are both finite, and
	\begin{equation*}
		h_{\bar\ka_1,\bar\ka_2}\circ \bar\ka=(1-u\ka_1)(1-u\ka)(\ka_1-\ka)^{-1}-u(1-u\ka).
	\end{equation*}
	Observe that $(1-u\ka)(\ka_1-\ka)^{-1}\in L^2[0,1]$ because as $\ka$ increases to $+\infty$, $(\ka_1-\ka)^{-1}$ remains bounded, while as $\ka\to \ka_1$, obviously $(1-u\ka)$ remains bounded. Thus, \eqref{E:needs2} holds. We leave the similar verification in the remaining cases to the reader.
\end{proof}

\begin{rem}\label{R:badspeed}
	The necessity of reparametrizing an admissible curve by arc-length before applying normal translation stems from the fact that the product of two $L^2$ functions need not be of class $L^2$: For a general parametrization the speed of $\ga_u$ is given by $\sig(1-u\ka)$, where $\sig$, $\ka$ are the speed and curvature of $\ga$. Hence, $\ga_u$ need not be admissible. This has no serious consequences because of \lref{L:reparametrize}.
\end{rem}

The next result greatly simplifies the study of the spaces $\sr L_{\kappa_1}^{\kappa_2}(P,Q)$. In all that follows the notation $X\home Y$ means that $X$ is homeomorphic to $Y$.

\begin{thm}\label{T:normalized}
	Let $P=(p,w),~Q=(q,z)\in \C\times \Ss^1$, $-\infty\leq \ka_1<\ka_2\leq +\infty$ and $\rho_i=\frac{1}{\ka_i}$.
	\begin{enumerate}
		\item [(a)] Suppose $\ka_1<0<\ka_2$. If at least one of $\ka_1,\ka_2$ is finite, then $\sr L_{\ka_1}^{\ka_2}(P,Q)\home \sr L_{-1}^{+1}(Q_1)$ for
		\begin{alignat*}{9}
			Q_1&=\Big(\tfrac{2}{\rho_2-\rho_1}\bar w\big[(q-p)+\tfrac{i}{2}(\rho_1+\rho_2)(z-w)\big]\,,\,z\bar w \Big).
		\end{alignat*}
		\item [(b)] Suppose $0<\ka_1<\ka_2$. Then $\sr L_{\kappa_1}^{\kappa_2}(P,Q)\home \sr L_{1}^{+\infty}(Q_2)$ for 
		\begin{equation*}
			Q_2=\Big(\tfrac{\bar w}{\rho_1-\rho_2}\big[(q-p)+i\rho_2(z-w)\big]\,,\,z\bar w \Big).
		\end{equation*}
		\item [(c)] Suppose $0=\ka_1<\ka_2$. Then $\sr L_{\kappa_1}^{\kappa_2}(P,Q)\home \sr L_{0}^{+\infty}(Q_3)$ for 
		\begin{equation*}
			Q_3=\big(\?\bar w\?[(q-p)+i\rho_2(z-w)]\,,\,z\bar w \?\big).
		\end{equation*}
		\item [(d)] Suppose $\ka_1<\ka_2<0$. Then $\sr L_{\kappa_1}^{\kappa_2}(P,Q)\home \sr L_{1}^{+\infty}(Q_4)$ for 
		\begin{equation*}
			Q_4=\Big(\tfrac{\bar z}{\rho_1-\rho_2}\big[(q-p)+i\rho_1(z-w)\big]\,,\,w \bar z \Big).
		\end{equation*}
		\item [(e)] Suppose $\ka_1<\ka_2=0$. Then $\sr L_{\kappa_1}^{\kappa_2}(P,Q)\home \sr L_{0}^{+\infty}(Q_5)$ for 
		\begin{equation*}
			Q_5=\big(\?\bar z\?[(q-p)+i\rho_1(z-w)]\,,\,w\bar z \?\big).
		\end{equation*}
	\end{enumerate}
	In cases \tup(a\tup)--\tup(c\tup) \tup(resp.~\tup(d\tup)--\tup(e\tup)\tup), the total turning of the image of a curve under the homeomorphism is equal \tup(resp.~opposite\tup) to that of the original curve.
\end{thm}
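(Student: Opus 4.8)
The plan is to reduce all five cases to a single mechanism: compose a Euclidean motion, a dilatation, and a normal translation, and check that the resulting diffeomorphism of $UT\C$ carries $\sr L_{\ka_1}^{\ka_2}(P,Q)$ onto the claimed normalized space, keeping careful track of total turning. First I would handle case (a), which is the archetype. Applying the left translation by $P^{-1}$ in the group $UT\C$ (an orientation-preserving isometry of $\C$) reduces to the space $\sr L_{\ka_1}^{\ka_2}(O,\bar w[(q-p)],z\bar w)$ — this step preserves curvature pointwise and hence total turning, and is a homeomorphism of spaces of admissible curves because it acts linearly on the frame equation \eqref{E:ivp}. Next I would apply the normal translation $\ga\mapsto \ga_u$ of \lref{L:normal} with $u=\tfrac12(\rho_1+\rho_2)$, which is the midpoint of the interval $J=[\rho_1,\rho_2]$ in case (ii) of that lemma and therefore satisfies $1-uk>0$ for all $k\in(\ka_1,\ka_2)$. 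By \eqref{E:barka} the new curvature bounds become $\bar\ka_i=\ka_i/(1-u\ka_i)$; the choice $u=\tfrac12(\rho_1+\rho_2)$ makes $\bar\rho_i=\rho_i-u=\pm\tfrac12(\rho_2-\rho_1)$, i.e.\ symmetric bounds $\bar\ka_2=-\bar\ka_1=2/(\rho_2-\rho_1)>0$. Part (a) of \lref{L:normal} tells us exactly how the endpoints move: the initial frame $O=(0,1)$ goes to $(iu\cdot 1,1)=(iu,1)$ and the terminal frame $(\bar w(q-p),z\bar w)$ goes to $(\bar w(q-p)+iu\,z\bar w,\,z\bar w)$; translating back by the new initial frame's inverse (another isometry) brings the initial frame to $O$ again and the terminal point to $\bar w(q-p)+iu(z\bar w)-iu = \bar w(q-p)+iu\,\bar w(z-w)$, i.e.\ $\bar w\big[(q-p)+\tfrac{i}{2}(\rho_1+\rho_2)(z-w)\big]$. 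Finally, a dilatation $\ga\mapsto \lambda\ga$ with $\lambda=2/(\rho_2-\rho_1)$ scales curvature by $1/\lambda$, sending the symmetric bounds $\pm 2/(\rho_2-\rho_1)$ to $\pm1$ and the terminal point to the displayed $Q_1$; since $\lambda>0$ this is orientation-preserving, so total turning is unchanged throughout. Composing these three homeomorphisms gives (a).

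For cases (b)–(c) the bounds are both nonnegative, so $J=(-\infty,\rho_2]$ and I would take $u=\rho_2$ (the right endpoint), which \lref{L:normal} explicitly allows; then $\bar\ka_2=-1/u=-\infty$ is replaced — wait, more precisely $\bar\rho_2=\rho_2-u=0$, so the upper curvature bound becomes $+\infty$, while $\bar\ka_1=\ka_1/(1-\rho_2\ka_1)$, which equals $0$ when $\ka_1=0$ (case (c)) and is a positive finite number when $\ka_1>0$ (case (b)). After the endpoint bookkeeping identical to case (a) — left translation, normal translation by $u=\rho_2$, left translation back — the terminal point becomes $\bar w[(q-p)+i\rho_2(z-w)]$; in case (b) an additional dilatation by $1/\bar\rho_1 = (\rho_1-\rho_2)^{-1}\cdot(\text{sign fix})$, namely $\lambda=1/(\rho_1-\rho_2)<0$... here I must be careful: $\rho_1>\rho_2>0$ in case (b), so $\bar\rho_1=\rho_1-\rho_2>0$ and the dilatation factor $1/(\rho_1-\rho_2)$ is positive, giving lower bound $1$ and the displayed $Q_2$. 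In case (c) no dilatation is needed since the bounds are already $(0,+\infty)$, giving $Q_3$. Cases (d)–(e) are the mirror images: both bounds nonpositive, $J=[\rho_1,+\infty)$, take $u=\rho_1$; the reflection that reverses orientation (which is why total turning flips sign and why $z,w$ swap roles in $Q_4,Q_5$) can be packaged as conjugating the whole picture by complex conjugation $\C\to\C$, $\zeta\mapsto\bar\zeta$, which sends curvature $\ka$ to $-\ka$ and hence interchanges the cases $\ka_1<\ka_2\le 0$ with $0\le\ka_1<\ka_2$; applying the already-proved (b)–(c) to the conjugated data and conjugating back yields (d)–(e), with the sign reversal of total turning recorded automatically.

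Two technical points need attention. First, \lref{L:normal} and \rref{R:badspeed} require the curve to be parametrized proportionally to arc-length before normal translation is applied, because otherwise $\ga_u$ need not be admissible; so the construction as described is not literally a continuous map on $\sr L_{\ka_1}^{\ka_2}(P,Q)$ until one precomposes with the reparametrization homotopy of \lref{L:reparametrize} and postcomposes with its inverse. I would phrase the homeomorphism as: reparametrize proportionally to arc-length (a homeomorphism onto the subspace of such curves by \lref{L:reparametrize}), apply the isometry–translation–dilatation composite, then reparametrize back. Each piece is a homeomorphism with explicit inverse (part (b) of \lref{L:normal} gives $(\ga_u)_{-u}=\ga$), and the composite is the desired map. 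Second — and this is the step I expect to be the genuine nuisance rather than a real obstacle — verifying that the composite does land inside the normalized admissible class requires the $L^2$-preservation arguments already carried out inside the proof of \lref{L:normal} (equations \eqref{E:needs}–\eqref{E:needs2}); I would simply cite them, noting that choosing $u$ to be an endpoint of $J$ is precisely the borderline case handled there. The main conceptual content is the algebra of tracking $(p,w,q,z)$ through the three moves; the displayed formulas for $Q_1,\dots,Q_5$ are then just the bookkeeping, and the total-turning claim follows because isometries, normal translation (\lref{L:normal}(a)), and positive dilatations preserve the unit tangent up to a fixed rotation while the conjugation in (d)–(e) reverses it.
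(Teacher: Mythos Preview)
Your overall strategy---Euclidean motion, normal translation, dilatation---is exactly the paper's, and your bookkeeping for the endpoint formulas in (a)--(c) is correct. But there is a genuine gap in how you pass from this construction to an actual homeomorphism.

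You write that you would ``reparametrize proportionally to arc-length (a homeomorphism onto the subspace of such curves by \lref{L:reparametrize})'' and then compose with the normal translation. Lemma~\ref{L:reparametrize} does not say this: it only produces a \emph{homotopy} from the identity to the reparametrization map, i.e.\ a deformation retraction onto the arc-length-parametrized subspace, which is a homotopy equivalence but not in general a homeomorphism. Consequently the composite you describe (reparametrize, normal-translate by $u$, then ``reparametrize back'') is not a homeomorphism with an explicit inverse; applying translation by $-u$ and reparametrizing again returns a \emph{reparametrization} of the original curve, not the curve itself. The paper handles this honestly: it observes that the map $F$ (reparametrize, then translate by $u$) and the analogous $G$ (translate by $-u$) satisfy $GF(\gamma)\simeq\gamma$ and $FG(\eta)\simeq\eta$, so the composite is only a \emph{homotopy equivalence}. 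The upgrade to a homeomorphism then comes from the infinite-dimensional fact \lref{L:Hilbert}(b), which you never invoke. Without that step your argument does not establish the $\simeq$ in the statement.

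A smaller point: for (d)--(e) you package the reduction as complex conjugation $\zeta\mapsto\bar\zeta$, whereas the paper uses orientation reversal $t\mapsto 1-t$. Both negate curvature and flip total turning, but they act differently on the frame data (conjugation sends $(P,Q)$ to $(\bar P,\bar Q)$; time-reversal sends it to $((q,-z),(p,-w))$). If you run your conjugation through the formula of (b) you will obtain a target $Q$ that does not match the displayed $Q_4$; the paper's orientation reversal is what produces the specific $\bar z$ and the $w\bar z$ in $Q_4,Q_5$. This is only a bookkeeping discrepancy, but since the theorem pins down $Q_4,Q_5$ explicitly you should use time-reversal, not conjugation.
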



\begin{proof} Suppose first that $\ka_1<0<\ka_2$ and let $k\in (\ka_1,\ka_2)$ be arbitrary. If $\rho_1+\rho_2\leq 0$, then 
	\begin{equation*}
		1-\big(\tfrac{\rho_1+\rho_2}{2}\big)k> 1-\big(\tfrac{\rho_1+\rho_2}{2}\big)\ka_1=\tfrac{1}{2}\big(1-\rho_2\ka_1\big)\geq \tfrac{1}{2}>0
	\end{equation*}
	and if $\rho_1+\rho_2\geq 0$, then
	\begin{equation*}
		1-\big(\tfrac{\rho_1+\rho_2}{2}\big)k> 1-\big(\tfrac{\rho_1+\rho_2}{2}\big)\ka_2=\tfrac{1}{2}\big(1-\rho_1\ka_2\big)\geq \tfrac{1}{2}>0.
	\end{equation*}
	Consequently, $u=\frac{\rho_1+\rho_2}{2}$ satisfies the hypothesis of \lref{L:normal}. Let 
	\begin{equation*}
		\ka_0=\frac{2}{\rho_2-\rho_1}.
	\end{equation*}
Note that $0<\ka_0<+\infty$; in the notation of \lref{L:normal}, $-\ka_0=\bar\ka_1$ and $\ka_0=\bar\ka_2$. Define a map $F\colon \sr L_{\kappa_1}^{\kappa_2}(P,Q)\to \sr L_{-\ka_0}^{+\ka_0}(\bar P,\bar Q)$ by letting $F(\ga)$ be the translation by $u$ of its reparametrization (still with domain $[0,1]$) by a multiple of arc-length. This is continuous by \lref{L:reparametrize}. In fact, it is a homotopy equivalence: There is a similarly defined map $G\colon \sr L_{-\ka_0}^{+\ka_0}(\bar P,\bar Q)\to \sr L_{\kappa_1}^{\kappa_2}(P,Q)$ using translation by $-u$, and $GF(\ga)$ is just a reparametrization of $\ga$, by \lref{L:normal}\?(b) and \lref{L:normal}\?(c).

	Let $T\colon \C \to \C$ be the dilatation $x \mapsto \ka_0 x$. If $\ga\in \sr L_{-\ka_0}^{+\ka_0}(\bar P,\bar Q)$, then $T\circ \ga$ lies in 
	\begin{equation*}
		\sr L_{-1}^{+1}(\te{P},\te{Q}),\ \ \text{where}\ \ \te{P}=\big(\ka_0(p+\tfrac{\rho_1+\rho_2}{2} iw),w\big),~\te{Q}=\big(\ka_0(q+\tfrac{\rho_1+\rho_2}{2}i z),z\big)
	\end{equation*}
	and the correspondence $\ga\mapsto T\circ \ga$ yields a homeomorphism between these two spaces. Write $\te{P}=(\te p,w)\in \C\times \Ss^1$ and let $E\colon \C\to \C$ be the Euclidean motion given by $E(x)=\bar w(x-\te p)$. Then the map $\ga\mapsto E\circ \ga$ is a homeomorphism from $\sr L_{-1}^{+1}(\te{P},\te{Q})$ onto $\sr L_{-1}^{+1}(Q_1)$, with $Q_1$ as in the statement. The composition of all of these maps yields a homotopy equivalence $\sr L_{\kappa_1}^{\kappa_2}(P,Q)\to \sr L_{-1}^{+1}(Q_1)$, which is homotopic to a homeomorphism by \lref{L:Hilbert}\?(b).
	
	The proofs of parts (b) and (c) are analogous, so only a brief outline will be provided. In part (b) we first use normal translation by $\rho_2$, and then compose with the dilatation $x\mapsto \frac{x}{\rho_1-\rho_2}$ and an Euclidean motion; in part (c) the dilatation is not necessary. Parts (d) and (e) follow from (b) and (c), respectively, by reversing the orientation of all curves in the corresponding space. 
	
	By \lref{L:normal}\?(a), the normal translations used in establishing (a)--(c) preserve the total turning of a curve. Clearly, so do dilatations and Euclidean motions, while a reversal of orientation changes the sign of the total turning. This proves the last assertion of the theorem.
\end{proof}

\begin{rem}\label{R:respect}
	Normal translations, and hence also the homotopy equivalences constructed in \tref{T:normalized}, do not generally respect inequalities between lengths. This is clear from Figure \ref{F:translation}: Two circles of the same radius $r>0$ but different orientations are mapped to circles of radii equal to $r\pm u$ under normal translation by $u\in (0,r)$. See also the remarks at the end of \S\ref{S:final}.
\end{rem}
A more concise version of \tref{T:normalized} is the following; recall that $0(\pm \infty)=0$ by convention.
\begin{cor}\label{C:normalized}
	Let $P,\,Q\in UT\C$. Then $\sr L_{\kappa_1}^{\kappa_2}(P,Q)$ is homeomorphic to a space of type $\sr L_{-1}^{+1}(Q_0)$, $\sr L_{0}^{+\infty}(Q_0)$ or $\sr L_{1}^{+\infty}(Q_0)$, according as $\ka_1\ka_2<0$, $\ka_1\ka_2=0$ or $\ka_1\ka_2>0$, respectively.\qed
\end{cor}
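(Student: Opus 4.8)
The plan is to deduce the statement directly from \tref{T:normalized} by sorting its cases according to the sign of $\ka_1\ka_2$, and then to treat separately the one pair of bounds it does not cover. If $\ka_1\ka_2>0$, then $\ka_1$ and $\ka_2$ are nonzero of the same sign, so either $0<\ka_1<\ka_2$ or $\ka_1<\ka_2<0$, and parts (b), (d) of \tref{T:normalized} give $\sr L_{\ka_1}^{\ka_2}(P,Q)\home\sr L_1^{+\infty}(Q_0)$; these parts remain valid when $\ka_2=+\infty$, resp.~$\ka_1=-\infty$, since \lref{L:normal} was established allowing infinite endpoints. If $\ka_1\ka_2=0$, then by the convention $0(\pm\infty)=0$ exactly one of $\ka_1,\ka_2$ vanishes, so either $0=\ka_1<\ka_2$ or $\ka_1<\ka_2=0$, and parts (c), (e) give $\sr L_{\ka_1}^{\ka_2}(P,Q)\home\sr L_0^{+\infty}(Q_0)$. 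If $\ka_1\ka_2<0$ and not both bounds are infinite, then $\ka_1<0<\ka_2$ with at least one of $\ka_1,\ka_2$ finite, which is precisely the hypothesis of part (a), giving $\sr L_{\ka_1}^{\ka_2}(P,Q)\home\sr L_{-1}^{+1}(Q_0)$. In each of these cases the assertion on total turning is the last sentence of \tref{T:normalized}. The only pair left out is $(\ka_1,\ka_2)=(-\infty,+\infty)$.

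Since no nontrivial normal translation is admissible for this pair, I would handle $\sr L_{-\infty}^{+\infty}(P,Q)$ through its homotopy type. By \lref{L:C^2} it is homeomorphic to $\sr C_{-\infty}^{+\infty}(P,Q)=\sr S(P,Q)$, which by Smale's theorem (\cite{Smale}) has the weak homotopy type of $\Om UT\C(P,Q)$, hence of $\Om\Ss^1(w,z)$: a countable disjoint union of contractible pieces, one for each $\theta_1$ with $e^{i\theta_1}=z\bar w$. Now take $Q_0=(-1,\,z\bar w)$; I claim $\sr L_{-1}^{+1}(Q_0)$ contains no condensed curve. Indeed, such a curve $\ga$ issues from $O=(0,1)$, so we may normalize its argument by $\theta_\ga(0)=0$; if its amplitude is $<\pi$ then $\theta_\ga$ ranges in an interval of length $<\pi$ containing $0$, whose midpoint $\phi$ satisfies $|\phi|<\tfrac{\pi}{2}$ and $|\theta_\ga(s)-\phi|<\tfrac{\pi}{2}$ for all $s$, so that $\langle e^{i\theta_\ga(s)},e^{i\phi}\rangle=\cos(\theta_\ga(s)-\phi)>0$; integrating gives $\langle\ga(1),e^{i\phi}\rangle>\langle\ga(0),e^{i\phi}\rangle=0$, whereas $\langle-1,e^{i\phi}\rangle=-\cos\phi<0$, a contradiction. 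Granting the results of \S\ref{S:condensed}--\S\ref{S:critical}, in particular $\sr T=\bd\sr U_c$, it follows that for every realizable $\theta_1$ the space $\sr L_{-1}^{+1}(Q_0;\theta_1)$ has empty condensed part, hence empty critical part, and so equals its diffuse part $\sr U_d$, which is contractible by \S\ref{S:diffuse} and nonempty since $\ka_1\ka_2<0$. Thus $\sr L_{-1}^{+1}(Q_0)$ is a separable Hilbert manifold whose components are contractible and indexed by the same set of total turnings as the components of $\sr L_{-\infty}^{+\infty}(P,Q)$. As components of a separable Banach manifold are open (\lref{L:Hilbert}\?(a)), the locally constant map carrying each component of $\sr L_{-\infty}^{+\infty}(P,Q)$ to a point of the matching component of $\sr L_{-1}^{+1}(Q_0)$ is continuous, restricts to a map between contractible spaces on every component, and hence is a weak homotopy equivalence; by \lref{L:Hilbert}\?(b) it is homotopic to a homeomorphism. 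The total-turning statement is vacuous here, the construction respecting the decomposition into total-turning classes by design.

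I expect the genuine difficulty to be exactly the case $(\ka_1,\ka_2)=(-\infty,+\infty)$, which \tref{T:normalized}\?(a) deliberately excludes: the reduction can no longer be geometric, and the delicate point is producing a base point $Q_0$ for which $\sr L_{-1}^{+1}(Q_0)$ contains no condensed curve in any total-turning class --- the one place where the later analysis of \S\ref{S:condensed} enters. Everything else is bookkeeping built on top of \tref{T:normalized}.
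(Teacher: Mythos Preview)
Your treatment of the cases covered by \tref{T:normalized} is correct and matches the paper's approach exactly: the paper presents \cref{C:normalized} as a bare restatement, marked only with \qed. You are right that \tref{T:normalized}\,(a) explicitly excludes $(\ka_1,\ka_2)=(-\infty,+\infty)$, and the paper does not address this case separately; your proof is in this respect more careful than the paper's.

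Your handling of the excluded case is sound in outline but has two issues worth flagging. First, it invokes results from \S\ref{S:condensed}--\S\ref{S:critical}, which appear after \cref{C:normalized}; in particular, nonemptiness of each $\sr L_{-1}^{+1}(Q_0;\theta_1)$ goes through \lref{L:nonempty}, whose written proof cites \cref{C:normalized}. The circularity is only formal, since for the specific space $\sr L_{-1}^{+1}(Q_0)$ no reduction is required, but the dependency should be noted. Second, the implication ``$\sr U_c=\emptyset\Rightarrow\sr T=\emptyset$'' via $\sr T=\bd\sr U_c$ is supplied by \pref{P:boundary} only for $|\theta_1|<\pi$. When $z\bar w=-1$, the value $\theta_1=\pm\pi$ is realized, and there \pref{P:boundary} does not assert $\sr T=\bd\sr U_c$. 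The fix is easy: your argument that no condensed curve ends at $-1$ extends to critical curves, since for amplitude exactly $\pi$ one still has $\cos(\theta_\ga(s)-\phi)\ge 0$, and the integral $\int_0^L\cos(\theta_\ga(s)-\phi)\,ds$ is strictly positive because $\theta_\ga$, being continuous with amplitude $\pi$, cannot lie in $\{\phi-\tfrac{\pi}{2},\phi+\tfrac{\pi}{2}\}$ almost everywhere.
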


Out of the three possibilities, the spaces of type $\sr L_{\ka_1}^{\ka_2}(P,Q)$ with $\ka_1\ka_2<0$ are the ones with the most interesting topological properties. We deal with the two remaining cases in \S\ref{S:locallyconvex}.  

\begin{rem}\label{R:obvious}
	We may replace $\sr L$ with $\sr C$ throughout in the statement of \tref{T:normalized}. In fact, the difficulty indicated in \rref{R:badspeed} disappears in this case, so the proof is simpler because it is not necessary to reparametrize the curves by arc-length before applying normal translation. This yields explicit homeomorphisms of the corresponding spaces, without relying on \lref{L:Hilbert}\?(b). Because the curves in a space of type $\hat{\sr L}_{\ka_1}^{\ka_2}$ are $C^1$ regular by definition, this simpler proof also works for this class (except that here $\ka_1<\ka_2$ must be finite); see \tref{T:normalizedbar} for the precise statement.

\end{rem}  

\section{Topology of $\sr U_c$}\label{S:condensed}

\begin{defn}
	Let $\ga\colon [0,1]\to \C$ be a regular curve and $\theta\colon [0,1]\to \R$ be an argument of $\ta_\ga$. The \tdef{amplitude} of $\ga$ is given by
	\begin{equation*}
		\om=\sup_{t\in [0,1]}\theta(t)-\inf_{t\in [0,1]}\theta(t).
	\end{equation*}
	We call $\ga$ \tdef{condensed}, \tdef{critical} or \tdef{diffuse} according as $\om<\pi$, $\om=\pi$ or  $\om>\pi$. 
\end{defn}

Our main objective now is to understand the topology of $\sr L_{\kappa_1}^{\kappa_2}(P,Q)$ when $\ka_1\ka_2<0$. By \cref{C:normalized}, no generality is lost in assuming that $\ka_1=-1$, $\ka_2=+1$ and $P=(0,1)\in \C\times \Ss^1$

\begin{defn}
Let $Q=(q,z)\in \C\times \Ss^1$ and $\theta_1\in \R$ satisfy $e^{i\theta_1}=z$. We denote by $\sr U_c$, $\sr U_d$ and $\sr T$ the subspaces of $\sr L_{-1}^{+1}(Q;\theta_1)$ consisting of all condensed, diffuse and critical curves, respectively.
\end{defn}


\begin{thm}\label{T:condensed}
	The subspace $\sr U_c\subs \sr L_{-1}^{+1}(Q;\theta_1)$ consisting of all condensed curves is either empty or homeomorphic to $\E$, hence contractible.
\end{thm}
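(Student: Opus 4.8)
The plan is to show that $\sr U_c$, when nonempty, is homeomorphic to the model Hilbert space $\E = L^2[0,1]\times L^2[0,1]$ by exhibiting it as (diffeomorphic to) an open subset of a closed affine-type subspace of $\E$, and then invoking the fact that a nonempty open convex-like subset of a separable Hilbert space is homeomorphic to the whole space. Concretely, the condensed condition $\om < \pi$ says that some argument $\theta_\ga$ of the unit tangent stays in an open arc of length $<\pi$; after rotating $P$ and $Q$ by a fixed Euclidean motion we may arrange that this arc is contained in $(-\pi/2,\pi/2)$, i.e. the first coordinate $\ta_1 = \cos\theta_\ga$ of the unit tangent is strictly positive. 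In that situation the curve $\ga$ is a graph $t\mapsto (x(t),y(t))$ over its first coordinate, and more importantly the condition ``$\theta_\ga$ stays in an open arc of length $<\pi$'' becomes a \emph{convex}, open condition on the curvature-like data.

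First I would set up coordinates so that every condensed curve in $\sr L_{-1}^{+1}(Q;\theta_1)$ has $\theta_\ga$ taking values in a fixed open interval $I$ of length $<\pi$; this uses that $\sr U_c$ nonempty forces $|\theta_1|<\pi$, and that an overall rotation (a homeomorphism of the space onto another space of the same type) can be used to center $I$. Then I would reparametrize all curves by a multiple of arc-length (using \lref{L:reparametrize}, which gives a \emph{deformation retraction} onto the subspace of such parametrizations — actually a homeomorphism of the ambient space with itself is not needed, only that the inclusion of the ``constant speed'' slice is a homotopy equivalence; but for the homeomorphism statement one must be slightly more careful). With arc-length-type parametrization, an admissible curve is determined by $P$, its length $L>0$ and its curvature $\ka\in L^2[0,1]$ with values in $(-1,1)$; equivalently by $(\hat L,\hat\ka)\in \R\times L^2[0,1]$ via the diffeomorphisms $h$. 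The endpoint conditions $\ga(1)=q$, $\ta_\ga(1)=z$ (with the prescribed total turning $\theta_1$) cut out a closed submanifold of codimension $3$ (here codimension $2$ for the position plus the turning constraint handled separately), and this submanifold, intersected with the condensed locus, is what we must identify with $\E$.

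The key simplification is that on the condensed locus this submanifold is \emph{graphical}: writing $\ga$ as a graph $y = f(x)$ over $x\in[0,X]$ with $X>0$ free, the curve is determined by $X$ and the function $f'$ (or its derivative), the turning condition $\int \ka = \theta_1$ and the endpoint conditions become conditions that can be solved for, say, $X$ and two parameters in terms of the rest, leaving an open subset of a Hilbert space of the remaining free data. One then checks this open subset is \emph{contractible in the strong sense needed}: it is star-shaped, or more precisely it deformation retracts onto a point (e.g. the Dubins path) via an explicit convex-combination homotopy in the curvature variable — the condensedness and curvature bounds $(-1,1)$ are both convex conditions, so scaling $\ka$ toward $0$ keeps us inside while fixing up the endpoint by adjusting $X$ and the two auxiliary parameters continuously. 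Finally, a nonempty open subset of a separable Hilbert space that is contractible and is itself a Hilbert manifold is homeomorphic to the Hilbert space (by \lref{L:Hilbert}\?(b), since the inclusion into the model space of a contractible open subset is a weak homotopy equivalence, hence homotopic to a homeomorphism — or one uses that open subsets of $\ell^2$ that are contractible are homeomorphic to $\ell^2$).

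\textbf{The main obstacle.} I expect the hard part to be the bookkeeping that turns the endpoint and turning constraints into a clean identification of $\sr U_c$ with an \emph{open} subset of a linear subspace of $\E$ — i.e. verifying that the constraint map restricted to the condensed locus is a submersion with the graphical description making its fibers linear slices, and simultaneously that the $L^2$-integrability of the curvature (via the functions $h_{-1,1}$) is preserved throughout the graphical reparametrization. The analytic subtlety flagged in \rref{R:badspeed} — that products of $L^2$ functions need not be $L^2$ — reappears when changing from arc-length to the graph parameter $x$, since $dx/ds = \cos\theta_\ga$ and one is multiplying $\ka$ by bounded-but-not-obviously-harmless factors; one must use that $\cos\theta_\ga$ is bounded away from $0$ on the condensed locus (after centering $I$), which is exactly what makes the change of variables benign. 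Once that is in place, the contraction to the Dubins path is essentially the explicit homotopy that the authors presumably carry out in detail, but conceptually it is just convex interpolation of curvatures combined with a continuous solution of the (now linear-in-the-unknowns) endpoint equations.
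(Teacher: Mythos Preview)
Your overall architecture---represent condensed curves as graphs over an axis, contract, then invoke \lref{L:Hilbert}\?(b)---matches the paper's, but two key steps fail as stated. First, there is no single rotation placing the argument of \emph{every} condensed curve into one fixed interval $I$ of length $<\pi$: with $\theta_\ga(0)=0$ and $\theta_\ga(1)=\theta_1$, the range of $\theta_\ga$ can be any subinterval of $(\theta_1-\pi,\pi)$ of length $<\pi$ containing $\{0,\theta_1\}$, and the union of these has length $2\pi-|\theta_1|>\pi$. The paper handles this by using a curve-dependent axis $\bar\vphi_\ga$ (see \eqref{E:thetabar}). Second, and more seriously, your convexity claim is false. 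In graph coordinates $f=\dot y$, the curvature is $\ka=\dot f/(1+f^2)^{3/2}$, and the set $\{|\ka|<1\}$ is not convex in $(f,\dot f)$: for instance $(0,\,0.9)$ and $(10,\,1000)$ both satisfy $|\ka|<1$ but their midpoint $(5,\,500.45)$ gives $\ka\approx 3.8$. Conversely, in arc-length the curvature bound \emph{is} convex, but the endpoint constraint $\int_0^L e^{i\theta(s)}\,ds=q$ is nonlinear in $\ka$ and cannot be repaired by adjusting finitely many scalars after a linear rescaling. So ``scale $\ka$ toward $0$ and fix up the endpoint'' does not produce a homotopy inside $\sr U_c$.

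The paper's contraction (\cref{C:excavator}, \pref{P:excavator}) is not a convex interpolation at all: it truncates $f$ via the median of $f$, two moving levels $\mu_\pm(s)$, and the extremal solutions $g_\pm,h_\pm$ of $\dot g=\pm\ka_0(1+g^2)^{3/2}$, with $\mu_\pm(s)$ chosen so that $\int_0^b f_s$ (the area, i.e.\ the endpoint condition) is preserved. This median automatically respects the curvature bound because locally $f_s$ equals either $f$, a constant, or one of the extremal solutions. The contraction ends at the Dubins path. To pass from the closed interval $[-\ka_0,\ka_0]$ back to the open one, the paper does not work directly in $\sr L_{-1}^{+1}$: given a compact family $K\to\sr U_c$, it smooths via \lref{L:smoothie} into $\sr C_{-\ka_0}^{+\ka_0}$ for some $\ka_0<1$, runs the contraction inside $\hat{\sr L}_{-\ka_0}^{+\ka_0}$, and returns via \lref{L:inclusions}.
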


Recall that $\E$ denotes the separable Hilbert space. In what follows a function $\phi$ of a real variable will be called \tdef{increasing} (resp.~\tdef{decreasing}) if $x<y$ (resp.~$x>y$) implies that $\phi(x)\leq \phi(y)$. The previous theorem will be derived as a corollary of the following result.

\begin{prop}\label{P:excavator}
	Let $\ka_0>0$ and $\hat{\sr U}_c\subs \hat{\sr L}_{-\ka_0}^{+\ka_0}(Q;\theta_1)$ be the subspace consisting of all condensed curves. If $\hat{\sr U}_c\neq \emptyset$, then there exists a continuous $H\colon [0,1]\times \hat{\sr U}_c\to \hat{\sr U}_c$ such that for all $\ga\in \hat{\sr U}_c$:
	\begin{enumerate}
		\item [(i)] $H(1,\ga)=\ga$ and $H(0,\ga)=\ga_0$ is independent of $\ga$. 
		\item [(ii)]  The amplitude of $\ga_s=H(s,\ga)$ is an increasing function of $s\in [0,1]$.
		\item [(iii)] The length of $\ga_s=H(s,\ga)$ is an increasing function of $s\in [0,1]$.
	\end{enumerate}
	In particular, $\hat{\sr U}_c$ is contractible. Moreover, $\ga_0$ is the unique curve of minimal length in $\hat{\sr L}_{-\ka_0}^{+\ka_0}(Q)$.
\end{prop}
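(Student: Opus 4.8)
The plan is to realize the deformation $H$ very concretely, by ``straightening'' a condensed curve onto the graph of a function over an axis. First I would observe that, since $\hat{\sr U}_c \neq \emptyset$, there is a unit vector $e = e^{i\theta_0}$ such that every $\ga \in \hat{\sr U}_c$ has the property that $\langle \dot\ga(t), e\rangle > 0$ for all $t$; equivalently, after rotating by $-\theta_0$, the first coordinate $\ga_1$ of $\ga$ is a strictly increasing function, so that $\ga$ is the graph $x \mapsto (x, f_\ga(x))$ of a $C^1$ function $f_\ga$ on some interval $[0, a_\ga]$. The constraint $\ka_1 \le (\theta(s_1)-\theta(s_2))/(s_1-s_2) \le \ka_2$ translates, via $\ka_0 = 1$ (after normalization; in general keep $\ka_0$) into a Lipschitz-type condition on $\arctan f_\ga'$, i.e.\ on the ``slope'' variable. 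The endpoint conditions $\Phi_\ga(0) = O$, $\Phi_\ga(1) = Q$ become: prescribed values of $x$, $f$ and $f'$ at the two ends. The key point is that this presentation is a homeomorphism of $\hat{\sr U}_c$ onto a set of functions that is \emph{convex} in a suitable sense, or at least star-shaped with respect to a canonical element.

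Next I would define the homotopy. The natural candidate for $\ga_0$ is the Dubins path in $\hat{\sr L}_{-\ka_0}^{+\ka_0}(Q)$: by Dubins' theorem (cited in the remark preceding Lemma \ref{L:inclusions}) a shortest path exists, and when $\hat{\sr U}_c \neq \emptyset$ one checks it is condensed, unique, and of the form arc--segment--arc (often degenerating to segment, or arc--arc). Write each $\ga \in \hat{\sr U}_c$ and $\ga_0$ in the graph presentation over the common axis $e$; reparametrize both by the $x$-coordinate. The deformation $H(s,\ga)$ should interpolate between $\ga$ (at $s=1$) and $\ga_0$ (at $s=0$) while (ii) keeping amplitude monotone and (iii) keeping length monotone. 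I would build this in two stages: first slide the right endpoint's $x$-extent and do an affine rescaling so all curves live over a common interval varying monotonically with $s$; then, on a fixed interval, linearly interpolate the \emph{slope function} (the argument $\theta$ as a function of arclength, or of $x$) between that of $\ga$ and that of $\ga_0$. Linearity in the slope variable is what makes amplitude monotone — the amplitude is $\sup\theta - \inf\theta$, and a convex combination of two functions each of amplitude $<\pi$ again has amplitude $<\pi$, and one arranges the combination so it degenerates monotonically toward $\ga_0$'s (minimal) amplitude as $s\to 0$. Length monotonicity in $s$ is the delicate bookkeeping item: length is $\int \sqrt{1+f'^2}$, which is convex in $f'$, so a straight-line homotopy in $f'$ has length a convex function of $s$; combined with the fact that $\ga_0$ minimizes length, one gets that along the homotopy the length decreases monotonically as $s \to 0$. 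One must check the curvature bound is preserved throughout — this is where the Lipschitz reformulation of condition (ii) of Definition \ref{D:main} is used, since that condition is itself convex in the slope function.

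I expect the main obstacle to be twofold. The \emph{conceptual} obstacle is verifying that $\ga_0$ — the minimal-length curve — is genuinely condensed and unique whenever $\hat{\sr U}_c$ is nonempty, and that it is the common ``center'' toward which every graph-presented curve can be pushed without leaving $\hat{\sr U}_c$; this requires a careful case analysis of Dubins paths (CSC vs.\ CCC, degenerate cases) and the observation that a Dubins path strictly between two configurations that admit a condensed connector cannot use a large arc. The \emph{technical} obstacle is arranging the interpolation so that all three monotonicity properties hold \emph{simultaneously} and continuously in $\ga$, especially across the reparametrizations (curves have $x$-domains $[0,a_\ga]$ of varying length, so one needs the affine rescaling to interact correctly with arclength, and one must confirm continuity of $\ga \mapsto f_\ga$ and $\ga \mapsto a_\ga$ in the $d$-metric of Definition \ref{D:main}). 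Once $H$ is in hand, contractibility of $\hat{\sr U}_c$ is immediate from (i), and ``$\ga_0$ is the unique minimal-length curve in $\hat{\sr L}_{-\ka_0}^{+\ka_0}(Q)$'' follows because (iii) shows every condensed curve is at least as long as $\ga_0$, while any non-condensed (critical or diffuse) curve has amplitude $\ge \pi$ and hence — by a short direct estimate comparing with the Dubins path, or by Dubins' classification — is strictly longer than the condensed minimizer $\ga_0$; uniqueness then reduces to the uniqueness clause in Dubins' theorem in the non-degenerate range, with the degenerate cases handled by hand.
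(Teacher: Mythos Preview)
Your plan has two genuine gaps. First, there is in general no single axis $e^{i\theta_0}$ that works for \emph{all} $\ga\in\hat{\sr U}_c$: if $\theta_1=0$, one condensed curve can have $\theta$-range close to $(-\pi,0]$ and another close to $[0,\pi)$, and then the intervals of admissible axes $(\theta^+-\tfrac{\pi}{2},\theta^-+\tfrac{\pi}{2})$ are disjoint. The paper handles this by choosing the $\ga$-dependent axis $\bar\vphi_\ga=\tfrac12(\sup\theta_\ga+\inf\theta_\ga)$, which varies continuously with $\ga$, and then showing a posteriori (via \cref{C:Dubinspath}) that the terminal curve $\ga_0$ is independent of the chosen axis. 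Second, and more seriously, linear interpolation of the slope function $f=\dot y$ does \emph{not} preserve the curvature constraint $\abs{\dot f}\le \ka_0(1+f^2)^{3/2}$: for $f_s=(1-s)f_0+sf$ you get $\abs{\dot f_s}\le \ka_0\big[(1-s)(1+f_0^2)^{3/2}+s(1+f^2)^{3/2}\big]$, and since $u\mapsto (1+u^2)^{3/2}$ is convex this upper bound is $\ge \ka_0(1+f_s^2)^{3/2}$, the wrong direction. (The same obstruction arises if you interpolate $\theta$ as a function of $x$ or of arclength, once you account for domain rescaling.) Your appeal to ``the Lipschitz reformulation \dots\ is itself convex in the slope function'' is therefore incorrect.

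The paper's construction is quite different from a convex combination: fixing the axis, it \emph{clamps} $f$ by setting $f_s=\midd\big(h_-,g_-,\mu_-(s),f,\mu_+(s),g_+,h_+\big)$, where $g_\pm,h_\pm$ are the extremal solutions of $\dot g=\pm\ka_0(1+g^2)^{3/2}$ with the prescribed endpoint slopes, and $\mu_\pm(s)$ are chosen so that the area $\int_0^b f_s\,dx$ stays fixed while $\mu_+(s)-\mu_-(s)$ shrinks linearly in $s$. Because $f_s$ coincides pointwise with one of $f$, a constant, or an extremal arc, the curvature bound is automatic. Amplitude monotonicity follows since $\sup f_s$ and $\inf f_s$ move inward, and length monotonicity comes from a computation using the convexity of $u\mapsto\sqrt{1+u^2}$ together with the equal-area constraint (not from convexity of a straight-line homotopy). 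This clamping idea is the key mechanism you are missing.
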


We believe that this proposition and its proof may be useful for other purposes which are not pursued here, e.g., for calculating the minimal length of curves in $\hat{\sr L}_{-\ka_0}^{+\ka_0}(Q)$. We shall first describe the effect of $H$ on a single curve $\ga\in  \hat{\sr U}_c$ and then derive its main properties separately as lemmas. First we record two results which will be used to show that $H(0,\ga)$ is independent of $\ga$.

\begin{figure}[ht]
	\begin{center}
		\includegraphics[scale=.335]{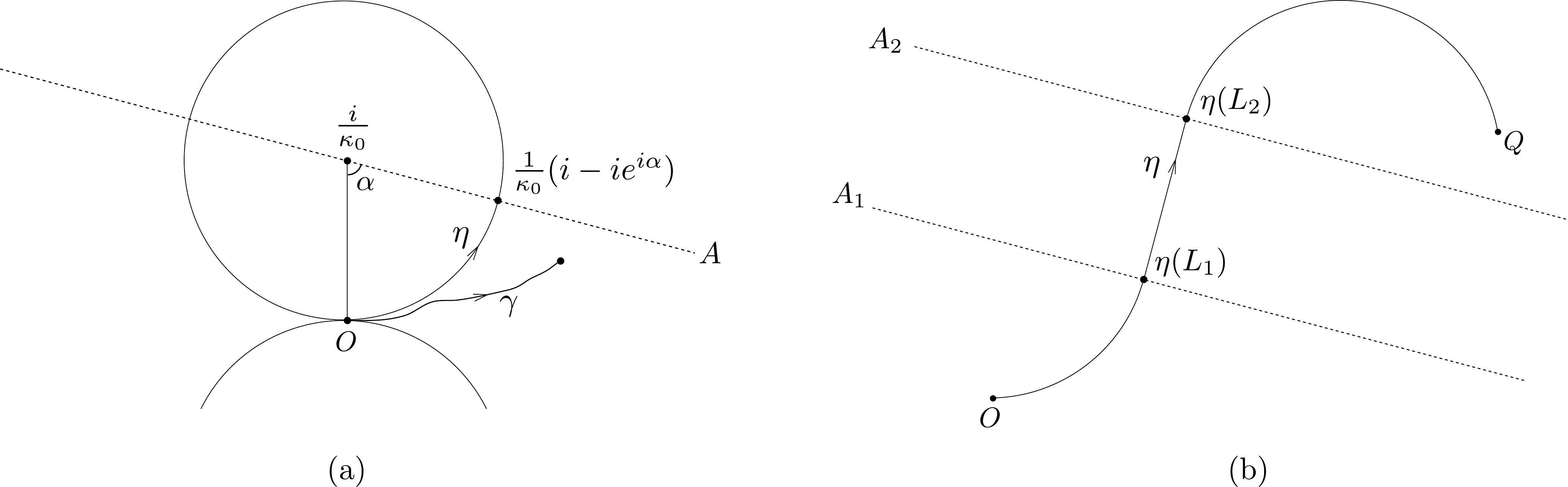}
		\caption{An illustration of \lref{L:faster} and \cref{C:Dubinspath}.}
		\label{F:faster}
	\end{center}
\end{figure}

\begin{lem}\label{L:faster}
	Let $Q=(q,z)\in \C\times \Ss^1$, $\ga\in \hat{\sr L}_{-\ka_0}^{+\ka_0}(Q)$ and $L$ be the length of $\ga$. Suppose that $q$ lies on the line through $\frac{i}{\ka_0}$ having direction $-ie^{i\alpha}$, for some $\alpha\in [0,\pi)$. Then $L\geq \frac{\alpha}{\ka_0}$ and equality holds if and only if $\ga$ is a reparametrization of the arc of the circle centered at $\frac{i}{\ka_0}$ joining $0$ to $\frac{1}{\ka_0}(i-ie^{i\alpha})$.
\end{lem}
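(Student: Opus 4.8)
The goal is a lower bound on the length $L$ of any curve $\ga \in \hat{\sr L}_{-\ka_0}^{+\ka_0}(Q)$ together with a rigidity statement in the equality case. First I would reduce to the normalized situation $\ka_0 = 1$ by a dilatation (the dilatation $x \mapsto \ka_0 x$ scales lengths by $\ka_0$, scales curvature bounds by $\ka_0^{-1}$, and maps $\frac{i}{\ka_0}$ to $i$), so it suffices to show $L \geq \alpha$ for curves in $\hat{\sr L}_{-1}^{+1}(Q)$ with $q$ on the line through $i$ in direction $-ie^{i\alpha}$, with equality exactly for the stated circular arc. The curve $\ga$ starts at $O = (0,1)$, i.e.\ at the origin pointing in the direction $1$; the circle of radius $1$ centered at $i$ is the ``leftmost'' osculating circle available at the start, and the heuristic is that to reach the given line as quickly as possible one should turn as sharply left as possible, i.e.\ follow that circle.

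\textbf{Key steps.} Parametrize $\ga$ by arc-length on $[0,L]$, with unit tangent $\ta = e^{i\theta}$, $\theta(0)=0$, and recall $\theta' = \ka_\ga$ with $|\ka_\ga| \leq 1$ a.e., so $|\theta(s)| \leq s$ for all $s \in [0,L]$. The plan is to track the curve relative to the fixed point $i$: consider the function measuring the position of $\ga(s)$ against the family of lines through $i$ of direction $-ie^{i\beta}$. Concretely, write $\ga(s) - i = r(s) e^{i(\phi(s) - \pi/2)}$ in polar form about $i$ (valid as long as $\ga(s) \neq i$, which holds near $s=0$ since $\ga(0)=0 \neq i$), so that $\phi(0) = 0$ and the hypothesis says $q - i$ lies on the ray of angular coordinate $\phi = \alpha$ (or $\alpha + \pi$); the endpoint condition forces $\phi(L) \equiv \alpha \pmod{\pi}$, and in particular $\phi$ must increase by at least $\alpha$ somewhere, or the curve must pass through $i$. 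The main computation is to bound $\phi'(s)$ from above: differentiating, $\phi'(s) = \frac{\operatorname{Im}\big(\overline{(\ga(s)-i)}\,\ta(s)\big)}{|\ga(s)-i|^2} = \frac{\langle \ta(s), \no_{\text{radial}}(s)\rangle}{r(s)}$ where $\no_{\text{radial}}$ is the unit vector perpendicular to $\ga(s)-i$; since $|\langle \ta, \no_{\text{radial}}\rangle| \le 1$ we only get $\phi' \le 1/r$, which is not directly enough. The cleaner route, which I would actually pursue, is to compare $\ga$ with the unit circle centered at $i$ via a \emph{support-function / distance} argument: let $f(s) = |\ga(s) - i|^2$. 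Then $f(0) = 1$, $f'(s) = 2\langle \ga(s)-i, \ta(s)\rangle$, and $f''(s) = 2 + 2\ka_\ga(s)\langle \ga(s)-i, \no_\ga(s)\rangle$. Using $|\ka_\ga| \le 1$ and Cauchy--Schwarz, $f'' \geq 2 - 2|\ga(s)-i| = 2 - 2\sqrt{f(s)}$, which shows $f$ cannot grow fast and in fact forces $f(s) \le 1$ is not automatic; instead one argues that $\ga$ stays in the closed unit disk about $i$ \emph{as long as $\theta$ stays in a suitable range}, because at $s=0$ the curve is tangent to that circle from inside and the curvature bound prevents it from escaping until the tangent direction has turned by $\pi$. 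Once $\ga(s)$ is confined to the closed disk of radius $1$ about $i$, the angular coordinate $\phi$ satisfies $\phi'(s) \le 1/r(s) \cdot 1$, but more importantly the \emph{arc-length along the radial-angle} gives $L = \int_0^L |\ta|\,ds \ge \int_0^L r(s)\,\phi'(s)\,ds$ when $\phi' \ge 0$...

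\textbf{The honest approach.} Rather than belabor the polar estimate, the slick proof is: set $g(s) = \theta(s) - \phi(s)$ where $\phi$ is the angular coordinate of $\ga(s)$ about $i$ as above; a direct computation gives $g'(s) = \ka_\ga(s) - \frac{\langle \ta(s), \no_{\text{radial}}(s)\rangle}{r(s)}$, and on the unit circle about $i$ one has $r \equiv 1$, $\langle \ta, \no_{\text{radial}}\rangle \equiv 1$, so $g' \equiv \ka_\ga - 1 \le 0$ with equality iff $\ka_\ga \equiv 1$ and $r \equiv 1$. I would show $g(s) \le 0$ for all $s$ (it starts at $0$), deduce $\phi(s) \ge \theta(s) \ge -s$... this sign-chasing needs care. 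The truly robust method — and the one I expect the authors use — is to invoke the characterization of length-minimizers: \emph{among all curves in $\hat{\sr L}_{-1}^{+1}$ from $O$ to a point on the given line (in any final direction), the minimum length is achieved}, and by Dubins' theorem the minimizer is a concatenation of at most three arcs of unit circles and line segments; a short case analysis of such $CCC$/$CSC$ words joining $O$ to the line $\{i - ite^{i\alpha} : t \in \R\}$ shows the shortest is the single left-turning arc of the unit circle centered at $i$ from $0$ to $i - ie^{i\alpha}$, of length exactly $\alpha$. The equality/uniqueness statement then also follows from the uniqueness portion of Dubins' analysis (the minimizer is unique here since the configuration is not one of the exceptional degenerate cases), together with the observation that any curve of length exactly $\alpha$ meeting the bound must be a reparametrization of that arc because it must have $\ka_\ga \equiv 1$ a.e.\ and the right endpoints of tangent direction.

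\textbf{Main obstacle.} The delicate point is the equality case: showing that $L = \alpha$ forces $\ga$ to be (a reparametrization of) the specified circular arc, not merely to have the same length. This requires ruling out any ``slack'' — one must show that the bound $|\ka_\ga| \le 1$ is saturated with $\ka_\ga \equiv +1$ almost everywhere, which comes down to a strict-inequality analysis in whichever monotone quantity ($f$, $g$, or $\phi - \theta$) is used: the inequality bounding its derivative must be shown to be strict on any set of positive measure where $\ka_\ga < 1$, and then integrating gives $L > \alpha$ there. I would handle this by taking the function $\phi(s) - \theta(s)$, proving it is nonincreasing with derivative $\le \ka_\ga(s) - 1 \le 0$ (after establishing that $\ga$ stays in the closed unit disk about $i$, which keeps $r(s) \le 1$ and hence $\langle\ta,\no_{\text{radial}}\rangle/r \ge \langle\ta,\no_{\text{radial}}\rangle \ge$ the needed bound), noting that the total drop of $\phi - \theta$ over $[0,L]$ must be at least $\alpha - L$ ... let me instead simply state that the whole argument is cleanest by reduction to Dubins' classification and its attendant uniqueness, and flag the measure-zero saturation argument as the one routine-but-essential computation I would not skip.
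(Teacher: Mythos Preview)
Your proposal never settles on a complete argument. You sketch three approaches (polar angle about $i$, the squared-distance function $|\ga(s)-i|^2$, the difference $\theta-\phi$) and abandon each, then fall back on invoking Dubins' classification theorem. That fallback is problematic on two counts. First, in the paper's development this lemma is precisely what is used to \emph{prove} the relevant special case of Dubins' theorem (the very next corollary), so appealing to Dubins here would be circular. Second, Dubins' theorem classifies minimizers between a fixed initial frame and a fixed \emph{final} frame; your problem has the endpoint free on a line and the final direction unconstrained, so the standard statement does not apply directly and you would need an additional transversality or compactness argument that you do not supply.

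The paper's proof is much simpler than any of your attempts and avoids all of this. The key idea you are missing is to project onto the direction \emph{perpendicular to the target line}: set
\[
f(s)=\big\langle \ga(s)-i,\,e^{i\alpha}\big\rangle,\qquad g(s)=\big\langle \eta(s)-i,\,e^{i\alpha}\big\rangle,
\]
where $\eta(s)=i-ie^{is}$ is the comparison arc. Then $f(s)=0$ exactly when $\ga(s)$ lies on the line, $f(0)=g(0)$, and $f'(s)=\cos(\alpha-\theta_\ga(s))$, $g'(s)=\cos(\alpha-s)$. The $1$-Lipschitz bound $|\theta_\ga(s)|\le s$ (from $|\ka_\ga|\le 1$) gives $\alpha-s\le \alpha-\theta_\ga(s)\le \alpha+s$, and a short case analysis on $[0,\alpha]$ yields $f'(s)\le g'(s)$. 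Since $g(s)<0$ on $[0,\alpha)$, one gets $f(s)<0$ there, forcing $L\ge\alpha$. Equality $f(\alpha)=g(\alpha)=0$ forces $f'\equiv g'$, hence $\theta_\ga(s)=s$, which is exactly the rigidity statement. This single linear functional replaces all of your polar-coordinate machinery and makes the equality case a one-line consequence rather than the ``measure-zero saturation'' argument you flag as the main obstacle.
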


\begin{proof}
	We lose no generality in assuming that $\ka_0=1$. If $\al=0$, there is nothing to prove, so suppose $\al\in (0,\pi)$. Let $\ga\colon [0,L]\to \C$ be parametrized by arc-length, and let $\eta\colon [0,\alpha]\to \C$ be given by
	\begin{equation*}
		\eta(s)=\int_0^se^{i\sig}\,d\sig=i-ie^{is}\quad (s\in [0,\alpha]),
	\end{equation*}
	so that $\eta$ is the parametrization by arc-length of the arc of circle described in \lref{L:faster}, see Figure \ref{F:faster}\?(a). Set
	\begin{alignat*}{10}
		f\colon [0,L]&\to \R,&\quad f(s)&=\gen{\ga(s)-i,e^{i\alpha}}, \\
		g\colon [0,\alpha]&\to \R,&\quad g(s)&=\gen{\eta(s)-i,e^{i\alpha}}.
	\end{alignat*}
	Let $A$ denote the line in the statement. Note that $f(s)=0$ if and only if $\ga(s)\in A$. We need to prove that $f(s)<0$ for all $s\in [0,\alpha)\cap [0,L]$. Let $\theta_\ga$ be the argument of $\ta_\ga$ satisfying $\theta_\ga(0)=0$. Then
	\begin{equation}\label{E:faster}
		\begin{alignedat}{10}
		f'(s)&=\big\langle e^{i\theta_\ga(s)},e^{i\alpha}\big\rangle=\cos(\al-\theta_\ga(s))\text{\ \ and\ \ }g'(s)&=\gen{e^{is},e^{i\alpha}}=\cos(\al-s)
		,
	\end{alignedat}
	\end{equation}
	
	We have $f(0)=g(0)$. Since $g(s)<0$ for all $s\in [0,\alpha)$, it suffices to establish that $f'(s)\leq g'(s)$ for all $s\in [0,\alpha]\cap [0,L]$. By the definition of $\hat{\sr L}_{-1}^{+1}(Q)$, $\theta_\ga$ is 1-Lipschitz. Hence, $\abs{\theta_\ga(s)}\leq s$ for all $s\in [0,L]$. Consequently,
	\begin{equation*}
		\alpha-s\leq \alpha-\theta_\ga(s)\leq \alpha+s\text{\ \ for all $s\in [0,L]$}.
	\end{equation*}
	In particular, $\alpha-\theta_\ga(s)\in [0,2\pi]$ for all $s\in [0,\alpha]\cap [0,L]$. Since the cosine is decreasing over $[0,\pi]$, it follows immediately from \eqref{E:faster} that if $\alpha-\theta_\ga(s)\leq \pi$, then $f'(s)\leq g'(s)$. On the other hand, if  $\alpha-\theta_\ga(s)\in [\pi,2\pi]$, then from $\alpha-\theta_\ga(s)\leq \alpha+s$, we obtain that
	\begin{equation*}
		\cos(\alpha-\theta_\ga(s))\leq \cos(\alpha+s)\leq \cos(\alpha-s),
	\end{equation*}
	the latter inequality coming from $\alpha\in (0,\pi)$ and $s\in [0,\alpha]$. Thus, $f'(s)\leq g'(s)$ in this case also. We conclude that $f(s)\leq g(s)<0$ for all $s\in [0,\alpha)\cap [0,L]$. In particular, $L\geq \al$, as $\ga(L)\in A$. 
	
	If $f(\al)=g(\al)=0$, then we must have $f'=g'$, that is, $\theta_\ga(s)=s$ for all $s\in [0,\alpha]$. Thus, in this case, $\ga|_{[0,\al]}$ is a reparametrization of $\eta|_{[0,\al]}$.
\end{proof}

\begin{cor}\label{C:Dubinspath}
	Suppose that $\eta\in \hat{\sr L}_{-\ka_0}^{+\ka_0}(Q)$ is a concatenation of an arc of circle of curvature $\pm \ka_0$, a line segment, and another arc of circle of curvature $\pm \ka_0$, where some of these may be degenerate and both arcs have length less than $\frac{\pi}{\ka_0}$. Then $\eta$ is the unique curve in $\hat{\sr L}_{-\ka_0}^{+\ka_0}(Q)$ of minimal length.
\end{cor}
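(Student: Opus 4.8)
The plan is to reduce the statement to two applications of \lref{L:faster}, one from each end of $\eta$, glued together by the triangle inequality in the middle. After rescaling $\C$ I would assume $\ka_0=1$, parametrize $\eta$ by arc length on $[0,L_\eta]$ with $L_\eta=\ell_1+m+\ell_2$ (where $\ell_1,\ell_2\in[0,\pi)$ are the lengths of the first arc $C_1$ and the second arc $C_2$, and $m\ge 0$ that of the segment $S$), and write $\epsilon_i\in\{-1,+1\}$ for the signed curvature of $C_i$ and $v=\ta_\eta$ for the tangent along $S$. Then $\eta(0)=0$, $\ta_\eta(0)=1$, the turning of $\eta$ is piecewise linear with successive slopes $\epsilon_1,0,\epsilon_2$, and the junctions are $c_1=\eta(\ell_1)=\frac{e^{i\epsilon_1\ell_1}-1}{i\epsilon_1}$ and $c_2=\eta(\ell_1+m)=c_1+mv$, with $\ta_\eta\equiv v$ on $[\ell_1,\ell_1+m]$. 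I would then prove the stronger claim that every $\ga\in\hat{\sr L}_{-1}^{+1}(Q)$ has length $L\ge L_\eta$, with equality only if $\ga$ is a reparametrization of $\eta$; since $\eta\in\hat{\sr L}_{-1}^{+1}(Q)$ has length $L_\eta$, this gives the corollary.

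The first step is to set up two barrier lines: let $A_1$ and $A_2$ be the lines through $c_1$ and $c_2$ orthogonal to $v$. A short computation gives $c_1-\epsilon_1 i=-i\epsilon_1 v$, which shows that $A_1$ is also the line through the centre $\epsilon_1 i$ of $C_1$ and through $c_1$ — hence, up to the reflection of $\C$ across $\R$ that is needed when $\epsilon_1=-1$, exactly the line appearing in \lref{L:faster} with $\alpha=\ell_1$; similarly for $A_2$, $C_2$ and $\alpha=\ell_2$. (This is where the hypothesis that the arcs have length $<\pi$ enters.) Since $c_2=c_1+mv$, the lines $A_1,A_2$ are parallel and at distance $m$. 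Two more short computations give $\langle\eta(0)-c_1,v\rangle=-\sin\ell_1\le 0$ and $\langle q-c_1,v\rangle=m+\sin\ell_2\ge 0$ (where $Q=(q,z)$), together with the analogues with $c_2$ in place of $c_1$; so the orthogonal projections of $\eta(0),c_1,c_2,q$ onto the line $\R v$ occur in this weakly increasing order.

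Next, given an arbitrary $\ga\in\hat{\sr L}_{-1}^{+1}(Q)$ parametrized by arc length on $[0,L]$, the above shows that $t\mapsto\langle\ga(t),v\rangle$ sweeps from the $v$-level of $\eta(0)$ to that of $q$ through the $v$-levels of $A_1$ and of $A_2$, so $\ga$ meets both lines; I would let $s_1$ be the first time it meets $A_1$ and $s_2$ the last time it meets $A_2$, and check via the intermediate value theorem that $0\le s_1\le s_2\le L$. Then one estimates the three pieces. Restrictions of $\hat{\sr L}_{-1}^{+1}$-curves to subintervals again satisfy condition (ii) of \dref{D:main}, so $\ga|_{[0,s_1]}$ determines an element of $\hat{\sr L}_{-1}^{+1}(O,\Phi_\ga(s_1))$ with endpoint on $A_1$, and \lref{L:faster} (applied to it, or to its reflection across $\R$) gives $s_1\ge\ell_1$, with equality only if $\ga|_{[0,s_1]}$ reparametrizes $C_1$, so then $\ga(s_1)=c_1$; reversing $\ga|_{[s_2,L]}$ and moving its initial frame to $O$ by a rigid motion, the same lemma gives $L-s_2\ge\ell_2$, with equality only if $\ga|_{[s_2,L]}$ reparametrizes $C_2$, so then $\ga(s_2)=c_2$; and $\ga|_{[s_1,s_2]}$ has length $s_2-s_1\ge|\ga(s_2)-\ga(s_1)|\ge m$ since $\ga(s_1)\in A_1$, $\ga(s_2)\in A_2$ and $A_1\parallel A_2$ at distance $m$, with equality only if this piece is the straight segment from $\ga(s_1)$ to $\ga(s_2)$. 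Summing, $L=s_1+(s_2-s_1)+(L-s_2)\ge L_\eta$; if $L=L_\eta$ then all three inequalities are equalities, which forces $\ga(s_1)=c_1$, $\ga(s_2)=c_2$, the three pieces to reparametrize $C_1$, $S$, $C_2$, and hence $\ga$ to be a reparametrization of $\eta$. The degenerate cases fit into the same scheme: when $m=0$ one has $A_1=A_2$ and only the triangle inequality is used for the (possibly trivial) middle piece; when $\ell_1=0$ (resp.\ $\ell_2=0$) one has $\eta(0)\in A_1$ (resp.\ $q\in A_2$), so $s_1=0$ (resp.\ $s_2=L$) and \lref{L:faster} is invoked with $\alpha=0$; and when $\ell_1=\ell_2=0$ the claim is the uniqueness of the shortest segment.

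The hard part will be the plane-geometry bookkeeping of the second step: confirming that $A_1,A_2$ really are parallel lines orthogonal to $v$ lying on the correct sides of $\eta(0)$ and of $q$, and matching them precisely with the lines of \lref{L:faster} under the reflection and rigid motion dictated by the signs $\epsilon_1,\epsilon_2$. None of this is deep, but it is where sign errors would most easily creep in.
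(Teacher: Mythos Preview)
Your proposal is correct and follows essentially the same approach as the paper's own proof: set up the two parallel lines $A_1,A_2$ through the junction points orthogonal to the segment direction, take the first hit of $A_1$ and the last hit of $A_2$, apply \lref{L:faster} from each end, and bound the middle piece by the distance between the parallel lines. Your write-up is considerably more explicit about the plane-geometry bookkeeping (the signs $\epsilon_i$, the ordering of the $v$-projections, why $s_1\le s_2$, and the degenerate cases), but the underlying argument is the same.
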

This result should be compared to Proposition 9 in \cite{Dubins}. Their proofs are essentially the same.
\begin{proof}
	Let $\eta\colon [0,L]\to \C$ be parametrized by arc-length, with $\eta|_{[0,L_1]}$, $\eta|_{[L_1,L_2]}$ and $\eta|_{[L_2,L]}$ corresponding to the first arc, line segment and second arc, respectively (see Figure \ref{F:faster}\?(b)). Let $A_i$ be the line perpendicular to $\eta'(L_i)$ passing through $\eta(L_i)$, $i=1,2$. Notice that $A_1$ and $A_2$ are parallel (or equal). Suppose that $\ga\colon [0,M]\to \C$ is another curve in $\hat{\sr L}_{-\ka_0}^{+\ka_0}(Q)$, parametrized by arc-length. Let 
	\begin{equation*}
		M_1=\inf\set{s\in [0,M]}{\ga(s)\in A_1},\quad M_2=\sup\set{s\in [0,M]}{\ga(s)\in A_2}.
	\end{equation*}
	By \lref{L:faster}, we have $M_1\geq L_1$ and $M-M_2\geq L-L_2$. It is clear that $M_2-M_1\geq L_2-L_1$ since any path joining a point of $A_1$ to a point of $A_2$ must have length greater than or equal to the distance between these lines. Hence, $M\geq L$.  Furthermore, if equality holds, then $M_1=L_1$, $M-M_2=L-L_2$ and $M_2-M_1=L_2-L_1$. By \lref{L:faster}, the two former equalities imply that $\ga|_{[0,M_1]}=\eta|_{[0,L_1]}$ and  $\ga|_{[M_2,M]}=\eta|_{[L_2,L]}$. The condition $M_2-M_1=L_2-L_1$ then implies that $\ga|_{[M_1,M_2]}$ must coincide with the line segment $\eta|_{[L_1,L_2]}$.
\end{proof}

\begin{rem}\label{R:condembed}
	Notice that a condensed curve must be an embedding of $[0,1$]. In fact, its image is the graph of a function of $x$, after a suitable choice of the $x$-axis. 	
\end{rem}

	\begin{cons}\label{C:excavator}
		Let $\ga\in \hat{\sr U}_c$, $\theta\colon [0,1]\to \R$ be the argument of $\ta_\ga$ satisfying $\theta(0)=0$. A number $\vphi\in (-\frac{\pi}{2},\frac{\pi}{2})$ will be called an \tdef{axis} of $\ga$ if $\gen{\ta_\ga(t),e^{i\vphi}}>0$ for all $t\in [0,1]$. Since $\ga$ is condensed, the set of all axes of $\ga$ is an open interval. The most natural axis, and the center of this interval, is
	 \begin{equation}\label{E:thetabar}
		\bar\vphi_\ga=\frac{1}{2}\Big(\sup_{t\in [0,1]}\theta(t)+\inf_{t\in [0,1]}\theta(t)\Big).
	 \end{equation}
	Let $\vphi$ be any axis of $\ga$. Rotating around the origin through $\vphi$ and  writing $\ga(t)=(x(t),y(t))$ in terms of the new $x$- and $y$-axes, the hypothesis that $\gen{\ta_\ga,e^{i\vphi}}>0$ becomes equivalent to the fact that $\dot x$ is bounded and positive over $[0,1]$. Let 
	\begin{equation*}
		\ga(x)=(x,y(x))\quad (x\in [0,b])
	\end{equation*}
	be the reparametrization of $\ga$ by $x$ and define
	\begin{equation*}
		f\colon [0,b]\to \R \quad\text{by}\quad f(x)=\dot y(x).
	\end{equation*}
	Let $f_s\colon [0,b]\to \R$ $(s\in [0,1])$ be a family of absolutely continuous functions and set
	\begin{equation*}
		\ga_s(x)=\Big(x,\int_0^xf_s(u)\?du\Big)\quad (x\in [0,b]).
	\end{equation*}  
	A straightforward computation shows that the curvature of $\ga_s$ is given by 
	\[
	\ka_{\ga_s}(x)=\frac{\dot f_s(x)}{[1+f_s(x)^2]^{\frac{3}{2}}}\quad (x\in [0,b]).
	\]
	Therefore, $\ga_s$ lies in $\hat{\sr L}_{-\ka_0}^{+\ka_0}(Q;\theta_1)$ if and only if $f_s$ satisfies:
	\begin{enumerate}
		\item [(i)] $\vert \dot f_s(x)\vert \leq \ka_0[1+f_s(x)^2]^{\frac{3}{2}}$ for almost every $x\in [0,b]$ (that is, $\ka_{\ga_s}\in [-\ka_0,+\ka_0]$ a.e.);
		\item [(ii)] $f_s(0)=r_0:=\dot y(0)$ and $f_s(b)=r_b:=\dot y(b)$ (that is, $\ta_{\ga_s}(0)=\ta_\ga(0)$ and $\ta_{\ga_s}(b)=\ta_{\ga}(b)$);
		\item [(iii)] $\int_0^b f_s(x)\?dx=A_1:=y(b)-y(0)$ (that is, $\ga_s(b)=\ga(b)$).
	\end{enumerate}
	 We will now produce a homotopy of $f=f_1$ through absolutely continuous functions satisfying (i)--(iii). 
	 \begin{figure}[ht]
		\begin{center}
			\includegraphics[scale=.12]{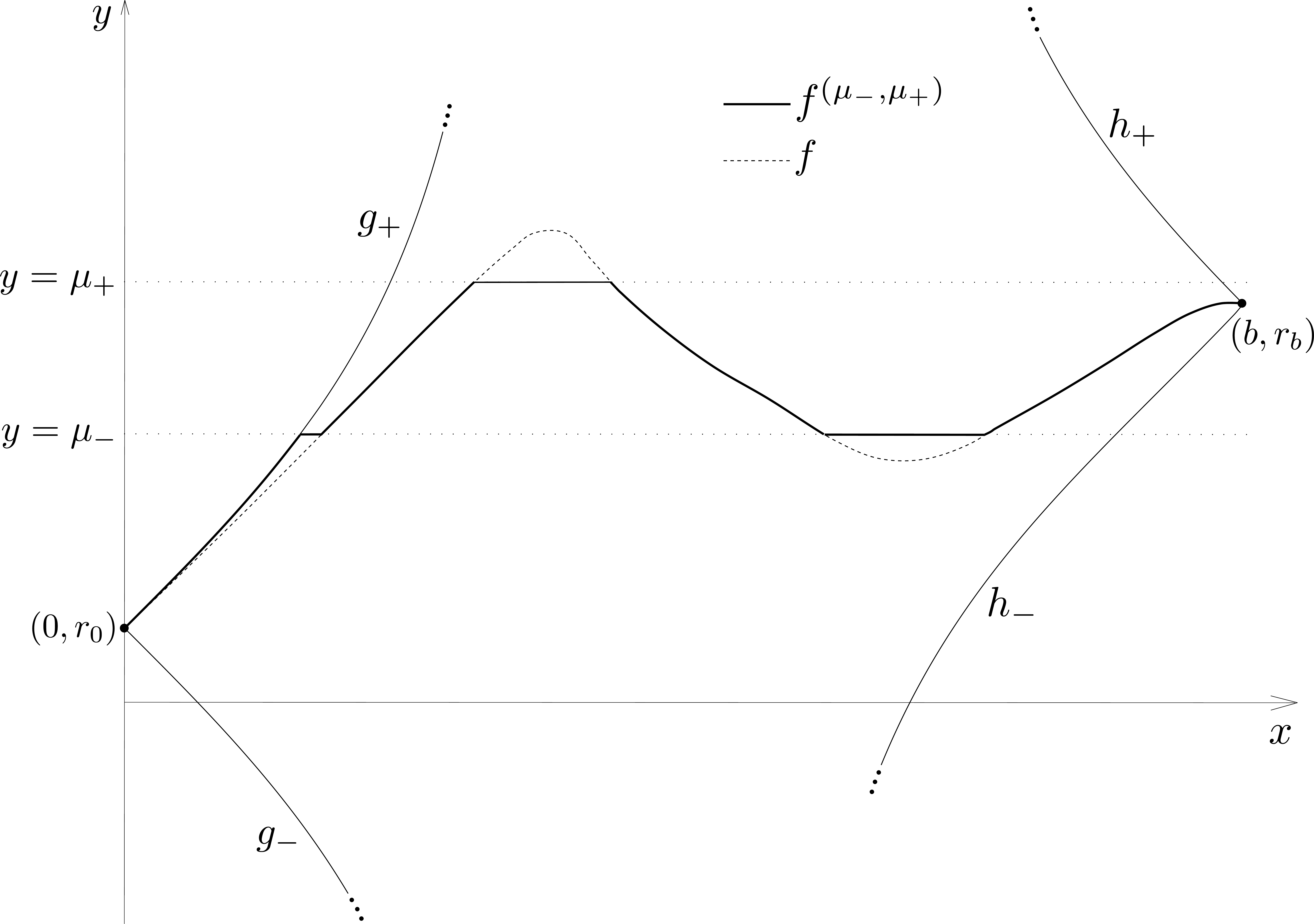}
			\caption{An illustration of \cref{C:excavator}.}
			\label{F:excavator}
		\end{center}
	\end{figure}
	 Define 
	\begin{equation}\label{E:gpm}
		\al_{\pm}=\mp \frac{r_0}{\sqrt{1+r_0^2}},\quad g_{\pm}(x)=\pm \frac{\ka_0x-\al_{\pm}}{\sqrt{1-(\ka_0x-\al_\pm)^2}}\quad \text{for}\ \ x\in \Big(\frac{\al_\pm-1}{\ka_0},\frac{\al_\pm+1}{\ka_0}\Big)
	\end{equation}
	(see Figure \ref{F:excavator}) and, similarly,
	\begin{equation}\label{E:hpm}
		\be_{\pm}=\ka_0b\pm \frac{r_b}{\sqrt{1+r_b^2}},\quad h_{\pm}(x)=\mp \frac{\ka_0x-\be_{\pm}}{\sqrt{1-(\ka_0x-\be_\pm)^2}}\quad\text{for}\ \  x\in \Big(\frac{\be_\pm-1}{\ka_0},\frac{\be_\pm+1}{\ka_0}\Big).
	\end{equation}
	The functions $g_\pm$ are the solutions of the differential equations $\dot g=\pm \ka_0(1+g^2)^{\frac{3}{2}}$ with $g(0)=r_0$. Similarly, $h_{\pm}$ are the solutions of the differential equations $\dot h=\mp \ka_0(1+h^2)^{\frac{3}{2}}$ with $h(b)=r_b$. Extend their domains to all of $\R$ by setting
	\begin{equation*}
		g_{\pm}(x)=\pm \infty \text{ \ \ if \ \ $x\geq \frac{\al_{\pm}+1}{\ka_0}$} \quad\text{and}\quad 	g_{\pm}(x)= \mp \infty  \text{ \ \ if \ \  $x\leq \frac{\al_{\pm}-1}{\ka_0}$}
	\end{equation*}
	and similarly for $h_{\pm}$.
	Since the curvature of $\ga=\ga_1$ takes values in $[-\ka_0,+\ka_0]$, condition (i) applied to $f=f_1$ gives:
	\begin{equation}\label{E:ineq}
		g_-(x),\,h_-(x)\leq  f(x)\leq  g_+(x),\,h_+(x)\quad \text{for all $x\in [0,b]$}.
	\end{equation}
	 Let 
	\begin{equation}\label{E:mpmm}
		m_-=\inf_{x\in [0,b]}f(x),\quad m_+=\sup_{x\in [0,b]}f(x)\quad \text{and}\quad \De=\set{(\mu_-,\mu_+)\in [m_-,m_+]}{\mu_-\leq \mu_+}.
	\end{equation} 
	For $(\mu_-,\mu_+)\in \De$, let $f^{(\mu_-,\mu_+)}\colon [0,b]\to \R$ be given by
	\begin{equation}\label{E:median}
		f^{(\mu_-,\mu_+)}(x)=\midd\big(h_-(x)\,,\,g_-(x)\,,\,\mu_-\,,\,f(x)\,,\,\mu_+\,,\,g_+(x)\,,\,h_+(x)\big)
	\end{equation}
	(cf.~Figure \ref{F:excavator}). The functions $f^{(\mu_-,\mu_+)}$ automatically satisfy conditions (i) and (ii). Define $A\colon \De\to \R$ to be the area under the graph of $f^{(\mu_-,\mu_+)}$:
	\begin{equation*}
		A(\mu_-,\mu_+)=\int_0^bf^{(\mu_-,\mu_+)}(x)\?dx.
	\end{equation*}
	It is immediate from \eqref{E:median} that:
	\begin{enumerate}
		\item [(A)] $A$ is increasing as a function of $\mu_-$ (resp.~$\mu_+$);
		\item [(B)] $A$ is a Lipschitz function of $(\mu_-,\mu_+)$. In fact, 
		\begin{equation*}
		\abs{A(\mu_-+u,\mu_++v)-A(\mu_-,\mu_+)}\leq b(\abs{u}+\abs{v}).
	\end{equation*}
	\end{enumerate}  
	By (A), for each $s\in [0,1]$, the set 
	\begin{equation*}
		\set{(\mu_-,\mu_+)\in \De}{A(\mu_-,\mu_+)=A_1\text{\ and\ }\mu_+-\mu_-=(m_+-m_-)s}
	\end{equation*} is an interval of the latter line in the $(\mu_-,\mu_+)$-plane. Let $(\mu_-(s),\mu_+(s))$ be the coordinates of the center of this interval. By (B), $\mu_-(s)$ and $\mu_+(s)$ are continuous (even Lipschitz), and (A) implies that $\mu_-$ is a decreasing, while $\mu_+$ is an increasing function of $s\in [0,1]$. The functions
	\begin{equation*}
		f_s\colon [0,b]\to \R,\quad f_s=f^{(\mu_-(s),\mu_+(s))}
	\end{equation*}	
	satisfy all of conditions (i)--(iii) by construction. We repeat their definition for convenience:
	\begin{equation}\label{E:f_s}
		\begin{alignedat}{10}
				f_s(x)&=\midd\big(h_-(x)\,,\,g_-(x)\,,\,\mu_-(s)\,,\,f(x)\,,\,\mu_+(s)\,,\,g_+(x)\,,\,h_+(x)\big), 	\\
				\ga_s(x)&=\Big(x,\int_0^xf_s(u)\?du\Big) \quad (x\in [0,b]).
		\end{alignedat} %
	\end{equation}
	We will denote $\mu_+(0)=\mu_-(0)$ by $\mu_0$. The monotonicity of $\mu_-$,\,$\mu_+$ implies that
	\begin{equation}\label{E:sigmas}
		\mu_-(s)\leq \mu_0\leq \mu_+(s)\quad\text{ for all $s\in [0,1]$}.\pushQED{\qed} \qedhere\popQED
	\end{equation}
	\end{cons}
	
	\begin{rem}\label{R:geom}
		We deduce from \eqref{E:ineq} and \eqref{E:f_s} that 
		\begin{equation*}
			f_0=\midd(h_-,g_-,\mu_0,g_+,h_+).
		\end{equation*}
The graph of $f_0$  is composed of at most three parts: a piece of the graph of $g_-$ or $g_+$, a piece of the graph of the constant function $y=\mu_0$, and a piece of the graph of $h_-$ or $h_+$. The corresponding curve $\ga_0$ is thus the concatenation of an arc of circle of curvature $\pm \ka_0$, a line segment and another arc of circle of curvature $\pm \ka_0$, though some of these may degenerate to a point. It is an immediate consequence of \cref{C:Dubinspath} that $\ga_0$ (and hence $f_0$) is independent of $\ga$ and of the chosen axis $\vphi$. 
	\end{rem}
	
	\begin{lem}\label{L:independent}
		Let $\vphi$ be an axis of  $\ga\in \hat{\sr U}_c$ and $s\mapsto \ga_s$ $(s\in [0,1])$ be the deformation described in \cref{C:excavator}. Then $\ga_0\in \hat{\sr U}_c$ is the unique curve of minimal length in $\hat{\sr L}_{-\ka_0}^{+\ka_0}(Q)$.\qed
	\end{lem}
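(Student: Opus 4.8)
The plan is to read the statement off from \rref{R:geom} and \cref{C:Dubinspath}. By construction (\cref{C:excavator}), $\ga_0$ lies in $\hat{\sr L}_{-\ka_0}^{+\ka_0}(Q;\theta_1)\subs \hat{\sr L}_{-\ka_0}^{+\ka_0}(Q)$, and by \rref{R:geom} it is a concatenation of an arc of a circle of curvature $\pm\ka_0$, a line segment, and another arc of a circle of curvature $\pm\ka_0$, with some of these pieces possibly degenerate. If in addition each of the two circular arcs has length strictly less than $\frac{\pi}{\ka_0}$, then \cref{C:Dubinspath} applies verbatim and yields that $\ga_0$ is the unique curve of minimal length in $\hat{\sr L}_{-\ka_0}^{+\ka_0}(Q)$. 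So the entire proof reduces to two claims: that $\ga_0$ is condensed, hence $\ga_0\in\hat{\sr U}_c$; and that its two circular arcs are short enough. I would obtain both at once from a single observation.

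The observation is this. Since $f_0=\midd(h_-,g_-,\mu_0,g_+,h_+)$ and, on $[0,b]$, at most two of these five functions equal $-\infty$ at any given point (namely $g_-$ and $h_-$) and at most two equal $+\infty$ (namely $g_+$ and $h_+$), the median $f_0$ is finite, and continuous, on all of $[0,b]$. Passing to the coordinates obtained by rotating about the origin through $\vphi$, as in \cref{C:excavator}, we have $\ga_0(x)=\big(x,\int_0^x f_0(u)\,du\big)$, so the velocity of $\ga_0$ in the $x$-parametrization is $(1,f_0(x))$; its first coordinate is positive, which says precisely that $\gen{\ta_{\ga_0}(x),e^{i\vphi}}>0$ for every $x$, i.e. $\vphi$ is an axis of $\ga_0$. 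In particular $\ga_0$ is condensed, so $\ga_0\in\hat{\sr U}_c$. Moreover, in this frame the argument of $\ta_{\ga_0}$ is $\arctan f_0(x)$, and its range over the compact interval $[0,b]$ is a compact---hence bounded away from $\pm\frac{\pi}{2}$---subset of $(-\frac{\pi}{2},\frac{\pi}{2})$; thus the amplitude of $\ga_0$, and therefore the amplitude of each of its two circular subarcs, is strictly less than $\pi$. Since an arc of a circle of curvature $\pm\ka_0$ of amplitude $\om$ has length $\frac{\om}{\ka_0}$, both arcs have length less than $\frac{\pi}{\ka_0}$, and \cref{C:Dubinspath} finishes the argument.

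I do not expect a real obstacle here: the geometric content is already recorded in \rref{R:geom}, and all that is added is the bookkeeping required to invoke \cref{C:Dubinspath}. The one step that genuinely deserves a careful word---and the place I would be most tempted to be sloppy---is the claim that the median $f_0$ never secretly takes an infinite value, i.e. that on $[0,b]$ three of $g_\pm,h_\pm$ cannot blow up in the same direction simultaneously. Once that is granted, $\ga_0$ is a bona fide condensed curve with $\vphi$ as an axis, and everything else is immediate.
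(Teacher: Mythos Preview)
Your proof is correct and follows essentially the same route as the paper, which treats the lemma as an immediate consequence of \rref{R:geom} and \cref{C:Dubinspath} (hence the bare \qed). You have simply made explicit the one point the paper leaves tacit in \rref{R:geom}, namely that the hypotheses of \cref{C:Dubinspath} are met: since $f_0$ is the median of five extended-real functions of which at most two can be $+\infty$ and at most two $-\infty$ at any point of $[0,b]$, $f_0$ is finite and continuous there, so $\ga_0$ is condensed with axis $\vphi$ and each circular arc has amplitude, hence length, strictly below the threshold.
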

	
	\begin{urem}
		Notice that this proves Dubins' Theorem I in \cite{Dubins} in the case where $\hat{\sr L}_{-\ka_0}^{+\ka_0}(Q)$ contains condensed curves. Furthermore, given $Q$ and $\ka_0$, we can use \cref{C:excavator} to describe $\ga_0$ explicitly.
	\end{urem}
	
	\begin{lem}\label{L:A+A-}
	Let $S_+=\set{x\in [0,b]}{f(x)\geq \mu_0}$ and $S_-=\set{x\in [0,b]}{f(x)\leq \mu_0}$. Then $f_s(x)$ is an increasing \tup(resp.~decreasing\tup) function of $s\in [0,1]$ if $x\in S_+$ \tup(resp.~$S_-$\tup). Moreover, for all $s\in [0,1]$, $f_s(x)\geq \mu_0$ if $x\in S_+$ and $f_s(x)\leq \mu_0$ if  $x\in S_-$.
\end{lem}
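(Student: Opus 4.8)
The plan is to compute $f_s(x)$ explicitly for each fixed $x\in[0,b]$ via a \emph{double-clamp} formula, and then read both assertions off from the monotonicity of the median together with the facts, recorded in \cref{C:excavator}, that $\mu_-$ is decreasing and $\mu_+$ increasing in $s$ and that $\mu_-(s)\le\mu_0\le\mu_+(s)$ for all $s$ by \eqref{E:sigmas}. Fix $x\in[0,b]$ and set $v=f(x)$, $\ell=\max\{g_-(x),h_-(x)\}$ and $u=\min\{g_+(x),h_+(x)\}$, so that $\ell\le v\le u$ by \eqref{E:ineq}. The first step is to show that, for every pair $\mu_-\le\mu_+$,
\begin{equation*}
	f^{(\mu_-,\mu_+)}(x)=\midd\bigl(\,\ell\,,\ \midd(\mu_-,v,\mu_+)\,,\ u\,\bigr);
\end{equation*}
in words, $f^{(\mu_-,\mu_+)}(x)$ is $v$ clamped first into $[\mu_-,\mu_+]$ and then into $[\ell,u]$. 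To prove this, I would note that among the seven numbers in \eqref{E:f_s} defining $f^{(\mu_-,\mu_+)}(x)$, the five numbers $g_-(x),h_-(x),v,g_+(x),h_+(x)$ all lie in $\bigl[\min\{g_-(x),h_-(x)\},\max\{g_+(x),h_+(x)\}\bigr]$; hence $\min\{g_-(x),h_-(x)\}$ has rank at most three, and $\max\{g_+(x),h_+(x)\}$ rank at least five, among the seven, so that deleting these two entries turns the median (the fourth of seven) into the median of the remaining five, namely $\midd(\ell,\mu_-,v,\mu_+,u)$; and an elementary check, splitting on the position of $v$ relative to $[\mu_-,\mu_+]$ and using $\ell\le v\le u$, rewrites this as the displayed expression.

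Granting the formula, the lemma follows at once. Let $x\in S_+$, so $v=f(x)\ge\mu_0$. Since $\mu_-(s)\le\mu_0\le v$ for all $s$ by \eqref{E:sigmas}, we get $\midd(\mu_-(s),v,\mu_+(s))=\min\{v,\mu_+(s)\}$, an expression which does not involve $\mu_-(s)$ and is non-decreasing in $\mu_+(s)$; as $\mu_+$ is increasing in $s$ and $\midd$ is non-decreasing in each of its three arguments, $f_s(x)=\midd\bigl(\ell,\min\{v,\mu_+(s)\},u\bigr)$ is a non-decreasing function of $s$. Moreover $\midd(\ell,w,u)\ge\min\{w,u\}$ whenever $\ell\le u$, so $f_s(x)\ge\min\{\min\{v,\mu_+(s)\},u\}=\min\{v,\mu_+(s)\}\ge\mu_0$ (the first equality because $u\ge v$, the last because $v\ge\mu_0$ and $\mu_+(s)\ge\mu_0$). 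For $x\in S_-$ I would argue symmetrically: the construction of \cref{C:excavator} is equivariant under reflecting the curve across the $x$-axis, which exchanges $f\leftrightarrow-f$, $(\mu_-,\mu_+)\leftrightarrow(-\mu_+,-\mu_-)$, $\mu_0\leftrightarrow-\mu_0$, $g_\pm\leftrightarrow-g_\mp$, $h_\pm\leftrightarrow-h_\mp$ and $S_+\leftrightarrow S_-$, and carries the family $s\mapsto\ga_s$ to the corresponding family for the reflected curve; hence the $S_-$ statements for $\ga$ are exactly the $S_+$ statements for its reflection. (When $f(x)=\mu_0$, i.e.\ $x\in S_+\cap S_-$, the formula gives $f_s(x)=\mu_0$ for all $s$, consistent with both conclusions.)

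The main obstacle is the verification of the double-clamp formula — a short but slightly fiddly case analysis on the median of seven reals, resting only on the envelope inequalities \eqref{E:ineq} — together with the attendant observation that on $S_+$ it is $\mu_+(s)$ that is the active endpoint while $\mu_-(s)$ is inert, the roles being reversed on $S_-$. Once this is isolated, the monotonicity of $f_s(x)$ in $s$ and the comparisons of $f_s(x)$ with $\mu_0$ are immediate from the monotonicity of the median and \eqref{E:sigmas}, with no further appeal to the differential equations satisfied by $g_\pm,h_\pm$ or to the curvature constraint beyond \eqref{E:ineq}.
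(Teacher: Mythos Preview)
Your proof is correct and follows the same approach as the paper: reduce the seven-term median using the envelope inequalities \eqref{E:ineq} and the bounds \eqref{E:sigmas}, observe that on $S_+$ only $\mu_+(s)$ is the active constraint (and dually on $S_-$), and read off monotonicity from that of $\mu_+$. The paper's proof is terser --- it writes directly $f_s(x)=\min\{\mu_+(s),f(x)\}$ for $x\in S_+$ from the displayed chain of inequalities --- whereas your double-clamp formula $f_s(x)=\midd(\ell,\min\{v,\mu_+(s)\},u)$ makes the computation explicit and also covers the marginal situation $\mu_+(s)<\ell$ (where the value is $\ell$, still $\geq\mu_0$ and constant in $s$); the two formulations agree whenever $\ell\le\mu_+(s)$ and yield the same conclusions in any case.
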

\begin{proof}
	Suppose that $x\in S_+$. From \eqref{E:ineq} and \eqref{E:sigmas}, we deduce that 
	\begin{equation*}
		g_-(x),\,h_-(x),\,\mu_-(s)\leq f(x)\leq g_+(x),\,h_+(x).
	\end{equation*}
	Hence, $f_s(x)=\min \se{\mu_+(s),f(x)}\geq \mu_0$ and $f_s(x)$ increases with $s$ since $\mu_+(s)$ does. The proof for $x\in S_-$ is analogous.
\end{proof}

\begin{cor}\label{C:maxmin}
	Let $m_-(s)=\inf_{x\in [0,b]}f_s(x)$ and $m_+(s)=\sup_{x\in [0,b]}f_s(x)$. Then $m_+(s)$ is an increasing and $m_-(s)$ a decreasing  function of $s\in [0,1]$.\qed
\end{cor}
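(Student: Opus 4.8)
The plan is to reduce the statement entirely to \lref{L:A+A-}. The key point is that the supremum of $f_s$ over $[0,b]$ is always attained on $S_+$ and its infimum on $S_-$, so that the monotonicity of $m_\pm$ in $s$ is inherited directly from the pointwise monotonicity of $f_s$ established in that lemma.

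First I would record the decomposition $[0,b]=S_+\cup S_-$ and note that both sets are nonempty: $f$ is continuous on the compact interval $[0,b]$, hence attains its extreme values $m_-$ and $m_+$, and since $\mu_0=\mu_-(0)=\mu_+(0)$ lies in $[m_-,m_+]$ (being the center of the corresponding slice of $\Delta$ in \cref{C:excavator}), every point at which $f=m_+$ belongs to $S_+$ and every point at which $f=m_-$ belongs to $S_-$. Next, for a fixed $s\in[0,1]$, the second assertion of \lref{L:A+A-} gives $f_s(x)\ge\mu_0$ for all $x\in S_+$ and $f_s(x)\le\mu_0$ for all $x\in S_-$; since $S_+\neq\emptyset$ this forces
\[
\sup_{x\in S_-}f_s(x)\ \le\ \mu_0\ \le\ \sup_{x\in S_+}f_s(x),
\]
and therefore $m_+(s)=\sup_{x\in[0,b]}f_s(x)=\sup_{x\in S_+}f_s(x)$. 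Symmetrically, $m_-(s)=\inf_{x\in S_-}f_s(x)$. With these identifications the conclusion is immediate: fixing $s\le s'$ in $[0,1]$, the first assertion of \lref{L:A+A-} gives $f_s(x)\le f_{s'}(x)$ for every $x\in S_+$, so passing to the supremum over $x\in S_+$ yields $m_+(s)\le m_+(s')$, i.e.\ $m_+$ is increasing; running the same argument on $S_-$, where $f_s(x)\ge f_{s'}(x)$, shows that $m_-$ is decreasing.

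I do not expect any serious obstacle here. The only step that is not completely mechanical is the identification of $m_+(s)$ (resp.\ $m_-(s)$) with a supremum over $S_+$ (resp.\ an infimum over $S_-$), which is precisely where the nonemptiness of $S_\pm$ and the uniform bounds $f_s\ge\mu_0$ on $S_+$, $f_s\le\mu_0$ on $S_-$ from \lref{L:A+A-} are used; once that is granted, the monotonicity of $m_\pm$ falls out by taking suprema and infima of the pointwise inequalities.
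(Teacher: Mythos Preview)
Your proof is correct and follows exactly the approach the paper intends: the corollary is stated with an immediate \qed, indicating it is a direct consequence of \lref{L:A+A-}, and you have simply spelled out the (straightforward) details of that deduction. The only extra work you do---checking that $S_\pm$ are nonempty and that the sup/inf of $f_s$ are attained on $S_+$/$S_-$ respectively---is precisely the content implicit in the paper's \qed.
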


\begin{lem}\label{L:Dpath}
		Let $\ga\in \hat{\sr U}_c$ and $s\mapsto \ga_s\in \hat{\sr L}_{-\ka_0}^{+\ka_0}(Q;\theta_1)$ be the homotopy described in \cref{C:excavator}. Let $\om_s$ denote the amplitude of $\ga_s$. Then $\om_s$ is an increasing function of $s$; in particular, $\ga_s$ is condensed \tup(i.e., $\ga_s\in \hat{\sr U}_c$\tup) for all $s\in [0,1]$. 
	\end{lem}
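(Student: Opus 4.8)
The plan is to read off the amplitude $\om_s$ directly from the extreme values of $f_s$ and then quote \cref{C:maxmin}. Retain the axis $\vphi$ and the notation $f_s$ and $b$ of \cref{C:excavator}. In the frame obtained by rotating around the origin through $\vphi$, the curve $\ga_s$ has velocity $(1,f_s(x))$ at the point $\ga_s(x)$, so, after reparametrizing by $x$, the function $x\mapsto\vphi+\arctan f_s(x)$ is a continuous argument of $\ta_{\ga_s}$; it is continuous because $f_s$ is absolutely continuous, hence continuous, and $\arctan$ is an increasing homeomorphism of $\R$ onto $(-\tfrac\pi2,\tfrac\pi2)$. Since the amplitude does not depend on the chosen argument or on the parametrization, and $\arctan$ is increasing, this yields
\[
	\om_s=\arctan\Big(\sup_{x\in[0,b]}f_s(x)\Big)-\arctan\Big(\inf_{x\in[0,b]}f_s(x)\Big)=\arctan m_+(s)-\arctan m_-(s),
\]
with $m_\pm(s)$ as in \cref{C:maxmin}.

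I would then simply apply \cref{C:maxmin}: $m_+(s)$ is increasing and $m_-(s)$ is decreasing in $s$, and composition with the increasing function $\arctan$ preserves these monotonicities, so $\arctan m_+(s)$ and $-\arctan m_-(s)$ are both increasing; hence $\om_s$ is an increasing function of $s\in[0,1]$. For the final assertion, recall from \cref{C:excavator} that $f_1=f$, whence $\ga_1=\ga$ and therefore $\om_1=\om_\ga<\pi$, since $\ga\in\hat{\sr U}_c$ is condensed. Monotonicity then gives $\om_s\le\om_1<\pi$ for every $s\in[0,1]$, so each $\ga_s$ is condensed; as $\ga_s$ already belongs to $\hat{\sr L}_{-\ka_0}^{+\ka_0}(Q;\theta_1)$ by construction, we conclude $\ga_s\in\hat{\sr U}_c$.

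There is no genuine obstacle here; the one point that deserves care is the identity $\om_s=\arctan m_+(s)-\arctan m_-(s)$. It holds because the parametrization by $x$ has $\dot x\equiv1>0$, which forces every continuous argument of $\ta_{\ga_s}$ to take values in an interval of length less than $\pi$, and hence to agree, up to the additive constant $\vphi$, with $\arctan f_s$; the claimed identity is then just the fact that the increasing continuous bijection $\arctan$ commutes with $\sup$ and $\inf$. Everything else reduces to \cref{C:maxmin} and monotonicity of $\arctan$.
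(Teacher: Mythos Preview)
Your proof is correct and follows essentially the same route as the paper. The paper phrases the key relation as $f_s(x)=\tan(\theta_s(x)-\vphi)$ and invokes the monotonicity of $\tan$ together with \cref{C:maxmin}; you invert this to $\theta_s=\vphi+\arctan f_s$ and use the monotonicity of $\arctan$, which is exactly equivalent. Your explicit verification that $\om_s\le\om_1<\pi$ for the ``in particular'' clause is a nice touch that the paper leaves implicit.
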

	\begin{proof}
		Let $\vphi$ be the axis of $\ga$ chosen for the construction. Recall that, by definition,
		\begin{equation}\label{E:amp}
			\om_s=\sup_{x\in [0,b]}\theta_s(x)-\inf_{x\in [0,b]}\theta_s(x)\quad (s\in [0,1]),
		\end{equation}
		where $\theta_s$ is the argument of $\ta_{\ga_s}$ such that $\theta_s(0)=0$. By \eqref{E:f_s}, 
		\begin{equation}\label{E:tan}
			f_s(x)=\tan(\theta_s(x)-\vphi).
		\end{equation}
		Because the tangent is an increasing function, \cref{C:maxmin} immediately implies that $\om_s$ is increasing.
	\end{proof}

	\begin{urem}
		Although $\ga_0$ has minimal amplitude in $\hat{\sr U}_c$ by the previous lemma, there may be other curves in $\hat{\sr U}_c$ with the same amplitude. This is the case, for instance, for the curves $\ga_0$ and $\ga$ corresponding to the functions $f$ and $f_0$ of Figure \ref{F:excavator}.
	\end{urem}
	
	\begin{lem}\label{L:length}
		Let $\ga\in \hat{\sr U}_c$ and $s\mapsto \ga_s$ the deformation described in \cref{C:excavator}. Then the length of $\ga_s$ is an increasing function of $s\in [0,1]$.
	\end{lem}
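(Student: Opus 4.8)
The plan is to show that $L'(s)\geq 0$ for a.e.\ $s\in[0,1]$, where $L(s)$ denotes the length of $\ga_s$; since $L$ will be seen to be Lipschitz (hence absolutely continuous), this yields the conclusion. Rotating back by the axis $\vphi$ of \cref{C:excavator} (an isometry, hence length-preserving) and using $\ga_s(x)=\big(x,\int_0^x f_s(u)\,du\big)$, we have $\abs{\dot\ga_s(x)}=\sqrt{1+f_s(x)^2}$, so
\[
	L(s)=\int_0^b\sqrt{1+f_s(x)^2}\;dx .
\]
By \eqref{E:mpmm} we have $m_-\leq\mu_\pm(s)\leq m_+$ and $m_-\leq f(x)\leq m_+$, and feeding these together with the inequalities \eqref{E:ineq} into the median formula \eqref{E:f_s} gives $m_-\leq f_s(x)\leq m_+$ for all $s,x$. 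Combined with the fact that $\mu_-,\mu_+$ are Lipschitz in $s$ (estimate (B) in \cref{C:excavator}), this makes $L$ Lipschitz and legitimizes differentiation under the integral sign.

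First I would pin down $\partial_s f_s(x)$. Since $\mu_-(s)$ and $\mu_+(s)$ are the only $s$-dependent inputs of \eqref{E:f_s}, and off the sets where $f_s(x)$ equals one of them the value $f_s(x)$ is locally (in $s$, for fixed $x$) equal to one of the fixed functions $f,g_\pm,h_\pm$, one has for a.e.\ $s$ that $\partial_s f_s(x)=\mu_+'(s)$ on $\set{x}{f_s(x)=\mu_+(s)}$, $\partial_s f_s(x)=\mu_-'(s)$ on $\set{x}{f_s(x)=\mu_-(s)}$, and $\partial_s f_s(x)=0$ elsewhere. Writing $P(s)$ and $N(s)$ for the Lebesgue measures of these two sets,
\[
	L'(s)=\mu_+'(s)\,\frac{\mu_+(s)}{\sqrt{1+\mu_+(s)^2}}\,P(s)+\mu_-'(s)\,\frac{\mu_-(s)}{\sqrt{1+\mu_-(s)^2}}\,N(s).
\]

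Next I would bring in the area constraint $A(\mu_-(s),\mu_+(s))=A_1$, valid for every $s$ by construction. The same inspection of \eqref{E:median} shows that $\partial A/\partial\mu_+=P$ and $\partial A/\partial\mu_-=N$ at the point $(\mu_-(s),\mu_+(s))$, so differentiating the constraint gives $N(s)\,\mu_-'(s)+P(s)\,\mu_+'(s)=0$. Setting $c(s):=P(s)\,\mu_+'(s)=-N(s)\,\mu_-'(s)$, which is $\geq 0$ because $\mu_+$ is increasing and $\mu_-$ is decreasing, we obtain
\[
	L'(s)=c(s)\left[\frac{\mu_+(s)}{\sqrt{1+\mu_+(s)^2}}-\frac{\mu_-(s)}{\sqrt{1+\mu_-(s)^2}}\right]\geq 0,
\]
since $v\mapsto v/\sqrt{1+v^2}$ is strictly increasing and $\mu_-(s)\leq\mu_+(s)$. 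Hence $L$ is increasing on $[0,1]$.

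The step I expect to cost the most care is the justification of the pointwise formulas for $\partial_s f_s$ and for $\partial A/\partial\mu_\pm$, i.e.\ showing that the ``corner'' locus, where $f_s(x)$ simultaneously equals $\mu_\pm(s)$ and one of the fixed functions $f(x),g_\pm(x),h_\pm(x)$, is Lebesgue-negligible in $(s,x)$. This follows because each of $f,g_\pm,h_\pm$ is a fixed function, so it attains any prescribed value on a positive-measure set of $x$ only for countably many values, while $\mu_\pm$ are monotone in $s$; on an interval where $\mu_+$ (resp.\ $\mu_-$) is constant the corresponding term in $L'$ vanishes anyway. Everything else is the short computation above, which is essentially dictated by the median structure of \eqref{E:f_s} together with property (A) of \cref{C:excavator}.
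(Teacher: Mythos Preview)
Your argument is correct, but it takes a genuinely different route from the paper's proof. The paper does not differentiate $L$ in $s$ at all. Instead, it fixes $s_1\le s_2$, sets $\la(u)=(1+u^2)^{1/2}$, and writes
\[
L_{s_2}-L_{s_1}=\Big(\int_{S_+}+\int_{S_-}\Big)\big(\la\circ f_{s_2}-\la\circ f_{s_1}\big)\,dx
=\Big(\int_{T_+}-\int_{T_-}\Big)\la'(y)\,dy\,dx,
\]
where $S_\pm$ are the sets of \lref{L:A+A-} and $T_+=\{(x,y):x\in S_+,\ f_{s_1}(x)\le y\le f_{s_2}(x)\}$, $T_-$ analogously. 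Since $y\ge\mu_0$ on $T_+$ and $y\le\mu_0$ on $T_-$, convexity of $\la$ gives $\la'(y)\ge\la'(\mu_0)$ on $T_+$ and $\la'(y)\le\la'(\mu_0)$ on $T_-$; replacing $\la'(y)$ by the constant $\la'(\mu_0)$ therefore only decreases the expression, and the resulting quantity is $\la'(\mu_0)\big(\int f_{s_2}-\int f_{s_1}\big)=0$ by the area constraint. This is a purely integral-inequality argument, resting on \lref{L:A+A-} and the convexity of $\la$.

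Your approach instead differentiates and uses the \emph{derivative} of the area constraint, which is a Lagrange-multiplier style computation: the monotonicity of $v\mapsto v/\sqrt{1+v^2}=\la'(v)$ that you invoke is of course the same convexity of $\la$ in disguise, and your relation $P\mu_+'+N\mu_-'=0$ plays the role of $\int f_{s_2}=\int f_{s_1}$. What the paper's route buys is that it sidesteps all the measure-theoretic housekeeping you flag in your last paragraph (a.e.\ differentiability of $\mu_\pm$, negligibility of corner loci, the ``center of an interval'' definition of $(\mu_-(s),\mu_+(s))$ when the level set has positive width); what your route buys is a transparent variational picture of \emph{why} length increases, and the computation would extend verbatim with $\la$ replaced by any convex integrand.
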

	\begin{proof}
		Let $\la\colon \R\to \R$ be given by $\la(u)=(1+u^2)^{\frac{1}{2}}$. A straightforward computation shows that
		\begin{equation}\label{E:computation}
			\la''(u)=(1+u^2)^{-\frac{3}{2}}>0\ \ \text{ for all $u\in \R$}.
		\end{equation}
		Moreover, by definition \eqref{E:f_s}, the length $L_s$ of $\ga_s$ is given by 
		\begin{equation*}
			L_s=\int_0^b(\la\circ f_s)(x)\,dx.
		\end{equation*}
		Let $s_1\leq s_2\in [0,1]$,  $S_+$, $S_-$ be as in \lref{L:A+A-} and
		\begin{alignat*}{10}
			T_+=&\set{(x,y)\in [0,b]\times \R}{f_{s_1}(x)\leq y\leq f_{s_2}(x)},\\
			T_-=&\set{(x,y)\in [0,b]\times \R}{f_{s_2}(x)\leq y\leq f_{s_1}(x)}.
		\end{alignat*}
		Using \lref{L:A+A-}, we deduce that 
		\begin{alignat*}{10}
			L_{s_2}-L_{s_1}&=\int_0^b(\la\circ f_{s_2})(x)-(\la\circ f_{s_1})(x)\,dx & & \\ 
			&=\bigg(\int_{S_+}+\int_{S_-}\bigg)(\la\circ f_{s_2})(x)-(\la\circ f_{s_1})(x)\,dx & &\\
			&=\bigg(\int_{T_+}-\int_{T_-}\bigg)\la'(y)\,dy\,dx & & \\
			&\geq \bigg(\int_{T_+}-\int_{T_-}\bigg)\la'(\mu_0)\,dy\,dx& &\ \  \text{by \eqref{E:computation}}\\
			&=\la'(\mu_0)\Big(\int_0^b f_{s_2}-\int_0^b f_{s_1}\Big)=0& &\ \ \text{by the definition of $f_s$}.
		\end{alignat*}
		Therefore, $L_s$ is an increasing function of $s\in [0,1]$.
	\end{proof}
	
	We are finally ready to prove \pref{P:excavator} and \tref{T:condensed}.
\begin{proof}[Proof of \pref{P:excavator}]
	For each $\ga\in \hat{\sr U}_c$, let
	\begin{equation}\label{E:aver}
		\bar\vphi_\ga=\frac{1}{2}\Big(\sup_{t\in [0,1]}\theta_\ga(t)+\inf_{t\in [0,1]}\theta_\ga(t)\Big),
	\end{equation}
	where $\theta_\ga\colon [0,1]\to \R$ is the argument of $\ta_{\ga}$ satisfying $\theta_\ga(0)=0$. It is clear that $\bar\vphi_\ga$ depends continuously on $\ga\in \hat{\sr U}_c$. Define $H\colon [0,1]\times \hat{\sr U}_c\to \hat{\sr U}_c$ by $H(s,\ga)=\ga_s$, where $\ga_s$ is the curve \eqref{E:f_s} constructed in \cref{C:excavator} with chosen axis $\bar\vphi_\ga$. Then part (ii) of \pref{P:excavator} follows from \lref{L:Dpath}, and part (iii) from \lref{L:length}. The last assertion of \pref{P:excavator} and part (i) were established in \rref{R:geom}.
\end{proof}

\begin{proof}[Proof of \tref{T:condensed}]
	Assume that $\sr U_c$ is nonempty. It is certainly open in $\sr L_{-1}^{+1}(Q;\theta_1)$. Hence, by \lref{L:Hilbert}, it suffices to prove that $\sr U_c$ is weakly contractible. Let $K$ be a compact manifold and $g\colon K\to \sr U_c$, $a\mapsto \ga^a$, be a continuous map. Using \lref{L:smoothie}, we may assume that the image of $g$ is contained in (the image under set inclusion of) $\sr C_{-\ka_0}^{+\ka_0}(Q;\theta_1)$ for some  $\ka_0\in (0,1)$. By \lref{L:inclusions}, we have continuous injections
	\begin{equation*}
		\sr C_{-\ka_0}^{+\ka_0}(Q;\theta_1)\to \hat{\sr L}_{-\ka_0}^{+\ka_0}(Q;\theta_1)\to \sr L_{-1}^{+1}(Q;\theta_1).
	\end{equation*} 
	Let $G\colon [0,1]\times K\to \sr L_{-1}^{+1}(Q;\theta_1)$ map $(s,a)$ to (the image under set inclusion of) $H(s,\ga^a)$, with $H$ as in \pref{P:excavator}. Then $G$ is a null-homotopy of $g$ in $\sr U_c$. \end{proof}
	
	The next couple of lemmas will only be needed in later sections.

\begin{lem}\label{L:bounded}
	Suppose that there exists $\hat \om\in (0,\pi)$ such that if $\ga\in \hat{\sr U}_c\subs \hat{\sr L}_{-\ka_0}^{+\ka_0}(Q;\theta_1)$ then its amplitude $\om_\ga$ satisfies $\om_\ga\leq \hat \om$. Let $L(\eta)$ denote the length of $\eta$. Then $\sup_{\ga\in \hat{\sr U}_c}L(\ga)$ is finite. In particular,  the images of $\ga\in \hat{\sr U}_c$ are all contained in some bounded subset of $\C$.
\end{lem}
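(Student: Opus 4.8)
The plan is to obtain a uniform upper bound on the length of the curves in $\hat{\sr U}_c$ first, and then to deduce from it that all their images lie in a common bounded set. The geometric idea is elementary: a condensed curve of amplitude at most $\hat\om$ has, relative to the midpoint direction of the angular range of its unit tangent, its tangent confined to directions within angle $\hat\om/2<\pi/2$ of that midpoint; hence the orthogonal projection of the curve onto that direction is monotone, and the net displacement it produces --- which is at least $\cos(\hat\om/2)$ times the length of the curve --- cannot exceed the fixed quantity $\abs{q}$, since every curve in question runs from $0$ to $q$.

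To carry this out, take $\ga\in\hat{\sr U}_c$ and reparametrize it by arc-length, so that $\ga\colon[0,L]\to\C$ with $\ga(0)=0$, $\ga(L)=q$, $\ga'=\ta_\ga$ and $L=L(\ga)$; let $\theta\colon[0,L]\to\R$ be a continuous argument of $\ta_\ga$, which is Lipschitz by \dref{D:main}, so that $\sup\theta$ and $\inf\theta$ are attained. Since $\ga$ is condensed with $\sup\theta-\inf\theta=\om_\ga\leq\hat\om$, the number $\bar\vphi=\tfrac12(\sup\theta+\inf\theta)$ satisfies $\abs{\theta(s)-\bar\vphi}\leq\hat\om/2$ for every $s\in[0,L]$, whence
\[
	\gen{\ga'(s),e^{i\bar\vphi}}=\cos\big(\theta(s)-\bar\vphi\big)\geq\cos\big(\tfrac{\hat\om}{2}\big)>0
\]
because $\hat\om<\pi$. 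Integrating over $[0,L]$ gives
\[
	L\cos\big(\tfrac{\hat\om}{2}\big)\leq\gen{\ga(L)-\ga(0),e^{i\bar\vphi}}=\gen{q,e^{i\bar\vphi}}\leq\abs{q},
\]
so $L(\ga)\leq\abs{q}\sec(\hat\om/2)$, a bound independent of $\ga$; hence $\sup_{\ga\in\hat{\sr U}_c}L(\ga)$ is finite (trivially so if $\hat{\sr U}_c$ is empty). For the last assertion, since each such $\ga$ starts at $0$, any point $\ga(t)$ on it satisfies $\abs{\ga(t)}=\abs{\ga(t)-\ga(0)}\leq L(\ga)\leq\abs{q}\sec(\hat\om/2)$, so all the images lie in the closed ball of that radius about the origin.

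I do not expect any real obstacle in this argument. The one point that deserves explicit attention is that the auxiliary direction $\bar\vphi=\bar\vphi_\ga$ varies with $\ga$, whereas the resulting estimate $\abs{q}\sec(\hat\om/2)$ does not, because $\abs{\gen{q,e^{i\bar\vphi}}}\leq\abs{q}$ regardless of $\bar\vphi$. An essentially equivalent route, closer in spirit to \cref{C:excavator}, is to describe $\ga$, after rotation by $-\bar\vphi_\ga$, as the graph of a function over an interval $[0,b]$ with $b=\gen{q,e^{i\bar\vphi_\ga}}\leq\abs{q}$ and slope bounded in absolute value by $\tan(\hat\om/2)$, and then to integrate the arc-length element $\sqrt{1+(\text{slope})^2}\leq\sec(\hat\om/2)$ over $[0,b]$; this yields the same bound.
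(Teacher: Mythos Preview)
Your proof is correct and follows essentially the same approach as the paper: both use the axis $\bar\vphi_\ga$ to bound the projection of $\ta_\ga$ onto $e^{i\bar\vphi_\ga}$ below by $\cos(\hat\om/2)$ and arrive at the identical estimate $L(\ga)\leq\abs{q}\sec(\hat\om/2)$. The paper carries this out via the graph representation of \cref{C:excavator} (exactly the ``equivalent route'' you mention at the end), while your primary argument integrates the tangential projection directly; the two computations are interchangeable.
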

\begin{proof}
	Let $\ga\in \hat{\sr U}_c$ and $\bar\vphi_\ga$ be as in \eqref{E:aver}. By hypothesis, the image of $\theta_\ga\colon [0,1]\to \R$ is contained in $\big[\bar\vphi_\ga-\frac{\hat\om}{2}\,,\,\bar\vphi_\ga+\frac{\hat\om}{2}\big]$. Let $f\colon [0,b]\to \R$ be the function corresponding to $\ga$ and the axis $\bar\vphi_\ga$,  in the notation of \cref{C:excavator}. Note that $b=\gen{e^{i\bar\vphi_\ga},q}\leq \abs{q}$, where $q$ is the $\C$-coordinate of $Q$. By \eqref{E:tan}, 
	\begin{equation*}
		\abs{f(x)}\leq \tan\Big({\frac{\hat \om}{2}}\Big)\text{\ for all $x\in [0,b]$}.
	\end{equation*}
	Therefore, the length $L(\ga)$ of $\ga$ satisfies
	\begin{equation*}
		L(\ga)=\int_0^b\sqrt{1+f(x)^2}\?dx\leq b\sec\Big({\frac{\hat \om}{2}}\Big)\leq \abs{q}\sec\Big({\frac{\hat \om}{2}}\Big).\qedhere
	\end{equation*}
\end{proof}

\begin{lem}\label{L:compress}
	Let $\hat{\sr U}_c\subs \hat{\sr L}_{-\ka_0}^{+\ka_0}(Q;\theta_1)$ and $H\colon [0,1]\times \hat{\sr U}_c\to \hat{\sr U}_c$ be the deformation described in \pref{P:excavator} and \cref{C:excavator}. Suppose that $\theta_1=0$. Then $\om_0<\om_1$ unless $\ga_1=\ga_0$. 
\end{lem}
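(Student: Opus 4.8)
The plan is to work entirely in the $x$-reparametrization picture of \cref{C:excavator}: fix the axis $\vphi=\bar\vphi_\ga$, let $f=f_1$ be the function associated with $\ga$ and $\vphi$, and let $f_s=\midd\bigl(h_-,g_-,\mu_-(s),f,\mu_+(s),g_+,h_+\bigr)$ be that construction's deformation, so $\ga_s=H(s,\ga)$ and $\ga_s(x)=\bigl(x,\int_0^x f_s\bigr)$. Put $m_+(s)=\sup_{[0,b]}f_s$ and $m_-(s)=\inf_{[0,b]}f_s$. Exactly as in the proof of \lref{L:Dpath}, \eqref{E:tan} gives $\om_s=\arctan m_+(s)-\arctan m_-(s)$, and by \cref{C:maxmin} the map $m_+$ is nondecreasing and $m_-$ nonincreasing; since $\arctan$ is strictly increasing, $\om_0\le\om_1$, with equality if and only if $\sup f_0=\sup f$ and $\inf f_0=\inf f$. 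As $\om_0\le\om_1$ is already known, the lemma reduces to the assertion: \emph{if $\theta_1=0$, $\sup f_0=\sup f$ and $\inf f_0=\inf f$, then $f_0=f$} (hence $\ga_0=\ga_1$).

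The hypothesis $\theta_1=0$ means $\ta_\ga(0)=\ta_\ga(1)$, i.e.\ $r_0=r_b$; call the common value $r$. From \eqref{E:gpm}--\eqref{E:hpm}, $g_+$ and $h_-$ are increasing, $g_-$ and $h_+$ decreasing, with $g_\pm(0)=r=h_\pm(b)$; so, setting $\phi_-=\max(g_-,h_-)$ and $\phi_+=\min(g_+,h_+)$, one has $\phi_-\le r\le\phi_+$ on $[0,b]$ and $\phi_-(0)=\phi_+(0)=r$. By \eqref{E:ineq} one also has $\phi_-\le f\le\phi_+$, hence $\phi_-\le\phi_+$, and a short computation with the five-element median of \rref{R:geom} (the remaining values $\min(g_-,h_-)$ and $\max(g_+,h_+)$ being irrelevant) shows $f_0=\midd(\phi_-,\mu_0,\phi_+)$. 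Consequently: on $S_+=\{f\ge\mu_0\}$, where $\phi_+\ge f\ge\mu_0$, one has $f_0=\max(\phi_-,\mu_0)$; on $S_-=\{f\le\mu_0\}$, $f_0=\min(\mu_0,\phi_+)$; and $f_0(0)=\midd(r,\mu_0,r)=r$. Using $\phi_-\le r\le\phi_+$, the nonemptiness of $S_\pm$ (as $\inf f\le\mu_0\le\sup f$ by \eqref{E:mpmm}), and evaluation at $x=0$, these formulas yield
\[
\sup f_0=\max(\mu_0,r),\qquad \inf f_0=\min(\mu_0,r)
\]
(which, since $r$ and every value of $f$ lie in $[\inf f,\sup f]$, incidentally reproves $\om_0\le\om_1$).

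Now impose $\sup f_0=\sup f$ and $\inf f_0=\inf f$. By the displayed identities, $\{\sup f,\inf f\}=\{\mu_0,r\}$, and since $\mu_0\in[\inf f,\sup f]$ this forces $\mu_0=\sup f$ (if $\mu_0\ge r$) or $\mu_0=\inf f$ (if $\mu_0\le r$). Treat the first case; the second follows by reflecting the picture across the $x$-axis, which replaces $(f,\mu_0,g_\pm,h_\pm)$ by $(-f,-\mu_0,-g_\mp,-h_\mp)$ and preserves $\theta_1=0$. If $\mu_0=\sup f$, then $f\le\mu_0$ on $[0,b]$, so $S_+=\{f=\mu_0\}$ and there $f_0=\max(\phi_-,\mu_0)=\mu_0=f$ (as $\phi_-\le r\le\mu_0$), while on $\{f<\mu_0\}$ we have $f_0=\min(\mu_0,\phi_+)\ge\min(\mu_0,f)=f$ (using $f\le\phi_+$); hence $f_0\ge f$ everywhere. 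But $\int_0^b f_0=A_1=\int_0^b f$ by condition (iii) of \cref{C:excavator}, so $\int_0^b(f_0-f)=0$ with $f_0-f\ge0$ continuous, and therefore $f_0\equiv f$, i.e.\ $\ga_0=\ga_1$. This proves the assertion, and with it the lemma.

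The step I expect to require the most care is the computation in the second paragraph: checking from the explicit formulas \eqref{E:gpm}--\eqref{E:hpm} and $r_0=r_b$ that $\phi_\pm$ attains its extreme value $r$ exactly at an endpoint, that $f_0=\midd(\phi_-,\mu_0,\phi_+)$, and hence that $\sup f_0=\max(\mu_0,r)$ and $\inf f_0=\min(\mu_0,r)$. Everything after that is a brief sign-and-integral argument.
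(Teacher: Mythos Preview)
Your proof is correct and follows essentially the same route as the paper's: both identify $m_+(0)=\mu_0$ (assuming $\mu_0\ge r$), deduce that equality $\om_0=\om_1$ forces $f\le\mu_0$ everywhere, and then use $f_0\ge f$ together with $\int f_0=\int f$ to conclude $f_0=f$. The paper is simply terser, splitting directly on whether $m_+(1)\le\mu_0$ and invoking \lref{L:A+A-} for the pointwise inequality rather than computing $\sup f_0$ and $\inf f_0$ explicitly as you do.
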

	\begin{proof}
		It is obvious that $\om_1=\om_0$ if $\ga_1=\ga_0$. The condition $\theta_1=0$ is equivalent to $r_0=r_b$, in the notation of \cref{C:excavator}. Suppose without loss of generality that $\mu_0\geq r_0$, so that $m_+(0)=\mu_0$. 
		
		If $m_+(1)\leq \mu_0$, then $S_-=[0,b]$. Hence, by \lref{L:A+A-}, $f_1(x)\leq f_0(x)$ for all $x\in [0,b]$.  Since $f_1$ and $f_0$ have the same area, we conclude that $f_1=f_0$, that is, $\ga_1=\ga_0$.
		
		By \cref{C:maxmin}, $m_-(1)\leq m_-(0)$. Hence, if $m_+(1)>\mu_0=m_+(0)$, then $\om_0<\om_1$ by \eqref{E:amp} and \eqref{E:tan}. 
	\end{proof}

\subsection*{Existence of condensed curves}The question of whether $\sr U_c\neq \emptyset$ is settled by means of an elementary geometric construction. In all that follows, $C_r(a)$ denotes the circle of radius $r>0$ centered at $a\in \C$. 

\begin{figure}[ht]
	\begin{center}
		\includegraphics[scale=.32]{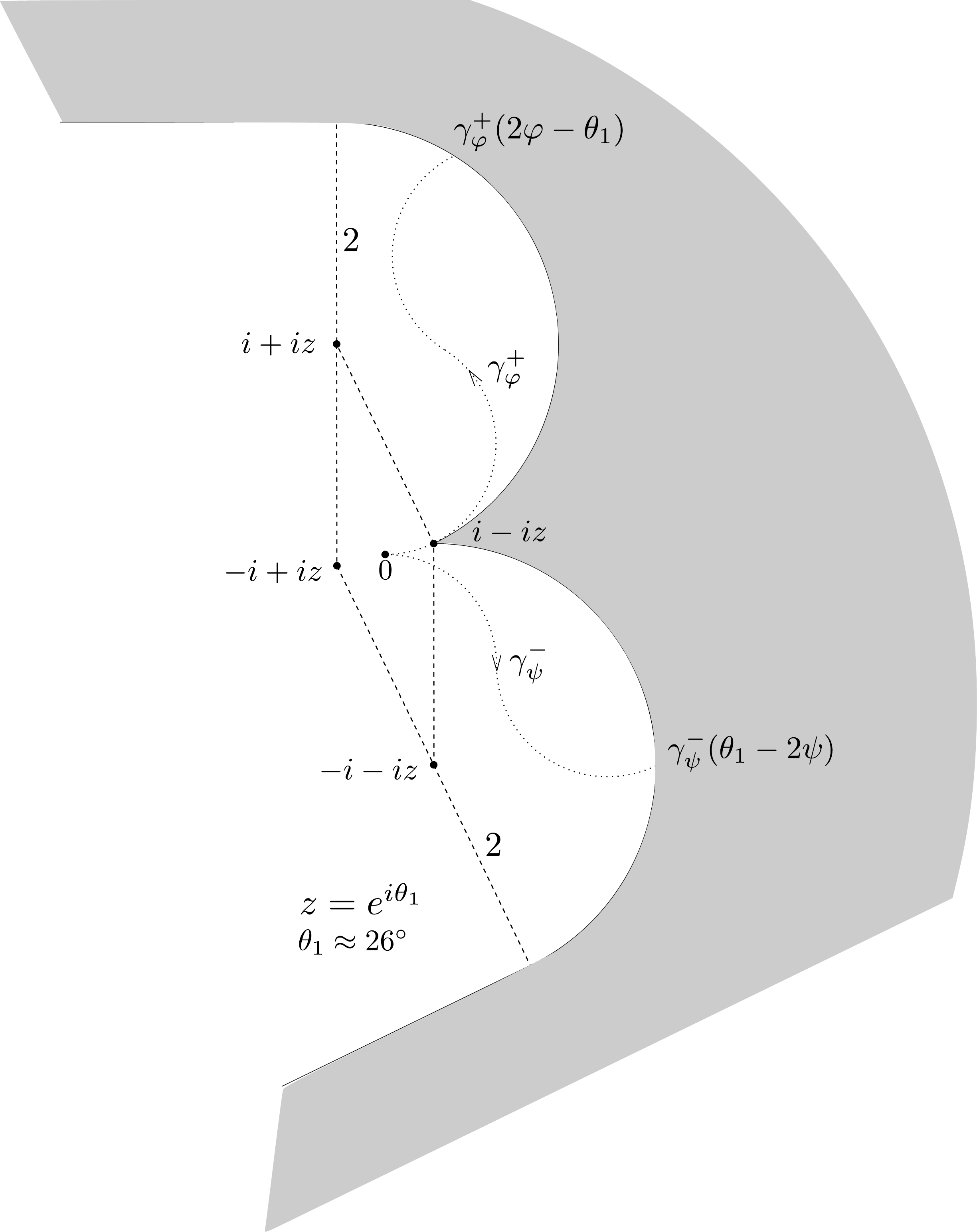}
		\caption{Let $\theta_1\in [0,\pi)$ be fixed and $Q=(q,z)$, where $z=e^{i\theta_1}$. There exist condensed curves in $\sr L_{-1}^{+1}(Q;\theta_1)$ if and only if $q$ belongs to the open gray region.}
		\label{F:condensed}
	\end{center}
\end{figure}

\begin{prop}\label{P:condensedlocation}
	Let $\theta_1\in [0,\pi)$ be fixed, $z=e^{i\theta_1}$ and $Q=(q,z)\in \C\times \Ss^1$. Let $R_{\sr U_c}$ be the open region of the plane which does \emph{not} contain $-i+iz$ and which is bounded by the shortest arcs of the circles $C_2(\pm(i+iz))$ joining $i-iz$ to $i-iz\pm 2(i+iz)$ and their tangent lines at the latter points. Then $\sr L_{-1}^{+1}(Q;\theta_1)$ contains condensed curves if and only if $q\in R_{\sr U_c}$.
\end{prop}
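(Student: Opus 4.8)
The plan is to characterize condensedness by the behavior of the unit tangent: a curve $\ga\in\sr L_{-1}^{+1}(Q;\theta_1)$ is condensed precisely when its argument $\theta_\ga$ stays within an open interval of length $<\pi$. Since the reachable directions are constrained by the curvature bound $|\ka|<1$, I will track, for each direction $e^{i\vphi}$ that could serve as an axis, the set of endpoints $q$ attainable by curves whose entire tangent image lies in the open half-circle about $e^{i\vphi}$. First I would reduce (via \lref{L:reparametrize} and \cref{L:smoothie}) to working with piecewise-$C^2$ curves, and reduce to the closed-curvature space $\hat{\sr L}_{-1}^{+1}(Q;\theta_1)$, exploiting the density/homotopy statements and the fact that the boundary extremals for the reachable set are attained by Dubins-type curves. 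By \cref{C:Dubinspath}, the extremal condensed curves — those pushing the endpoint as far as possible in a given axis direction — are concatenations of an arc of a unit circle, a line segment, and another arc of a unit circle, each arc of length $<\pi$ (since amplitude $<\pi$).

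Next I would fix the axis to be the most natural one. Because the problem is symmetric and $\theta_1\in[0,\pi)$, one expects the extreme endpoints to be realized by curves of the form ``initial unit-circle arc of total turning up to $\theta_1$ or $\theta_1-\pi^+$, followed by a segment'' and its mirror, i.e. the boundary of $R_{\sr U_c}$ should be swept out by the endpoints of such two-arc-plus-segment curves as the arc lengths vary. Concretely: starting at $O=(0,1)$, an arc of the circle $C_1(i)$ (curvature $+1$) takes the frame to $(i-iz',z')$ as the direction rotates to $z'=e^{i\psi}$; an arc of $C_1(-i)$ (curvature $-1$) takes it to $(-i+iz',z')$. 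Composing an initial left arc, then a straight segment, then a final right arc — with the constraint that the total turning is exactly $\theta_1$ and the amplitude never reaches $\pi$ — the locus of endpoints, as the arc parameters range over the allowed set, is exactly the region bounded by the two arcs of $C_2(\pm(i+iz))$ and their tangent lines described in the statement. The center $\pm(i+iz)$ arises as $i + i e^{i\theta_1}$-type combinations of the two circle centers $\pm i$; the radius $2$ is the distance between the centers of the initial circle $C_1(i)$ and the reflected terminal circle; the endpoints $i-iz\pm 2(i+iz)$ of the arcs are where the segment degenerates; and the tangent lines correspond to letting the straight segment have unbounded length.

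The two halves of the argument are: (1) \emph{sufficiency} — if $q\in R_{\sr U_c}$, exhibit an explicit condensed curve. Here I would take the appropriate two-arc-plus-segment curve whose endpoint is $q$ (these exist by the geometry of the swept region, continuity, and convexity of the area functional as in \cref{C:excavator}), and check its amplitude is $<\pi$ directly from the arc lengths. (2) \emph{necessity} — if $\sr U_c\ne\emptyset$, then $q\in R_{\sr U_c}$. Given any condensed $\ga$ with axis $\vphi$, apply \pref{P:excavator}/\cref{C:excavator} to deform it, without increasing amplitude past $\pi$ and ending at the Dubins curve $\ga_0$; since $\ga_0$ is a two-arc-plus-segment curve with both arcs of length $<\pi$, its endpoint $q$ lies in the open region swept by such curves, which is precisely $R_{\sr U_c}$. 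The strict inequality $\om<\pi$ forces $q$ into the \emph{open} region and excludes the bounding arcs of radius $2$ (which correspond to $\om=\pi$) — this matches the caption of Figure \ref{F:S0}, where the radius-$4$ arc is included but the radius-$2$ arcs are not (after the dilation normalizing $\ka_0$).

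The main obstacle will be step (2): verifying precisely that the endpoint of \emph{every} condensed curve lies in $R_{\sr U_c}$, i.e. that the Dubins normal form $\ga_0$ cannot escape the described region. This requires a careful case analysis of the possible sign patterns of the two arcs (left-then-right, left-then-left via a degenerate segment, etc.) together with the total-turning constraint $\theta_\ga(1)-\theta_\ga(0)=\theta_1$ and the amplitude constraint, and then an explicit computation identifying the union of the resulting endpoint loci with the region bounded by the arcs of $C_2(\pm(i+iz))$ and their tangent lines. The monotonicity properties already established (Lemmas \ref{L:A+A-}, \ref{L:Dpath}, \ref{L:length} and Corollaries \ref{C:maxmin}) ensure the deformation to $\ga_0$ stays condensed, so the reduction itself is free; the remaining work is the plane geometry of circle arcs, which is elementary but must be done with care about which arcs and tangent half-lines bound the region on which side.
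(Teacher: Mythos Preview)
Your sufficiency argument is essentially the paper's: one constructs explicit arc--arc and arc--line--arc curves $\ga_\vphi^+$, $\ga_\psi^-$ with curvature $\pm 1$ whose endpoints sweep out $\overline{R_{\sr U_c}}$, and then perturbs (the paper dilates by a factor $c>1$) to land strictly inside $R_{\sr U_c}$ with curvature in $(-1,1)$.

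Your necessity argument takes a genuinely different route and has a real gap. The paper does \emph{not} reduce to the Dubins path $\ga_0$; instead, given any condensed $\eta\in\sr L_{-1}^{+1}(Q)$ with $\vphi=\sup\theta_\eta$, it defines
\[
g(s)=\big\langle\eta(s)-\ga_\vphi^+(2\vphi-\theta_1),\,ie^{i\vphi}\big\rangle
\]
and proves $g(L)<0$ directly, by combining $g'(s)\le 0$ (since $\theta_\eta(s)\in[\vphi-\pi,\vphi]$) with the \emph{strict} $1$-Lipschitz property of $\theta_\eta$ (coming from $|\ka_\eta|<1$) on three well-chosen subintervals where $\theta_\eta$ must cross prescribed values. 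The strict Lipschitz bound is precisely what produces the strict inequality, hence the \emph{open} region $R_{\sr U_c}$.

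Your reduction to $\ga_0$ discards exactly this information, and your stated reason for openness is wrong. You claim the bounding arcs of $C_2(\pm(i+iz))$ ``correspond to $\om=\pi$'', but they do not: the curves $\ga_\vphi^+$ for $\vphi\in(\theta_1,\pi)$ are \emph{condensed} (amplitude $\vphi<\pi$) arc--arc curves whose endpoints trace out precisely those arcs. What excludes them from $\sr L_{-1}^{+1}(Q)$ is that their curvature equals $\pm 1$ on sets of positive measure, not that their amplitude is $\pi$. (Only the tangent-line portions of $\bd R_{\sr U_c}$ and the far endpoints of the arcs correspond to $\om=\pi$.) Consequently, after passing to $\hat{\sr L}_{-1}^{+1}$ and deforming to the Dubins path your argument yields only $q\in\overline{R_{\sr U_c}}$; nothing you have written rules out $q$ lying on the radius-$2$ arcs. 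The gap can be patched---e.g.\ by invoking \lref{L:submersion} to see that the set of $q$ reached by condensed curves in $\sr L_{-1}^{+1}(O,\cdot)$ is open, so being contained in $\overline{R_{\sr U_c}}$ forces containment in $R_{\sr U_c}$---but the paper's direct integral estimate is both shorter and handles the strict inequality in one stroke.
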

It is clear from the definition of condensed curve that $\sr U_c\subs \sr L_{-1}^{+1}(Q;\theta_1)$ is empty if $\abs{\theta_1}\geq \pi$. In other words, all condensed curves in $\sr L_{-1}^{+1}(Q)$ must be contained in the subspace $\sr L_{-1}^{+1}(Q;\theta_1)$ with $\theta_1$ the unique number in $(-\pi,\pi)$ satisfying $e^{i\theta_1}=z$ (for $z\neq -1$). We have assumed that $\theta_1\in [0,\pi)$ just to simplify the statement. If $\theta_1\in (-\pi,0]$, the only difference is that $i-iz$ should be interchanged with $-i+iz$. The proof is analogous to the one given below. Alternatively, it can be deduced from the proposition by applying a reflection across the $x$-axis. When $\theta_1=0$, the statement becomes ambiguous; in this case the arcs of circles which bound $R_{\sr U_c}$ are centered at $\pm2i$, bounded by 0 and $\pm 4i$, and pass through the points $2\pm 2i$, respectively.

\begin{proof}[Proof of \pref{P:condensedlocation}.]
Let $\eta\colon [0,1]\to \C$ be condensed and let $\theta_\eta\colon [0,1]\to \R$ be the argument of $\ta_\eta$ satisfying $\theta_\eta(0)=0$. Observe that
	\begin{equation*}\label{E:minmax}
		 \inf\set{\theta_\eta(t)}{t\in [0,1]}\in [\theta_1-\pi,0]\text{\quad and\quad}\sup\set{\theta_\eta(t)}{t\in [0,1]}\in[\theta_1,\pi].
	\end{equation*}
	The proof relies on the study of the following curves. For each $\vphi\in [\theta_1,\pi]$, define $\ga_\vphi^+\colon [0,2\vphi-\theta_1]\to \C$ to be the unique curve parametrized by arc-length satisfying:
	\begin{equation*}
		\ga_\vphi^+(0)=0\text{\quad and\quad}\ta_{\ga_\vphi^+}(s)=\begin{cases}
		e^{is} & \text{\quad if $s\in [0,\vphi]$} \\
		e^{i(2\vphi-s)} & \text{\quad if $s\in [\vphi,2\vphi-\theta_1]$}
		\end{cases}
	\end{equation*}
	Then $\ga_{\vphi}^+$ is the concatenation of two arcs of circles of radius 1, 
	\begin{alignat*}{9}
	&\inf_{t\in [0,1]}\theta_{\ga_\vphi^+}(t)=0,\quad \sup_{t\in [0,1]}\theta_{\ga_\vphi^+}(t)=\vphi,\quad \ta_{\ga_\vphi^+}(2\vphi-\theta_1)=z \quad\text{and} \\
	&\ga_\vphi^+(2\vphi-\theta_1)=\int_0^\vphi e^{is}\,ds+\int_\vphi^{2\vphi-\theta_1}e^{i(2\vphi-s)}\,ds=(i+iz)-2ie^{i\vphi}.
	\end{alignat*}
	Thus, as $\vphi$ increases from $\theta_1$ to $\pi$, the endpoints of the $\ga_\vphi^+$ trace out the arc of $C_2(i+iz)$ bounded by $i-iz$ and $3i+iz$. Further, the tangent line to $C_2(i+iz)$ at $\ga_\vphi^+(2\vphi-\theta_1)$ is parallel to $e^{i\vphi}$, for it must be orthogonal to $-2ie^{i\vphi}$. 
	
	Similarly, for each $\psi\in [\theta_1-\pi,0]$, let $\ga_\psi^-\colon [0,\theta_1-2\psi]\to \C$ be the curve, parametrized by arc-length, which satisfies
	\begin{equation*}
		\ga_\psi^-(0)=0\text{\quad and\quad}\ta_{\ga_\psi^-}(s)=\begin{cases}
		e^{-is} & \text{\quad for $s\in [0,-\psi]$} \\
		e^{i(2\psi+s)} & \text{\quad for $s\in [-\psi,\theta_1-2\psi]$}
		\end{cases}
	\end{equation*}
	Then $\ga_{\psi}^-$ is the concatenation of two arcs of circles of radius 1, $\ta_{\ga_\psi^-}(\theta_1-2\psi)=z$ for all $\psi\in [\theta_1-\pi,0]$, and as $\psi$ decreases from $0$ to $\theta_1-\pi$, the endpoints of the $\ga_\psi^-$ traverse the arc of $C_2(-i-iz)$ bounded by $i-iz$ and $-i-3iz$. Moreover, the tangent line to this circle at $\ga_\psi^-(\theta_1-2\psi)$ is parallel to $e^{i\psi}$.
	
	Any $q\in \ol{R}_{\sr U_c}$ is the endpoint of a curve of one of the following three types:
	\begin{enumerate}
		\item [(i)] The concatenation of a $\ga_{\vphi}^+$ or a $\ga_{\psi}^-$ with a line segment of direction $z$.
		\item [(ii)] The concatenation of $\ga_{\pi}^+|_{[0,\pi]}$, a line segment of length $\ell\geq 0$ having direction $-1$, the arc $-\ell +\ga_\pi^+|_{[\pi,2\pi-\theta_1]}$, and a line segment of direction $z$.
		\item [(iii)] The concatenation of $\ga^-_{\theta_1-\pi}|_{[0,\pi-\theta_1]}$, a line segment of length $\ell_1\geq 0$ of direction $-z$, the arc $-\ell_1 z+\ga^-_{\theta_1-\pi}|_{[\pi-\theta_1,\frac{3\pi}{2}-\theta_1]}$, a line segment of length $\ell_2\geq 0$ and direction $-iz$, the arc $-\ell_1 z-\ell_2iz+\ga^-_{\theta_1-\pi}|_{[\frac{3\pi}{2}-\theta_1,2\pi-\theta_1]}$, and a line segment of direction $z$.
	\end{enumerate}
	The curves which we have described have curvature equal to  $\pm 1$ over intervals of positive measure and, additionally, may be critical curves. Nevertheless, for any $q\in R_{\sr U_c}$, we can find a condensed $\ga\in \sr L_{-1}^{+1}(Q;\theta_1)$ by composing one of these curves with a dilatation through a factor $c>1$, with $c$ close to $1$ if $q$ lies close to $\bd R_{\sr U_c}$, and by avoiding the argument $\pi$ (for a curve of type (i)) or $\theta_1-\pi$ (for a curve of type (iii)).
	
	Conversely, suppose that $\sr L_{-1}^{+1}(Q)$ contains condensed curves. Let $\eta\colon [0,L]\to \C$ be such a curve, parametrized by arc-length, and let $\vphi=\sup\theta$, where $\theta\colon [0,L]\to \R$ is an argument of $\ta_\eta$ satisfying $\theta(0)=0$. Define
	\begin{equation*}
g\colon [0,L]\to \R\quad\text{by}\quad g(s)=\big\langle\eta(s)-\ga_\vphi^+(2\vphi-\theta_1)\,,\,ie^{i\vphi}\big\rangle.
	\end{equation*}
	Note that $g(s)>0$ if and only if $\eta(s)$ lies to the left of the line through $\ga_{\vphi}^+(2\vphi-\theta_1)\in C_2(i+iz)$ having direction $e^{i\vphi}$; we have already seen that this line is tangent to this circle at this point. We claim that $g(L)<0$. Since $\eta$ is admissible, $\theta=\arg\circ \ta_\eta$ is an absolutely continuous function, and $\abs{\theta'}=\abs{\ka_\eta}<1$ almost everywhere by eq.~\eqref{E:rate}. Moreover, $\theta(s)\in [\vphi-\pi,\vphi]$ for all $s$ because $\eta$ is condensed. Hence, 
	\begin{equation}\label{E:nonnega}
		g'(s)=\big\langle e^{i\theta(s)},ie^{i\vphi}\big\rangle=\cos\big(\theta(s)-\vphi-\tfrac{\pi}{2}\big)\leq 0\text{\quad  for all $s\in [0,L]$}.
	\end{equation}
	Let $J_i=(a_i,b_i)\subs (0,L)$ ($i=1,2,3$) be disjoint intervals such that:
	\begin{enumerate}
		\item [(I)] $\theta(a_1)=0$ and $\theta(b_1)=\theta_1$; 
		\item [(II)]  $\theta(a_2)=\theta_1$ and $\theta(b_2)=\vphi$; 
		\item [(III)] $\theta(a_3)=\vphi$ and $\theta(b_3)=\theta_1$.
	\end{enumerate}
	Such intervals exist because $\theta$ is a continuous function satisfying $\theta(0)=0$, $\theta_1\leq \vphi=\sup \theta$ and $\theta(L)=\theta_1$. Let $\la$ denote the Lebesgue measure on $\R$. Fix $i$ and let $[\al,\be]$ be any nondegenerate subinterval of $\theta((a_i,b_i))$. Since $\theta$ is strictly 1-Lipschitz, if $S=\set{s\in (a_i,b_i)}{\al\leq \theta(s)\leq \be}$, then $\la(S)>\be-\al$. Combining this with \eqref{E:nonnega}, we deduce that
	\begin{alignat*}{9}
		g(L)-g(0)& \leq \bigg(\int_{a_1}^{b_1}+\int_{a_2}^{b_2}+\int_{a_3}^{b_3}\bigg) g'(s)\,ds\\
		&<\int_0^{\theta_1}\gen{e^{it},e^{i\vphi}}\,dt+2\int_{\theta_1}^{\vphi}\big\langle e^{it},e^{i\vphi}\big\rangle\,dt=\big\langle\ga_\vphi^+(2\vphi-\theta_1),ie^{i\vphi}\big\rangle.
	\end{alignat*}
	Therefore, $g(L)<0$ as claimed. Similarly, if $\psi=\inf \theta$, then $\eta(L)$ lies on the side of the tangent to $C_2(-i-iz)$ at $\ga_{\psi}^-(\theta_1-2\psi)$ which does not contain $-i+iz$. It follows that $q=\eta(L)\in R_{\sr U_c}$.
\end{proof}


\section{Topology of $\sr U_d$}\label{S:diffuse}

Throughout this section, let $K$ denote a compact manifold, possibly with boundary. Also, let $Q=(q,z)\in \C\times \Ss^1$ be fixed (but otherwise arbitrary) and let $\sr U_d\subs  \sr L^{+1}_{-1}(Q;\theta_1)$ denote the subset consisting of all diffuse curves in $\sr L_{-1}^{+1}(Q)$ having total turning $\theta_1$, for some fixed $\theta_1\in \R$ satisfying $e^{i\theta_1}=z$. Finally, let $O=(0,1)\in \C\times \Ss^1$, the identity element of the group $UT\C$.

Our next objective is to prove that $\sr U_d$ is contractible. The idea behind the proof is quite simple. If $\ga$ is diffuse, then we can ``graft'' a straight line segment of length greater than 4 onto $\ga$, as illustrated in Figure \ref{F:grafting}. By the theorem of Dubins stated in the introduction, this segment can be deformed so that in the end an eight curve of large radius traversed a number $n$ of times has been attached to it, as in Figure \ref{F:spreading}\?(e). These eights are then spread along the curve, as in Figure \ref{F:spreading}\?(f). If $n\in \N$ is large enough, the spreading can be carried out within $\sr L_{-1}^{+1}(Q)$. The result is a curve which is so loose that the constraints on the curvature may be safely forgotten, allowing us to use the following fact.

\begin{thm}[Smale]\label{T:Smale}
	Let $Q=(q,z)\in \C\times \Ss^1$. Then $\sr C_{-\infty}^{+\infty}(Q)$ and $\sr L_{-\infty}^{+\infty}(Q)$ have $\aleph_0$ connected components, one for each $\theta_1\in \R$ satisfying $e^{i\theta_1}=z$, all of which are contractible.
\end{thm}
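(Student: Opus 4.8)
The statement is essentially a repackaging of the facts assembled in the introduction, so the plan is to quote Smale's theorem (\cite{Smale}, Theorem~C) and run it through the chain of equivalences that has already been set up. First I would recall that $\sr C_{-\infty}^{+\infty}(Q)$ is exactly the space $\sr S(O,Q)$ of regular curves from the introduction (with $P=O=(0,1)$), that the frame map $\Phi\colon \sr S(O,Q)\to \Om UT\C(O,Q)$ is a weak homotopy equivalence by Smale's theorem, and that $\Om UT\C(O,Q)$ is homotopy equivalent to $\Om\Ss^1(1,z)$ via the projection $UT\C\equiv\C\times\Ss^1\to\Ss^1$ (the fiber $\C$ being contractible). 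The loop space $\Om\Ss^1(1,z)$ of paths from $1$ to $z$ in $\Ss^1$ has one path component for each lift of the endpoint, i.e.\ one for each $\theta_1\in\R$ with $e^{i\theta_1}=z$, and each such component is contractible since it is (homeomorphic to) a convex-like space of paths with prescribed total increment — concretely, the component with total turning $\theta_1$ is the space of maps $\theta\colon[0,1]\to\R$ with $\theta(0)=0$, $\theta(1)=\theta_1$, which is an affine subspace of a Banach space, hence contractible. Stringing these together shows $\sr C_{-\infty}^{+\infty}(Q)$ has countably many weakly contractible components indexed as claimed.

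To upgrade ``weakly contractible'' to ``contractible'' and to carry the statement over to $\sr L_{-\infty}^{+\infty}(Q)$, I would invoke the Banach/Hilbert manifold machinery collected in \lref{L:Hilbert} and \lref{L:C^2}. By \lref{L:C^2} (taking $\ka_1=-\infty$, $\ka_2=+\infty$), the inclusion $\sr C_{-\infty}^{+\infty}(Q)^r\to\sr L_{-\infty}^{+\infty}(Q)$ is a homotopy equivalence and the two spaces are homeomorphic; so it suffices to prove the statement for either one. Both are separable Banach (indeed Hilbert, in the $\sr L$ case) manifolds, so by \lref{L:Hilbert}(a) their connected and path components coincide, and by \lref{L:Hilbert}(b) a weak homotopy equivalence to a point is homotopic to a homeomorphism — hence each weakly contractible component is in fact contractible. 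The decomposition into components respecting total turning is the one already recorded in the introduction ($\sr S(P,Q)=\bigsqcup_{\theta_1}\sr S(P,Q;\theta_1)$, each a closed-open piece), and on each piece $\Phi$ restricts to a weak homotopy equivalence onto the corresponding (contractible) component of $\Om UT\C(O,Q)$.

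I do not expect any serious obstacle: every ingredient — Smale's theorem, the homotopy equivalence $\Om UT\C(O,Q)\simeq\Om\Ss^1(1,z)$, the enumeration and contractibility of the components of $\Om\Ss^1(1,z)$, and the Banach-manifold results of \lref{L:Hilbert}–\lref{L:C^2} — is either standard or already available in the excerpt. The one point that requires a word of care is the passage from weak contractibility of a component of the \emph{function space} to genuine contractibility: this is not automatic for arbitrary topological spaces, and is precisely why one must first observe that $\sr C_{-\infty}^{+\infty}(Q)$ and $\sr L_{-\infty}^{+\infty}(Q)$ are (Hilbert) manifolds and then appeal to \lref{L:Hilbert}(b). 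With that in hand the proof is a short assembly of cited facts.
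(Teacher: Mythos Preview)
Your proposal is correct and follows essentially the same approach as the paper: the paper's own proof simply says that the claim for $\sr C_{-\infty}^{+\infty}(Q)$ was already discussed in the introduction (i.e., Smale's theorem plus the analysis of $\Om UT\C(O,Q)\simeq\Om\Ss^1(1,z)$ and the Banach manifold results), and then invokes \lref{L:C^2} to transfer the result to $\sr L_{-\infty}^{+\infty}(Q)$. You have spelled out the details of that discussion more explicitly, but the route is the same.
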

\begin{proof}
	For the space $\sr C_{-\infty}^{+\infty}(Q)$, the proof was discussed in the introduction. We may replace $\sr C_{-\infty}^{+\infty}(Q)$ by $\sr L_{-\infty}^{+\infty}(Q)$ using \lref{L:C^2}.
\end{proof}


\begin{lem}\label{L:WG}
	Let $P\in UT\C$. Then $\sr C_{-1}^{+1}(P,P)$ and $\sr L_{-1}^{+1}(P,P)$ have $\aleph_0$ connected components, one for each turning number $n\in \Z$, all of which are contractible.
\end{lem}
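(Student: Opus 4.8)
The plan is to reduce to the based case $P=O=(0,1)$, describe the components directly, and then deduce the contractibility of each by feeding the curvature‑spreading technique of this section into \tref{T:Smale}. First, conjugation by the Euclidean motion $x\mapsto\bar w(x-p)$ (for $P=(p,w)$) gives a homeomorphism $\sr L_{-1}^{+1}(P,P)\home\sr L_{-1}^{+1}(O,O)$ preserving curvature and total turning; together with \lref{L:C^2} this reduces everything to $\sr L_{-1}^{+1}(O,O)$. On the latter the total turning is a continuous, $2\pi\Z$‑valued, hence locally constant function, so $\sr L_{-1}^{+1}(O,O)=\bigsqcup_{n\in\Z}\sr L_{-1}^{+1}(O,O;2\pi n)$ with each summand open and closed; each is nonempty (for $n>0$, the circle $t\mapsto 2i(1-e^{2\pi i nt})$ of radius $2$ through the origin; for $n<0$, the same with reversed orientation; for $n=0$, an eight of radius $2$ based at $O$, as in \fref{F:spreading}), and by \lref{L:submersion} each is an open (and closed) subset of $\sr L_{-1}^{+1}(O,O)$, hence a separable infinite‑dimensional Hilbert manifold. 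By \lref{L:Hilbert} it therefore suffices to prove that each $\sr L_{-1}^{+1}(O,O;2\pi n)$ is weakly contractible.

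The next step is to check that every curve in $\sr L_{-1}^{+1}(O,O;2\pi n)$ is diffuse. Choosing the argument $\theta_\ga$ of $\ta_\ga$ with $\theta_\ga(0)=0$ and parametrizing $\ga\colon[0,L]\to\C$ by arc length, the amplitude satisfies $\om_\ga\geq\abs{\theta_\ga(L)-\theta_\ga(0)}=\abs{2\pi n}$, so $\om_\ga>\pi$ whenever $n\neq0$. For $n=0$, if $\om_\ga\leq\pi$ then $\theta_\ga$ maps into some interval $[a,a+\pi]$ with $a\leq0\leq a+\pi$, so $\gen{\ta_\ga(s),e^{i(a+\pi/2)}}=\cos(\theta_\ga(s)-a-\tfrac{\pi}{2})\geq0$ for all $s$, with equality only where $\theta_\ga(s)\in\se{a,a+\pi}$; since $\theta_\ga$ is strictly $1$‑Lipschitz ($\abs{\ka_\ga}<1$ a.e.) and nonconstant with $\theta_\ga(0)=0$, the set where $\theta_\ga(s)\in(a,a+\pi)$ has positive measure, whence $\gen{\ga(L)-\ga(0),e^{i(a+\pi/2)}}=\int_0^L\cos(\theta_\ga(s)-a-\tfrac{\pi}{2})\,ds>0$, contradicting $\ga(L)=\ga(0)$. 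Hence $\om_\ga>\pi$ in every case, so $\sr L_{-1}^{+1}(O,O;2\pi n)=\sr U_d$ (with $Q=O$); in particular it contains no condensed curves, consistently with \rref{R:condembed}, and no critical ones.

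For weak contractibility, let $K$ be a finite simplicial complex and $f\colon K\to\sr L_{-1}^{+1}(O,O;2\pi n)$ continuous; by \lref{L:smoothie} and \lref{L:reparametrize} we may assume each $f(a)$ smooth and parametrized proportionally to arc length, depending continuously on $a$. Since every $f(a)$ is diffuse, the grafting construction of this section (\fref{F:grafting}) applies uniformly over the compact $K$; using the theorem of Dubins from the introduction to turn the grafted segments into many eight curves of large radius and spreading these along the curves (\fref{F:spreading}) — which stays within $\sr L_{-1}^{+1}(O,O;2\pi n)$ once the number of eights is large enough — one homotopes $f$, inside $\sr L_{-1}^{+1}(O,O;2\pi n)$, to a family $g$ whose curves all have curvature bounded in absolute value by a fixed $\varepsilon\in(0,1)$. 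Then $g$ is also a continuous map into the contractible space $\sr L_{-\infty}^{+\infty}(O,O;2\pi n)$ of \tref{T:Smale}; picking a contraction $H$ of the latter with $H_0=g$, compactness of $H([0,1]\times K)$ gives $M\geq1$ with $\abs{\ka_{H(s,a)}}<M$ throughout, and the dilatation $T_M\colon x\mapsto Mx$ — which carries $\sr L_{\ka_1}^{\ka_2}(O,O;\theta_1)$ homeomorphically onto $\sr L_{\ka_1/M}^{\ka_2/M}(O,O;\theta_1)$ — transforms $H$ into a null‑homotopy of $T_M\circ g$ inside $\sr L_{-1}^{+1}(O,O;2\pi n)$. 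Since $(s,a)\mapsto T_{1+s(M-1)}\circ g(a)$ is a homotopy from $g$ to $T_M\circ g$ within $\sr L_{-1}^{+1}(O,O;2\pi n)$ (dilating by a factor $\geq1$ only shrinks the curvature, keeping it below $\varepsilon$), $f$ is null‑homotopic. Thus each $\sr L_{-1}^{+1}(O,O;2\pi n)$ is weakly contractible, hence contractible by \lref{L:Hilbert}, and transporting back along the homeomorphisms of the first paragraph completes the proof.

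The main obstacle is the grafting‑and‑spreading step — carrying out, continuously over $K$ and without ever leaving $\sr L_{-1}^{+1}$, the replacement of grafted segments by many large eights and their distribution along the curve so as to make the curvature uniformly small; this is exactly the construction announced in the introduction to this section, and it rests on Dubins' theorem. By comparison, the diffuseness dichotomy and the rescaling trick that turns Smale's contraction into one obeying $\abs{\ka}<1$ are routine.
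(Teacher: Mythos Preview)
Your proof is essentially correct, but you have taken a considerably longer route than the paper, and the step you flag as the ``main obstacle'' --- the grafting-and-spreading construction --- can be bypassed entirely.

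The paper's argument exploits one simple fact that you use only at the very end: the origin $O=(0,1)$ is fixed by every dilatation $x\mapsto \lambda x$. After reducing to $P=O$ and smoothing via \lref{L:smoothie} (so that $f\colon K\to \sr C_{-1}^{+1}(O,O;2\pi n)$), the paper applies \tref{T:Smale} \emph{directly} to $f$ inside $\sr C_{-\infty}^{+\infty}(O)$, obtaining a null-homotopy $F\colon [0,1]\times K\to \sr C_{-\infty}^{+\infty}(O)$. Compactness then gives $M=2\sup_{s,a,t}\abs{\ka_{F(s,a)}(t)}$, and $MF$ is a null-homotopy of $Mf$ inside $\sr C_{-1}^{+1}(O)$. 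Finally $u\mapsto uf$ for $u\in[1,M]$ connects $f$ to $Mf$ within $\sr C_{-1}^{+1}(O)$, since dilating a curve with curvature in $(-1,1)$ by $u\geq 1$ keeps the curvature in $(-1/u,1/u)\subset(-1,1)$. No preprocessing of $f$ is needed: the diffuseness verification and the entire grafting/spreading apparatus are unnecessary here. What this buys is not only brevity but also logical order: \lref{L:WG} is stated and used (in \lref{L:basic}\?(d)) \emph{before} the grafting and spreading lemmas are developed, so your approach forward-references machinery that is built later in the section.

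One small technical point: your curvature bound ``$\abs{\ka_{H(s,a)}}<M$'' requires the contraction $H$ to be taken in $\sr C_{-\infty}^{+\infty}$ rather than in $\sr L_{-\infty}^{+\infty}$, since compactness in the $L^2$-based topology of $\sr L$ does not yield $L^\infty$ bounds on curvature. This is exactly why the paper works in $\sr C$ throughout this argument.
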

\begin{proof}
	By \lref{L:C^2}, it suffices to prove the result for $\sr C_{-1}^{+1}(P,P)$. Let $\sr C_n\subs \sr C_{-1}^{+1}(P,P)$ denote the subset of all curves which have turning number $n$. Then each $\sr C_n$ is closed and open. Hence, to establish that $\sr C_n$ is a contractible component, it suffices, by \lref{L:Hilbert}\?(b), to prove that it is weakly contractible. 
	
	Recall that $\sr C_{-1}^{+1}(P,P)\home \sr C_{-1}^{+1}(O)$, the homeomorphism coming from composing all curves with a suitable Euclidean motion. We may thus assume that $P=O$. Let $K$ be a compact manifold and $f\colon K\to \sr C_n$ a continuous map. By \tref{T:Smale}, there exists a continuous $F\colon[0,1]\times K\to \sr C_{-\infty}^{+\infty}(O)$ such that $F_0=f$ and $F_1$ is a constant map. Let 
	\begin{equation*}
		M=2\sup\set{\vert\ka_{F(s,a)}(t)\vert}{s,t\in [0,1],~a\in K}.
	\end{equation*}
	Given a curve $\ga$, let $M\ga$ denote the dilated curve $t\mapsto M\ga(t)$. It is easy to see that $\ka_{M\ga}=\frac{\ka_\ga}{M}$. Hence, $MF$ is a homotopy between $Mf$ and a constant map within $\sr C_{-1}^{+1}(O)$. But $f$ and $Mf$ are homotopic within $\sr C_{-1}^{+1}(O)$ through  $u\mapsto uf$ ($u\in [1,M]$). Therefore, $f$ is null-homotopic.
\end{proof}	

\subsection*{Loops and eights}We shall now explain how to attach loops and eights to a curve, and how to spread eights along it (Figure \ref{F:spreading}).

\begin{defn}
	We denote by $\al\colon \R\to \C$ the \tdef{loop} of radius $2$ and by $\be\colon \R\to \C$ the \tdef{eight} curve of the same radius (see figs.~\ref{F:spreading}\?(b) and (d)) given by:
\begin{alignat*}{9}
	\al(t)&=2i\big(1-\exp(2\pi it)\big) \\
	\be(t)&=\begin{cases}
	\al(2t) & \text{\ \ for\ \ }t\in [\frac{m}{2},\frac{m+1}{2}],~m\equiv 0\pmod 2 \\
	-\al(-2t) & \text{\ \ for\ \ }t\in [\frac{m}{2},\frac{m+1}{2}],~m\equiv 1\pmod 2
	\end{cases}\quad (m\in \Z).
\end{alignat*}
We shall also denote by $\al_n,\be_n\colon [0,1]\to \C$ a loop (resp.~eight) traversed $n\geq 1$ times: $\al_n(t)=\al(nt)$ and $\be_{n}(t)=\be(nt)$ ($t\in [0,1]$). 
\end{defn} 

Note that $\al_n,\be_{n}\in \sr L_{-1}^{+1}(O)$. The curvature of $\al_n$ is everywhere equal to $\tfrac{1}{2}$, and that of $\be_n$ equals $\pm \tfrac{1}{2}$ except at the $2n-1$ points where it is undefined. The turning number of $\al_n$ is $n$, and that of $\be_n$ is 0. 

\begin{figure}[ht]
	\begin{center}
		\includegraphics[scale=.08]{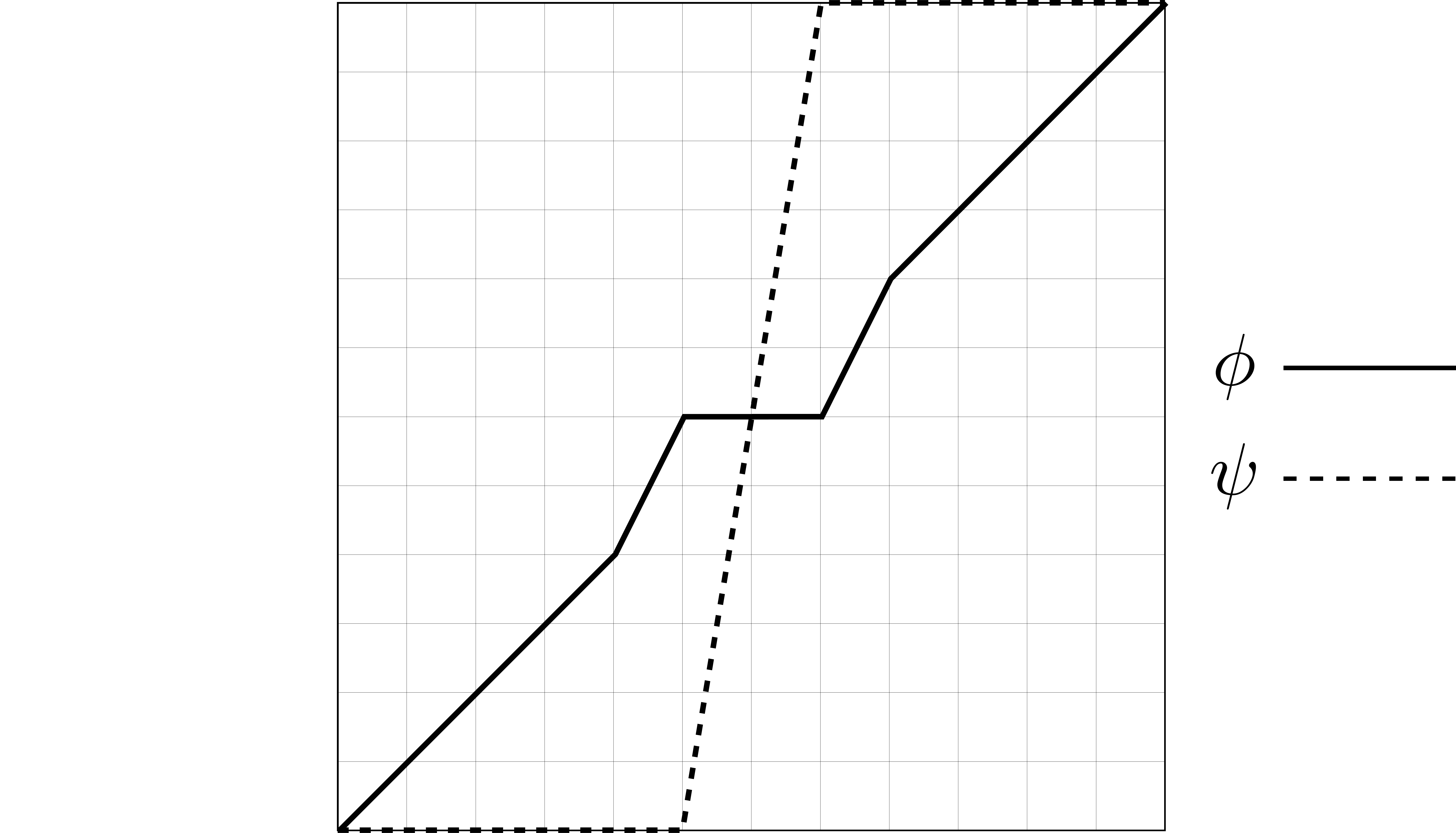}
		\caption{}
		\label{F:phipsi}
	\end{center}
\end{figure}

\begin{defn}\label{D:eighting}
	Let $t_0\in (0,1)$, $0<2\eps<\min\se{1-t_0,t_0}$, $1\leq n\in \N$ and $\ga$ be an admissible plane curve. Define piecewise linear functions $\phi,\psi\colon [0,1]\to [0,1]$ (whose graphs are depicted in Figure \ref{F:phipsi}) by:
	\begin{equation}\label{E:phipsi}
			\phi(t)=\begin{cases}
			t & \text{ if \ \  $t\nin [t_0-2\eps,t_0+2\eps]$}\\
			2t-t_0+2\eps & \text{ if \ \  $t\in [t_0-2\eps,t_0-\eps]$} \\
			t_0 & \text{ if \ \  $t\in [t_0-\eps,t_0+\eps]$} \\
			2t-t_0-2\eps & \text{ if \ \  $t\in [t_0+\eps,t_0+2\eps]$} 
			\end{cases}\ \  \psi(t)=\begin{cases}
			0 & \text{ if \ \  $t\in [0,t_0-\eps]$} \\
			\frac{t-t_0+\eps}{2\eps} & \text{ if \ \  $t\in [t_0-\eps,t_0+\eps]$} \\
		 	1 & \text{ if \ \  $t\in [t_0+\eps,1]$} 
			\end{cases}	
		\end{equation}
Define curves $A_{\ga,n,t_0}$,~$B_{\ga,n,t_0}$ (attaching loops, eights) and $S_{\ga,n}\colon [0,1]\to \C$ (spreading eights) by:
	\begin{alignat*}{9}
		A_{\ga,n,t_0}(t)&=\Phi_\ga(\phi(t))\al_n(\psi(t)) \\
		B_{\ga,n,t_0}(t)&=\Phi_\ga(\phi(t))\be_n(\psi(t))\qquad (t\in [0,1]). \\
		S_{\ga,n}(t)&=\Phi_\ga(t)\be_{n}(t).
	\end{alignat*}
Here  $\Phi_\ga\colon [0,1]\to \C\times \Ss^1$ is the frame of $\ga$ (as in \eqref{E:frame}), but viewed as a curve in the \tsl{group} $UT\C$: Each $\Phi_\ga(t)$ is an Euclidean motion, with $\Phi_\ga(t)a=\ga(t)+\ta_\ga(t)a$ for $a\in \C$. Different values of $\eps$ and $t_0$ yield curves which are homotopic in whichever space one is working with. 
\end{defn}

\begin{figure}[ht]
	\begin{center}
		\includegraphics[scale=.22]{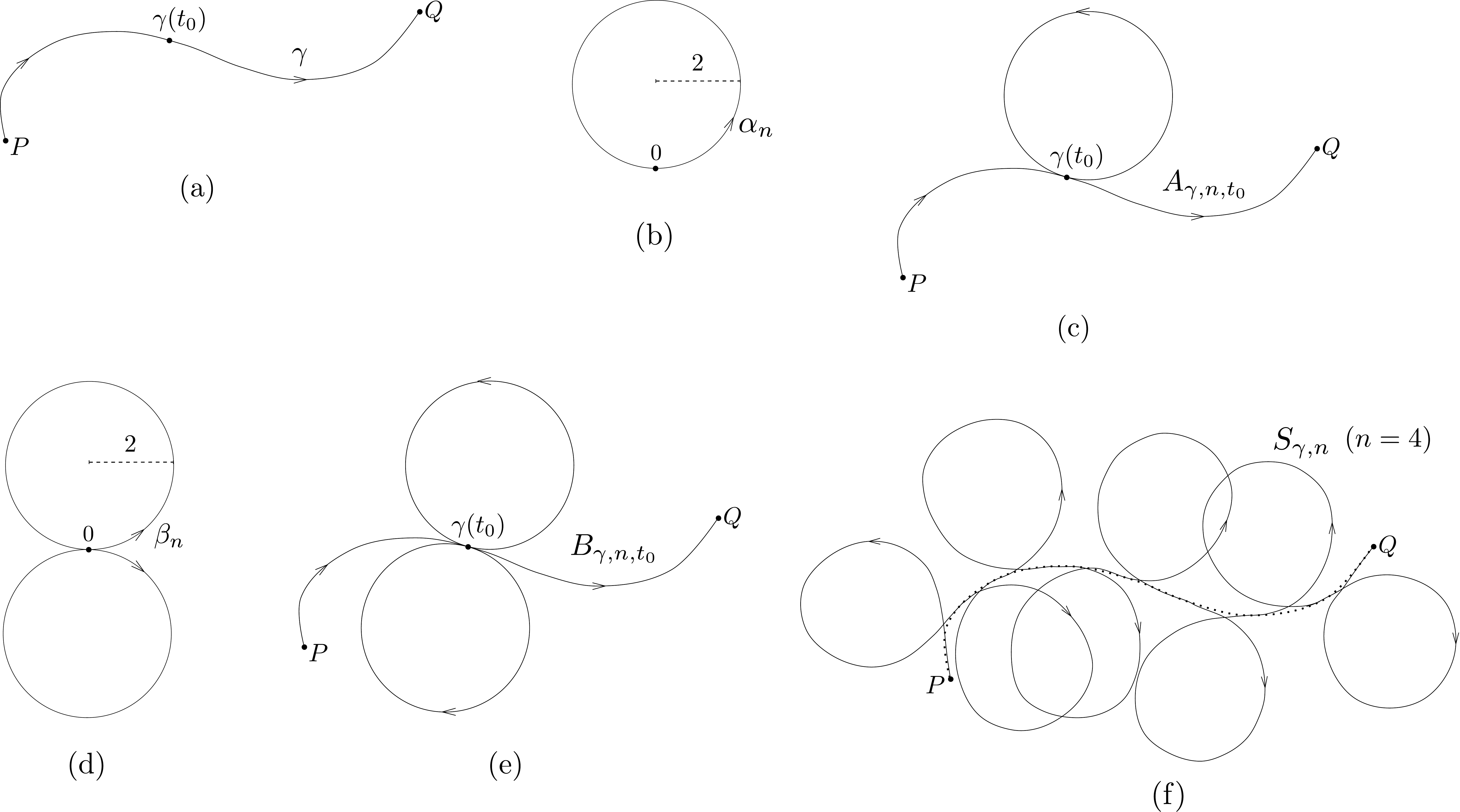}
		\caption{}
		\label{F:spreading}
	\end{center}
\end{figure}

\begin{lem}\label{L:basic}
		Let $t_0\in (0,1)$, $1\leq n\in \N$ and $\ga$ be an admissible plane curve. Then:
		\begin{enumerate}
			\item [(a)] $A_{\ga,n,t_0}$, $B_{\ga,n,t_0}$ and $S_{\ga,n}$ have the same initial and final frames as $\ga$. 
			\item [(b)] $B_{\ga,n,t_0}$ and $S_{\ga,n}$ lie in the same connected component of $\sr L_{-\infty}^{+\infty}(P,Q)$ \tup($P=\Phi_\ga(0)$, $Q=\Phi_\ga(1)$\tup).
			\item [(c)]  If $\ga\in \sr L_{-1}^{+1}(P,Q)$, then $A_{\ga,n,t_0},\,B_{\ga,n,t_0}\in \sr L_{-1}^{+1}(P,Q)$ also.
			\item [(d)] Let $O=(0,1)\in \C\times \Ss^1$. Then $\al_1$ and $B_{\al_1,n,t_0}$ lie in the same connected component of $\sr L_{-1}^{+1}(O)$ for all $n\geq 1$.
			\item [(e)] If $f,g\colon K\to \sr U_d$ are continuous and homotopic within $\sr U_d$, then so are $B_{f,n,t_0}$ and $B_{g ,n,t_0}$.
			\item [(f)] If $\ga$ is a reparametrization of $\al_1$, then $A_{\ga,n,t_0}$ is a reparametrization of $\al_{n+1}$.
		\end{enumerate}
\end{lem}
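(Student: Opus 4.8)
The six items are largely independent bookkeeping statements about the curves introduced in \dref{D:eighting}, and I would prove them in the order listed, leaning on \tref{T:Smale} and \lref{L:WG} for the two with any real content, (b) and (d). For (a): near $t=0$ one has $\phi=\mathrm{id}$ and $\psi\equiv 0$, and near $t=1$ one has $\phi=\mathrm{id}$ and $\psi\equiv 1$, so, as $\al_n(0)=\al_n(1)=0$ and $\be_n(0)=\be_n(1)=0$, the curves $A_{\ga,n,t_0}$ and $B_{\ga,n,t_0}$ literally coincide with $\ga$ near $0$ and near $1$; for $S_{\ga,n}(t)=\ga(t)+\ta_\ga(t)\be_n(t)$ a one-line computation at $t=0$ and $t=1$ (where $\be_n$ vanishes, killing the $\dot\ta_\ga\be_n$-term, while $\dot\ga$ and $\ta_\ga\dot\be_n$ are both nonnegative multiples of $\ta_\ga$, the second equal to $8\pi n\,\ta_\ga$) shows $S_{\ga,n}$ passes through $\ga(0)$, resp.\ $\ga(1)$, with unit tangent $\ta_\ga(0)$, resp.\ $\ta_\ga(1)$. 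For (f) the key is the identity $\Phi_{\al_1}(s)\,\al_n(u)=\al(s+nu)$, which drops out of $\al(t)=2i-2ie^{2\pi it}$ once $\Phi_{\al_1}(s)$ is written as $a\mapsto\al(s)+e^{2\pi is}a$ via \eqref{E:group}; if $\ga=\al_1\circ\tau$ with $\tau$ an orientation-preserving reparametrization (so $\Phi_\ga=\Phi_{\al_1}\circ\tau$), then $A_{\ga,n,t_0}(t)=\al\bigl(\tau(\phi(t))+n\psi(t)\bigr)$, and since $t\mapsto\tau(\phi(t))+n\psi(t)$ is a strictly increasing bijection $[0,1]\to[0,n+1]$, this exhibits $A_{\ga,n,t_0}$ as a reparametrization of $\al_{n+1}$.

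For (c) I would cut $[0,1]$ into $[0,t_0-\eps]$, $[t_0-\eps,t_0+\eps]$, $[t_0+\eps,1]$ using \eqref{E:phipsi}. On the outer blocks $\psi$ is constant ($0$, resp.\ $1$), so $A_{\ga,n,t_0}(t)=\ga(\phi(t))$ (again using $\al_n(0)=\al_n(1)=0$), a reparametrization of $\ga|_{[0,t_0]}$, resp.\ $\ga|_{[t_0,1]}$; on the middle block $\phi\equiv t_0$, so $A_{\ga,n,t_0}(t)=\Phi_\ga(t_0)\,\al_n(\psi(t))$ is a fixed Euclidean motion applied to a reparametrization of the loop $\al_n$. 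Since curvature is unaltered by orientation-preserving reparametrizations (the remark after \eqref{E:rate}) and by Euclidean motions, $\ka_{A_{\ga,n,t_0}}$ is $\ka_\ga\circ\phi\in(-1,1)$ on the outer blocks and $\tfrac12$ on the middle one; at $t_0\pm\eps$ (and at the mild corners $t_0\pm2\eps$) every one-sided unit tangent equals $\ta_\ga(t_0)$, so the unit tangent of $A_{\ga,n,t_0}$ is continuous and its curvature is bounded with values in $(-1,1)$. With (a) this gives $A_{\ga,n,t_0}\in\sr L_{-1}^{+1}(P,Q)$, and the identical argument — with curvature $\pm\tfrac12$ (undefined at $2n-1$ points, still in $(-1,1)$ a.e.) on the middle block — handles $B_{\ga,n,t_0}$. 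This decomposition also shows that the total turning of $B_{\ga,n,t_0}$ equals that of $\ga$ (an eight contributes $0$), whereas that of $A_{\ga,n,t_0}$ exceeds it by $2\pi n$.

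Item (d) is then immediate: by (c) and the turning count $B_{\al_1,n,t_0}\in\sr L_{-1}^{+1}(O,O)=\sr L_{-1}^{+1}(O)$ with turning number $1$, the same as $\al_1$, so \lref{L:WG} (one component per turning number) puts the two in the same component. For (e), (c) and the turning count give $B_{f(a),n,t_0}\in\sr L_{-1}^{+1}(Q;\theta_1)$ for every $a$, and $B_{f(a),n,t_0}$ is diffuse because the inserted eights alone force its amplitude to be at least $2\pi>\pi$; as $a\mapsto B_{f(a),n,t_0}$ is continuous, a homotopy $F$ from $f$ to $g$ inside $\sr U_d$ produces the homotopy $(s,a)\mapsto B_{F(s,a),n,t_0}$, which stays in $\sr U_d$ by the same two facts.

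Finally (b): both $B_{\ga,n,t_0}$ and $S_{\ga,n}$ are to lie in $\sr L_{-\infty}^{+\infty}(P,Q)$, where, by \tref{T:Smale} transported from base $O$ to base $P$ by left translation in $UT\C$, two curves with the same endpoints are in the same component precisely when they have the same total turning; by (a) and the turning count both have the total turning of $\ga$ — for $S_{\ga,n}$ because shrinking the eights, $S^\lambda(t)=\ga(t)+\lambda\ta_\ga(t)\be_n(t)$ ($\lambda\in[0,1]$), is a path to $\ga$ along which the turning is locally constant — so (b) follows. The one step here that is not pure bookkeeping, and the main point to watch, is that $S_{\ga,n}$ (and each $S^\lambda$) must actually be admissible, i.e.\ regular: $\dot S_{\ga,n}=\ta_\ga\bigl(\sig(1+i\ka_\ga\be_n)+\dot\be_n\bigr)$ can vanish for a poorly chosen parametrization of $\ga$, so I would first reparametrize $\ga$ via \lref{L:reparametrize} so that $\sig$ is small enough for the term $\dot\be_n$, of constant modulus $8\pi n$, to dominate — legitimate since every assertion of the lemma is invariant under reparametrization of $\ga$.
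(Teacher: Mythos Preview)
Your arguments for parts (a), (c), (d), (e), (f) agree with the paper's (you are rather more explicit than the paper, which dismisses (f) as ``obvious'' and compresses (c) to one line). The one genuine difference is part (b). The paper does \emph{not} appeal to Smale's theorem; instead it writes down the explicit homotopy
\[
(s,t)\ \longmapsto\ \Phi_\ga\big(\phi_s(t)\big)\,\be_n\big(\psi_s(t)\big),\qquad \phi_s=(1-s)\phi+s\,\mathrm{id},\quad \psi_s=(1-s)\psi+s\,\mathrm{id},
\]
which interpolates the time-warps $\phi,\psi$ linearly to the identity and thereby connects $B_{\ga,n,t_0}$ (at $s=0$) directly to $S_{\ga,n}$ (at $s=1$). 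Your route via total turning and your shrinking homotopy $S^\lambda$ is correct, but more circuitous: you invoke \tref{T:Smale} and a separate deformation just to read off the turning of $S_{\ga,n}$, whereas the paper's single formula already \emph{is} the desired path. More importantly, the paper's explicit homotopy is precisely what is reused in \lref{L:LLoopy}, where one must stay inside $\sr L_{-1}^{+1}(Q)$; there Smale's theorem is of no help, so the paper's approach is the one that generalizes.

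On the regularity issue you raise: your observation that $\dot S_{\ga,n}$ could in principle vanish is well taken, but your proposed cure is off. Reparametrizing $\ga$ does \emph{not} merely reparametrize $S_{\ga,n}$, since $S_{\tilde\ga,n}(t)=\Phi_\ga(\tau(t))\be_n(t)$ is a genuinely different curve from $S_{\ga,n}$; so the claim that ``every assertion of the lemma is invariant under reparametrization of $\ga$'' is false for the $S$-curves. (The paper's explicit homotopy has the same implicit regularity assumption for the intermediate curves and simply does not comment on it; in practice the lemma is only applied in situations, such as \lref{L:loopy} and \lref{L:LLoopy}, where the curvature computation already forces regularity.)
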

\begin{proof}
	It is clear that $A_{\ga,n,t_0}$, $B_{\ga,n,t_0}$ have the same initial and final frames as $\ga$, since they agree with $\ga$ in neighborhoods of the endpoints of $[0,1]$. From the definition of $S_{\ga,n}$, we find that
		\begin{equation*}
			\dot S_{\ga,n}=\dot \ga+\dot\ta_\ga\be_n+\ta_\ga\dot\be_n
		\end{equation*}
		Using that $\Phi_{\be_{n}}(0)=\Phi_{\be_n}(1)=(0,1)\in \C\times \Ss^1$, we deduce that:
		\begin{equation*}\label{E:}
			S_{\ga,n}(0)=\ga(0)\text{\quad and \quad} \dot S_{\ga,n}(0)=\big(\vert\dot\ga(0)\vert+\vert\dot \be_{n}(0)\vert\big)\ta_\ga(0).
		\end{equation*}
		Similarly, $S_{\ga,n}(1)=\ga(1)$ and  $\dot S_{\ga,n}(1)$ is a positive multiple of $\ta_\ga(1)$. This establishes (a). 
		
		Let $\phi,\psi \colon [0,1]\to [0,1]$ be as in \eqref{E:phipsi}, and set 
		\begin{equation}\label{E:AS}
			\phi_s(t)=(1-s)\phi(t)+st\quad \text{ and }\quad \psi_s(t)=(1-s)\psi(t)+st\quad (s,\,t\in [0,1]). 
		\end{equation}
		Then 
		\begin{equation*}
			(s,t)\mapsto \Phi_\ga(\phi_s(t))\be_{n}(\psi_s(t))\qquad (s,t\in [0,1])
		\end{equation*}
		defines a homotopy between $B_{\ga,n,t_0}$ and $S_{\ga,n}$ in $\sr L_{-\infty}^{+\infty}(P,Q)$. This proves (b).
		
		Part (c) follows from (a) and the fact that the curvatures of $\al_n,\be_n$ equal $\pm \tfrac{1}{2}$ a.e.. 
		
		Part (d) is a corollary of \lref{L:WG}.
		
		For part (e), let $H\colon [0,1]\times K\to \sr U_d$ be a continuous map with $H_0=f$ and $H_1=g$. Set
		\begin{equation*}
			\hat H(s,a)(t)=\Phi_{H(s,a)}(\phi(t))\be_n(\psi(t))\qquad (s,\,t\in [0,1],~a\in K).
		\end{equation*}
		Then $\hat H$ is a homotopy between $B_{f,n,t_0}=\hat H_0$ and $B_{g,n,t_0}=\hat H_1$ in $\sr U_d$.
		
		Part (f) is obvious.
\end{proof}


\begin{lem}\label{L:loopy}
	Let $f\colon K\to \sr C_{-\infty}^{+\infty}(Q)$ be continuous. Then there exists $n_0\in \N$ such that  $S_{f(a),n}\in \sr L_{-1}^{+1}(Q)$ for all $a\in K$ whenever $n\geq n_0$ \tup($n\in \N$\tup).
\end{lem}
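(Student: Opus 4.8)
The plan is to estimate $\dot S_{f(a),n}$ and its curvature directly: for $n$ large the speed of $S_{f(a),n}$ stays bounded away from $0$ while its curvature stays uniformly inside $(-1,+1)$, and since by \lref{L:basic}\?(a) the initial and final frames of $S_{f(a),n}$ are those of $f(a)$ --- namely $O$ and $Q$ --- this will place $S_{f(a),n}$ in $\sr L_{-1}^{+1}(Q)$.

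Fix $a\in K$ and write $\ga=f(a)$, $\sig=\abs{\dot\ga}$, $\ka=\ka_\ga$. Recall that $\be_n(t)=\be(nt)$, so $\dot\be_n(t)=n\be'(nt)$ and $\ddot\be_n(t)=n^2\be''(nt)$ a.e.; the eight $\be$ is parametrized at constant speed and its image is a union of two circles of radius $2$, so $\abs{\be}\leq 4$, $\abs{\be'}$ and $\abs{\be''}$ are positive constants, and $\ka_{\be_n}(t)=\det\bigl(\be'(nt),\be''(nt)\bigr)/\abs{\be'}^3=\pm\tfrac12$ a.e. Since $S_{\ga,n}=\ga+\ta_\ga\be_n$ (complex multiplication), \eqref{E:together} together with $\no_\ga=i\ta_\ga$ gives
\[
	\dot S_{\ga,n}=\ta_\ga\cdot v,\qquad v:=\sig(1+i\ka\be_n)+\dot\be_n,
\]
so $\abs{\dot S_{\ga,n}}=\abs{v}$; using $\ddot S_{\ga,n}=\ta_\ga(i\sig\ka\,v+\dot v)$ and $\det(\ta_\ga x,\ta_\ga y)=\det(x,y)$, a short computation yields
\[
	\ka_{S_{\ga,n}}=\frac{\sig\ka}{\abs{v}}+\frac{\det(v,\dot v)}{\abs{v}^3}\qquad\text{a.e.}
\]
Because $K$ is compact and $f$ continuous, $f(K)$ is compact in the $C^r$ topology, so $\sig$, $\abs{\ka}$ and the relevant derivatives of $\sig,\ka$ are bounded by a constant independent of $a\in K$ and $t\in[0,1]$. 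Splitting $v=v_s+\dot\be_n$ with $v_s=\sig(1+i\ka\be_n)$, the part $v_s$ is then uniformly bounded while $\abs{\dot\be_n}=\abs{\be'}\,n$, so there is a constant $C'$ with $\abs{\be'}\,n-C'\leq\abs{v}\leq\abs{\be'}\,n+C'$; hence $S_{\ga,n}$ is regular once $\abs{\be'}\,n>C'$, and $h\circ\abs{\dot S_{\ga,n}}$ is bounded, hence in $L^2[0,1]$. For the curvature, the first term is $O(1/n)$; in the second, the dominant part of the numerator is $\det(\dot\be_n,\ddot\be_n)=n^3\det\bigl(\be'(nt),\be''(nt)\bigr)=\pm\tfrac12\abs{\be'}^3n^3$, every remaining term of $\det(v,\dot v)$ (each involving $v_s$ or $\dot v_s$) is $O(n^2)$, and $\abs{v}^3=\abs{\be'}^3n^3\bigl(1+O(1/n)\bigr)$; therefore
\[
	\ka_{S_{\ga,n}}=\pm\tfrac12+O(1/n)
\]
uniformly in $a\in K$, $t\in[0,1]$ and the (a.e.)\ choice of sign. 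Picking $n_0$ so that this error is below $\tfrac14$ for $n\geq n_0$ gives $\abs{\ka_{S_{f(a),n}}}\leq\tfrac34<1$ a.e.\ for all $a\in K$; then $h_{-1,+1}\circ\ka_{S_{f(a),n}}$ is bounded, hence lies in $L^2[0,1]$, so $S_{f(a),n}$ is $(-1,+1)$-admissible and, by \lref{L:basic}\?(a), has endpoint frames $O$ and $Q$. Thus $S_{f(a),n}\in\sr L_{-1}^{+1}(Q)$ for all $a\in K$ whenever $n\geq n_0$.

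The main obstacle is the curvature estimate, and in particular making the $O(1/n)$ bound \emph{uniform} over the compact parameter space $K$; the uniformity is exactly what the compactness of $f(K)$ in the $C^r$ topology is for. Heuristically the estimate works because $S_{\ga,n}$ traverses the rapidly oscillating eight $\be_n$, whose velocity has magnitude of order $n$, while being carried along by the comparatively slowly varying frame $\Phi_\ga$, so the frame contributes only lower-order terms to the curvature; the choice of radius $2$ for $\be$ (curvature $\pm\tfrac12$) leaves enough room to absorb the $O(1/n)$ perturbation and remain inside $(-1,+1)$. A minor technical point, glossed over above, concerns the regularity required to justify the curvature formula when $\ga$ is only of class $C^2$; this is a routine verification (and is in any event inessential, since in the applications $f$ may be taken to have smooth image, e.g.\ via \cref{L:smoothie}).
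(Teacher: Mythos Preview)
Your proof is correct and follows essentially the same approach as the paper's: compute $\dot S_{\ga,n}$ and $\ddot S_{\ga,n}$, use compactness of $K$ to get uniform bounds on the frame-related terms, and observe that the dominant contribution to the curvature comes from $\be_n$ (yielding $\pm\tfrac12$) with an $O(1/n)$ remainder. Your factorization $\dot S_{\ga,n}=\ta_\ga\cdot v$ is a tidy variant of the paper's direct expansion, and your explicit remarks on admissibility and on the regularity needed when $r=2$ are, if anything, more careful than the paper's version.
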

\begin{proof}
	For $a\in K$, let $\ga_a=f(a)$ and $\ta_a=\ta_{\ga_a}$. Let $T=\se{\frac{1}{2n},\frac{2}{2n},\dots,\frac{2n-1}{2n}}$. Then:
	\begin{alignat*}{10}
		S_{\ga_a,n}(t)&=\Phi_{\ga_a}(t)\be_n(t)=\ga_a(t)+\ta_{a}(t)\be(nt)& &\qquad (t\in [0,1],~a\in K) \\
		\dot S_{\ga_a,n}(t)&=\dot \ga_a(t)+\dot\ta_a(t)\be(nt)+n\ta_a(t)\dot\be(nt) & & \qquad (t\in [0,1],~a\in K) \\
		\ddot S_{\ga_a,n}(t)&=\ddot \ga_a(t)+\ddot \ta_a(t)\be(nt)+2n\dot\ta_a(t)\dot\be(nt)+n^2\ta_a(t)\ddot \be(nt) & & \qquad (t\in [0,1]\ssm T,~a\in K)
	\end{alignat*}
Since $f\colon K\to \sr C_{-\infty}^{+\infty}(Q)$ is continuous and $K$ compact, $\vert\ga^{(k)}_a(t)\vert$ and $\vert\ta_a^{(k)}(t)\vert$ ($k=0,1,2$) are all bounded by some constant as $(t,a)$ ranges over $[0,1]\times K$. Using the third expression for the curvature in \eqref{E:curvature} and the multilinearity of the determinant, we conclude that
\begin{equation*}
	\ka_{S_{\ga_a,n}}(t)=\tfrac{1}{2}+O(\tfrac{1}{n})\quad (t\in [0,1]\ssm T,~a\in K),
\end{equation*}
where $O(\frac{1}{n})$ is a function of $(t,a)$ such that $n\abs{O(\tfrac{1}{n})}$ is uniformly bounded over $([0,1]\ssm T)\times K$ as $n$ ranges over $\N$. It follows that $S_{\ga_a,n}\in \sr L_{-1}^{+1}(Q)$ for all sufficiently large $n$.
\end{proof}

\begin{lem}\label{L:LLoopy}
	Let $f\colon K\to \sr C_{-1}^{+1}(Q)$ be continuous, $t_0\in (0,1)$. Then for all sufficiently large $n\in \N$, there exists a continuous $H\colon [0,1]\times K\to \sr L_{-1}^{+1}(Q)$ with $H_0=B_{f,n,t_0}$ and $H_1=S_{f,n}$. 
\end{lem}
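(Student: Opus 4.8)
The plan is to upgrade the homotopy produced in \lref{L:basic}(b) from $\sr L_{-\infty}^{+\infty}(Q)$ to one that remains inside $\sr L_{-1}^{+1}(Q)$ as soon as $n$ is large. The decisive point is that the naive interpolation used there — replacing $\phi,\psi$ by $(1-s)\phi+st$ and $(1-s)\psi+st$ — will \emph{not} do: for small $s$ it lets $\be_n$ be traced very slowly over a long arc of $\ga$, and a curve of the form $t\mapsto\Phi_\ga(t)\be(\text{slowly varying})$ is essentially a frame translate of $\ga$, whose curvature can be arbitrarily large even when $\abs{\ka_\ga}<1$ (a near-degenerate normal translation, say). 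So instead I would keep the figure eight supported on an interval $I_s\subs[0,1]$ growing from (essentially) its support in $B_{f,n,t_0}$ to all of $[0,1]$, while arranging that $\psi_s$ always sweeps $[0,1]$ across $I_s$ at speed $\geq1$; then $\be_n=\be(n\,\cdot\,)$ is traced at speed $\geq n$ wherever it is present at all, and is absent — hence contributes a constant — elsewhere.

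For the set-up, since $f$ is continuous and $K$ compact I would fix $C\geq1$ bounding $\ga_a$, $\ta_{\ga_a}$ and their first two derivatives — assuming, as we may by \lref{L:C^2} or \lref{L:smoothie}, that $f$ takes values in curves of class at least $C^{3}$ — with $\sig_{\ga_a}=\abs{\dot\ga_a}\geq C^{-1}$, and fix $\bar\ka<1$ with $\abs{\ka_{\ga_a}}\leq\bar\ka$ throughout (possible since $f(K)\subs\sr C_{-1}^{+1}(Q)$ is compact), where $\ga_a=f(a)$. Keep the $\eps$ from the definition of $B_{f,n,t_0}$ and the functions $\phi,\psi$ of \eqref{E:phipsi}. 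For $s\in[0,1]$ put $a_s=(1-s)(t_0-\eps)$, $b_s=(1-s)(t_0+\eps)+s$ and $I_s=[a_s,b_s]\subs[0,1]$ (so $b_s-a_s=2\eps(1-s)+s\in[2\eps,1]$); let $\phi_s=(1-s)\phi+s\,\mathrm{id}_{[0,1]}$, and let $\psi_s\colon[0,1]\to[0,1]$ equal $0$ on $[0,a_s]$, be affine with slope $(b_s-a_s)^{-1}$ on $I_s$, and equal $1$ on $[b_s,1]$. Then $\phi_s$ is a nondecreasing surjection with $0\leq\phi_s'\leq2$, $\psi_s'\geq1$ on $I_s$, $\psi_s$ is constant off $I_s$, and $\phi_0=\phi$, $\psi_0=\psi$, $\phi_1=\psi_1=\mathrm{id}$. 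Set $H(s,a)(t)=\Phi_{f(a)}(\phi_s(t))\,\be_n(\psi_s(t))$. Using $\be_n(0)=\be_n(1)=0$ and $\be_n'(0),\be_n'(1)>0$ one checks immediately that $H(0,\cdot)=B_{f,n,t_0}$, $H(1,\cdot)=S_{f,n}$, and $\Phi_{H(s,a)}(0)=O$, $\Phi_{H(s,a)}(1)=Q$ for all $s\in[0,1]$, $a\in K$.

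The heart of the proof is to show $\ka_{H(s,a)}\in(-1,1)$ a.e.\ for $n$ large, uniformly in $s$ and $a$. Off $I_s$ one has $\be_n(\psi_s)\equiv0$, so $H(s,a)=\ga_a\circ\phi_s$ is an orientation-preserving reparametrization of $\ga_a$ there and $\ka_{H(s,a)}\in[-\bar\ka,\bar\ka]$. On $I_s$, write $u=\phi_s(t)$ and $\nu=(b_s-a_s)^{-1}\geq1$; since $\phi$ and $\psi_s$ are piecewise affine ($\phi_s''=\psi_s''=0$ a.e.), differentiating $H(s,a)(t)=\ga_a(u)+\ta_{\ga_a}(u)\be(n\psi_s(t))$ twice gives
\[
\dot H=\nu n\,\ta_{\ga_a}(u)\,\be'(n\psi_s(t))+O(1),\qquad\ddot H=\nu^2n^2\,\ta_{\ga_a}(u)\,\be''(n\psi_s(t))+O(\nu n),
\]
the implied constants depending only on $C$ (using $\abs{\be}\leq4$, $\abs{\be'}\equiv8\pi$, $\abs{\be''}\equiv32\pi^2$, $\phi_s'\leq2$). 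Because multiplication by the unit complex number $\ta_{\ga_a}(u)$ preserves the determinant and $\det(\be',\be'')=\abs{\be'}^3\ka_\be=\pm\tfrac12(8\pi)^3$, expanding the determinant and using $\nu\geq1$ yield $\det(\dot H,\ddot H)=\pm\tfrac12(8\pi)^3\nu^3n^3+O(\nu^2n^2)$ and $\abs{\dot H}^3=(8\pi)^3\nu^3n^3\big(1+O(n^{-1})\big)$, so $\ka_{H(s,a)}=\det(\dot H,\ddot H)/\abs{\dot H}^3=\pm\tfrac12+O(n^{-1})$ with the error uniform over $s$, $a$ and $t\in I_s$. Hence for $n$ large $\abs{\ka_{H(s,a)}}\leq\tfrac34$ a.e.\ on $I_s$; together with the off-$I_s$ bound this gives $H(s,a)\in\sr L_{-1}^{+1}(Q)$, since its curvature stays in a fixed compact subinterval of $(-1,1)$ and its speed is bounded and bounded away from $0$. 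Continuity of $H$ into $\sr L_{-1}^{+1}(Q)$ follows from the explicit formula together with dominated convergence.

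The step I expect to be the real obstacle is exactly this curvature estimate, and within it the \emph{uniformity} of the $O(n^{-1})$ term: one must ensure that a single threshold $n_0$ works for all $s\in[0,1]$, all $a\in K$ and all $t$, regardless of how short (when $s$ is near $0$) or how long the interval $I_s$ carrying the eight happens to be. This is exactly what forces the design of the homotopy above — the eight is either absent (a bounded contribution) or traced at speed at least $n$, so that after the rescaling $\nu\geq1$ the figure-eight terms dominate $\ga$'s contribution by a factor comparable to $n$ in the curvature — and once that design is fixed the estimate reduces to bookkeeping with the constants $C$ and $\bar\ka$.
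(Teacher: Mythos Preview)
Your proof is correct, and the route you take differs from the paper's in exactly the way you anticipate. The paper \emph{does} use the naive linear interpolation $\phi_s=(1-s)\phi+s\,\mathrm{id}$, $\psi_s=(1-s)\psi+s\,\mathrm{id}$ of \eqref{E:AS}, and simply asserts that a computation ``entirely similar'' to that of \lref{L:loopy} shows $H(s,a)\in\sr L_{-1}^{+1}(Q)$ for large $n$. Your worry about this is well founded. On $[0,t_0-2\eps]$ the paper's homotopy reads $t\mapsto\ga_a(t)+\ta_a(t)\be(nst)$, which depends only on $\la=ns$; hence taking $n$ large does not avoid any fixed value of $\la$. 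For a concrete failure, let $\ga(t)=ct$ be a line segment with $c>2\pi/(t_0-2\eps)$ (so $\sig\equiv c$, $\ka\equiv0$, an element of $\sr C_{-1}^{+1}(Q)$ for $Q=(c,1)$). At $\la=c/(8\pi)$ and $t=2\pi/c\in(0,t_0-2\eps)$ one has $\la t=\tfrac14$, $\dot\be(\tfrac14)=-8\pi$, and hence $\dot H=c+\la\dot\be(\la t)=0$; the curve is not even regular, let alone in $\sr L_{-1}^{+1}(Q)$, and this happens for \emph{every} $n$ at $s=\la/n$. So the paper's sketch is incomplete without further hypotheses (e.g.\ a prior reparametrization bounding $\sig$).

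Your modification --- keeping $\psi_s$ constant off an interval $I_s$ that grows from $[t_0-\eps,t_0+\eps]$ to $[0,1]$, with $\psi_s'\geq1$ on $I_s$ --- removes the difficulty cleanly: off $I_s$ the curve is a genuine reparametrization of $\ga_a$ (so curvature $\leq\bar\ka$), while on $I_s$ the factor $n\psi_s'\geq n$ lets the \lref{L:loopy} estimate go through with error $O(1/n)$ uniformly in $s$. The curvature bound, the regularity check at the junctions $a_s,b_s$, and the endpoint verification are all as you describe. The only cosmetic point is that invoking \lref{L:smoothie} to upgrade to $C^3$ (so that $\ddot\ta_a$ makes sense) slightly changes $f$ within $\sr L_{-1}^{+1}(Q)$; this is harmless for the use of the lemma in \tref{T:diffuse}, but strictly speaking you are proving the statement for $f$ replaced by a homotopic map, which suffices.
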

\begin{proof}Let $H$ be given by
\begin{equation*}
	H(s,a)(t)=\Phi_{f(a)}(\phi_s(t))\be_{n}(\psi_s(t))\qquad (s,t\in [0,1],~a\in K),
\end{equation*}
where $\phi_s$, $\psi_s$ are as in \eqref{E:AS}. Then $H(0,a)=B_{f(a),n,t_0}$ and $H(1,a)=S_{f(a),n}$. A computation entirely similar to the one in the proof of \lref{L:loopy} establishes that $H(s,a)\in \sr L_{-1}^{+1}(Q)$ for all $s\in [0,1]$, $a\in K$ if $n$ is sufficiently large. The details will be left to the reader, but to make things easier, notice that $\phi_s,\psi_s$ are piecewise linear for all $s\in [0,1]$, so that $\ddot \psi_s=\ddot \phi_s=0$ except at a finite set of points (which depends on $s$).
\end{proof}

The next result provides a sufficient condition, which does not involve $g$, for one to be able to write a compact family of curves $f$ as $f=A_{g,n,t_0}$.

\begin{lem}\label{L:implicit}
	Let $X$ be a compact Hausdorff topological space and $f\colon X\to \sr U_d\subs \sr L_{-1}^{+1}(Q;\theta_1)$, $t_0\colon X\to (0,1)$ be continuous maps. Then it is possible to reparametrize each $f(a)$ \tup{(}continuously with $a$\tup{)} and find a continuous $g\colon X\to \sr L_{-1}^{+1}(Q;\theta_1)$ so that $f(a)=A_{g(a),n,t_0(a)}$ for all $a\in X$ if and only if there exists a continuous function $\eps\colon X\to (0,1)$ such that, for all $a\in X$:
	\begin{enumerate}
		\item [(i)] $0<t_0(a)-\eps(a)<t_0(a)+\eps(a)<1$;
		\item [(ii)]  $f(a)|_{[t_0(a)-\eps(a),t_0(a)+\eps(a)]}$ is some parametrization of $\Phi_{f(a)}(t_0(a)-\eps(a))\al_n$.
	\end{enumerate}
\end{lem}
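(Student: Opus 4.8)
The plan is to observe that $A_{g,n,t_0}$, as given by the formula in \dref{D:eighting}, is precisely the operation of inserting a copy of the loop $\al_n$ into $g$ at the frame $\Phi_g(t_0)$, so that (i)--(ii) record exactly the footprint of that loop on the output curve; the proof then amounts to verifying this, and, in the other direction, to excising the loop. The facts I will use repeatedly are the following, all immediate from \dref{D:eighting}: on $[t_0-\eps,t_0+\eps]$ one has $\phi\equiv t_0$ while $\psi$ is the affine increasing bijection onto $[0,1]$, so $A_{g,n,t_0}$ restricted to this interval equals $t\mapsto\Phi_g(t_0)\al_n(\psi(t))$, a parametrization of $\Phi_g(t_0)\al_n$; on $[0,t_0-\eps]$ and on $[t_0+\eps,1]$ the curve $A_{g,n,t_0}$ is an orientation-preserving reparametrization of $g|_{[0,t_0]}$ and of $g|_{[t_0,1]}$ respectively (being $g$ itself off the transition zones and $g\circ\phi$ on them), and consequently $\Phi_{A_{g,n,t_0}}(t_0-\eps)=\Phi_{A_{g,n,t_0}}(t_0+\eps)=\Phi_g(t_0)$.

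\emph{Necessity.} Suppose that after a reparametrization $f(a)=A_{g(a),n,t_0(a)}$ for some continuous $g$, and let $\eps(a)$ be the half-width used, which we may take continuous in $a$. Then (i) holds because the construction demands $0<2\eps(a)<\min\{t_0(a),1-t_0(a)\}$. By the facts above, $\Phi_{f(a)}(t_0(a)-\eps(a))=\Phi_{g(a)}(t_0(a))$ and $f(a)|_{[t_0(a)-\eps(a),\,t_0(a)+\eps(a)]}$ is a parametrization of $\Phi_{g(a)}(t_0(a))\al_n=\Phi_{f(a)}(t_0(a)-\eps(a))\al_n$, which is (ii).

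\emph{Sufficiency.} Suppose $\eps\colon X\to(0,1)$ is continuous and satisfies (i)--(ii). Since $\al_n(0)=\al_n(1)=0$ and $\ta_{\al_n}(0)=\ta_{\al_n}(1)=1$, condition (ii) combined with continuity of $\ta_{f(a)}$ forces $f(a)(t_0(a)-\eps(a))=f(a)(t_0(a)+\eps(a))$ and $\ta_{f(a)}(t_0(a)-\eps(a))=\ta_{f(a)}(t_0(a)+\eps(a))$. I would then define $g(a)$ by deleting the arc $f(a)|_{[t_0(a)-\eps(a),\,t_0(a)+\eps(a)]}$ and concatenating the two surviving arcs, reparametrized affinely so that they occupy $[0,t_0(a)]$ and $[t_0(a),1]$. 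The two displayed equalities make this a regular curve with an absolutely continuous unit tangent whose curvature still lies in $(-1,1)$ almost everywhere — a jump in speed at $t_0(a)$ does no harm, since membership in $\sr L_{-1}^{+1}(Q)$ only constrains the \emph{unit} tangent — so $g(a)\in\sr L_{-1}^{+1}(Q)$, it varies continuously with $a$, and $\Phi_{g(a)}(t_0(a))=\Phi_{f(a)}(t_0(a)-\eps(a))$. Finally I would form $A_{g(a),n,t_0(a)}$ with half-width $\tfrac12\eps(a)$, legitimate by (i); tracing the definition shows that it traverses, up to orientation-preserving reparametrization, the arc $g(a)|_{[0,t_0(a)]}$, the loop $\Phi_{g(a)}(t_0(a))\al_n$, then $g(a)|_{[t_0(a),1]}$, which by the construction of $g(a)$, the identity just displayed, and (ii) is the same concatenation as $f(a)$. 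Hence $A_{g(a),n,t_0(a)}$ is an orientation-preserving reparametrization of $f(a)$, and $a\mapsto A_{g(a),n,t_0(a)}$ is continuous (the defining formula is continuous in $g(a)$, $t_0(a)$, $\eps(a)$); replacing $f$ by this family finishes the proof.

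The routine calculations are the explicit descriptions of $\phi,\psi$ and of the endpoint frames of $A_{g,n,t_0}$. The step I expect to be the main obstacle is the bookkeeping in the sufficiency direction: checking that the excised-and-reglued curve $g(a)$ really lands in $\sr L_{-1}^{+1}(Q)$ — this is exactly where it is essential that the $\sr L$-spaces admit curves of merely $L^2$ speed — and that the whole excise-then-reinsert procedure is continuous in $a$ in the Hilbert-manifold ($L^2$) topology, which requires knowing that restriction to a subinterval, affine reparametrization, and concatenation are continuous there. A lesser subtlety is that ``some parametrization of $\Phi_{f(a)}(t_0(a)-\eps(a))\al_n$'' in (ii) has to be read as orientation-preserving: a reversed loop would have curvature $-\tfrac12$ and could never coincide with the $\al_n$, of curvature $+\tfrac12$, that $A_{g(a),n,t_0(a)}$ inserts.
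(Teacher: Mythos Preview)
Your proof is correct and follows essentially the same approach as the paper's. The paper only treats sufficiency explicitly (calling necessity ``obvious''), and there it first reparametrizes the family $f$ so that $\eps$ becomes a constant and the loop piece is parametrized proportionally to arc-length, after which it writes down $g(a)$ by an explicit piecewise formula for which $f(a)=A_{g(a),n,t_0(a)}$ holds on the nose; you instead excise the loop directly and push the reparametrization to the end by declaring the new $f(a)$ to be $A_{g(a),n,t_0(a)}$. The content is the same.
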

\begin{proof}
	Suppose that such a function $\eps\colon X\to (0,1)$ exists. Since $X$ is compact, we may reparametrize all $f(a)$ so that $\eps$ becomes a constant function and, for all $a\in X$, satisfies:
	\begin{enumerate}
		\item [(I)] $0<t_0(a)-2\eps<t_0(a)+2\eps<1$;
		\item [(II)] $f(a)|_{[t_0(a)-\eps,t_0(a)+\eps]}$ is a parametrization of  $\Phi_{f(a)}(t_0(a)-\eps)\al_n$ by a multiple of arc-length.
	\end{enumerate}
Define $g\colon X\to \sr L_{-1}^{+1}(Q)$ by:
	\begin{equation*}
		g(a)(t)=\begin{cases}
		f(a)(t) & \text{\quad if $t\nin [t_0(a)-2\eps,t_0(a)+2\eps]$} \\
		f(a)\big(\frac{1}{2}(t+t_0(a)-2\eps)\big) & \text{\quad if $t\in [t_0(a)-2\eps,t_0(a)]$} \\
		f(a)\big(\frac{1}{2}(t+t_0(a)+2\eps)\big) & \text{\quad if $t\in [t_0(a),t_0(a)+2\eps]$} 
		\end{cases}\quad (a\in X,~t\in [0,1]).
	\end{equation*}
	Then $g$ is continuous because $f$ and $t_0$ are continuous, and $f(a)=A_{g(a),n,t_0(a)}$ for all $a\in X$. This proves the ``if'' part of the lemma. The converse is obvious. 
\end{proof}

As a simple application of \lref{L:loopy}, we prove that this article is not a study of the empty set.

\begin{lem}\label{L:nonempty}
	Let $\ka_1<\ka_2$, $P=(p,w),\,Q=(q,z)\in \C\times \Ss^1$ and let $\theta_1\in \R$ satisfy $e^{i\theta_1}=z\bar w$. Then:
	\begin{enumerate}
		\item [(a)] $\sr L_{\kappa_1}^{\kappa_2}(P,Q)\neq \emptyset$. 
		\item [(b)]  $\sr L_{\kappa_1}^{\kappa_2}(P,Q;\theta_1)\neq \emptyset$ if $\ka_1\ka_2<0$.
		\item [(c)] If $\ka_1<\ka_2\leq 0$, then $\sr L_{\kappa_1}^{\kappa_2}(P,Q;\theta_1)=\emptyset$ for all sufficiently large $\theta_1$. If $0\leq \ka_1<\ka_2$, then $\sr L_{\kappa_1}^{\kappa_2}(P,Q;\theta_1)=\emptyset$ for all sufficiently small $\theta_1$.
	\end{enumerate}
\end{lem}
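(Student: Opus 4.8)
The plan is to dispatch the three parts separately, in each case first composing every curve with the isometry of $\C$ corresponding to $P^{-1}\in UT\C$; this gives a homeomorphism $\sr L_{\ka_1}^{\ka_2}(P,Q)\home\sr L_{\ka_1}^{\ka_2}(\bar Q)$ for a suitable $\bar Q\in UT\C$ whose direction is $e^{i\theta_1}$, and it preserves curvature and total turning, so I may assume $P=O$. Parts (a) and (b) are then short applications of the curvature-spreading construction of \S\ref{S:diffuse}, and part (c) is a sign computation. The one point needing care is flagged at the end.

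For (a): a nonempty open interval contains a nonzero point, so fix $\ka_0\in(\ka_1,\ka_2)$ with $\ka_0\neq0$. Take any smooth $\ga\in\sr C_{-\infty}^{+\infty}(\bar Q)$ — nonempty by \tref{T:Smale} — and let $c_n\colon[0,1]\to\C$ be a circle of constant curvature $\ka_0$ (a positive rescaling of $\al$, reflected across $\R$ if $\ka_0<0$) traversed $n$ times, so that $\Phi_{c_n}(0)=\Phi_{c_n}(1)=O$. Consider $\Sigma_n(t)=\Phi_\ga(t)\,c_n(t)$, the exact analogue of the curve $S_{\ga,n}$ of \dref{D:eighting} with $\be_n$ replaced by $c_n$. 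The estimate in the proof of \lref{L:loopy}, which uses only that the building block is a fixed closed piecewise-$C^2$ curve with frame $O$ at both endpoints, shows that $\ka_{\Sigma_n}=\ka_0+O(1/n)$ uniformly and that $|\dot\Sigma_n|$ stays bounded and bounded away from $0$; hence for $n$ large $\Sigma_n$ is $(\ka_1,\ka_2)$-admissible (its curvature lies in a compact subinterval of the \emph{open} interval $(\ka_1,\ka_2)$), and by \lref{L:basic}(a) (whose proof uses only $\Phi_{\be_n}(0)=\Phi_{\be_n}(1)=O$, which holds for $c_n$ as well) it has the same endpoints as $\ga$. Therefore $\Sigma_n\in\sr L_{\ka_1}^{\ka_2}(\bar Q)$, proving (a).

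For (b): the hypothesis $\ka_1\ka_2<0$ means $\ka_1<0<\ka_2$, so I may choose $\rho>0$ with $-1/\rho>\ka_1$ and $1/\rho<\ka_2$, and I take $\ga\in\sr C_{-\infty}^{+\infty}(\bar Q;\theta_1)$, which is nonempty by \tref{T:Smale} since $e^{i\theta_1}$ is the direction of $\bar Q$. Now repeat the construction of (a) using, in place of $c_n$, the eight $\be^\rho_n=\tfrac{\rho}{2}\be_n$ of radius $\rho$ traversed $n$ times — it has curvature $\pm1/\rho$ a.e., total turning $0$, and frame $O$ at both endpoints. For $n$ large the resulting $\Sigma_n$ has curvature $\pm1/\rho+O(1/n)\in(\ka_1,\ka_2)$ and the same endpoints as $\ga$, and it has total turning $\theta_1$: by \lref{L:basic}(b) it lies in the same component of $\sr L_{-\infty}^{+\infty}$ as the curve obtained from $\ga$ by splicing in an eight, which has total turning $\theta_1$, and total turning is constant on the components of that space by \tref{T:Smale}. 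Hence $\Sigma_n\in\sr L_{\ka_1}^{\ka_2}(\bar Q;\theta_1)$. (Alternatively one may first apply the total-turning-preserving normal translation of \tref{T:normalized}(a) to reduce to $\ka_1=-1$, $\ka_2=+1$ and then quote \lref{L:loopy} with $K$ a point verbatim.)

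For (c): no construction is needed. If $\ka_1<\ka_2\leq0$ and $\ga\in\sr L_{\ka_1}^{\ka_2}(P,Q)$ has length $L>0$, then its curvature, being valued in $(\ka_1,\ka_2)$, satisfies $\ka_\ga(s)<\ka_2\leq0$ for almost every $s\in[0,L]$, so the total turning $\int_0^L\ka_\ga(s)\,ds$ of $\ga$ is strictly negative; hence $\sr L_{\ka_1}^{\ka_2}(P,Q;\theta_1)=\emptyset$ whenever $\theta_1\geq0$, a fortiori for all sufficiently large $\theta_1$. The case $0\leq\ka_1<\ka_2$ is symmetric, with every admissible curve having total turning $>0$, so the space is empty for all $\theta_1\leq0$. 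The only step that genuinely requires attention is the admissibility claim in (a) and (b) — that the spread curves lie in the spaces defined by the \emph{open} curvature bounds and meet the $L^2$ conditions in the definition of admissibility. This is exactly the estimate in the proof of \lref{L:loopy} (curvature of the spread curve $=$ curvature of the building block $+\,O(1/n)$, uniformly), which carries over word for word once the unit eight there is replaced by a circle of curvature $\ka_0$ or an eight of radius $\rho$; I would cite that argument rather than repeat it, the rest being bookkeeping with \lref{L:basic}.
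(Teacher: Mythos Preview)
Your proof is correct and uses the same core idea as the paper: spreading circles or eights along a smooth curve, exactly as in \lref{L:loopy}, to force the curvature into the desired interval. You streamline slightly by working directly with general $(\ka_1,\ka_2)$ (choosing the radius of the spread circle or eight accordingly) rather than first reducing via \cref{C:normalized} to the three canonical cases, and by fixing the total turning in (b) at the outset---starting from $\ga\in\sr C_{-\infty}^{+\infty}(\bar Q;\theta_1)$---instead of adjusting it afterward with attached loops as the paper does.
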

\begin{proof}
	By \cref{C:normalized}, we need only consider spaces of the form $\sr L_{-1}^{+1}(Q)$, $\sr L_{0}^{+\infty}(Q)$ and $\sr L_{1}^{+\infty}(Q)$. It is clear that $\sr C_{-\infty}^{+\infty}(Q)\neq \emptyset$ for all $Q\in UT\C$. Let $\ga\in \sr C_{-\infty}^{+\infty}(Q)$ be arbitrary. 
	
	By \lref{L:loopy}, if $n$ is sufficiently large, then $S_{\ga,n}\in \sr L_{-1}^{+1}(Q)$. Furthermore, attaching loops (possibly with reversed orientation) to $S_{\ga,n}$, we can obtain a curve in $\sr L_{-1}^{+1}(Q;\theta_1)$ for any $\theta_1\in \R$ satisfying $e^{i\theta_1}=z$. This proves (b), and also part (a) when $\ka_1\ka_2<0$.
	
	Similarly, define a curve $\bar S_{\ga,n}$ by $\bar S_{\ga,n}(t)=\Phi_{\ga}(t)\big(\frac{1}{4}\al_n(t)\big)$ ($t\in [0,1]$). In words, $\bar S_{\ga,n}$ is obtained from $\ga$ by spreading $n$ loops of radius $\frac{1}{2}$, instead of $n$ eights of radius 2. Using an argument analogous to the one which established \lref{L:loopy}, one sees that $\bar S_{\ga,n}\in \sr L_{1}^{+\infty}(Q)$ for all sufficiently large $n$. This completes the proof of (a).
	
	To see that $\sr L_{\kappa_1}^{\kappa_2}(P,Q;\theta_1)$ may be empty if $\ka_1\ka_2\geq 0$, we use eq.~\eqref{E:rate}: If $\ka_1,\ka_2$ are both non-negative, for example, then $\sr L_{\kappa_1}^{\kappa_2}(P,Q)$ can only contain curves having positive total turning.
\end{proof}

\begin{urem}
	Invoking \lref{L:smoothie}, we obtain a version of \lref{L:nonempty} with $\sr C$ in place of $\sr L$. 
\end{urem}

\begin{cor}\label{C:top}
	Let $\sr U_d$ denote the subset of $\sr L_{-1}^{+1}(Q;\theta_1)$ consisting of all diffuse curves, where $Q=(q,z)$ and $e^{i\theta_1}=z$. Then $\sr U_d\neq \emptyset$.
\end{cor}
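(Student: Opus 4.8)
The plan is to exhibit a single diffuse curve in $\sr L_{-1}^{+1}(Q;\theta_1)$, obtained by grafting a figure-eight of turning number $0$ onto an arbitrary curve of this space. First I would apply \lref{L:nonempty}\?(b): since $(-1)(+1)<0$, the space $\sr L_{-1}^{+1}(Q;\theta_1)$ is nonempty, so I may fix some $\ga$ in it (which is admissible, by definition of $\sr L_{-1}^{+1}$). Choosing any $t_0\in (0,1)$, I set $\eta=B_{\ga,1,t_0}$, the curve obtained from $\ga$ by attaching one copy of the eight $\be_1$ as in \dref{D:eighting}.

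Next I would check that $\eta\in \sr L_{-1}^{+1}(Q;\theta_1)$. That $\eta\in \sr L_{-1}^{+1}(Q)$ and that $\eta$ has the same initial and final frames as $\ga$ are precisely \lref{L:basic}\?(c) and \lref{L:basic}\?(a), so only the total turning needs attention. For this I would read off from the formula in \dref{D:eighting}, using the functions $\phi,\psi$ of \eqref{E:phipsi}: on $[0,t_0-\eps]$ one has $\psi\equiv 0$ (so $\be_1(\psi(t))=\be_1(0)=0$) while $\phi$ is a homeomorphism onto $[0,t_0]$, so $\eta$ restricts there to an orientation-preserving reparametrization of $\ga|_{[0,t_0]}$; symmetrically, on $[t_0+\eps,1]$ it restricts to a reparametrization of $\ga|_{[t_0,1]}$; and on $[t_0-\eps,t_0+\eps]$, where $\phi\equiv t_0$, it equals $\Phi_\ga(t_0)\circ\be_1\circ\psi$, that is, the fixed Euclidean motion $\Phi_\ga(t_0)$ applied to $\be_1$ traversed once (since $\psi$ maps this interval onto $[0,1]$). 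Adding the contributions of these three pieces to the total turning, the first and third give the total turning of $\ga$ and the middle one gives the total turning of $\be_1$, which is $0$; hence the total turning of $\eta$ is $\theta_1$.

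Finally I would check that $\eta$ is diffuse. On $[t_0-\eps,t_0+\eps]$ the curve $\eta$ is the image of the eight $\be_1$ under a rigid motion, so the oscillation of the argument of $\ta_\eta$ over this subinterval equals the amplitude of $\be_1$, namely $2\pi$ (its unit tangent sweeps from $1$ once around and back). Since the amplitude of a curve is never less than the oscillation of its tangent argument over a subinterval, $\eta$ has amplitude $\geq 2\pi>\pi$, hence is diffuse, and so $\eta\in\sr U_d$.

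I do not anticipate a genuine obstacle here: every ingredient has already been assembled in \dref{D:eighting} and \lref{L:basic}, and the only computation not literally recorded there — that grafting a turning-number-zero eight leaves the total turning unchanged — is the elementary piecewise bookkeeping sketched above. The one point to be mildly careful about is that $B_{\ga,1,t_0}$ must be built from an admissible curve, which is automatic since $\ga\in\sr L_{-1}^{+1}(Q;\theta_1)$ is admissible by definition.
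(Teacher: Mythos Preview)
Your proof is correct and follows essentially the same approach as the paper: pick $\ga\in\sr L_{-1}^{+1}(Q;\theta_1)$ using \lref{L:nonempty}\?(b), then attach an eight via $B_{\ga,1,t_0}$ and invoke \lref{L:basic}. The paper's proof is terser, citing only \lref{L:basic}\?(c) and leaving the verifications of diffuseness and preservation of total turning implicit; your explicit piecewise bookkeeping for these two points is accurate and fills in exactly what the paper omits.
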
	 
\begin{proof}
	Lemma \lref{L:basic}\?(c) implies that $B_{\ga,1,\frac{1}{2}}\in \sr U_d$ for any $\ga\in \sr L_{-1}^{+1}(Q;\theta_1)$. Since the latter is nonempty by \lref{L:nonempty}\?(b), so is $\sr U_d$.
\end{proof}

\begin{thm}[Dubins]\label{P:Dubins}
	Let $x>0$, $Q=(x,1)$ and $\eta \in \sr L_{-1}^{+1}(Q)$ be the line segment $\eta\colon t\mapsto xt$. Then $\eta$ and $B_{\eta,1,\frac{1}{2}}$ lie in the same component of $\sr L_{-1}^{+1}(Q)$ if and only if $x>4$.
\end{thm}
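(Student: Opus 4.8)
\emph{Plan.} The curve $\eta$ is condensed (its amplitude is $0$), whereas $B_{\eta,1,\frac{1}{2}}$ is diffuse: the grafted figure eight $\be_1$ makes the argument of the unit tangent wind once around, so the amplitude of $B_{\eta,1,\frac{1}{2}}$ is at least $2\pi$. Thus $\eta\in\sr U_c$ and $B_{\eta,1,\frac{1}{2}}\in\sr U_d$ inside $\sr L_{-1}^{+1}(Q;0)$ (along any path in $\sr L_{-1}^{+1}(Q)$ the total turning varies continuously in $2\pi\Z$, hence is constantly $0$). The pivotal remark is that the amplitude $\sup_t\theta_\ga(t)-\inf_t\theta_\ga(t)$ is a continuous real-valued function on $\sr L_{-1}^{+1}(Q;0)$, because the unit tangent $\ta_\ga$ — a column of the solution $\Phi_\ga$ of \eqref{E:ivp} — depends continuously on $\ga$ in the $C^0$ topology. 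Consequently any path joining $\eta$ to $B_{\eta,1,\frac{1}{2}}$ must meet the set $\sr T$ of critical curves (amplitude exactly $\pi$). So the theorem is equivalent to the assertion that $\sr T\neq\emptyset$ in $\sr L_{-1}^{+1}(Q;0)$ \emph{and} that $\eta$, $B_{\eta,1,\frac12}$ can then actually be connected; and I claim $\sr T\neq\emptyset\iff x>4$.

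\emph{From a critical curve to $x>4$.} Let $\ga\colon[0,L]\to\C$ be a critical curve in $\sr L_{-1}^{+1}(Q;0)$ parametrized by arc length, with argument $\theta$ satisfying $\theta(0)=\theta(L)=0$, $M:=\sup\theta\in[0,\pi]$, $\inf\theta=M-\pi$ and $\abs{\theta'}=\abs{\ka_\ga}<1$ a.e.. Since $\ga(L)=x\in\R$ and $\ta_\ga(0)=\ta_\ga(L)=1$ we have $\int_0^L\cos\theta\,ds=x$. Choose times $0=a_0<a_1<a_2<a_3\le L$ at which $\theta$ successively attains the levels $0,\,M,\,M-\pi,\,0$, and on each of the resulting stretches compare $\ga$ with an arc of a \emph{unit} circle sweeping the same angular interval: because $\abs{\theta'}<1$, every such stretch of $\ga$ is longer than the comparison arc, so the contribution of that stretch to $\int\cos\theta\,ds$ strictly exceeds the corresponding integral for the unit arc. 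Summing the four contributions yields $x=\int_0^L\cos\theta\,ds>4$. This is exactly the sort of ``slowest rotation'' estimate established in \lref{L:faster} (and \cref{C:Dubinspath}), from which the bound can in fact be extracted by applying \lref{L:faster} to appropriate subarcs. Conversely, for every $x>4$ there \emph{is} such a critical curve: pick $\ka\in(\tfrac4x,1)$ and concatenate a straight segment of length $x-\tfrac4\ka\ge0$ with three arcs of curvature $+\ka,\,-\ka,\,+\ka$ driving $\theta$ through $0\to\tfrac\pi2\to-\tfrac\pi2\to0$; a direct computation gives $\int\cos\theta\,ds=x$, $\int\sin\theta\,ds=0$, so this curve lies in $\sr C_{-1}^{+1}((x,1);0)\subs\sr L_{-1}^{+1}((x,1);0)$ and has amplitude $\pi$.

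\emph{From $x>4$ to a path.} Using \lref{L:reparametrize}, reparametrize $\eta$ so that on a subinterval $[t_0-2\eps,t_0+2\eps]$ it traverses a straight subsegment of direction $1$ and length $L_0$ with $4<L_0\le x$. Keeping the endpoint frames of this subsegment fixed and the curvature in $(-1,1)$, deform it — through curves assembled from arcs of radius $2$ and straight segments, of total turning $0$ — into the concatenation of a figure eight $\be_1$ placed at the initial point of the subsegment with a straight segment; the hypothesis $L_0>4$ is precisely what provides the room for every intermediate stage to remain a valid curve from $\eta(0)$ to $\eta(1)$ (the straight parts appearing along the way stay of non-negative length). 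Grafting this homotopy back into $\eta$ joins $\eta$ to a curve of the form $B_{\eta,1,t_0}$ (for some $t_0$ and $\eps$), and different admissible choices of the parameters of the grafting construction give curves in the same component by \dref{D:eighting}; hence this curve is joined to $B_{\eta,1,\frac{1}{2}}$. As an alternative one may route through the loop/eight machinery: by \lref{L:WG} and \lref{L:basic}, $B_{\eta,1,\frac12}$ lies in the same component as $B_{\eta,n,t_0}$ for every $n$, and for large $n$ the latter lies in the same component as the spread curve $S_{\eta,n}$ by \lref{L:LLoopy}, so it suffices to join $\eta$ to $S_{\eta,n}$ inside $\sr L_{-1}^{+1}(Q)$, where again the condition $x>4$ is what leaves enough slack.

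\emph{The main obstacle} is the sharp constant $4$: getting the \emph{strict} inequality $x>4$ out of a critical curve, and — dually — making a concrete curvature-constrained homotopy succeed exactly when $x>4$ and not before. Everything else (continuity of the amplitude, the reduction to $\sr T$, and the bookkeeping with graftings) is routine given the results already in place.
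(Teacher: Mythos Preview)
The paper does not prove this theorem; it simply cites Dubins' original article \cite{Dubins1}. Your attempt is therefore an independent proof, and it has real gaps in both directions.

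\textbf{``Only if''.} The reduction to the nonexistence of critical curves is correct, but your comparison argument does not deliver the constant $4$. Projecting onto the $x$-axis, the concatenation of unit arcs driving $\theta$ through $0\to M\to M-\pi\to 0$ has $x$-displacement $\sin M+2\sin M+\sin M=4\sin M$, not $4$. Worse, your claim that each stretch of $\ga$ contributes more to $\int\cos\theta\,ds$ than the corresponding unit arc is false: on $[a_0,a_1]$ the argument $\theta$ may dip toward $M-\pi$ before reaching $M$, and when $M<\tfrac\pi2$ this makes $\cos\theta$ negative on a set of positive measure, so the contribution can be \emph{smaller} than $\sin M$. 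The argument that actually works---carried out later in \pref{P:criticallocation}, which does \emph{not} depend on the present theorem---projects onto $ie^{i\mu}$ with $\mu=\inf\theta$ rather than onto the real axis: along that direction the integrand $\cos\big(\theta-\mu-\tfrac\pi2\big)$ is nonnegative on all of $[\mu,\mu+\pi]$, and the strict Lipschitz bound on $\theta$ then forces $q=\ga(L)$ to lie outside a circle of radius $4$, which for $q=x>0$ real gives $x>4$.

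\textbf{``If''.} Your alternative route through the loop/eight machinery is circular. To pass from $B_{\eta,1,\frac12}$ to $B_{\eta,n,\frac12}$, or to connect $\eta$ to $S_{\eta,n}$, you implicitly need connectedness of $\sr U_d$, that is, \tref{T:diffuse} via \pref{P:diffuse}; but \pref{P:diffuse} rests on \lref{L:L2}, which rests on \cref{C:eighting}, which explicitly invokes \emph{this} theorem to produce the homotopy $h$. The lemmas you actually cite (\lref{L:WG}, \lref{L:basic}, \lref{L:LLoopy}) do not by themselves yield $B_{\eta,1,\frac12}\simeq B_{\eta,n,\frac12}$ or $\eta\simeq S_{\eta,n}$. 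Your direct construction is the honest path, but ``the hypothesis $L_0>4$ is precisely what provides the room'' is an assertion, not a proof: exhibiting an explicit family of admissible curves interpolating between a segment of length $>4$ and that segment with an eight attached, keeping the curvature in $(-1,1)$ at every stage, is exactly the nontrivial content of Dubins' argument, and you have not supplied it.
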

\begin{proof}
	See \cite{Dubins1}, Theorem 5.3.
\end{proof}

The next construction provides a homotopy of the straight line segment $[0,x]$ to the same segment with an eight attached which is continuous with respect to $x$.
\begin{cons}\label{C:eighting}
	For $x>0$, let $\eta_x\colon [0,x]\to \C$ be the line segment $t\mapsto t$. Take $t_0=\frac{1}{2}$ in \eqref{E:phipsi} and let $h\colon [0,1]\times [0,6]\to \C$ be a fixed homotopy between $h_0=\eta_6$ and
	\begin{equation*}
		h_1=\Phi_{\eta_6}\Big(6\phi(\tfrac{t}{6})\Big)\be_1\big(\psi(\tfrac{t}{6})\big)\quad \text{($\eta_6$ with an eight attached)}
	\end{equation*}
	such that $t\mapsto h_s(6t)$ ($t\in [0,1]$) is a curve in $\sr L_{-1}^{+1}(Q)$ for all $s\in [0,1]$. The existence of $h$ is guaranteed by \pref{P:Dubins}. Let $\mu\colon [0,+\infty)\to [0,1]$ be a smooth function such that $\mu(x)=0$ if $x\in [0,6]$ and $\mu(x)=1$ if $x\geq 8$. Define a family of curves $\eta_x^u\colon [0,1]\to \C$ by:
	\begin{equation}\label{E:eighting}
		\eta_x^u(t)=\begin{cases}
		\eta_x(t) & \text{\quad if $t\geq 6$ or $x\leq 6$} \\
		h(u\mu(x),t) & \text{\quad if $t\leq 6$ and $x\geq 6$}
		\end{cases}\qquad (u\in [0,1],~t\in [0,x],~x>0).
	\end{equation}
	Of course, $\eta_x^0=\eta_x$ for all $x>0$. If $x\geq 8$, then $\eta_x^1$ equals $\eta_x$ with an eight attached; in particular, $\eta_x^1|_{[3-6\eps,3]}$ is a loop.
\end{cons}

\subsection*{Grafting}We now explain how to graft straight line segments onto a diffuse curve (see Figure \ref{F:grafting}).
\begin{defn}\label{D:graft}
Let $\ga\in \sr L_{-1}^{+1}(Q)$ be a curve of length $L$ parametrized by arc-length, $\sig_i\geq 0$ and $s_i\in [0,1]$, $i=1,\dots,2n$, where the $s_i$ form a non-decreasing sequence. Suppose that there exists a bijection $p$ of  $\se{1,\dots,2n}$ onto itself such that, for each $i$:
\begin{enumerate}
	\item [($\ast$)] $\sig_{p(i)}=\sig_{i}$ and $\ta_{\ga}(s_{p(i)})=-\ta_\ga(s_i)$.
\end{enumerate}
Then we define the \tdef{graft} $G_\ga=G_{\ga,(s_i),(\sig_i)}\colon \big[0,L+\sum_{i=1}^{2n}\sig_i\big]\to \C$ by:
\begin{equation}\label{E:graft}
	G_\ga(s)=\begin{cases}
	\ga(s) & \text{ if\ \ $s\in [0,s_1]$ } \\
	\ga(s_1)+(s-s_1)\ta_{\ga}(s_1) & \text{ if\ \ $s\in [s_1,s_1+\sig_1]$ }\\
	\ga(s-\sig_1)+\sig_1\ta_{\ga}(s_1) & \text{ if\ \ $s\in [s_1+\sig_1,s_2+\sig_1]$}\\
	\ga(s_2)+\sig_1\ta_\ga(s_1)+(s-s_2-\sig_1)\ta_{\ga}(s_2) & \text{ if\ \ $s\in [s_2+\sig_1,s_2+\sig_1+\sig_2]$ }\\
	\phantom{\ga(s-\sig_1)}\vdots & \phantom{\text{if $s\in a$}}\vdots \\
	\ga(s-\sum_{i=1}^{2n}\sig_i)+\sum_{i=1}^{2n}\sig_i\ta_{\ga}(s_i) & \text{ if\ \ $s\in [s_{2n}+\sum_{i=1}^{2n}\sig_i,L+\sum_{i=1}^{2n}\sig_i]$}
	\end{cases}
\end{equation}
Although it simplifies the previous formula, the assumption that $(s_i)$ is a non-decreasing sequence is not necessary for the construction to work, since we may always relabel the $s_i$.
\end{defn}

\begin{figure}[ht]
	\begin{center}
		\includegraphics[scale=.195]{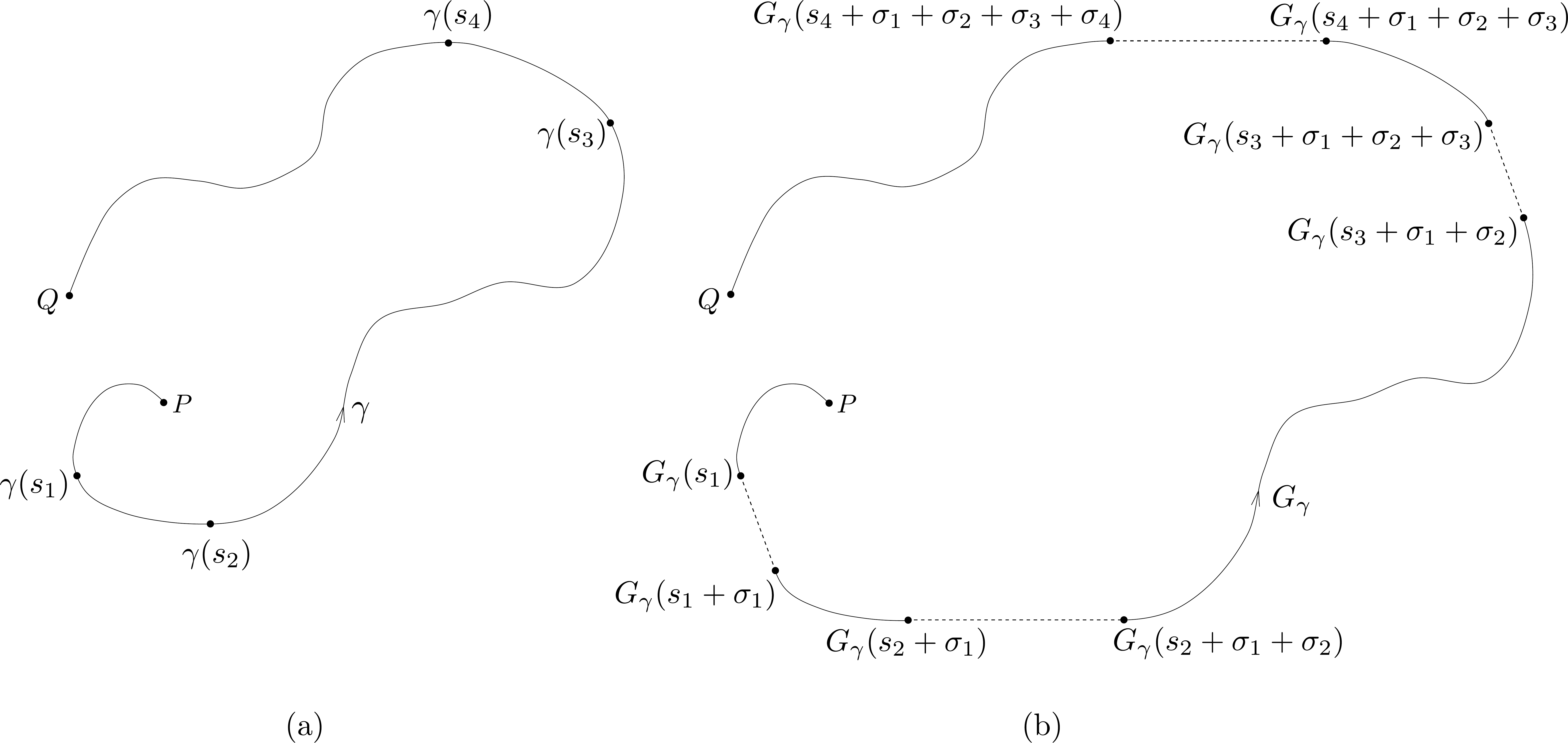}
		\caption{A diffuse curve $\ga$ and its graft $G_\ga=G_{\ga,(s_1,s_2,s_3,s_4),(\sig_1,\sig_2,\sig_3,\sig_4)}$.}
		\label{F:grafting}
	\end{center}
\end{figure}

\begin{lem}
	Let $\ga\in \sr L_{-1}^{+1}(Q)$ be diffuse and $G_\ga$ be as in \dref{D:graft}. Then $G_\ga$ is parametrized by arc-length and it lies in the same connected component of $\sr L_{-1}^{+1}(Q)$ as $\ga$.
\end{lem}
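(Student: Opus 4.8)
The plan is to prove the two assertions separately: that $G_\ga$ is parametrized by arc length and lies in $\sr L_{-1}^{+1}(Q)$ is a direct inspection of \eqref{E:graft}, while the claim that $G_\ga$ and $\ga$ lie in the same component is established by an explicit deformation of $G_\ga$ back onto $\ga$ that shrinks the grafted segments to zero length. First I would check the admissibility. On each of the finitely many subintervals of $\bigl[0,L+\sum_i\sig_i\bigr]$ occurring in \eqref{E:graft}, the curve $G_\ga$ is either a translate of $s\mapsto\ga(s-c)$ for some constant $c$ or a straight segment $s\mapsto a+(s-b)\ta_\ga(s_i)$; in both cases $\lvert\dot G_\ga\rvert\equiv 1$, since $\ga$ has unit speed and $\lvert\ta_\ga(s_i)\rvert=1$, and at each junction the two one-sided velocities coincide (they equal $\ta_\ga(s_i)$ at the ends of the $i$-th grafted segment, because $\dot\ga=\ta_\ga$), so $G_\ga$ is in fact a $C^1$ curve parametrized by arc length. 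Its curvature equals $\ka_\ga$ on the $\ga$-pieces and $0$ on the segments, hence takes values in $(-1,1)$; after reparametrizing $G_\ga$ affinely so that its domain becomes $[0,1]$ (an orientation-preserving reparametrization, cf.\ \lref{L:reparametrize}, which changes neither the curvature at a point nor the connected component), the resulting $\hat\sig$ is a constant and $\hat\ka$ is obtained from $\hat\ka_\ga$ by cutting it into pieces, rescaling each affinely in the variable, and inserting the constant $h_{-1,+1}(0)=0$ on the intervals corresponding to the segments; thus $(\hat\sig,\hat\ka)\in\E$ and $G_\ga$ is $(-1,+1)$-admissible. Finally $G_\ga$ agrees with $\ga$ near $s=0$, so $\Phi_{G_\ga}(0)=\Phi_\ga(0)$, while at the other end $G_\ga\bigl(L+\sum_i\sig_i\bigr)=\ga(L)+\sum_{i=1}^{2n}\sig_i\ta_\ga(s_i)$ with final tangent $\ta_\ga(L)$; reindexing the sum by the bijection $p$ of \dref{D:graft} and using $(\ast)$ gives $\sum_i\sig_i\ta_\ga(s_i)=\sum_i\sig_{p(i)}\ta_\ga(s_{p(i)})=-\sum_i\sig_i\ta_\ga(s_i)$, so this sum vanishes and $\Phi_{G_\ga}(1)=\Phi_\ga(1)=Q$. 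Hence $G_\ga\in\sr L_{-1}^{+1}(Q)$, with the same total turning as $\ga$ since straight segments contribute $0$.

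For the connectedness statement I would introduce, for $r\in[0,1]$, the graft $G_\ga^r=G_{\ga,(s_i),(r\sig_i)}$ with the same breakpoints $s_i$ and the same bijection $p$ but with segment lengths scaled by $r$. Then $G_\ga^1=G_\ga$ and $G_\ga^0=\ga$ (all segments degenerate), the family $(r\sig_i)$ again satisfies $(\ast)$ with $p$, so by the previous step $G_\ga^r\in\sr L_{-1}^{+1}(Q)$ for every $r$, all with the same total turning. After reparametrizing each $G_\ga^r$ affinely onto $[0,1]$, the map $r\mapsto G_\ga^r$ is, I claim, continuous into $\sr L_{-1}^{+1}(Q)\equiv\E$; granting this, $\ga=G_\ga^0$ and $G_\ga=G_\ga^1$ lie on a path in $\sr L_{-1}^{+1}(Q)$ and therefore in the same connected component, which by \lref{L:Hilbert}\?(a) coincides with the path component. (Diffuseness of $\ga$ plays no role in this argument beyond guaranteeing that $(\ast)$ can be met with some $n\geq1$; it does imply that $G_\ga$ is again diffuse, since grafting straight segments leaves the image of $\theta_\ga$ unchanged.)

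The one point that needs care, and where I expect the only real work to lie, is the continuity of $r\mapsto G_\ga^r$ in the $L^2$ topology. The breakpoints of $G_\ga^r$, the total length $M_r=L+r\sum_i\sig_i$, and the parameter-shifts of the $\ga$-pieces are all affine functions of $r$; so after rescaling to $[0,1]$ the function $\hat\sig^r\equiv h(M_r)$ is continuous in $r$, and $\hat\ka^r=\sum_j\mathbf{1}_{I_j^r}g_j^r$, where the endpoints of the intervals $I_j^r$ depend continuously on $r$ and each $g_j^r$ is either $\equiv 0$ or equals the fixed $L^2$ function $\hat\ka_\ga$ precomposed with an affine map whose coefficients depend continuously on $r$. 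Strong continuity of the translation and dilation actions on $L^2$ gives $g_j^r\to g_j^{r_0}$ in $L^2$ as $r\to r_0$; $\mathbf{1}_{I_j^r}\to\mathbf{1}_{I_j^{r_0}}$ in $L^2$ because the symmetric difference of the intervals has measure tending to $0$; and combining these with the uniform $L^2$-bound on the $g_j^r$ and the absolute continuity of the integral yields $\hat\ka^r\to\hat\ka^{r_0}$ in $L^2$, which is the claim. This verification is routine and its details can be omitted.
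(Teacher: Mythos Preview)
Your proof is correct and follows essentially the same approach as the paper: both verify the endpoint conditions using condition~($\ast$) (you via the cancellation $\sum_i\sig_i\ta_\ga(s_i)=0$, the paper by observing directly that the last line of \eqref{E:graft} reduces to a translate of $\ga$), and both use the same one-parameter family $u\mapsto G_{\ga,(s_i),(u\sig_i)}$ to connect $\ga$ to $G_\ga$. Your treatment is more thorough than the paper's, which simply asserts that this family defines a path in $\sr L_{-1}^{+1}(Q)$ without spelling out the $L^2$-continuity argument you carry out.
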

\begin{proof}
	It is obvious from \eqref{E:graft} that $\Phi_{G_\ga}(0)=\Phi_{\ga}(0)$. Looking at the last line of \eqref{E:graft} and using ($\ast$) we deduce that 
\begin{equation*}
	G_\ga(s)=\ga\Big(s-{\textstyle\sum_{i=1}^{2n}}\sig_i\Big)  \text{\ \ for\ \ $\textstyle  s\in \big[s_{2n}+\sum_{i=1}^{2n}\sig_i,L+\sum_{i=1}^{2n}\sig_i\big]$}.
\end{equation*}
Hence, $\Phi_{G_\ga}(L+\sum_{i=1}^{2n}\sig_i)=\Phi_\ga(L)$. Since $G_\ga$ is made up of line segments and arcs of $\ga$ (composed with translations), $G_\ga\in \sr L_{-1}^{+1}(Q)$. It is clear that $G_\ga$ is parametrized by arc-length. Finally, 
\begin{equation*}
\qquad	u\mapsto G_{\ga,(s_i),(u\sig_i)}\qquad (u\in [0,1])
\end{equation*}
defines a path in $\sr L_{-1}^{+1}(Q)$ joining $\ga$ to $G_\ga$.
\end{proof}	

\subsection*{Contractibility of $\sr U_d$} Recall that $K$ denotes a compact manifold, possibly with boundary.
\begin{lem}\label{L:L1}
	Let $f\colon K\to \sr U_d$ be continuous. Then there exist an open cover $(V_j)_{j=1}^m$ of $K$ and continuous maps $\tau_{j}^{\pm}\colon K\to (0,1)$, $f_1\colon K\to \sr U_d$ such that:
	\begin{enumerate}
		\item [(i)] $f\iso f_1$ within $\sr U_d$ and $f_1$ satisfies conditions (ii) and (iii) of \lref{L:smoothie}.
		\item [(ii)] $\ta_{f_1(a)}(\tau_j^+(a))=-\ta_{f_1(a)}(\tau_j^-(a))$ whenever $a\in V_j$.
	\end{enumerate}
\end{lem}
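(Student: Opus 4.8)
The plan is to reduce to smooth curves first, and then to manufacture the required pairs of antipodal tangent directions by a local transversality argument. Since $\sr U_d$ is the intersection of the (open) set of diffuse curves with a union of connected components of $\sr L_{-1}^{+1}(Q)$, it is open in $\sr L_{-1}^{+1}(Q)$, so \lref{L:smoothie} (with $\sr U=\sr U_d$) furnishes a map $f_1\colon K\to\sr U_d$, homotopic to $f$ within $\sr U_d$, all of whose values $f_1(a)$ are smooth curves whose $t$-derivatives depend continuously on $a$; this is exactly condition (i), and by the last clause of \lref{L:smoothie} it guarantees the continuity properties we need below. Writing $\theta_a\colon[0,1]\to\R$ for the argument of $\ta_{f_1(a)}$ normalized by $\theta_a(0)=0$, continuity of the first two $t$-derivatives of $f_1(a)$ in $a$, together with the speed of $f_1(a)$ being locally uniformly bounded away from $0$, shows that $\theta_a$ and $\dot\theta_a$ depend continuously on $a$, uniformly in $t$. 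As $f_1(a)\in\sr U_d$, each $\theta_a$ has oscillation $>\pi$.

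Now fix $a_0\in K$. By continuity of $\theta_{a_0}$ there is $\de>0$ with $\sup_{[\de,1-\de]}\theta_{a_0}-\inf_{[\de,1-\de]}\theta_{a_0}>\pi$. Since $\theta_{a_0}$ is smooth, Sard's theorem lets us choose $v\in\R$ such that $v$ and $v+\pi$ are both regular values of $\theta_{a_0}$, both lie strictly between $\inf_{[\de,1-\de]}\theta_{a_0}$ and $\sup_{[\de,1-\de]}\theta_{a_0}$ (possible since that range has length $>\pi$), and neither equals $\theta_{a_0}(\de)$ or $\theta_{a_0}(1-\de)$. Then $\theta_{a_0}^{-1}(v)\cap[\de,1-\de]$ is a nonempty finite subset of the open interval $(\de,1-\de)$ at each point of which $\dot\theta_{a_0}\neq0$, and similarly for $v+\pi$.

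The key step is persistence of these transversal crossings. Using the uniform continuity of $a\mapsto(\theta_a,\dot\theta_a)$ near $a_0$: there is a neighborhood $V$ of $a_0$ such that for $a\in V$, $\theta_a$ stays bounded away from $v$ on $[\de,1-\de]$ outside fixed small neighborhoods of the points of $\theta_{a_0}^{-1}(v)$, while on each such neighborhood $\dot\theta_a$ keeps the sign of $\dot\theta_{a_0}$ there, so $\theta_a$ is strictly monotone and meets the level $v$ exactly once, at a point depending continuously on $a$. Hence $\theta_a^{-1}(v)\cap[\de,1-\de]$ is a finite set of fixed cardinality whose points vary continuously with $a\in V$; let $\tau^-(a)$ be its minimum, a continuous map $V\to(\de,1-\de)$. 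Defining $\tau^+(a)$ analogously from the level $v+\pi$ gives $\theta_a(\tau^+(a))-\theta_a(\tau^-(a))=\pi$, hence $\ta_{f_1(a)}(\tau^+(a))=-\ta_{f_1(a)}(\tau^-(a))$ for all $a\in V$.

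To conclude, cover the compact space $K$ by finitely many such neighborhoods $V_1,\dots,V_m$, chosen so that each $\ol{V_j}$ lies in the open set on which the corresponding pair $\tau_j^\pm$ is defined, and extend each $\tau_j^\pm$ from $\ol{V_j}$ to a continuous map $K\to(0,1)$ (its restriction to $\ol{V_j}$ has image in a compact subinterval of $(0,1)$, so Tietze's theorem applies). Then $(V_j)_{j=1}^m$, $(\tau_j^\pm)$ and $f_1$ satisfy (i) and (ii). I expect the only delicate point to be the continuity of the selection $a\mapsto\tau_j^\pm(a)$; this is precisely why the reduction to smooth $f_1$ with $t$-derivatives continuous in $a$ (via \lref{L:smoothie}) and the choice of $v$, $v+\pi$ as regular values are essential, while everything else is routine bookkeeping.
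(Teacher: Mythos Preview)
Your proof is correct and follows essentially the same route as the paper: reduce to smooth curves via \lref{L:smoothie}, pick two argument levels $\pi$ apart as common regular values of $\theta_{a_0}$ by Sard, and use persistence of transversal crossings together with uniform continuity of $(\theta_a,\dot\theta_a)$ to obtain continuous local selections $\tau^\pm$, then cover $K$ and extend by Tietze. The only cosmetic differences are that you secure $\tau^\pm(a)\in(0,1)$ via a $\de$-buffer on the domain (the paper instead chooses the level near $\sup\theta_{a_0}$), and you pick the minimum of the finite level set rather than tracking a single crossing as in the paper's implicit-function style argument.
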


\begin{proof}
	Apply \lref{L:smoothie} to $f$ and $\sr U_d$ to obtain $f_1$. The idea is to use the implicit function theorem to find $\tau_{j}^{\pm}$. However, some care must be taken since $f_1$ need not be differentiable with respect to $a$. 
	
	For each $a\in K$, let $\theta_a\colon [0,1]\to \R$ be the argument of $\ta_{f_1(a)}$ satisfying $\theta_a(0)=0$, and set
	\begin{equation*}
		\vphi_a=\frac{1}{2}\Big(\sup_{t\in [0,1]}\theta_a(t)+\inf_{t\in [0,1]}\theta_a(t)\Big).
	\end{equation*}
	Because each $\ga_a$ is diffuse and $K$ is compact, we can find $\de>0$ such that 
	\begin{equation*}
		\theta_a([0,1])\sups \Big(\vphi_a-\frac{\pi}{2}-\de\,,\,\vphi_a+\frac{\pi}{2}+\de\Big)\text{\quad for all $a\in K$}.	
	\end{equation*}
	Fix $a_0\in K$. By Sard's theorem, we can find $\psi\in (\vphi_{a_0}+\frac{\pi}{2},\vphi_{a_0}+\frac{\pi}{2}+\de)$ such that both $\psi$ and $\psi-\pi$ are regular values of $\theta_{a_0}$. Let $\tau^{\pm}(a_0)\in (0,1)$ satisfy $\theta_{a_0}(\tau^{+}(a_0))=\psi$ and $\theta_{a_0}(\tau^{-}(a_0))=\psi-\pi$. No generality is lost in assuming that $\dot\theta_{a_0}(\tau^+(a_0))>0$. From eq.~\eqref{E:rate}, $\dot\theta_a=\abs{\dot\ga_{f_1(a)}}\ka_{f_1(a)}$. Thus, $\dot \theta_a$ depends continuously on $a$, so we can find $\mu,\eps>0$ and a compact neighborhood $V\subs K$ of $a_0$ such that 
	\begin{equation*}
		\psi\in \theta_a((\tau^+(a_0)-\eps,\tau^+(a_0)+\eps))\ \ \text{and}\ \ \dot\theta_{a}(t)>\mu \ \ \text{whenever $a\in V$,~$\abs{t-\tau^+(a_0)}<\eps$.}
	\end{equation*}
	Hence, for each $a\in V$, there exists a \tsl{unique} $\tau^+(a)\in (\tau^+(a_0)-\eps,\tau^+(a_0)+\eps)$ with $\theta_a(\tau^+(a))=\psi$. We claim that the function $\tau^+\colon V\to (0,1)$ so defined is continuous. Consider the equation
	\begin{equation*}
		\theta_a(\tau^+(b))-\theta_a(\tau^+(a))=\big[\theta_b(\tau^+(b))-\theta_a(\tau^+(a))\big]+\big[\theta_a(\tau^+(b))-\theta_b(\tau^+(b))\big]\ \ (a,b\in V).
	\end{equation*}
	The first term on the right side equals 0 by the definition of $\tau^+$, and the second converges to $0$ as $b\to a$ since $\theta_b(t)$ is a uniformly continuous function of $(b,t)\in K\times [0,1]$. Hence, by the mean-value theorem, 
	\begin{equation*}
		\abs{\tau^+(b)-\tau^+(a)}< \frac{1}{\mu}\abs{\theta_a(\tau^+(b))-\theta_a(\tau^+(a))} \to 0\text{\ \ as \ \ $b\to a\quad (a,b\in V)$}.
	\end{equation*}
	It follows that $\tau^+$ is continuous. Similarly, reducing $V$ if necessary, we can find a continuous function $\tau^-\colon V\to (0,1)$ with $\theta_a(\tau^-(a))=\psi-\pi$ for all $a\in V$. To finish the proof, cover $K$ by finitely many such compact neighborhoods $V_j$, let $\tau^{\pm}_j\colon V_j\to (0,1)$ be the corresponding functions and extend each $\tau^{\pm}_j$ to $K$ using the Tietze extension theorem.
\end{proof}

\begin{lem}\label{L:L2}
	Let $f\colon K\to \sr U_d$ be continuous. Then there exist an open cover $(W_j)_{j=1}^m$ of $K$ and continuous maps $t_{j}\colon K\to (0,1)$, $g_j\colon W_j\to \sr L_{-1}^{+1}(Q)$ and $f_2\colon K\to \sr U_d$ such that:
	\begin{enumerate}
		\item [(i)] $f\iso f_2$ within $\sr U_d$;
		\item [(ii)] $f_2(a)=A_{g_j(a),1,t_j(a)}$ for all $a\in W_j$. 
	\end{enumerate}
\end{lem}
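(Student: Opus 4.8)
The plan is to construct $f_2$ from $f$ in three moves: first pass to the family $f_1$ furnished by \lref{L:L1}, which locally exhibits pairs of parameters at which the unit tangent of $f_1(a)$ takes antiparallel values; then graft two straight segments of equal, large length at each such pair (\dref{D:graft}), which keeps the family inside $\sr U_d$; and finally use Dubins' theorem in the guise of \cref{C:eighting} to bend one segment of each pair into one carrying an embedded loop $\al_1$. Once the loop is present, \lref{L:implicit} will identify the resulting family, over the relevant part of the cover, as $A_{g_j,1,t_j}$.

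So I would begin by applying \lref{L:L1} to $f$, obtaining an open cover $(V_j)_{j=1}^m$ of $K$, continuous $\tau_j^{\pm}\colon K\to(0,1)$, and $f_1\colon K\to\sr U_d$ with $f\iso f_1$ within $\sr U_d$, with $f_1$ satisfying (ii)--(iii) of \lref{L:smoothie}, and with $\ta_{f_1(a)}(\tau_j^+(a))=-\ta_{f_1(a)}(\tau_j^-(a))$ for $a\in V_j$ (so in particular $\tau_j^-(a)\neq\tau_j^+(a)$). Reparametrize each $f_1(a)$ by arc-length, continuously in $a$ (\lref{L:reparametrize} and the remark following \cref{C:reparametrize}). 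Next choose a shrinking $(V_j')$ of $(V_j)$ with $\overline{V_j'}\subs V_j$, continuous functions $\lambda_j\colon K\to[0,1]$ equal to $1$ on $\overline{V_j'}$ and supported in $V_j$, and fix $\ell\geq 8$. For $a\in K$ define $\tilde f(a)$ to be the graft of $f_1(a)$ that inserts, for each $j$ with $\lambda_j(a)>0$, a straight segment of length $\lambda_j(a)\ell$ at $\tau_j^-(a)$ and another at $\tau_j^+(a)$, with the bijection $p$ of \dref{D:graft} swapping the two members of each pair; this is legitimate because within each pair the lengths coincide and the tangents are opposite. Since a graft inserts only straight segments, whose directions already occur along the curve, $\tilde f(a)$ has the same amplitude as $f_1(a)$, hence is diffuse and lies in $\sr L_{-1}^{+1}(Q)$, and $u\mapsto G_{f_1(a),(s_i),(u\sig_i)}$ is a path in $\sr U_d$ from $f_1(a)$ to $\tilde f(a)$. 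Thus $\tilde f\iso f_1\iso f$ within $\sr U_d$, and $\tilde f$ is continuous in $a$.

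Then I would deform: for each $j$, apply the homotopy $\eta^u_{\sig}$ of \cref{C:eighting}, with $\sig=\lambda_j(a)\ell$, to the segment that was inserted at $\tau_j^-(a)$ — read through the frame of $\tilde f(a)$ at the left endpoint of that segment — performing all of these simultaneously. Each intermediate curve still lies in $\sr L_{-1}^{+1}(Q)$ (this is exactly what \cref{C:eighting}, and behind it \pref{P:Dubins}, guarantees for the modified segment) and is still diffuse, because the arcs inherited from $f_1(a)$ — which by themselves already push the amplitude above $\pi$ — are never touched. Hence this is a homotopy within $\sr U_d$ from $\tilde f$ to a continuous $f_2\colon K\to\sr U_d$, and together with the previous step it gives (i). Put $W_j:=V_j'$. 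On $W_j$ one has $\lambda_j\equiv 1$, so the segment just modified has length $\ell\geq 8$, and by \cref{C:eighting} $f_2(a)$ restricted to it is $\eta_\ell$ with a figure eight attached; in particular $f_2(a)$ restricts, on an interval varying continuously with $a\in\overline{W_j}$, to a parametrization of $\Phi_{f_2(a)}(t_j(a)-\eps_j(a))\al_1$ (the loop forming the first half of that eight), where $t_j(a)$ is the midpoint of that interval and $\eps_j(a)$ its half-length. Extend $t_j$ continuously over $K$ by the Tietze theorem and apply \lref{L:implicit} on the compact set $\overline{W_j}$, with this $\eps_j$ and $n=1$: after reparametrizing $f_2$ it produces a continuous $g_j\colon W_j\to\sr L_{-1}^{+1}(Q)$ with $f_2(a)=A_{g_j(a),1,t_j(a)}$ for all $a\in W_j$, which is (ii).

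The hard part will be organizational rather than conceptual: carrying out the local grafting and eight-attaching constructions coherently in $a$, gluing them into a single map $f_2\colon K\to\sr U_d$, and checking that through all the deformations taking $f_1$ to $\tilde f$ and then to $f_2$ every curve stays diffuse and admissible (curvature in $(-1,1)$ almost everywhere). Diffuseness is the delicate point, and what saves it is that the arcs inherited from $f_1(a)$, which alone already force the amplitude above $\pi$, are never modified. Beyond that there is only bookkeeping — arranging (or checking harmless) that the inserted parameters may be taken pairwise distinct, passing between the arc-length and $[0,1]$ parametrizations compatibly, and reconciling the reparametrizations of $f_2$ that \lref{L:implicit} demands over the several $\overline{W_j}$, most simply by normalizing the $\eps_j$ one piece of the cover at a time.
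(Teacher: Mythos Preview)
Your proposal is correct and follows essentially the same approach as the paper: apply \lref{L:L1}, graft paired segments of length governed by bump functions supported in the $V_j$, then use \cref{C:eighting} to produce an explicit loop on each segment and invoke \lref{L:implicit}. The only cosmetic differences are that the paper uses a partition of unity (setting $\sigma_j=10m\lambda_j$ and $W_j=\{\sigma_j>8\}$) rather than a shrinking of the cover, and it deforms both inserted segments of each pair into eights rather than just one; neither change affects the argument.
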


\begin{proof}
	Take $f_1$ as in \lref{L:L1}. By \cref{C:reparametrize}, we may assume that each $\ga_a=f_1(a)\colon [0,L_a]\to \C$ is parametrized by arc-length, so that now $\tau_j^{\pm}(a)\in (0,L_a)$ for each $a$. Let $( \la_j)_{j=1}^m$ be a partition of unity subordinate to $(V_j)_{j=1}^m$, with $V_j$ as in \lref{L:L1}. Set $\sig_j=10m\la_j$ and $W_j=\set{a\in K}{\sig_j(a)>8}$. Then $\overline{W}_j\subs V_j$ and the $W_j$ form an open cover of $K$. Define 
	\begin{equation*}
		\ga_a^u=G_{\ga_a,(\tau_1^-(a),\dots,\tau_m^-(a),\tau_1^+(a),\dots,\tau_m^+(a))\?,\?(u\sig_1(a),\dots,u\sig_m(a),u\sig_1(a),\dots,u\sig_m(a))}\quad (u\in [0,1],~a\in K),
	\end{equation*}
	as in \dref{D:graft}. Let us suppose that $\tau_1^-\leq \dots\leq \tau_m^-(a)\leq \tau_1^+(a)\leq \dots\leq \tau_m^+(a)$ for each $a$ to abbreviate the notation, and set
	\begin{equation*}
		\xi_j^-(a)=\sum_{i<j} \sig_i(a)\text{\ \ and \ \ }\xi_j^+(a)=10m+\sum_{i<j}\sig_i(a)\quad (a\in K,~j=1,\dots,m).
	\end{equation*}
	Then 
	\begin{equation*}
		\ga_a^1\big([\tau_j^-(a)+\xi_j^-(a),\tau_j^-(a)+\xi_j^-(a)+\sig_j(a)]\big)
	\end{equation*} is a line segment, corresponding to the graft at $\ga_a(\tau_j^-(a))$. Its length $\sig_j(a)$ is at least 8 if $a\in \ol{W}_j$. Of course, the same statements hold with $^+$ instead of $^-$.  We obtain $f_2$ by deforming all of these segments to eights. More precisely, for $u\in [1,2]$ and $a\in K$, let
	\begin{equation*}
		\ga_a^u(s)=\begin{cases}
		\Phi_{\ga_a^1}(\tau_j^{\pm}(a)+\xi^{\pm}_j(a))\eta_{\sig_j(a)}^{u-1}(s-\tau^{\pm}_j(a)-\xi^{\pm}_j(a)) & \\
		\ga_a^1(s) &
		\end{cases}\ (s\in [0,L_a+20m])
	\end{equation*}
	according as  $s\in [\tau^{\pm}_j(a)+\xi^{\pm}_j(a)\?,\?\tau^{\pm}_j(a)+\xi^{\pm}_j(a)+\sig_j(a)]$ for some $j$ or not, respectively. Here $\eta_x^u$ is as in \cref{C:eighting}. Let $f_2\colon K\to \sr U_d$ be given by $f_2(a)=\ga_a^2$. Note that
	\begin{equation*}
		\ga_a^2\big([\tau^{\pm}_j(a)+\xi^{\pm}_j(a)+3-6\eps\?,\?\tau^{\pm}_j(a)+\xi^{\pm}_j(a)+3]\big)\qquad (j=1,\dots,m)
	\end{equation*}
	is a loop whenever $a\in \ol{W}_j$. Thus (after reparametrizing the $\ga_a^2$ so that their domains become $[0,1]$) we may apply \lref{L:implicit} to each family $f_2|_{\ol{W}_j}$ to find $g_j\colon \ol{W}_j\to \sr L_{-1}^{+1}(Q)$ and $t_j\colon \ol{W}_j \to (0,1)$ such that 
	\begin{equation*}
		f_2(a)=A_{g_j(a),1,t_j(a)}\text{\ \ for all $a\in W_j$}. 
	\end{equation*}
	The functions $t_j$ may be extended to all of $K$ by the Tietze extension theorem. 
\end{proof}

\begin{lem}\label{L:L3}
	Let $f\colon K\to \sr U_d$ be continuous. Suppose that there exists a covering of $K$ by open sets $W_j$ and continuous maps $t_j\colon K\to (0,1)$, $g_j\colon W_j\to \sr L_{-1}^{+1}(Q)$ with $f(a)=A_{g_j(a),1,t_j(a)}$ whenever $a\in W_j$, $j=1,\dots,m$. Then there exist continuous $g\colon K\to \sr L_{-1}^{+1}(Q)$ and $H\colon [0,1]\times K\to \sr U_d$ with $H_0=f$ and $H_1=A_{g,1,\frac{1}{2}}$.
\end{lem}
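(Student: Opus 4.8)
The point is to upgrade the \emph{local} presentations $f(a)=A_{g_j(a),1,t_j(a)}$ into one \emph{global} presentation $f\iso A_{g,1,\frac12}$ in $\sr U_d$. Two elementary facts make an attached loop easy to manipulate. First, any curve of the form $A_{\ga,n,t_0}$ is automatically diffuse, since during the loop $\al_n$ the unit tangent turns by $2\pi n\geq 2\pi>\pi$, so its amplitude exceeds $\pi$; together with \lref{L:basic}(c) this shows that \emph{sliding an attached loop} — replacing $t_0$ by a continuous path in $(0,1)$, with $\eps$ shrunk as needed (cf.\ the remark after \dref{D:eighting}) — is always a homotopy within $\sr U_d$. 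Second, sliding one attached loop past another along the same curve leaves the latter in place, so such slides can be carried out independently for the different indices.

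Step 1 (bump functions). Since $K$ is compact, choose a smaller open cover $\{W_j'\}_{j=1}^m$ of $K$ with the closure of $W_j'$ contained in $W_j$, and continuous $\lambda_j\colon K\to[0,1]$ with $\lambda_j\equiv 1$ on $W_j'$ and with the support of $\lambda_j$ contained in $W_j$. Step 2 (collecting loops at $\frac12$, one index at a time). Process $j=1,\dots,m$ in turn. Inductively suppose $f^{(j-1)}\iso f$ in $\sr U_d$ and, for every $i\geq j$, that there are continuous $t_i^{\mathrm c}\colon W_i\to(0,1)$ and $g_i^{\mathrm c}\colon W_i\to\sr L_{-1}^{+1}(Q)$ with $f^{(j-1)}(a)=A_{g_i^{\mathrm c}(a),1,t_i^{\mathrm c}(a)}$ on $W_i$ (for $j=1$ this is the hypothesis, with $t_i^{\mathrm c}=t_i$, $g_i^{\mathrm c}=g_i$). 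Define $H^j\colon[0,1]\times K\to\sr U_d$ by: for $a\in W_j$, let $H^j(s,a)$ be $f^{(j-1)}(a)$ with its $j$-th loop slid from $t_j^{\mathrm c}(a)$ to the parameter $\big(1-s\lambda_j(a)\big)t_j^{\mathrm c}(a)+s\lambda_j(a)\tfrac12$; and for $a$ outside the support of $\lambda_j$, let $H^j(s,a)=f^{(j-1)}(a)$. These agree where both apply (there $\lambda_j(a)=0$), so $H^j$ is continuous, and each $H^j(s,\cdot)$ lands in $\sr U_d$ by the first paragraph. Put $f^{(j)}=H^j(1,\cdot)$; since sliding the $j$-th loop leaves the $i$-th loop ($i>j$) in place, $f^{(j)}$ still satisfies the inductive hypothesis for the indices $>j$, with new $t_i^{\mathrm c}$ obtained by tracking the $i$-th loop through the slide. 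After $m$ steps we obtain $f^{(m)}\iso f$ in $\sr U_d$.

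Step 3 (extracting a single loop). At each $a\in K$, every index $j$ with $\lambda_j(a)=1$ contributes a loop sitting exactly at parameter $\tfrac12$; since $\{W_j'\}$ covers $K$ there is at least one such $j$, so $f^{(m)}(a)$ has near $t=\tfrac12$ a loop $\al_{r(a)}$ with $r(a)\geq 1$, while the remaining attached loops (for indices with $0<\lambda_j(a)<1$ or $a\notin W_j$) sit at other parameters and, together with the ambient curve, form the rest. Removing exactly one $\al_1$-worth of this cluster defines $g(a)\in\sr L_{-1}^{+1}(Q)$ with $f^{(m)}(a)=A_{g(a),1,\frac12}$ (for a suitable $\eps$); the crucial point is that $g$ is continuous, because as $a$ varies and some $\lambda_j(a)$ descends through $1$ the corresponding loop slides \emph{off} the cluster at $\tfrac12$ continuously — it never disappears or jumps — so $r(a)$ only changes through a continuous merging/splitting of loops of $f^{(m)}(a)$. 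Concatenating the homotopies of Step 2 yields $H\colon[0,1]\times K\to\sr U_d$ with $H_0=f$ and $H_1=f^{(m)}=A_{g,1,\frac12}$.

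The main obstacle is exactly what Step 3 is designed to overcome: the local "de-loopings" $g_i$ and $g_j$ disagree on overlaps (they differ by the parameter at which a loop is attached), so there is no canonical global curve-with-its-loop-removed that one could simply take for $g$; this is why all loops must first be transported to a common standard parameter and only then can one be peeled off, and the delicate part is verifying that this peeling is continuous in $a$. One should also note that every move above merely translates a loop (of curvature $\pm\tfrac12$) along an admissible curve and reparametrizes, so the bound $\abs{\ka}<1$ and the endpoint frame $Q$ are preserved throughout.
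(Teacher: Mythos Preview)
Your approach differs from the paper's and has a genuine gap. The paper argues by a short induction on $m$: it slides the $m$-th loop (weighted by a bump function $\lambda$ supported in $W_m$, equal to $1$ on a shrunken $W\subset W_m$) to the position $t_{m-1}(a)$ of the $(m-1)$-th loop, \emph{not} to $\tfrac12$. On the enlarged set $\hat W_{m-1}=W_{m-1}\cup W$ there is then certainly a loop at $t_{m-1}(a)$ --- either the original one (for $a\in W_{m-1}$) or the slid one (for $a\in W$) --- and a single appeal to \lref{L:implicit} yields the new $\hat g_{m-1}$, reducing to $m-1$ open sets. The base case $m=1$ is just the slide to $\tfrac12$.

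The gap in your argument is in Step~3, and it stems from something your Step~2 does not control: what happens to the $j$-th loop \emph{after} index $j$ has been processed. Your inductive hypothesis only maintains the representation $f^{(j-1)}=A_{g_i^{\mathrm c},1,t_i^{\mathrm c}}$ for the \emph{unprocessed} indices $i\geq j$; once $j$ is done you stop tracking it. But processing a later $k>j$ means rewriting $f^{(k-1)}(a)=A_{g_k^{\mathrm c}(a),1,t_k^{\mathrm c}(a)}$ and replacing $t_k^{\mathrm c}(a)$ by another value, and this involves the global reparametrization $\phi$ of \dref{D:eighting}, which in general shifts every feature of the curve --- in particular the previously placed $j$-th loop --- away from $\tfrac12$. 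So the assertion that ``every index $j$ with $\lambda_j(a)=1$ contributes a loop sitting exactly at parameter $\tfrac12$'' is unsupported, and without it you have no continuous $\eps$ witnessing a single $\al_1$ centered at $\tfrac12$, hence no way to invoke \lref{L:implicit} to produce a \emph{continuous} $g$ on all of $K$. The heuristic that $r(a)$ ``only changes through a continuous merging/splitting'' does not by itself give a continuous selection of one loop from a cluster of variable size. (A smaller point: the persistence claim in Step~2 is itself asserted rather than proved; verifying it would already require an application of \lref{L:implicit} at each stage.) The paper's device of sliding onto the \emph{existing} location $t_{m-1}(a)$ sidesteps all of this: one never needs to remember where a processed loop went.
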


\begin{proof}
	The proof will be by induction on $m$. If $m=1$ then $W_1=K$, and $H$ just slides the loop from $t_1$ to $\frac{1}{2}$:
	\begin{equation*}
		H(s,a)=A_{g_1(a),1,\?(1-s)t_1(a)+\frac{s}{2}}\quad (s\in [0,1],~a\in K).
	\end{equation*}
	Suppose now that $m>1$. Let $W$ be an open set such that $\ol{W}\subs W_m$ and $W_1\cup \dots W_{m-1}\cup W=K$. Let $\la\colon K\to [0,1]$ be a continuous function such that $\la(a)=1$ if $a\in W$ and $\la(a)=0$ if $a\nin W_m$. Define $\hat H\colon [0,1]\times K\to \sr U_d$ by: 
	\begin{equation*}
		\hat H(s,a)=\begin{cases}
		A_{g_m(a),1,\?(1-\la(a)s)t_m(a)+\la(a)st_{m-1}(a)} & \text{\ \ if $a\in W_m$} \\
		f(a) & \text{\ \ if $a\nin W_m$ }
		\end{cases} \quad (s\in [0,1],~a\in K).
	\end{equation*}
	Then the induction hypothesis applies to $\hat H_1\colon K\to \sr U_d$, the open sets $\hat W_i=W_i$ ($i=1,\dots,m-2$) and $\hat W_{m-1}=W_{m-1}\cup W$, and the same functions $t_j$ as before, $j=1,\dots,m-1$. The existence of $\hat g_{m-1}$ as in the statement is guaranteed by \lref{L:implicit}: Using \lref{L:basic}\?(f), we deduce that there is at least one loop at $t_{m-1}(a)$ for $a\in \hat W_{m-1}$.
\end{proof}

\begin{prop}\label{P:diffuse}
	Let $f\colon K\to \sr U_d$ be continuous. Then $f\iso B_{f,n,\frac{1}{2}}$ within $\sr U_d$ for all $n\geq 1$. 
\end{prop}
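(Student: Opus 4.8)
The plan is to reduce first to a family of curves each carrying a single loop, and then to deform that loop into a loop-with-eight by a deformation supported near the loop.

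For the reduction I would feed $f$ into \lref{L:L2} and then \lref{L:L3}: the former produces a continuous $f_2\colon K\to\sr U_d$ with $f\iso f_2$ within $\sr U_d$, together with a covering of $K$ of the kind required by \lref{L:L3}, and the latter then yields a continuous $g\colon K\to\sr L_{-1}^{+1}(Q)$ with $f_2\iso A_{g,1,\frac{1}{2}}$ within $\sr U_d$. Hence $f\iso h:=A_{g,1,\frac{1}{2}}$ within $\sr U_d$, and by \lref{L:basic}\?(e) also $B_{f,n,\frac{1}{2}}\iso B_{h,n,\frac{1}{2}}$ within $\sr U_d$. So it will be enough to prove $h\iso B_{h,n,\frac{1}{2}}$ within $\sr U_d$. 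Since grafting the loop $\al_1$ raises the total turning by $2\pi$, each $g(a)$ has total turning $\theta_1-2\pi$; this bookkeeping fact will be used below.

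The core step is the localized deformation. Write $h(a)(t)=\Phi_{g(a)}(\phi(t))\,\al_1(\psi(t))$ with $\phi,\psi$ the transition functions of \dref{D:eighting} for $t_0=\frac{1}{2}$. By \lref{L:basic}\?(d) there is a homotopy $G\colon[0,1]\to\sr L_{-1}^{+1}(O)$ with $G_0=\al_1$ and $G_1=B_{\al_1,n,\frac{1}{2}}$; being a path, $G$ stays in a single connected component of $\sr L_{-1}^{+1}(O)$, which by \lref{L:WG} is the one of turning number $1$, so every $G_s$ has total turning $2\pi$ and hence amplitude $\geq 2\pi$. I would then set $h^s(a)(t):=\Phi_{g(a)}(\phi(t))\,G_s(\psi(t))$ and verify the routine properties: $h^0=h$; the initial and final frames of $h^s(a)$ are $O$ and $Q$, because $G_s$ begins and ends at the identity of $UT\C$; the total turning of $h^s(a)$ equals $(\theta_1-2\pi)+2\pi=\theta_1$; its curvature lies in $(-1,1)$ almost everywhere, by the argument proving \lref{L:basic}\?(c) with $\al_n$ replaced by the $(-1,+1)$-admissible curve $G_s$ (the transition functions are piecewise linear, hence preserve admissibility); and its amplitude is at least that of $G_s$, so it exceeds $\pi$ and $h^s(a)$ is diffuse. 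Thus $h^s$ is a homotopy within $\sr U_d$ from $h$ to $h^1$, where $h^1(a)(t)=\Phi_{g(a)}(\phi(t))\,B_{\al_1,n,\frac{1}{2}}(\psi(t))$.

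What remains, and what I expect to be the main obstacle, is to identify $h^1$ with $B_{h,n,\frac{1}{2}}$ up to homotopy within $\sr U_d$: both families graft onto $g(a)$, near the parameter value $\frac{1}{2}$, the very same object, namely a once-traversed loop with an $n$-fold eight inserted at one of its interior points, and they differ only in the auxiliary widths entering the nested grafts. By the remark following \dref{D:eighting} such changes of width yield homotopic curves; because a loop persists throughout these homotopies the amplitude stays above $\pi$, the total turning stays $\theta_1$, and the curvature stays in $(-1,1)$, so they may be performed inside $\sr U_d$; and since they involve only $a$-independent reparametrizations composed with $\Phi_{g(a)}$, they are continuous in $a$. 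Making this ``associativity of grafting'' precise and uniform in $a$ is the delicate point. Granting it, chaining the homotopies above gives $f\iso h\iso h^1\iso B_{h,n,\frac{1}{2}}\iso B_{f,n,\frac{1}{2}}$ within $\sr U_d$, which is the assertion.
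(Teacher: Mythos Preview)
Your proposal is correct and follows essentially the same route as the paper: reduce via \lref{L:L2} and \lref{L:L3} to $h=A_{g,1,\frac{1}{2}}$, use \lref{L:basic}\?(d) to deform the loop into a loop carrying $n$ eights, and transfer back via \lref{L:basic}\?(e). You have actually spelled out more carefully than the paper why the localized deformation stays in $\sr U_d$ (curvature, total turning, amplitude), and the ``associativity of grafting'' issue you flag as the delicate point is precisely what the paper's one-line ``Thus $h\iso B_{h,n,\frac{1}{2}}$'' sweeps under the rug.
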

\begin{proof}
	 Applying \lref{L:L2} and \lref{L:L3} to $f$, we obtain continuous maps $g\colon K\to \sr L_{-1}^{+1}(Q)$ and $h\colon K\to \sr U_d$ such that $f\iso h$ in $\sr U_d$ and 
\begin{equation*}
	h(a)=A_{g(a),1,\frac{1}{2}} \text{\quad for all $a\in K$}.
\end{equation*} 
Using \lref{L:basic}\?(d), we may deform the loop at $t=\frac{1}{2}$ to attach $n$ eights to $h$ at $t=\frac{1}{2}$ (for arbitrary $n\geq 1$). Thus $h\iso B_{h,n,\frac{1}{2}}$. Together with \lref{L:basic}\?(e), this implies that $f\iso B_{f,n,\frac{1}{2}}$ within $\sr U_d$.
\end{proof}

\begin{thm}\label{T:diffuse}
	Let $Q=(q,z)\in \C\times \Ss^1$ and $\theta_1\in \R$ satisfy $e^{i\theta_1}=z$. Then the subspace $\sr U_d\subs \sr L_{-1}^{+1}(Q;\theta_1)$ consisting of all diffuse curves is homeomorphic to $\E$, hence contractible.
\end{thm}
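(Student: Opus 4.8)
The plan is to combine the contractibility machinery already in place for Hilbert manifolds with the grafting/spreading technology developed in this section. First I would observe that $\sr U_d$ is an open subset of $\sr L_{-1}^{+1}(Q;\theta_1)$, hence it is a (nonempty, by \cref{C:top}) open submanifold of a separable Hilbert manifold; in particular it is itself a separable Hilbert manifold. By \lref{L:Hilbert}\?(a)--(b), to prove that it is homeomorphic to $\E$ it suffices to prove that it is weakly contractible: indeed, once $\sr U_d$ is shown to be weakly contractible, the constant map $\sr U_d\to \{\text{pt}\}$ is a weak homotopy equivalence, hence homotopic to a homeomorphism by \lref{L:Hilbert}\?(b), and an open submanifold of a separable Hilbert space that is contractible is homeomorphic to $\E$ (via the same Anderson--Kadec--Henderson--type results quoted for \lref{L:Hilbert}). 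So the entire content is: for every $k\in\N$ and every continuous $f\colon\Ss^k\to\sr U_d$, the map $f$ is null-homotopic within $\sr U_d$. Since $\Ss^k$ is a compact manifold, we are exactly in the setting "$K$ a compact manifold" fixed throughout \S\ref{S:diffuse}.

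Next I would run the grafting argument. Given $f\colon K\to\sr U_d$ continuous, \pref{P:diffuse} gives $f\iso B_{f,n,\frac12}$ within $\sr U_d$ for every $n\ge 1$. Now fix, via \lref{L:smoothie} applied to $\sr U_d$, that $f$ (hence also its avatar feeding into $B_{f,n,\frac12}$) may be taken with values in the image of $\sr C_{-1}^{+1}(Q;\theta_1)$, and indeed with all $t$-derivatives depending continuously on $a\in K$; the relevant inputs for the spreading lemmas are precisely of this form. By \lref{L:basic}\?(b), $B_{f(a),n,\frac12}$ and $S_{f(a),n}$ lie in the same component of $\sr L_{-\infty}^{+\infty}$, and by \lref{L:LLoopy}, for all sufficiently large $n$ there is a continuous homotopy $H\colon[0,1]\times K\to\sr L_{-1}^{+1}(Q;\theta_1)$ from $B_{f,n,\frac12}$ to $S_{f,n}$; this homotopy stays among diffuse curves because attaching an eight already forces the amplitude above $\pi$ and the homotopy $\phi_s,\psi_s$ only redistributes where the eight sits, so the image stays in $\sr U_d$ (one checks amplitude stays $>\pi$ along the homotopy, e.g.\ because the eight's turning excursions are preserved). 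So up to homotopy within $\sr U_d$ we have replaced $f$ by $S_{f,n}$ for $n$ large.

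Finally I would invoke \lref{L:loopy}: for $n$ large enough $S_{f(a),n}\in\sr L_{-1}^{+1}(Q)$ with curvature uniformly close to $\tfrac12$, hence uniformly bounded, so all the curves $S_{f(a),n}$ lie in a single space $\sr C_{-\ka}^{+\ka}(Q;\theta_1)$ with $\ka>1$... but more to the point, they all have curvature bounded away from $\pm\infty$ by a fixed constant, so after a dilatation their curvature is $<1$ in absolute value. Concretely: by \tref{T:Smale}, $\sr L_{-\infty}^{+\infty}(Q;\theta_1)$ is contractible, so there is a homotopy $G\colon[0,1]\times K\to\sr L_{-\infty}^{+\infty}(Q;\theta_1)$ with $G_0=S_{f,n}$ and $G_1$ constant. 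Let $M=2\sup\{|\ka_{G(s,a)}(t)|\}$; since the spread curves are smooth with all derivatives continuous in $a$ and $K\times[0,1]$ is compact, $M<\infty$. Dilating by $M$ (which scales curvature by $1/M<1$, see the argument in \lref{L:WG}) turns $MG$ into a homotopy within $\sr L_{-1}^{+1}(Q;\theta_1)$ — and within $\sr U_d$, since dilatation preserves amplitude, so each curve remains diffuse — from $M S_{f,n}$ to a constant; and $S_{f,n}\iso M S_{f,n}$ within $\sr U_d$ via $u\mapsto uS_{f,n}$, $u\in[1,M]$ (again amplitude is scale-invariant). Composing all these homotopies shows $f$ is null-homotopic in $\sr U_d$.

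The step I expect to be the main obstacle is verifying that the various homotopies — especially the $\phi_s,\psi_s$ eight-spreading homotopy of \lref{L:LLoopy} and the final dilatation homotopy — genuinely stay inside $\sr U_d$ rather than merely inside $\sr L_{-1}^{+1}(Q;\theta_1)$: one must confirm that the amplitude stays strictly above $\pi$ throughout, which is intuitively clear (an attached eight contributes a full turning excursion that no part of the deformation destroys, and dilatation leaves arguments untouched) but requires care to state cleanly; the bookkeeping is exactly parallel to that in \lref{L:WG} and \pref{P:diffuse}, so it should go through, but it is where the real verification lies rather than in the soft topology.
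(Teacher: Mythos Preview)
Your overall architecture is right, but the final step has a genuine gap: dilation by $M$ does \emph{not} preserve membership in $\sr L_{-1}^{+1}(Q;\theta_1)$ when $q\neq 0$. If $\ga$ starts at $0$ and ends at $q$, then $M\ga$ starts at $0$ but ends at $Mq$, so $MG(s,a)\notin \sr L_{-1}^{+1}(Q;\theta_1)$ and the homotopy $u\mapsto uS_{f,n}$ ($u\in[1,M]$) leaves the space immediately. The dilation trick in \lref{L:WG} works only because that lemma concerns $\sr L_{-1}^{+1}(O,O)$, where both endpoints are at the origin and hence fixed by scaling; you cannot transplant it to general $Q$.

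The paper circumvents this by reversing the order of operations: instead of first contracting in $\sr L_{-\infty}^{+\infty}$ and then trying to push back into $\sr L_{-1}^{+1}$ by dilation, it takes the Smale null-homotopy $G$ of $g$ in $\sr C_{-\infty}^{+\infty}(Q)$ and applies the spreading operator $S_{\cdot,n}$ to \emph{every} curve $G(s,a)$ in that homotopy. By \lref{L:basic}\?(a), $S_{G(s,a),n}$ has the same initial and final frames as $G(s,a)$, so endpoints are preserved automatically; and by \lref{L:loopy}, for $n$ large enough all of $S_{G(s,a),n}$ land in $\sr L_{-1}^{+1}(Q)$ (indeed in $\sr U_d$, as the spread eights force large amplitude). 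One then concatenates the $\sr U_d$-homotopy $g\iso B_{g,n,\frac12}\iso S_{g,n}$ (from \pref{P:diffuse} and \lref{L:LLoopy}) with $S_{G,n}$ to obtain the desired null-homotopy. So the fix is not to tame curvature after the fact by rescaling, but to carry the eight-spreading through the entire Smale contraction.
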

\begin{proof}
	Because $\sr U_d$ is open, it suffices to prove that it is weakly contractible, by \lref{L:Hilbert}\?(b). Let $k\in \N$, $f\colon \Ss^k\to \sr U_d$ be continuous and $g\colon \Ss^k\to \sr U_d$ be a map satisfying (i)--(iii) of \lref{L:smoothie} (with $\sr U=\sr U_d$). By \tref{T:Smale}, there exists $G\colon [0,1]\times \Ss^k\to \sr C_{-\infty}^{+\infty}(Q)$ such that $G_0=g$ and $G_1$ is a constant map. By \lref{L:loopy}, there exists $n_0\in \N$ such that if $n\geq n_0$, then $S_{G(s,a),n}\in \sr U_d$ for all $s\in [0,1]$,~$a\in \Ss^k$. Applying \pref{P:diffuse} and \lref{L:LLoopy} to $g$, we obtain $n_1\geq n_0$ and a continuous $F\colon [-1,0]\times \Ss^k\to \sr U_d$ with $F_{-1}=g$ and $F_0=S_{g,n_1}$. Concatenating $F$ and $S_{G,n_1}$ we obtain a null-homotopy of $g$ in $\sr U_d$.
\end{proof}



\section{Critical curves}\label{S:critical}

Fix $Q=(q,z)\in \C\times \Ss^1$ and $\theta_1\in \R$ satisfying $e^{i\theta_1}=z$. Let $\ga\in \sr L_{-1}^{+1}(Q;\theta_1)$ and $\theta\colon [0,1]\to \R$ be the argument of $\ta_\ga$ satisfying $\theta(0)=0$. Finally, let 
\begin{equation}\label{E:amp2}
	\theta^+=\sup_{t\in [0,1]}\theta(t)\quad \text{and}\quad \theta^-=\inf_{t\in [0,1]}\theta(t).
\end{equation}
Recall that $\ga$ is called \tdef{critical} if $\theta^+-\theta^-=\pi$.  A curve $\eta\in \sr L_{-1}^{+1}(Q;\theta_1)$ must be either condensed, diffuse or critical. It has already been shown that the subspaces $\sr U_c$ and $\sr U_d$ of $\sr L_{-1}^{+1}(Q;\theta_1)$ consisting of all condensed (resp.~diffuse) curves are both contractible. Let $\sr T\subs \sr L_{-1}^{+1}(Q;\theta_1)$ denote the subspace of all critical curves. Clearly, $\sr T$ is closed as the complement of $\sr U_c\cup \sr U_d$. Since the difference $\theta^+-\theta^-$ depends continuously on $\ga$, we deduce that $\bd \sr U_c\subs \sr T$ and $\bd \sr U_d \subs \sr T$, where $\bd \sr U_c$ denotes the topological boundary of $\sr U_c$ considered as a subspace of $\sr L_{-1}^{+1}(Q;\theta_1)$ and similarly for $\sr U_d$.

\begin{prop}\label{P:boundary}
Let $\abs{\theta_1}<\pi$ and $\sr U_c,\,\sr U_d,\,\sr T\subs \sr L_{-1}^{+1}(Q;\theta_1)$ be as above. Then $\bd{\sr U}_c=\bd{\sr U}_d=\crit$. Therefore, $\bar{\sr U}_c\cup \bar{\sr U}_d=\sr L_{-1}^{+1}(Q;\theta_1)$ and  $\bar{\sr U}_c\cap \bar{\sr U}_d=\sr T$.
\end{prop}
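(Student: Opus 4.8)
The plan rests on noticing that half the statement is already available. It was observed above that $\bd\sr U_c\subs\crit$ and $\bd\sr U_d\subs\crit$, and since $\sr U_c,\sr U_d$ are open and disjoint with $\sr U_c\sqcup\sr U_d\sqcup\crit=\sr L_{-1}^{+1}(Q;\theta_1)$, one has $\bar{\sr U}_c=\sr U_c\cup\bd\sr U_c$ and $\bar{\sr U}_d=\sr U_d\cup\bd\sr U_d$. Hence the two displayed identities follow \emph{formally} once the reverse inclusions $\crit\subs\bd\sr U_c$ and $\crit\subs\bd\sr U_d$ are proved; and because $\crit$ meets neither $\sr U_c$ nor $\sr U_d$, the entire content is: \emph{every critical curve is a limit in $\sr L_{-1}^{+1}(Q;\theta_1)$ both of condensed curves and of diffuse curves}. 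So I would fix a critical $\ga$, parametrized by arc length on $[0,L]$, with argument $\theta$ and $\theta^\pm$ as in \eqref{E:amp2}, so that $\theta^+-\theta^-=\pi$.

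The key elementary remark is that, since $|\theta_1|<\pi$ while $\theta(0)=0$ and $\theta(L)=\theta_1$, the set $\{0,\theta_1\}$ cannot equal $\{\theta^-,\theta^+\}$, so $\ga$ has an \emph{overshoot}: $\theta^+>\max\{0,\theta_1\}$ or $\theta^-<\min\{0,\theta_1\}$. Mirroring $\ga$ (which maps $\sr L_{-1}^{+1}(Q;\theta_1)$ homeomorphically onto another such space with $|{-\theta_1}|<\pi$, interchanges condensed/critical/diffuse, and interchanges the two sides of the overshoot) lets me assume $\theta^+>\max\{0,\theta_1\}$; then $\theta^+\notin\{0,\theta_1\}$, so $\theta^+$ is attained only at interior parameters and I fix $s^\bullet\in(0,L)$ with $\theta(s^\bullet)=\theta^+$. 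I would also record that $|\theta'|<1$ a.e.\ forces $|s-s'|>|\theta(s)-\theta(s')|$ whenever the two values differ, so the level set $\{\theta=\theta^-\}$ sits at parameter distance $>\pi$ from $s^\bullet$, and $s^\bullet$ at distance $>\theta^+>0$ from $0$ and $>\theta^+-\theta_1>0$ from $L$.

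Now I would produce, for each small value of a parameter, an admissible curve in $\sr L_{-1}^{+1}(Q;\theta_1)$ near $\ga$, condensed in one construction and diffuse in the other. \emph{Condensed:} for $\de>0$ small, replace $\theta$ by $\tilde\theta_\de=\min\{\theta,\theta^+-\de\}$; this still starts at $0$, ends at $\theta_1$ (as $\theta^+-\de>\max\{0,\theta_1\}$), has $|\tilde\theta_\de'|\le|\theta'|<1$ a.e., and has supremum $\theta^+-\de$ and infimum $\theta^-$, so the corresponding curve has total turning $\theta_1$ and amplitude $\pi-\de<\pi$; its curvature differs from that of $\ga$ only on $\{\theta^+-\de<\theta<\theta^+\}$ (on $\{\theta=\theta^+\}$ the curvature vanishes a.e.\ for both), a set of measure $\to0$, so the curve $\to\ga$ in $\sr L_{-1}^{+1}$. \emph{Diffuse:} add to $\hat\ka$ a small $L^2$ bump supported in a short interval $I\subs(0,L)$ around $s^\bullet$ with closure in $(0,L)$, its negative part tuned by solving one monotone scalar equation so that the resulting argument $\tilde\theta$ agrees with $\theta$ off $I$; since the bump is $\ge0$ on the left half of $I$ and $h^{-1}_{-1,1}$ is strictly increasing, $\tilde\theta(s^\bullet)>\theta^+$, while $\tilde\theta=\theta$ near the argmin of $\theta$ keeps $\inf\tilde\theta=\theta^-$, so the amplitude is $>\pi$, i.e.\ the curve is diffuse, and shrinking the bump sends it to $\ga$. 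In both constructions the endpoint $\Phi(1)$ has moved from $Q$ by an amount $\to0$ without its direction changing; I would restore $\Phi(1)=Q$ \emph{exactly} by inserting two further narrow zero‑tail bumps at parameters $p_1,p_2$ disjoint from $I$, whose coefficients the implicit function theorem yields as continuous functions of the deformation parameter vanishing at $0$: by \lref{L:submersion} the endpoint map is a submersion, and the $2\times2$ derivative of $\Phi(1)$ in these two coefficients is invertible provided $\theta(p_1)-\theta(p_2)\not\equiv0\pmod\pi$ (the transversality computation in the proof of \lref{L:submersion}). After correction the total turning equals $\theta_1$ modulo $2\pi$ and is close to $\theta_1$, hence equals $\theta_1$; so the corrected curves lie in $\sr L_{-1}^{+1}(Q;\theta_1)$, are condensed (resp.\ diffuse) and still converge to $\ga$, proving $\ga\in\bd\sr U_c$ (resp.\ $\ga\in\bd\sr U_d$). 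Since $\ga\in\crit$ was arbitrary, this gives $\crit\subs\bd\sr U_c$ and $\crit\subs\bd\sr U_d$, finishing the proof.

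The step I expect to be the genuine obstacle is ensuring that the endpoint correction does not spoil the strict amplitude inequality. I would force this by taking $\theta(p_1),\theta(p_2)$ in the middle third of $[\theta^-,\theta^+]$ and with $\theta(p_1)-\theta(p_2)\equiv\pi/3$: there $\theta$ — hence also $\tilde\theta$, because the capping and the main bump leave $\theta$ unchanged on neighborhoods of $p_1,p_2$ — has room of at least about $\pi/3$ above and below, so once the deformation parameter is small enough the two correction bumps have sup‑norm too small to reach either $\sup\theta$ or $\inf\theta$; the corrected curve therefore has amplitude exactly $\pi-\de$ (resp.\ still strictly greater than $\pi$). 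A minor point — that $\de\mapsto\tilde\theta_\de$ is only Lipschitz, not $C^1$ — is absorbed either by replacing $\min$ with a $C^1$ regularization or by running the implicit function theorem in the correction coefficients alone for each fixed $\de$.
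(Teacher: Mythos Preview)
Your approach is correct and the delicate point you flag (that the endpoint correction must not spoil the amplitude inequality) is handled soundly by your choice of $p_1,p_2$ with $\theta(p_i)$ in the middle third; the zero–tail bumps then perturb $\tilde\theta$ only in a region bounded away from both $\theta^-$ and $\theta^+-\de$ (resp.~from $\theta^-$ and $\tilde\theta(s^\bullet)$), and the invertibility of the $2\times 2$ derivative follows from your computation that each bump moves the endpoint in direction $ie^{i\theta(p_i)}$.

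However, your route is genuinely different from the paper's. The paper never corrects endpoints a posteriori. For $\crit\subs\bd\sr U_d$ it uses \emph{grafting} (Definition~\ref{D:graft}): since $\ga$ is critical one can find $s_0,s_1$ with $\ta_\ga(s_0)=-\ta_\ga(s_1)$, insert two antiparallel segments of length $\eps$, and then replace one segment by a small circular bump (Figure~\ref{F:bump}); grafting is designed so that the final frame is preserved exactly. For $\crit\subs\bd\sr U_c$ the paper first uses the submersion \lref{L:submersion} plus a dilation to replace $\ga$ by a nearby critical curve with curvature in $(-\ka_0,\ka_0)$ for some $\ka_0<1$; it then covers $\theta^{-1}(\{\theta^+\})$ by finitely many subarcs and applies the excavator deformation of \S\ref{S:condensed} (specifically \lref{L:compress}) to each subarc, which strictly decreases the amplitude \emph{while fixing the endpoints of the subarc}. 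So the paper spends its submersion budget on the preliminary $\ka_0$-reduction and then relies on endpoint-preserving constructions, whereas you spend it on a final two-parameter correction and avoid both the excavator and grafting entirely. Your argument is more self-contained; the paper's reuses machinery already built for Theorems~\ref{T:condensed} and~\ref{T:diffuse} and sidesteps the amplitude-versus-correction bookkeeping.
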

Observe that $\sr T=\emptyset$ if $\abs{\theta_1}>\pi$ and $\sr U_c=\emptyset$  if $\abs{\theta_1}\geq \pi$. However, in any case $\bd \sr U_d=\sr T$. 
\begin{proof}
	Let $\ga\in \sr L_{-1}^{+1}(Q;\theta_1)$ be a critical curve and $\sr V\subs \sr L_{-1}^{+1}(Q;\theta_1)$ be an open set containing $\ga$. Let $\theta$ be the argument of $\ta_\ga$ satisfying $\theta(0)=0$, and let $\theta^+$, $\theta^-$ be as in \eqref{E:amp2}.
	
	We first prove that $\sr V\cap \sr U_c\neq \emptyset$. Our immediate objective is to replace $\ga$ with another curve in $\sr V\cap \sr T$ having smaller curvature. Choose $t_1\in (0,1)$ and $\de>0$ such that $\theta(t)\in (\theta^-,\theta^+)$ for all $t\in [t_1-\de,t_1]$.  Let $Q_0=\Phi_\ga(t_1-\de)$, $Q_1=\Phi_\ga(t_1)$ and consider the map 
	\begin{equation*}
		F\colon \sr L_{-1}^{+1}(Q_0,\cdot)\to UT\C,\quad F(\eta)=\Phi_\eta(1).
	\end{equation*}
	(Recall that $\sr L_{-1}^{+1}(Q_0,\cdot)$ consists of all $(-1,1)$-admissible curves having initial frame equal to $Q_0$ and arbitrary final frame.) By \lref{L:submersion}, $F$ is an open map. It follows that for any $\te{Q}_1$ close enough to $Q_1$, we can find $\eta\in \sr L_{-1}^{+1}(Q_0,\te{Q}_1)$ such that
	\begin{equation}\label{E:thetaeta}
		\theta_\eta([0,1])\subs (\theta^-,\theta^+).
	\end{equation}
	Let $Q_1=(q_1,z_1)$ and $Q=(q,z)$. Since $\ga$ is critical, the image of $\ta_\ga$ is contained in a semicircle. Consequently, $q\neq 0$. Choose $\ka_0\in (0,1)$ close to 1.  Replace the arc $\ga|_{[t_1-\de,t_1]}$ by a curve $\eta$ as above with $\te{Q}_1=(q_1+(\ka_0-1)q,z_1)$, and the arc $\ga|_{[t_1,1]}$ by its translate $\ga|_{[t_1,1]}+(\ka_0-1)q$. Let $\ga_1$ be the resulting curve; observe that $\ga_1$ is critical, $\Phi_{\ga_1}(0)=(0,1)$ and $\Phi_{\ga_1}(1)=(\ka_0q,z)$. Set $\ga_2=\frac{1}{\ka_0}\ga_1$ (that is, $\ga_2(t)$ is obtained from $\ga_1(t)$ by a dilatation through a factor of $\frac{1}{\ka_0}$ for all $t\in [0,1]$). Then 
	\begin{equation*}
		\Phi_{\ga_2}(0)=(0,1),\ \ \Phi_{\ga_2}(1)=(q,z),\ \ \ta_{\ga_2}(t)=\ta_{\ga_1}(t)\ \ \text{and\ \ }\ka_{\ga_2}(t)=\ka_0\ka_{\ga_1}(t)\ \ \text{for all $t\in [0,1]$}.
	\end{equation*}
	Thus, $\ga_2$ is a critical curve in $\sr L_{-1}^{+1}(Q;\theta_1)$ whose curvature is constrained to $(-\ka_0,\ka_0)$. Moreover, if $\ka_0$ is close enough to $1$ and $\eta$ is chosen appropriately, we can guarantee that $\ga_2\in \sr V$. 
	
	Having established the existence of $\ga_2$ with these properties, let us return to the beginning, setting $\ga=\ga_2\in \sr V$. Since $\abs{\theta_1}<\pi$, either 
	\begin{equation*}
		\theta^{-1}(\se{\theta^{-}})\cap \se{0,1}=\emptyset \quad \text{or}\quad \theta^{-1}(\se{\theta^{+}})\cap \se{0,1}=\emptyset,
	\end{equation*}
	and we lose no generality in assuming the latter. Choose $\eps>0$ small enough to guarantee that
	\begin{equation*}
		W=\theta^{-1}\big((\theta^+-\eps,\theta^+]\big)\subs (0,1).
	\end{equation*}
	Cover $\theta^{-1}(\se{\theta^+})$ by the finite union of disjoint intervals $(a_i,b_i)\subs W$ with $\theta(a_i)=\theta(b_i)=\theta^+-\eps$, $i=1,\dots,m$. Let $P_i=\Phi_{\ga}(a_i)$, $Q_i=\Phi_\ga(b_i)$. We can obtain a curve in $\sr U_c\cap \sr V$ by modifying $\ga$ in each of these intervals to avoid the argument $\theta^+$ using \pref{P:excavator}: Note that $P_i^{-1}\ga|_{[a_i,b_i]}$ satisfies the hypotheses of \lref{L:compress} because it has curvature in the open interval $(-\ka_0,+\ka_0)$ and is not a line segment. Moreover, the inclusion $\hat{\sr L}_{-\ka_0}^{+\ka_0}(P_i^{-1}Q_i)\to \sr L_{-1}^{+1}(P_i^{-1}Q_i)$ is continuous by \lref{L:inclusions}.
	
	The proof that $\sr V\cap \sr U_d\neq \emptyset$ is easier. Let the critical curve $\ga\colon [0,L]\to \C$ be parametrized by arc-length. Then we can find $s_0,\,s_1\in [0,1]$ with $\ta_\ga(s_0)=-\ta_\ga(s_1)$. Choose $\eps>0$ and let 
	\begin{equation*}
		G_\ga=G_{\ga,(s_0,s_1),(\eps,\eps)}.
	\end{equation*}
	(See definition \dref{D:graft} and Figure \ref{F:grafting}.) Choose $\ka_0\in (0,1)$ and construct a curve $\zeta\in \sr L_{-1}^{+1}(Q;\theta_1)$ by replacing the line segment $G_\ga|_{[s_0,s_0+\eps]}$ by three small arcs of circles of radius $\frac{1}{\ka_0}$ as indicated in Figure \ref{F:bump}. If the bump is chosen to lie on the correct side, the curve $\zeta$ will be diffuse, and if $\eps>0$ is small enough, then $\zeta\in \sr V$. (Notice that this part of the proof works even if $\abs{\theta_1}=\pi$.)
	
	\begin{figure}[ht]
		\begin{center}
			\includegraphics[scale=.46]{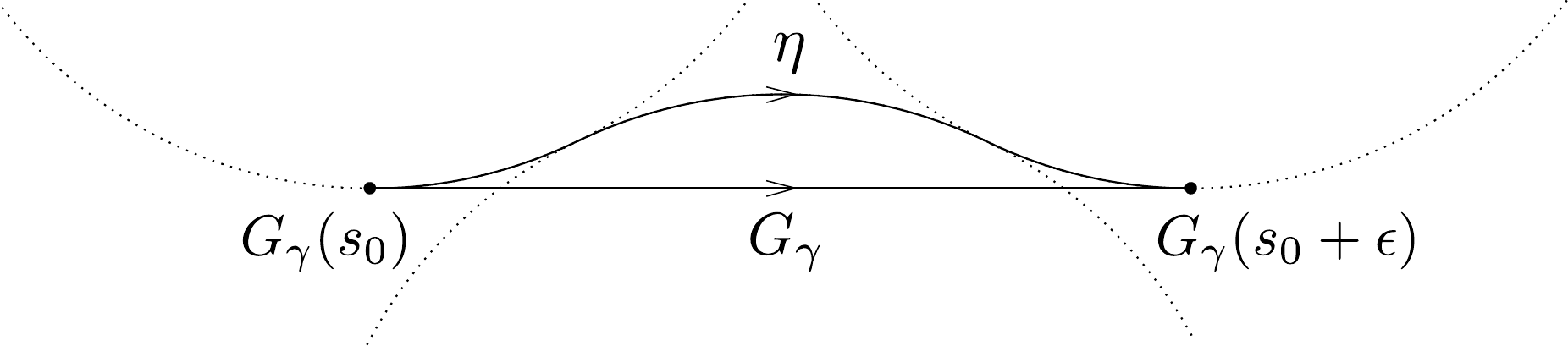}
			\caption{}
			\label{F:bump}
		\end{center}
	\end{figure}
	
	We have established that $\sr T\subs \bd \sr U_d\cap \bd \sr U_c$. As explained at the beginning of the section,
$\bd \sr U_c\subs \sr T$ and $\bd \sr U_d \subs \sr T$. Thus, $\bd \sr U_c=\bd \sr U_d=\sr T$.
	\end{proof}

\subsection*{Existence of critical curves}It is immediate from the definition of ``critical curve'' that if $\abs{\theta_1}>\pi$, then the subspace $\sr T\subs \sr L_{-1}^{+1}(Q;\theta_1)$ must be empty. In this subsection we shall determine exactly when $\sr T=\emptyset$ for $\abs{\theta_1}\leq \pi$.

\begin{defn}
	A \tdef{sign string} $\sig$ is an alternating finite sequence of signs, such as \ty{+-+} or \ty{-+-+}. As part of the definition we require that its \tdef{length} $\abs{\sig}$, the number of terms in the string, satisfy $\abs{\sig}\geq 2$. Let $\sig(k)$ denote its $k$-th term ($1\leq k\leq \abs{\sig}$). The \tdef{opposite} $-\sig$ of $\sig$ is the unique sign string satisfying $\abs{-\sig}=\abs{\sig}$ and $(-\sig)(k)=-\sig(k)$. 
	
	A critical curve $\ga\colon [0,1]\to \C$ is \tdef{of type $\sig$} if there exist $0\leq t_1<t_2<\dots<t_{\abs{\sig}}\leq 1$ with $\theta(t_k)=\theta^{\sig(k)}$ (recall that $\theta^+=\sup_{t\in [0,1]}\theta(t)$ and $\theta^-=\inf_{t\in [0,1]}\theta(t)$, where $e^{i\theta}=\ta_\ga$), but it is impossible to find $0\leq s_1<\dots<s_{\abs{\sig}+1}\leq 1$ such that $\ta_\ga(s_k)=-\ta_\ga(s_{k+1})$ for each $k=1,\dots,\abs{\sig}$.
\end{defn}
 
Given a sign string $\sig$, one can determine whether there exist critical curves of type $\sig$ in $\sr L_{-1}^{+1}(Q;\theta_1)$ using an elementary geometric construction, see Figure \ref{F:critical}.

\begin{figure}[ht]
	\begin{center}
		\includegraphics[scale=.32]{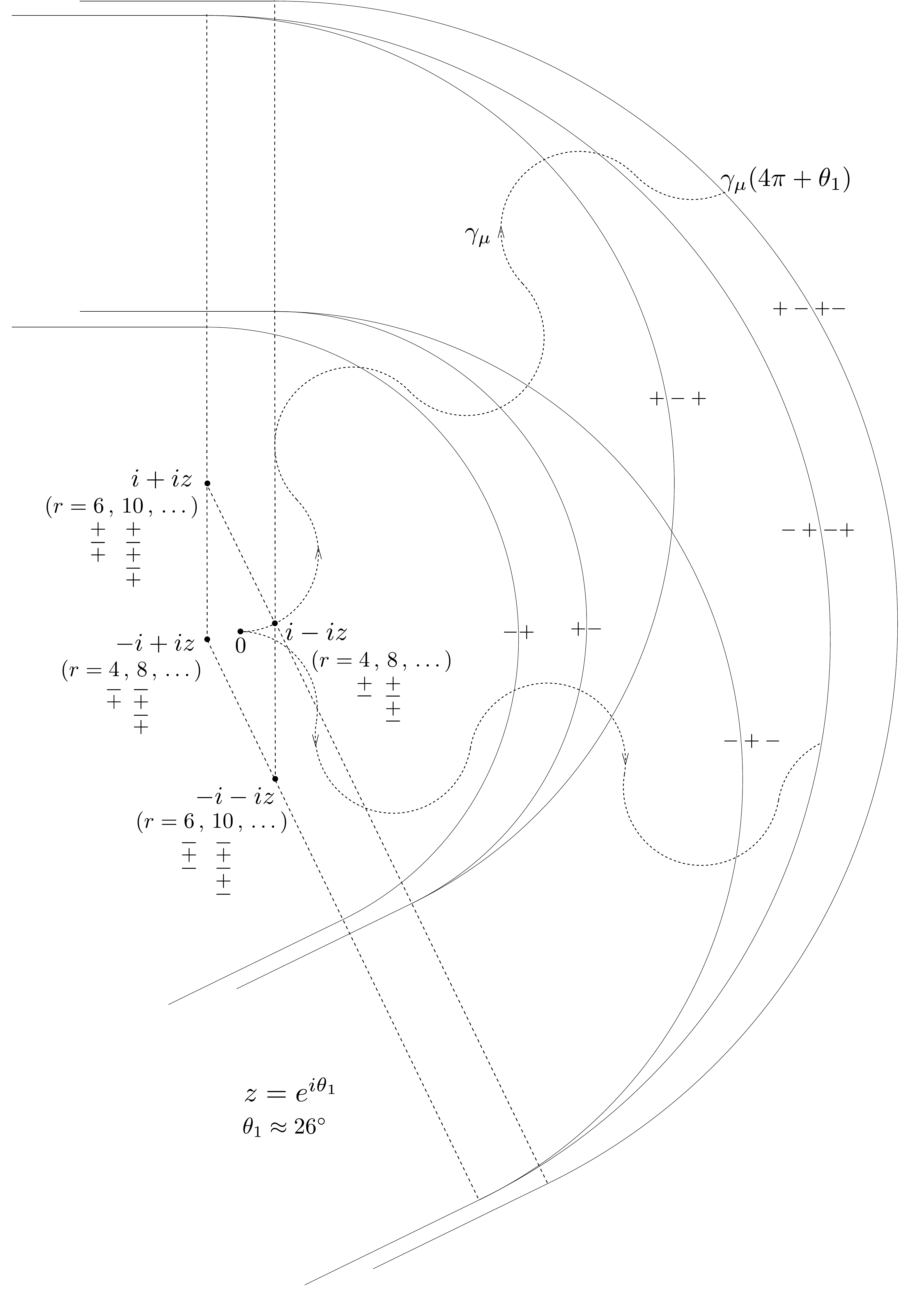}
		\caption{The regions $R_\sig$ of \pref{P:criticallocation}.}
		\label{F:critical}
	\end{center}
\end{figure}

\begin{prop}\label{P:criticallocation}
	Let $\theta_1\in [0,\pi]$, $z=e^{i\theta_1}$ and $Q=(q,z)\in \C\times \Ss^1$. Let $\sig$ be a sign string,
	\begin{equation*}
		a=i\sig(1)\big(1+(-1)^{\abs{\sig}+1}z\big)\in \C\text{\quad and\quad} r=2\abs{\sig}\in \N.
	\end{equation*}
	Let $R_\sig$ be the open region of the plane which does \emph{not} contain $-i+iz$ and which is bounded by the shortest arc of $C_r(a)$ joining $a+ri$ to $a-riz$ and the tangent lines to $C_r(a)$ at these points. Then $\sr L_{-1}^{+1}(Q;\theta_1)$ contains critical curves of type $\sig$ if and only if $q\in R_\sig$.
\end{prop}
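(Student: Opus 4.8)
The plan is to prove the two implications separately, mimicking closely the structure of the proof of \pref{P:condensedlocation}. In both directions the key object is the family of model curves that realize, for a given sign string $\sig$, the extreme behaviour of a critical curve of type $\sig$: a curve that oscillates between the two extreme arguments $\theta^-$ and $\theta^+=\theta^-+\pi$ using $|\sig|$ alternations, with curvature $\pm 1$ on arcs of circle of radius $1$ glued together, and finishing with a line segment in the direction $z$. Normalising $\theta^-$ and $\theta^+$ so that one of them is forced (since $|\theta_1|<\pi$ we may assume, after reflection, that $\theta^+$ is not attained at $0$ or $1$), each alternation contributes a half-turn, so after traversing $r/2=|\sig|$ such half-turns the endpoint displacement is controlled: integrating $e^{i\theta}$ over the model turning profile gives exactly the centre $a=i\sig(1)(1+(-1)^{|\sig|+1}z)$ and shows that the model endpoints sweep out the arc of $C_r(a)$ described in the statement, with $r=2|\sig|$, and that the tangent lines to $C_r(a)$ at $a+ri$ and $a-riz$ are the limiting tangent lines of the ``graft a line segment'' operation at the two ends.

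\textbf{The ``if'' direction.} Suppose $q\in R_\sig$. I would explicitly exhibit, for every such $q$, a curve ending at $q$ which is a concatenation of: possibly an initial arc reaching argument $\theta^-$ or $\theta^+$, then $|\sig|-1$ arcs alternating between the two extreme arguments (each of turning $\pm\pi$), together with at most two straight segments inserted at the ``free ends'' (direction $-iz$ or $z$, and near the start), exactly as in cases (i)--(iii) of the proof of \pref{P:condensedlocation}. The region $\overline{R_\sig}$ is precisely the set of endpoints attainable this way; this is the elementary geometry of Figure~\ref{F:critical} and is checked by the same integral computations $\int_0^\vphi e^{is}\,ds = i - ie^{i\vphi}$ that appear there. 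These model curves have curvature $\pm1$ on sets of positive measure and may themselves be critical \emph{of a different type} (more alternations than $\sig$ allows), so — just as before — for $q$ in the \emph{open} region $R_\sig$ I would fix this by composing with a dilatation through a factor $c>1$ close to $1$, and perturbing so that exactly $|\sig|$ alternations occur (no $(|\sig|+1)$-th pair of antipodal tangents), placing the result genuinely inside $\sr L_{-1}^{+1}(Q;\theta_1)$ and of type $\sig$.

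\textbf{The ``only if'' direction.} Suppose $\ga\colon[0,L]\to\C$ is critical of type $\sig$, parametrised by arc length, with $\theta(0)=0$. Choose $0\le t_1<\dots<t_{|\sig|}\le 1$ (in arc-length: $0\le s_1<\dots<s_{|\sig|}\le L$) with $\theta(s_k)=\theta^{\sig(k)}$. Let $\vphi$ be a direction so that the final turning profile points at $q$; following the proof of \pref{P:condensedlocation}, I would define $g(s)=\langle \ga(s)-c,\, ie^{i\vphi}\rangle$ for the appropriate corner point $c$ of the model curve, and show $g(L)<0$ by splitting $[0,L]$ into the $|\sig|$ (or $|\sig|+1$, counting the arcs before $s_1$ and after $s_{|\sig|}$) monotone pieces on which $\theta$ runs between consecutive extremes; on each such piece, because $\theta$ is \emph{strictly} $1$-Lipschitz, any subinterval of the $\theta$-range is covered by an $s$-set of strictly larger measure, and since $g'(s)=\cos(\theta(s)-\vphi-\tfrac\pi2)\le 0$ there, comparing with the model arcs yields the strict inequality $g(L)-g(0) < \langle a - riz,\, ie^{i\vphi}\rangle$ (and symmetrically at the other end). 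The crucial point where type $\sig$ (rather than a longer string) is used is that one cannot find $|\sig|+1$ parameters with consecutive antipodal tangents, which is exactly what prevents $\ga$ from ``going further'' than $r=2|\sig|$ half-turns — this forces the endpoint to stay strictly inside the arc of $C_r(a)$ and on the correct side of its tangent lines, i.e. $q\in R_\sig$.

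\textbf{The main obstacle} I expect is bookkeeping, not ideas: getting the precise value of the centre $a$ — including the sign $\sig(1)$ (which end the first extremum lies at) and the parity factor $(-1)^{|\sig|+1}$ (which extreme argument the \emph{last} $t_k$ realizes, hence whether the final tangent line is anchored at $a+ri$ or at $a-riz$) — correct in all cases, and matching it with the convention $\theta_1\in[0,\pi]$ (versus $[-\pi,0]$, handled by reflection as in \pref{P:condensedlocation}). One must also treat carefully the degenerate boundary cases $\theta_1=0$ and $\theta_1=\pi$, where the two anchor points or the two extremal arcs coincide or the arc of $C_r(a)$ closes up; and one must be sure that the strict-Lipschitz strict-inequality argument still applies on the first and last monotone pieces even when $s_1=0$ or $s_{|\sig|}=L$, which is where the hypothesis $|\theta_1|<\pi$ (so that at least one of $\theta^\pm$ is not attained at an endpoint) is again invoked. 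None of these is deep; the essential mechanism is the one already established for condensed curves, applied $|\sig|$ times.
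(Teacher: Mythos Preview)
Your approach is essentially the paper's: model curves built from arcs of unit circles with $|\sigma|$ alternations, an explicit endpoint computation showing these sweep the relevant arc of $C_r(a)$, and the strict-Lipschitz comparison via $g(s)=\langle\eta(s)-c,ie^{i\varphi}\rangle$ for the converse. However, your account of the ``only if'' direction contains a conceptual reversal that you should straighten out before writing the details.

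You say the crucial hypothesis is that one \emph{cannot} find $|\sigma|+1$ antipodal pairs, and that this ``forces the endpoint to stay strictly inside the arc of $C_r(a)$''. Both halves are backwards. The region $R_\sigma$ is the \emph{far} side of the arc (the side not containing $-i+iz$), and the proof actually uses the \emph{lower} bound: because $\gamma$ is of type $\sigma$, one \emph{can} find $|\sigma|$ alternations, hence $|\sigma|+1$ disjoint subintervals $J_0,\dots,J_{|\sigma|}$ on which $\theta$ runs between prescribed values (from $0$ to the first extreme, then back and forth between $\theta^{\pm}$, finally from the last extreme to $\theta_1$). Summing the strict-Lipschitz estimate over these intervals, together with $g'\ge 0$ (or $\le 0$ in your sign convention), shows that $q$ lies \emph{beyond} the tangent line to $C_r(a)$ at the model endpoint, i.e., $q\in R_\sigma$. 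The upper bound on alternations plays no role here; it matters only in the ``if'' direction, where you must ensure the constructed curve has \emph{exactly} $|\sigma|$ alternations---and for the model curves this is automatic, since dilating does not alter the turning profile. Also, your $\varphi$ should be pinned down concretely (the paper takes $\mu=\inf\theta$ and uses the direction $ie^{i\mu}$); ``a direction so that the final turning profile points at $q$'' is not yet a definition. Once these orientations are corrected, the remaining work is indeed just the bookkeeping you anticipate.
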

	We have assumed that $\theta_1\in [0,\pi]$ just to simplify the statement. If $\theta_1\in [-\pi,0]$, then the only differences are that the points bounding the arc of $C_r(a)$ are now $a-ri$ and $a+riz$ and the region $R_\sig$ is the one not containing $i-iz$. Indeed, reflection across the $x$-axis yields a homeomorphism between $\sr L_{-1}^{+1}(Q;\theta_1)$ and $\sr L_{-1}^{+1}(\bar Q;-\theta_1)$, where $\bar Q=(\bar q,\bar z)$, which maps critical curves of type $\sig$ to critical curves of type $-\sig$.

When $\theta_1=0$, the points $a+ri$ and $a-ri$ determine two shortest arcs of $C_r(a)$, not just one; the region $R_\sig$ is bounded by the one which goes through $a+r$. When $\theta_1=\pm \pi$, the arc of circle degenerates to a single point. In this case, $R_\sig$ is the component of the complement of the horizontal line through $a+\sign(\theta_1) ri$ which does not contain the real axis.

\begin{proof}[Proof of \pref{P:criticallocation}]
	 There are four essentially distinct types of sign strings to consider: 
	 \begin{equation*}
	 \underbrace{\ty{+-\dots+-}}_{2n}\ \ ,\ \ \underbrace{\ty{-+\dots-+}}_{2n}\ \ ,\ \ \ty{+}\underbrace{\ty{-+\dots-+}}_{2n}\ \ \text{and}\ \ \ty{-}\underbrace{\ty{+-\dots+-}}_{2n}\quad (n\in \N,~n\geq 1).
	 \end{equation*}
	 (Note that these are distinguished by the values of $\sig(1)$ and $\abs{\sig}$ appearing in the expression for $a$.) 
	 We shall prove the theorem for a string of the first type; the proof in the remaining three cases is analogous. The argument given here is the same as the one which was used to prove \pref{P:condensedlocation}, so some details will be omitted.

	For each $\mu\in [\theta_1-\pi,0]$, let $\ga_\mu\colon [0,2n\pi+\theta_1]\to \C$ be the unique curve parametrized by arc-length satisfying:
	\begin{equation*}
		\ga_\mu(0)=0\text{\ and\ }\ta_{\ga_\mu}(s)=\begin{cases}
		e^{is} & \text{\  if $s\in [0,\mu+\pi]\cup [\mu+2n\pi,\theta_1+2n\pi]\bigcup_k [\mu+k\pi,\mu+(k+1)\pi]$} \\
		e^{i(2\mu-s)} & \text{\ if $s\in \bigcup_k [\mu+k\pi,\mu+(k+1)\pi]$}
		\end{cases}
	\end{equation*}
	where the first (resp.~second) union is over all $k\equiv 0$ (resp.~$k\equiv 1$)$\pmod 2$, $1\leq k\leq 2n-1$. Notice that $\ga_\mu$ is the concatenation of arcs of circles of radius 1, see~Figure \ref{F:critical}. (Vaguely speaking, $\ga_\mu$ is the ``most efficient'' critical curve $\ga$ of type $\sig$ with $\inf \theta_\ga=\mu$ and $\abs{\ka_\ga}\leq 1$.)  We have:
	\begin{alignat*}{9}
	\Phi_{\ga_\mu}(0)=(0,1),\ \ \ta_{\ga_\mu}(2n\pi+\theta_1)=z,\ \ \inf\theta_{\ga_\mu}=\mu,\ \ \sup\theta_{\ga_\mu}=\mu+\pi\ \ \text{and} \\
	\ga_\mu(2n\pi+\theta_1)=\bigg(\int_0^{\mu+\pi}+(2n-1)\int_{\mu}^{\mu+\pi}+\int_\mu^{\theta_1}\bigg) e^{is}\,ds=(i-iz)+4nie^{i\mu}.
	\end{alignat*}
	From the previous equation it follows that as $\mu$ increases from $\theta_1-\pi$ to $0$, the endpoint of $\ga_\mu$ traces out the arc of $C_r(a)$ joining $a-riz$ to $a+ri$, where $a=i-iz$ and $r=4n=2\abs{\sig}$. Further, the tangent line to $C_r(a)$ at $\ga_\mu(2n\pi+\theta_1)$ is parallel to $e^{i\mu}$, for it must be orthogonal to $4nie^{i\mu}$. 
	
	It is easy to see that any $q\in \overline{R}_\sig$ is the endpoint of a curve of one of the following three types:
	\begin{enumerate}
		\item [(i)] The concatenation of $\ga_\mu$ with a line segment of direction $z$, for some $\mu\in [\theta_1-\pi,0]$.
		\item [(ii)]  The concatenation of $\ga_0|_{[0,\pi]}$, a line segment of length $\ell\geq 0$ having direction $-1$, the arc $-\ell+\ga_0|_{[\pi,2n\pi+\theta_1]}$, and a line segment of direction $z$.
		\item [(iii)] The concatenation of $\ga_{\theta_1-\pi}|_{[0,\theta_1+\pi]}$, a line segment of length $\ell_1\geq 0$ of direction $-z$, the arc $-\ell_1 z+\ga_0|_{[\theta_1+\pi,\theta_1+\frac{3\pi}{2}]}$, a line segment of length $\ell_2\geq 0$ and direction $-iz$, the arc $-\ell_1 z-\ell_2iz+\ga_0|_{[\theta_1+\frac{3\pi}{2},2n\pi+\theta_1]}$, and a line segment of direction $z$.
	\end{enumerate}
	If $q\in R_\sig$, then we can find a critical curve $\ga$ of type $\sig$ in $\sr L_{-1}^{+1}(Q;\theta_1)$ by a slight modification of one of these curves.
	
	Conversely, suppose that $\sr L_{-1}^{+1}(Q;\theta_1)$ contains critical curves of type $\sig$.  Let $\eta\colon [0,L]\to \C$ be such a curve, parametrized by arc-length, and let $\mu=\inf\theta$, where $\theta\colon [0,L]\to \R$ is the argument of $\ta_\eta$ satisfying $\theta(0)=0$. Define
	\begin{equation*}
g\colon [0,L]\to \R\quad\text{by}\quad g(s)=\big\langle\eta(s)-\ga_\mu(2n\pi+\theta_1)\,,\,ie^{i\mu}\big\rangle.
	\end{equation*}
	Notice that $g(s)>0$ if and only if $\eta(s)$ lies to the left of the line through $\ga_{\mu}(2n\pi+\theta_1)\in C_r(a)$ having direction $e^{i\mu}$; we have already seen that this line is tangent to $C_r(a)$ at this point. We claim that $g(L)>0$. Since $\eta$ is critical, $\theta(s)\in [\mu,\mu+\pi]$ for all $s$. Hence, 
	\begin{equation}\label{E:non-negative}
		g'(s)=\big\langle e^{i\theta(s)},ie^{i\mu}\big\rangle=\cos\big(\theta(s)-(\mu+\tfrac{\pi}{2})\big)\geq 0\text{\quad  for all $s\in [0,L]$}.
	\end{equation}
	Let $J_i=(a_i,b_i)\subs (0,L)$, $i=0,\dots,2n=\abs{\sig}$, be disjoint intervals such that:
	\begin{enumerate}
		\item [(I)] $\theta(a_0)=0$ and $\theta(b_1)=\mu+\pi$; 
		\item [(II)]  $\theta(a_i)=\mu+\pi$ and $\theta(b_i)=\mu$\ \ for\ \ $i=1,3,\dots,2n-1$;
		\item [(III)] $\theta(a_i)=\mu$ and $\theta(b_i)=\mu+\pi$\ \ for\ \ $i=2,4,\dots,2n-2$;
		\item[(IV)] $\theta(a_{2n})=\mu+\pi$ and $\theta(b_{2n})=\theta_1$.
	\end{enumerate}
	Such intervals exist because $\theta([0,L])\subs [\mu,\mu+\pi]$, $\theta(0)=0$, $\theta(L)=\theta_1$ and $\eta$ is critical of type $\sig$. It follows from \eqref{E:non-negative} and the fact $\theta$ is strictly 1-Lipschitz (by eq.~\eqref{E:rate} and the fact that  $\theta=\arg\circ \ta_\eta$ is absolutely continuous) that
	\begin{alignat*}{9}
		g(L)-g(0)& \geq \bigg(\sum_{i=0}^{2n}\int_{a_i}^{b_i}\bigg) g'(s)\,ds\\
		&>\bigg(\int_0^{\mu+\pi}+\,(2n-1)\int_{\mu}^{\mu+\pi}+\int_\mu^{\theta_1}\bigg)\gen{e^{it},ie^{i\mu}}\,dt=\big\langle\ga_\mu(2n\pi+\theta_1),ie^{i\mu}\big\rangle.
	\end{alignat*}
	Therefore, $g(L)>0$ as claimed. We conclude that $q=\eta(L)$ lies on the side of a tangent to $C_r(a)$ which only contains points of $R_\sig$.
\end{proof}

\begin{cor}\label{C:criticallocation}
	Let $Q=(q,z)\in \C\times \Ss^1$,
	\begin{equation*}
		a=\sign(\Im(z))\? i(z-1)\in \C
	\end{equation*}
	and let $R_{\sr T}$ be the open region of the plane which does \emph{not} contain $a$ and which is bounded by the shortest arc of $C_4(a)$ joining $a+\sign(\Im(z))\?4i$ to $a-\sign(\Im(z))\?4iz$ and the tangent lines to $C_4(a)$ at these points. Then $\sr L_{-1}^{+1}(Q)$ contains critical curves if and only if $q\in R_{\sr T}$.
\end{cor}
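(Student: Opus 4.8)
The plan is to deduce this from \pref{P:criticallocation}, whose content is precisely a classification, by sign string, of the curves whose existence is in question; the corollary should amount to computing the union $\bigcup_\sig R_\sig$ over all sign strings $\sig$. Two preliminary observations effect the reduction. First, if $\ga\in\sr L_{-1}^{+1}(Q)$ is critical then its amplitude is $\pi$, so its total turning $\theta_1=\theta_\ga(1)-\theta_\ga(0)$ lies in $[-\pi,\pi]$ (both $\theta_\ga(0)$ and $\theta_\ga(1)$ belong to an interval of length $\pi$); for $z\notin\se{-1,1}$ this pins down a unique admissible $\theta_1$, the cases $z=\pm1$ being treated separately as in the remark following \pref{P:criticallocation}. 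Second, every critical curve $\ga$ is of type $\sig$ for some sign string $\sig$ of \emph{finite} length: each pair of consecutive alternating visits of $\theta_\ga$ to its extremes forces an increment of $\pi$ in the argument, which by $\abs{\ka_\ga}<1$ a.e.\ (eq.~\eqref{E:rate}) consumes more than $\pi$ units of arc-length, so the number of visits is bounded in terms of the finite length of $\ga$. Hence $\sr L_{-1}^{+1}(Q)$ contains a critical curve if and only if $\sr L_{-1}^{+1}(Q;\theta_1)$ contains a critical curve of type $\sig$ for some $\sig$, which by \pref{P:criticallocation} --- and, when $\theta_1<0$, by its version obtained by reflecting across the real axis (which sends a critical curve of type $\sig$ in $\sr L_{-1}^{+1}(Q;\theta_1)$ to one of type $-\sig$ in $\sr L_{-1}^{+1}(\bar Q;-\theta_1)$) --- is equivalent to $q\in\bigcup_\sig R_\sig$.

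It then remains to identify $\bigcup_\sig R_\sig$ with $R_{\sr T}$. The key claim is that the $R_\sig$ are nested and the union is attained by the shortest sign string of the correct leading sign: $\sig_0=\ty{-+}$ when $\theta_1\geq0$ and $\sig_0=\ty{+-}$ when $\theta_1<0$ (these agree when $\theta_1=0$). Granting this, $\bigcup_\sig R_\sig=R_{\sig_0}$, and one only has to match notation: substituting $\abs{\sig_0}=2$ into \pref{P:criticallocation} gives $r=2\abs{\sig_0}=4$ and $a=i\sig_0(1)\bigl(1+(-1)^{3}z\bigr)=i\sig_0(1)(1-z)$, which in either case equals $\sign(\Im(z))\,i(z-1)$; the boundary data transform under the reflection in the evident way, and $R_{\sig_0}$ is in both cases the component not containing $a$ (for $\theta_1>0$ the marked point $-i+iz$ equals $a$; for $\theta_1<0$ the reflected marked point $i-iz$ equals $a$). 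The degenerate cases $z=1$ (take $a=0$, $r=4$, and the arc of $C_4(0)$ through $4$) and $z=-1$ ($\theta_1=\pm\pi$, where each $R_{\sig_0}$ is a half-plane) are covered by the remark after \pref{P:criticallocation}.

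The nesting is where the real work lies, and I expect it to be the main obstacle. I would prove $R_\sig\subseteq R_{\sig_0}$ (say for $\theta_1\geq0$; the other case is symmetric) by rerunning the necessity half of the proof of \pref{P:criticallocation} with \emph{fewer} intervals. Given a critical curve $\eta\colon[0,L]\to\C$ parametrized by arc-length, with $\mu=\inf\theta_\eta$ and total turning $\theta_1$, both $\theta_\eta^{-}=\mu$ and $\theta_\eta^{+}=\mu+\pi$ are attained; according to which is attained first, $\theta_\eta$ runs monotonically over two or three disjoint intervals --- from $0$ up to $\mu+\pi$ and then down to $\theta_1$, or from $0$ down to $\mu$, up to $\mu+\pi$, and down to $\theta_1$ --- and these are exactly the interval data for the ``most efficient'' type-\ty{+-} or type-\ty{-+} critical curve $\ga_\mu$ of that proof. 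Feeding this into the estimate for $g(s)=\gen{\eta(s)-p_\mu\,,\,ie^{i\mu}}$ (with $p_\mu$ the endpoint of $\ga_\mu$), using $g'\geq0$ and the strict $1$-Lipschitzness of $\theta_\eta$ exactly as before, shows that $q=\eta(L)$ lies strictly on the far side of the tangent to $C_4(a)$ at $p_\mu$, hence $q$ lies in the open region $R_{\ty{+-}}$ or $R_{\ty{-+}}$. A direct comparison of these two lunes --- their circles $C_4(i(1-z))$ and $C_4(i(z-1))$ have centers symmetric about the origin, and the common reference point $i(z-1)$ is the center of the second circle and, for $0<\theta_1<\pi$, an interior point of the first --- gives $R_{\ty{+-}}\subseteq R_{\ty{-+}}=R_{\sig_0}$, so in all cases $q\in R_{\sig_0}$. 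The reverse inclusion $R_{\sig_0}\subseteq\bigcup_\sig R_\sig$ is immediate from the sufficiency half of \pref{P:criticallocation}, which completes the identification $\bigcup_\sig R_\sig=R_{\sr T}$.
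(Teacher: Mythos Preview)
Your proof is correct and follows the same route as the paper's: both identify $R_{\sr T}$ with $R_{\sig_0}$ for the length-two sign string $\sig_0=\ty{-+}$ (resp.~$\ty{+-}$) when $\theta_1\geq 0$ (resp.~$\theta_1\leq 0$), and match the data of \pref{P:criticallocation} with the statement of the corollary.

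The difference is one of detail. The paper's proof is a one-liner: it simply asserts that $R_{\sr T}=R_{\sig_0}$, treating the nesting $\bigcup_\sig R_\sig=R_{\sig_0}$ as evident from the construction (and Figure~\ref{F:critical}). You instead supply an argument for this nesting, by rerunning the necessity half of \pref{P:criticallocation} with only the intervals needed for a length-two string, and then checking the residual inclusion $R_{\ty{+-}}\subs R_{\ty{-+}}$ (for $\theta_1\geq 0$) by direct geometric comparison. This is a legitimate and more self-contained justification; your observation that any critical curve has finite type is also a point the paper uses tacitly. The cost is some extra work on the lune comparison, which the paper bypasses by appeal to the figure.
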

\begin{proof}
	Let $z=e^{i\theta_1}$, $\abs{\theta_1}\leq \pi$.  For $\theta_1\in [0,\pi]$ (resp.~$\theta_1\in [-\pi,0]$), $R_{\sr T}$ is the same as the region $R_{\ty{-+}}$ (resp.~$R_{\ty{+-}}$) appearing in proposition \pref{P:criticallocation}.
\end{proof}
If $z=\pm 1$, then $\sign(\Im z)$ is not defined. When $z=1$, $R_{\sr T}$ is bounded by the semicircle centered at 0 through $4$ and $\pm 4i$ and the tangents to $C_4(0)$ at the latter two points. When $z=-1$, $R_{\sr T}$ is bounded by the horizontal lines through $\pm 2i$. 

\begin{cor}\label{C:bat}
	Let $Q=(q,z)\in \C\times \Ss^1$ and $e^{i\theta_1}=z$. Then there exist condensed curves but not critical curves in $\sr L_{-1}^{+1}(Q;\theta_1)$ if and only if $\abs{\theta_1}<\pi$ and $q$ lies in the region illustrated in Figure \ref{F:S0}.
\end{cor}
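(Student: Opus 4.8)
The plan is to read off \cref{C:bat} from the two location results already established. By \pref{P:condensedlocation}, $\sr U_c$ is nonempty exactly when $\abs{\theta_1}<\pi$ and $q\in R_{\sr U_c}$. By \cref{C:criticallocation}, $\sr L_{-1}^{+1}(Q)$ contains a critical curve exactly when $q\in R_{\sr T}$; moreover, when $\abs{\theta_1}<\pi$ any critical curve $\ga$ (amplitude $\pi$, $\theta_\ga(0)=0$) has $\theta_\ga(1)\in[-\pi,\pi]$ with $e^{i\theta_\ga(1)}=z$, so $\theta_\ga(1)=\theta_1$, whence every critical curve in $\sr L_{-1}^{+1}(Q)$ already lies in $\crit\subs\sr L_{-1}^{+1}(Q;\theta_1)$ and $\crit\neq\emptyset$ exactly when $q\in R_{\sr T}$. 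Therefore ``$\sr U_c\neq\emptyset$ and $\crit=\emptyset$'' is equivalent to ``$\abs{\theta_1}<\pi$ and $q\in R_{\sr U_c}\ssm R_{\sr T}$'' (when $\abs{\theta_1}\geq\pi$ both sides fail, since then $\sr U_c=\emptyset$), and the corollary reduces to the geometric assertion that $R_{\sr U_c}\ssm R_{\sr T}$ is the gray region of Figure \ref{F:S0}. As in \pref{P:criticallocation} it suffices to treat $\theta_1\in[0,\pi)$, the range $\theta_1\in(-\pi,0]$ following by reflection across the real axis, which interchanges $i-iz$ with $-i+iz$ consistently throughout the region descriptions.

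So assume $\theta_1\in[0,\pi)$. The key is that the three boundary arcs of the two regions fit together along their endpoints and tangent lines. With $a=i(z-1)=-i+iz$, one computes $a+4i=3i+iz=(i-iz)+2(i+iz)$ and $a-4iz=-i-3iz=(i-iz)-2(i+iz)$; hence the relevant arc of $C_4(a)$ joins $3i+iz$ to $-i-3iz$, while the arcs of $C_2(i+iz)$ and of $C_2(-(i+iz))$ join $i-iz$ to $3i+iz$ and to $-i-3iz$ respectively. At $3i+iz$ the outward radii of $C_4(a)$ and of $C_2(i+iz)$ both have direction $i$, and at $-i-3iz$ those of $C_4(a)$ and of $C_2(-(i+iz))$ both have direction $-iz$; so the tangent lines occurring in \cref{C:criticallocation} are the same as those occurring in \pref{P:condensedlocation}, and $R_{\sr U_c}$, $R_{\sr T}$ are cut off by the same pair of lines, differing only in their curved part. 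The centre $a$ of $C_4(a)$ lies on both $C_2(i+iz)$ and $C_2(-(i+iz))$, so every point of the two radius-$2$ arcs is at distance at most $4$ from $a$, with distance exactly $4$ only at the shared endpoints $3i+iz$, $-i-3iz$; these arcs therefore lie in $\ol{D_4(a)}$ and $R_{\sr T}\subs R_{\sr U_c}$ (this inclusion also follows conceptually from $\bd\sr U_c=\crit$ in \pref{P:boundary}, whereby $\crit\neq\emptyset$ forces $\sr U_c\neq\emptyset$). Hence $R_{\sr U_c}\ssm R_{\sr T}$ is exactly the region enclosed between the two radius-$2$ arcs, which meet at the vertex $i-iz$, and the radius-$4$ arc, and lying between the two common tangent lines — equivalently, the region that contains the radius-$4$ arc but not the radius-$2$ arcs, which is the gray region of Figure \ref{F:S0}. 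Together with the first paragraph this proves the corollary.

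The argument is short, and the only part needing care is the second paragraph. There the endpoint and tangent-direction computations are routine; the one genuinely substantive point is the nesting $R_{\sr T}\subs R_{\sr U_c}$, which fixes which region is the ``inner'' one and hence the shape of the difference. After that, the degenerate configurations $z=\pm1$ and $\theta_1=0$ already isolated in \pref{P:condensedlocation} and \cref{C:criticallocation} merely have to be matched against the corresponding degenerations of the gray region.
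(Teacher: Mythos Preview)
Your proof is correct and follows exactly the approach the paper indicates: the paper's proof reads in its entirety ``This is an immediate consequence of \pref{P:condensedlocation} and \cref{C:criticallocation}.'' You have simply filled in the geometric verification that $R_{\sr U_c}\ssm R_{\sr T}$ coincides with the region in Figure~\ref{F:S0} (matching endpoints, tangent lines, and the nesting $R_{\sr T}\subs R_{\sr U_c}$), which the paper leaves to the reader.
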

\begin{proof}
	This is an immediate consequence of \pref{P:condensedlocation} and \cref{C:criticallocation}.
\end{proof}

\begin{lem}\label{L:omega}
	Let $Q=(q,z)\in \C\times \Ss^1$ and $e^{i\theta_1}=z$, $\abs{\theta_1}\leq \pi$. Let $\om\in [\abs{\theta_1},\pi]$ and $r(\om)=4\sin\big(\frac{\om}{2}\big)$. Suppose that $q$ lies inside of $C_{r(\om)}\big(\sign(\theta_1)i(z-1)\big)$. Then there does not exist a curve in $\hat{\sr L}_{-1}^{+1}(Q;\theta_1)$ or $\sr L_{-1}^{+1}(Q;\theta_1)$ having amplitude $\om$.
\end{lem}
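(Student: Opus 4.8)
The plan is to establish the contrapositive \emph{geometric} assertion: if $\ga$ belongs to $\hat{\sr L}_{-1}^{+1}(Q;\theta_1)$ or to $\sr L_{-1}^{+1}(Q;\theta_1)$ and has amplitude $\om$, then its endpoint $q$ satisfies $\abs{q-a}\geq r(\om)$, where $a=\sign(\theta_1)\?i(z-1)$. Since the hypothesis says $q$ lies strictly inside $C_{r(\om)}(a)$, this is a contradiction and the lemma follows. First I would reduce to the case $\theta_1\in[0,\pi]$ (so that $a=i(z-1)$): reflection across the real axis is a homeomorphism $\sr L_{-1}^{+1}(Q;\theta_1)\to\sr L_{-1}^{+1}(\bar Q;-\theta_1)$ (and likewise for $\hat{\sr L}$) that preserves amplitude and turns the hypothesis into the corresponding one for $\bar Q$ and $-\theta_1$. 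Since amplitude, total turning and the endpoints of a curve are invariant under orientation-preserving reparametrization, I would also take $\ga\colon[0,L]\to\C$ parametrized by arc length with $\ga(0)=0$. Writing $\theta\colon[0,L]\to\R$ for the argument of $\ta_\ga$ with $\theta(0)=0$ and $\mu=\inf\theta$, one has $\theta(L)=\theta_1$, $\abs{\theta'}=\abs{\ka_\ga}\leq 1$ a.e., $\mu\leq0$, $\sup\theta=\mu+\om\geq0$, and $\theta([0,L])\subs[\mu,\mu+\om]$; the case $\om=0$ being vacuous, assume $\om>0$.

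The key step is the identity
\[
q-a=\int_0^L\big(1+\ka_\ga(s)\big)\,e^{i\theta(s)}\,ds,
\]
obtained from $\tfrac{d}{ds}e^{i\theta}=i\ka_\ga e^{i\theta}$ (eq.~\eqref{E:rate}), which gives $\int_0^L\ka_\ga e^{i\theta}\,ds=-i(e^{i\theta_1}-1)=-a$, combined with $q=\int_0^Le^{i\theta}\,ds$. What makes it useful is that $1+\ka_\ga\geq0$. Set $\nu=\mu+\tfrac{\om}{2}$, the midpoint of the angular range of $\ga$; then $\theta(s)-\nu\in[-\tfrac{\om}{2},\tfrac{\om}{2}]\subs[-\tfrac{\pi}{2},\tfrac{\pi}{2}]$, so $\cos(\theta(s)-\nu)\geq\cos\tfrac{\om}{2}\geq0$ and
\[
\abs{q-a}\ \geq\ \gen{q-a,\,e^{i\nu}}\ =\ \int_0^L\big(1+\ka_\ga(s)\big)\cos\big(\theta(s)-\nu\big)\,ds\ \geq\ 0.
\]
It therefore suffices to bound this integral below by $r(\om)=4\sin\tfrac{\om}{2}$.

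For that I would use an interval estimate. Splitting off the term $\ka_\ga\cos(\theta-\nu)=\tfrac{d}{ds}\sin(\theta-\nu)$, for any $[c,d]\subs[0,L]$ one has
\[
\int_c^d(1+\ka_\ga)\cos(\theta-\nu)\,ds=\int_c^d\cos(\theta-\nu)\,ds+\sin(\theta(d)-\nu)-\sin(\theta(c)-\nu),
\]
and, since $\abs{\theta'}\leq1$ and $\cos(\theta-\nu)\geq0$, also $\int_c^d\cos(\theta-\nu)\,ds\geq\abs{\sin(\theta(d)-\nu)-\sin(\theta(c)-\nu)}$. Pick $t_-,t_+\in[0,L]$ with $\theta(t_-)=\mu$, $\theta(t_+)=\mu+\om$ (attained, $\theta$ being continuous on a compact interval). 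As in the proof of \pref{P:criticallocation}, there are two cases. If some such pair satisfies $t_-<t_+$, apply the estimate on $[t_-,t_+]$, where $\theta-\nu$ sweeps from $-\tfrac{\om}{2}$ to $\tfrac{\om}{2}$; this contributes $\geq2\sin\tfrac{\om}{2}+2\sin\tfrac{\om}{2}=4\sin\tfrac{\om}{2}$, and since $(1+\ka_\ga)\cos(\theta-\nu)\geq0$ throughout $[0,L]$, we are done. Otherwise every such pair has $t_+<t_-$ (take $t_+$ the largest time with $\theta=\mu+\om$ and $t_-$ the smallest with $\theta=\mu$); apply the estimate on $[0,t_+]$ and on $[t_-,L]$, discard the nonnegative contribution of $[t_+,t_-]$, and use $\theta(0)=0$, $\theta(L)=\theta_1$ to obtain
\[
\gen{q-a,\,e^{i\nu}}\ \geq\ \big(2\sin\tfrac{\om}{2}+2\sin\nu\big)+\big(2\sin\tfrac{\om}{2}+2\sin(\theta_1-\nu)\big)=4\sin\tfrac{\om}{2}+4\sin\tfrac{\theta_1}{2}\cos\big(\nu-\tfrac{\theta_1}{2}\big).
\]
Here I have used $\sin\nu+\sin(\theta_1-\nu)=2\sin\tfrac{\theta_1}{2}\cos(\nu-\tfrac{\theta_1}{2})$ and the elementary facts $\sin\tfrac{\om}{2}+\sin\nu\geq0$, $\sin\tfrac{\om}{2}+\sin(\theta_1-\nu)\geq0$ (which follow from $-\om\leq\mu\leq0$ and $\theta_1\geq0$); and the final term is $\geq0$ because $\theta_1\in[0,\pi]$ and $\abs{\nu-\tfrac{\theta_1}{2}}\leq\tfrac{\om-\theta_1}{2}\leq\tfrac{\pi}{2}$, the latter coming from $\theta_1-\om\leq\mu\leq0$. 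In either case $\abs{q-a}\geq r(\om)$.

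I expect the only genuine difficulty to lie in the second case: there the direct ``tangent-line'' argument in the direction $e^{i\nu}$ (the one used in \pref{P:criticallocation}) does not close by itself, and one must track the boundary terms $\sin\nu$, $\sin(\theta_1-\nu)$ and verify the sign of $\sin\tfrac{\theta_1}{2}\cos(\nu-\tfrac{\theta_1}{2})$ from the range bounds $\theta_1-\om\leq\mu\leq0$ on the infimum of $\theta$. Two remarks: the argument is uniform over $\hat{\sr L}_{-1}^{+1}(Q;\theta_1)$ and $\sr L_{-1}^{+1}(Q;\theta_1)$, since it uses only $\abs{\ka_\ga}\leq1$ a.e.; and the bound $r(\om)=4\sin\tfrac{\om}{2}$ is sharp, being attained by the concatenation of three arcs of unit circles --- curvature $-1$ turning direction $0$ to $\mu$, curvature $+1$ turning $\mu$ to $\mu+\om$, curvature $-1$ turning $\mu+\om$ to $\theta_1$ --- which is the type of extremal curve already appearing in the proof of \pref{P:criticallocation}.
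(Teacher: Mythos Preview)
Your argument is correct and arrives at exactly the inequality the paper establishes, namely $\langle q-a,\,e^{i\nu}\rangle\geq r(\om)$ with $\nu=\mu+\tfrac{\om}{2}$, but the route is genuinely different. The paper constructs the explicit extremal curve $\ga_\mu$ (the concatenation of three unit-circle arcs of type $-+-$), computes that its endpoint equals $a+r(\om)e^{i\nu}$, and then invokes the interval-based comparison argument of \pref{P:condensedlocation} and \pref{P:criticallocation} (choosing subintervals of $[0,L]$ on which $\theta$ runs between specified values and using that $\theta$ is $1$-Lipschitz) to show that any $\eta$ of amplitude $\om$ must have $g(L)=\langle q-\ga_\mu(2\om-\theta_1),e^{i\nu}\rangle\geq 0$. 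Your key step is instead the identity $q-a=\int_0^L(1+\ka_\ga)e^{i\theta}\,ds$, which turns the non-negativity of $1+\ka_\ga$ into a pointwise lower bound on the integrand after projection; you then split off the exact differential $\tfrac{d}{ds}\sin(\theta-\nu)$ and handle the two possible orderings of $t_\pm$ separately. Your method is more self-contained (it does not rely on the machinery of the earlier propositions, nor on building the extremal curve first), while the paper's method has the advantage of producing the extremal curves as part of the argument and fitting into the uniform template already set up for \pref{P:condensedlocation} and \pref{P:criticallocation}. The case split you flag as the ``genuine difficulty'' is a real feature of your approach that the paper's version absorbs into its interval-selection scheme; your verification of the sign of $\sin\tfrac{\theta_1}{2}\cos(\nu-\tfrac{\theta_1}{2})$ via $\theta_1-\om\leq\mu\leq 0$ is correct.
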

\begin{proof}
	Assume that $\theta_1\in [0,\pi]$; the proof for $\theta_1\in [-\pi,0]$ is analogous. Let $\om\in [\theta_1,\pi]$, $\mu\in [\theta_1-\om,0]$ and let $\ga_\mu\colon [0,2\om-\theta_1]\to \C$ be the unique curve parametrized by arc-length satisfying 
	\begin{equation*}
		\ga_\mu(0)=0\text{ and }\ta_{\ga_{\mu}}(s)=\begin{cases}
		e^{-is} & \text{ if $s\in [0,-\mu]$;} \\
		e^{i(s+2\mu)} & \text{ if $s\in [-\mu,-\mu+\om]$; }\\
		e^{-i(s-2\om)} & \text{ if $s\in [-\mu+\om,2\om-\theta_1]$.}
		\end{cases}
	\end{equation*}
	Notice that $\ta_{\ga_\mu}(0)=1$, $\ta_{\ga_\mu}(2\om-\theta_1)=z$ and $\ga_\mu$ is a concatenation of three arcs of circles of radius 1. Moreover, $\inf \theta_{\ga_\mu}=\mu$ and $\sup \theta_{\ga_\mu}=\mu+\om$, where $\theta_{\ga_\mu}$ is the argument of $\ta_{\ga_\mu}$ satisfying $\theta_{\ga_\mu}(0)=0$. Consequently, $\ga_\mu$ has amplitude $\om$. Further,
	\begin{equation*}
		\ga_{\mu}(2\om-\theta_1)=\bigg(\int_\mu^{0}+\int_{\mu}^{\mu+\om}+\int_{\theta_1}^{\mu+\om}\bigg) e^{is}\,ds=(-i+iz)+4\sin\Big(\frac{\om}{2}\Big)e^{i\big(\mu+\tfrac{\om}{2}\big)}.
	\end{equation*}
	Thus, as $\mu$ increases from $\theta_1-\om$ to $0$, the endpoint of $\ga_\mu$ traverses an arc of $C_{r(\om)}(-i+iz)$. Suppose that there exists $\eta\in \hat{\sr L}_{-1}^{+1}(Q;\theta_1)$ of amplitude $\om$, and let $\eta\colon [0,L]\to \C$ be parametrized by arc-length. Let $\theta_\eta$ be the argument of $\eta$ satisfying $\theta_\eta(0)=0$, take $\mu=\inf \theta_\eta$ and define 
	\begin{equation*}
		g\colon [0,L]\to \R\quad\text{by}\quad g(s)=\Big\langle\eta(s)-\ga_\mu(2\om-\theta_1)\,,\,e^{i\big(\mu+\tfrac{\om}{2}\big)}\Big\rangle.
	\end{equation*}
	Then the same reasoning used to establish \pref{P:condensedlocation} and \pref{P:criticallocation} shows that $g(L)\geq 0$. This implies that $\eta(L)=q$ lies on or to the left of the line through $\ga_\mu(2\om-\theta_1)$ having direction $\exp\big(i\big(\mu+\tfrac{\om-\pi}{2}\big)\big)$. This line is tangent to $C_{r(\om)}(-i+iz)$ at this point, therefore $q$ cannot lie inside of this circle. This proves the assertion about $\hat{\sr L}_{-1}^{+1}(Q;\theta_1)$. Since the latter contains $\sr L_{-1}^{+1}(Q;\theta_1)$ as a subset, the proof is complete.
\end{proof}

The next result will only be needed in \cite{SalZueh2}. Recall the definition of $\bar\vphi_\ga$ in \eqref{E:aver}.
\begin{cor}\label{C:criticalinterval}
	Let $\abs{\theta_1}\leq \pi$, $e^{i\theta_1}=z$, $Q=(q,z)\in \C\times \Ss^1$ and $\sig$ be a sign string. Then the set of all $\vphi\in \R$ such that there exists a critical curve $\ga\in \sr L_{-1}^{+1}(Q;\theta_1)$ of type $\sig$ for which $\bar\vphi_\ga=\vphi$ is an open interval.
\end{cor}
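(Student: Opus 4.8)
The plan is to translate the statement into an explicit geometric description, in the same style as the regions $R_{\sr U_c}$ and $R_\sig$ in Propositions \pref{P:condensedlocation} and \pref{P:criticallocation}. First I would observe that a critical curve $\ga$ has $\theta^+_\ga-\theta^-_\ga=\pi$, so by the definition \eqref{E:aver} of $\bar\vphi_\ga$ we have $\bar\vphi_\ga=\theta^-_\ga+\frac{\pi}{2}=\theta^+_\ga-\frac{\pi}{2}$, where $\theta^-_\ga=\inf_{t\in[0,1]}\theta_\ga(t)$ and $\theta_\ga(0)=0$. Hence $\bar\vphi_\ga$ is determined by, and determines, the number $\mu=\theta^-_\ga$. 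It therefore suffices to prove that the set $M_\sig$ of all $\mu\in\R$ for which there exists a critical curve $\ga\in\sr L_{-1}^{+1}(Q;\theta_1)$ of type $\sig$ with $\theta^-_\ga=\mu$ is an open interval; the set in the statement is then $M_\sig+\frac{\pi}{2}$ (in particular, it is allowed to be empty).

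Next I would record the elementary constraints on $\mu$. Since $\theta_\ga(0)=0$ and $\theta_\ga(1)=\theta_1$ both lie in $[\mu,\mu+\pi]$, every $\mu\in M_\sig$ belongs to $J:=\big[\max(-\pi,\theta_1-\pi)\,,\,\min(0,\theta_1)\big]$, which is an interval, degenerating to a point precisely when $\abs{\theta_1}=\pi$. Moreover, having type $\sig$ sharpens some endpoint inequalities: if, say, $\sig(1)=+$ and $\theta_\ga(0)=0$ were equal to $\theta^-_\ga=\mu$, then prepending $t=0$ to a sequence of times witnessing the pattern $\sig$ would produce $\abs{\sig}+1$ times with alternating unit tangents, contradicting that $\ga$ has type $\sig$; similarly $\sig(1)=-$ forces $\theta^+_\ga=\mu+\pi\ne 0$, and $\sig(\abs{\sig})$ sharpens the inequalities involving $\theta_1$ in the same way. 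Thus $J$ is replaced by an open subinterval $J_\sig$ of admissible $\mu$, and those endpoints of $J$ at which the type gives no strict inequality will be excluded by a different mechanism below.

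The main step is a description, for each fixed $\mu\in J_\sig$, of the set $R_{\sig,\mu}\subs\C$ of all $q$ that occur as the endpoint of a critical curve of type $\sig$ in $\sr L_{-1}^{+1}(Q;\theta_1)$ with $\theta^-=\mu$; I would obtain it by rerunning the arguments of \pref{P:criticallocation} (for the $\inf\theta=\mu$ extreme) and \pref{P:condensedlocation} (for the $\sup\theta=\mu+\pi$ extreme) with $\inf\theta$ held equal to $\mu$. On the one hand, the explicit ``most efficient'' critical curve $\ga^\sig_\mu$ of type $\sig$ with $\inf\theta=\mu$ — a concatenation of finitely many unit‑radius arcs, as in the proofs of \pref{P:criticallocation} and \lref{L:omega} — has a frame one can compute in closed form, and the comparison‑of‑derivatives argument used there (relying only on $\theta$ being strictly $1$‑Lipschitz, via \eqref{E:rate}) shows that the endpoint $q$ of any such curve must lie strictly on a prescribed side of the tangent line to an explicit circle at the endpoint of $\ga^\sig_\mu$, and, symmetrically, strictly on the prescribed side of a second such tangent line coming from the $\sup\theta$ extreme. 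On the other hand, the same three families of model curves from the proof of \pref{P:criticallocation} (a model curve followed by straight segments, with at most two straight ``bumps'') realize every $q$ strictly inside the resulting open wedge. So $R_{\sig,\mu}$ is an intersection of two open half‑planes, bounded by two lines that depend continuously on $\mu$ (with the same degenerations as $R_\sig$ when $\theta_1\in\{0,\pm\pi\}$), and $\bigcup_{\mu\in J_\sig}R_{\sig,\mu}=R_\sig$.

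Finally I would combine these. Each condition ``$q$ lies on the prescribed side of the line $T_\mu$'' becomes, after writing the relevant circle as a circle of radius $r$ about a point $a$ and setting $q-a=Re^{i\beta}$, an inequality of the shape $R\sin(\beta-\mu)<r$, whose solution set in $\mu$ is an open interval; intersecting the two such open intervals with the open interval $J_\sig$ shows that $M_\sig=\set{\mu\in J_\sig}{q\in R_{\sig,\mu}}$ is an open interval, which proves the corollary after translation by $\frac{\pi}{2}$. I expect the real work to lie in the third step: setting up $\ga^\sig_\mu$ and the two bounding lines correctly across the four families of sign strings, verifying that exactly two half‑plane conditions are needed (so the slices $R_{\sig,\mu}$ are bounded and sweep out the bounded region $R_\sig$), and checking that each endpoint of $J$ not already excluded by the type argument is excluded because it would force $q\in\bd R_\sig$. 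As a cross‑check — and as an alternative route to the openness of $M_\sig$ — one can perturb a given critical curve of type $\sig$ by the methods of the proof of \pref{P:boundary}: a dilation to create curvature room, followed by localized modifications supported near the points where $\theta$ attains $\theta^\pm$ together with a compensating modification in the interior, all carried out via the submersion \lref{L:submersion}, yields critical curves of type $\sig$ whose $\bar\vphi$ ranges over a neighbourhood of the original value.
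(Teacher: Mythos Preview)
Your approach coincides with the paper's at its core: both read off from the proof of \pref{P:criticallocation} that, for each fixed $\mu=\inf\theta_\ga$ (equivalently, each fixed $\vphi=\bar\vphi_\ga$), the set of endpoints of critical curves of type $\sig$ with that value is the single open half-plane $E_\vphi$ bounded by the tangent to the circle $C_r(a)$ (with $a,r$ as in \pref{P:criticallocation}) at the endpoint of the model curve $\ga_\mu$, and then argue that $\set{\vphi}{q\in E_\vphi}$ is an open interval. The paper's version of the last step is the geometric inclusion $E_{\vphi_1}\cap E_{\vphi_2}\subs E_\vphi$ whenever $\vphi\in[\vphi_1,\vphi_2]$ and $\vphi_2-\vphi_1\leq\pi$, together with the obvious openness; this is equivalent to your trigonometric inequality but sidesteps the check that the periodic solution set meets $J_\sig$ in a single interval.

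You overcomplicate in two places. First, the ``second tangent line coming from the $\sup\theta$ extreme'' is redundant: since $\ga$ is critical, $\sup\theta_\ga=\mu+\pi$ is determined by $\mu$, and any \pref{P:condensedlocation}-style inequality derived from it (using a single passage to $\mu+\pi$) is strictly weaker than the \pref{P:criticallocation} inequality (which uses all $\abs{\sig}$ alternations). Hence $R_{\sig,\mu}$ is a single open half-plane, not a bounded wedge, and $R_\sig$ itself is unbounded (cf.~Figure~\ref{F:critical}); your parenthetical ``so the slices $R_{\sig,\mu}$ are bounded'' is false and should be dropped. Second, the inequality should read $>r$ (the external side of the tangent), not $<r$; with the correct sign the solution set in $\mu$ is an open arc of length less than $\pi$, so its intersection with $J_\sig$ is indeed a single open interval. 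The alternative openness argument you sketch at the end via \lref{L:submersion} and the methods of \pref{P:boundary} is sound but unnecessary once the half-plane description is in hand.
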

\begin{proof}
	No generality is lost in assuming that $\theta_1\in [0,\pi)$. Let $\vphi\in \R$. It was established in the proof of \pref{P:criticallocation} that a curve $\ga$ as in the statement exists if and only if $\vphi\in [\theta_1-\frac{\pi}{2},\frac{\pi}{2}]$ and $q$ lies in the open external region $E_\vphi$ determined by the tangent orthogonal to $e^{i\vphi}$  to a certain circle $C$ (which depends only on $\theta_1$ and $\sig$). It is straightforward to check that $E_{\vphi_1}\cap E_{\vphi_2}\subs E_\vphi$ whenever $\vphi\in [\vphi_1,\vphi_2]$ with $\vphi_2-\vphi_1\leq \pi$. Moreover, if $q\in E_\vphi$, then $q\in E_{\te{\vphi}}$ for all $\te{\vphi}$ sufficiently close to $\vphi$.
\end{proof}


\section{Components of $\sr L_{\ka_1}^{\ka_2}(P,Q)$ for $\ka_1\ka_2<0$}\label{S:components}



Recall that $\E$ denotes the separable Hilbert space and $B_{\ga,1,\frac{1}{2}}$ is obtained from $\ga$ by attaching a figure eight curve (at $t=1/2$); see \dref{D:eighting} and Figure \ref{F:spreading}\?(d).

\begin{thm}\label{T:criterion}
	Let $Q=(q,z)\in \C\times \Ss^1$ and $\theta_1\in \R$ satisfy $e^{i\theta_1}=z$. Then the following assertions are equivalent:
	\begin{enumerate}
		\item [(i)] $\sr L_{-1}^{+1}(Q;\theta_1)$ is disconnected.
		\item [(ii)] $\abs{\theta_1}<\pi$ and $q$ lies in the region depicted in Figure \ref{F:S0}.
		\item [(iii)] $\abs{\theta_1}<\pi$ and there exist condensed curves, but not critical curves, in $\sr L_{-1}^{+1}(Q)$.
		\item [(iv)] $\abs{\theta_1}<\pi$ and there exist condensed curves in $\sr L_{-1}^{+1}(Q)$, but no condensed curve is homotopic to a diffuse curve within $\sr L_{-1}^{+1}(Q)$.
		\item [(v)] $\abs{\theta_1}<\pi$ and there exists an embedding $\ga\in \sr L_{-1}^{+1}(Q)$ which cannot be homotoped within this space to create self-intersections.
		\item [(vi)] $\abs{\theta_1}<\pi$ and there exists $\ga\in \sr L_{-1}^{+1}(Q)$ which does not lie in the same component as $B_{\ga,1,\frac{1}{2}}$.
	\end{enumerate}
	Furthermore, if $\sr L_{-1}^{+1}(Q;\theta_1)$ is disconnected, then it has exactly two components; one of them is $\sr U_c$ and the other is $\sr U_d\subs \sr L_{-1}^{+1}(Q;\theta_1)$, and both are homeomorphic to $\E$, hence contractible.
\end{thm}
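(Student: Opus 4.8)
\textbf{Proof proposal for Theorem \ref{T:criterion}.}

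The plan is to prove the chain of equivalences by exploiting the work already done in the preceding sections, treating the cycle of implications so that each arrow is as short as possible, and then deriving the final structural statement. First I would dispose of the bulk of the equivalences using the geometric picture: (ii) $\Leftrightarrow$ (iii) is exactly \cref{C:bat}, and \cref{C:bat} in turn rests on \pref{P:condensedlocation} and \cref{C:criticallocation}, so nothing new is needed there. To handle (i), (iii), (iv), (vi) together, the key structural fact to invoke is \pref{P:boundary}: when $\abs{\theta_1}<\pi$ we always have $\bar{\sr U}_c\cup\bar{\sr U}_d=\sr L_{-1}^{+1}(Q;\theta_1)$ and $\bar{\sr U}_c\cap\bar{\sr U}_d=\sr T$. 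Combined with \tref{T:condensed} and \tref{T:diffuse}, which say $\sr U_c,\sr U_d$ are each empty or homeomorphic to $\E$ (in particular connected), this forces a dichotomy: either $\sr T\neq\emptyset$, in which case the two closed pieces overlap and $\sr L_{-1}^{+1}(Q;\theta_1)$ is connected (it is the union of two connected sets with a common point, modulo the case $\sr U_c=\emptyset$ where it equals $\bar{\sr U}_d$ and is connected outright), or $\sr T=\emptyset$ and $\sr U_c\neq\emptyset$, in which case $\sr U_c$ and $\sr U_d$ are disjoint nonempty open-and-closed sets covering the space, so it is disconnected with exactly these two components. This already gives (i) $\Leftrightarrow$ (iii) and, simultaneously, the final structural assertion about the two contractible components. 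Also, when $\abs{\theta_1}\geq\pi$ the space $\sr L_{-1}^{+1}(Q;\theta_1)$ is connected: it equals $\bar{\sr U}_d$ since $\sr U_c=\emptyset$, and $\sr U_d$ is dense and connected; so in every case outside (ii) the space is connected, pinning down the ``only if'' direction of (i) $\Leftrightarrow$ (ii).

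Next I would connect (iv) and (vi) to the same dichotomy. For (iii) $\Rightarrow$ (iv): if there are no critical curves then $\sr U_c$ and $\sr U_d$ are separate components, so no condensed curve is homotopic to a diffuse one. For the converse (iv) $\Rightarrow$ (iii), argue contrapositively: if $\sr T\neq\emptyset$ then by \pref{P:boundary} a critical curve lies in $\bar{\sr U}_c\cap\bar{\sr U}_d$, hence arbitrarily close to it there are both condensed and diffuse curves, and since $\sr L_{-1}^{+1}(Q;\theta_1)$ is locally path-connected (\lref{L:Hilbert}(a)) one produces a path from a condensed to a diffuse curve; so if condensed curves exist some condensed curve is homotopic to a diffuse one. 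For (vi): note $B_{\ga,1,\frac12}$ is always diffuse (\lref{L:basic}(c) shows it stays in $\sr L_{-1}^{+1}$, and attaching an eight of total turning $0$ makes the amplitude exceed $\pi$, cf.\ the discussion preceding \tref{T:condensed}). Thus (vi) says: there is a curve $\ga$ not in the component of some diffuse curve. If the space is disconnected (so (iii) holds), take $\ga$ condensed and this is immediate. Conversely if (vi) holds with $\abs{\theta_1}<\pi$, then the space has at least two components, which by the dichotomy above forces $\sr T=\emptyset$ and $\sr U_c\neq\emptyset$, i.e.\ (iii).

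Finally, (v) is the self-intersection criterion. For (iii) $\Rightarrow$ (v): when the space is disconnected, any $\ga\in\sr U_c$ is an embedding by \rref{R:condembed}, and its entire component is $\sr U_c$, which consists of condensed — hence embedded — curves; so $\ga$ cannot be homotoped within the space to acquire self-intersections. For (v) $\Rightarrow$ (iii) (contrapositive): if the space is connected, I would show any embedding can be homotoped to a curve with self-intersections; the clean way is to note that $B_{\ga,1,\frac12}$ has self-intersections (it contains a genuine figure-eight piece) and, the space being connected, $\ga$ is homotopic to $B_{\ga,1,\frac12}$ within it — so in particular every $\ga$, embedding or not, can be homotoped to create self-intersections. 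This also re-proves (vi) $\Leftarrow\neg$(iii) cleanly. The main obstacle I anticipate is not any single implication — each is short given the heavy machinery already in place — but rather bookkeeping the degenerate cases $\sr U_c=\emptyset$ and $\abs{\theta_1}\geq\pi$ consistently across all six statements, making sure that ``disconnected'' genuinely fails there and that statements (iv)--(vi), whose hypotheses presuppose $\abs{\theta_1}<\pi$, are vacuously compatible. Once the dichotomy from \pref{P:boundary} plus \tref{T:condensed} and \tref{T:diffuse} is set up, the final ``exactly two components, both $\cong\E$'' assertion is immediate.
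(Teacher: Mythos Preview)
Your proposal is correct and takes essentially the same approach as the paper: both arguments rest on the dichotomy coming from \pref{P:boundary} together with \tref{T:condensed}, \tref{T:diffuse}, \cref{C:top}, and \cref{C:bat}. The only difference is organizational: the paper closes the equivalences via the cycle (iii)$\Rightarrow$(iv)$\Rightarrow$(v)$\Rightarrow$(vi)$\Rightarrow$(i) (using an intermediate-value argument on amplitude for (iii)$\Rightarrow$(iv), and the observation that any self-intersecting curve is diffuse for (iv)$\Rightarrow$(v)), whereas you prove each of (iv), (v), (vi) equivalent to (iii) separately, which makes you do a little more work but also makes each step self-contained.
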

\begin{proof}
	
	 We know from \tref{T:condensed} and \tref{T:diffuse} that each of $\sr U_c,\,\sr U_d\subs \sr L_{-1}^{+1}(Q;\theta_1)$ is homeomorphic to $\E$, hence connected. By \cref{C:top}, $\sr U_d\neq \emptyset$. By \pref{P:boundary}, $\bar{\sr U}_c\cup \bar{\sr U}_d=\sr L_{-1}^{+1}(Q;\theta_1)$, and $\bar{\sr U}_c\cap \bar{\sr U}_d$ consists of all the critical curves in $\sr L_{-1}^{+1}(Q;\theta_1)$. Thus, the latter has at most two connected components. It has exactly two if and only if $\bar{\sr U}_c\neq \emptyset$ but $\bar{\sr U}_c\cap \bar{\sr U}_d=\emptyset$, that is, if and only if there exist condensed curves, but not critical curves. This proves the last assertion of the theorem and also the equivalence (i)$\Dar$(iii). The equivalence (ii)$\Dar$(iii) was proved in \cref{C:bat}.
	
	Suppose that $s\mapsto \ga_s\in \sr L_{-1}^{+1}(Q;\theta_1)$ is a path joining a condensed curve to a diffuse curve. Let $\theta_{s}$ be the argument of $\ta_{\ga_s}$ satisfying $\theta_s(0)=0$. By continuity, there must exist $s_0\in (0,1)$ such that 
	 \begin{equation*}
		\sup_{t\in [0,1]}\theta_{{s_0}}(t)-\inf_{t\in [0,1]}\theta_{{s_0}}(t)=\pi,
	\end{equation*}
	that is, there must exist $s_0$ such that $\ga_{s_0}$ is critical. Hence, (iii)$\Rar$(iv).
	 
	Suppose that (iv) holds, and let $\ga\in \sr L_{-1}^{+1}(Q)$ be smooth and condensed. Then $\ga$ is an embedding, but it cannot be deformed to have a self-intersection since any curve with double points must be diffuse. Thus, (iv)$\Rar$(v). 

	
	Finally, it is obvious that (v)$\Rar$(vi) and (vi)$\Rar$(i).
\end{proof}

\begin{cor}\label{C:topo}
	Let $Q=(q,z)\in \C\times \Ss^1$ and $\theta_1\in \R$ satisfy $e^{i\theta_1}=z$. If $\abs{\theta_1}\geq \pi$, then $\sr L_{-1}^{+1}(Q;\theta_1)$ is connected. If $\abs{\theta_1}>\pi$, then $\sr L_{-1}^{+1}(Q;\theta_1)$ is  homeomorphic to $\E$, hence contractible.
\end{cor}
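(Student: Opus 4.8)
The idea is to combine the trichotomy ``condensed/critical/diffuse'' with the results of \S\ref{S:diffuse} and \S\ref{S:critical}, exploiting the fact that when $\abs{\theta_1}$ is large the two potentially troublesome pieces $\sr U_c$ and $\sr T$ are simply empty. First I would record the elementary inequality that if $\ga\in\sr L_{-1}^{+1}(Q;\theta_1)$ and $\theta\colon[0,1]\to\R$ is an argument of $\ta_\ga$, then its amplitude satisfies
\[
\om=\sup_{[0,1]}\theta-\inf_{[0,1]}\theta\ \geq\ \abs{\theta(1)-\theta(0)}=\abs{\theta_1}.
\]
Hence a condensed curve (amplitude $<\pi$) can occur only if $\abs{\theta_1}<\pi$, and a critical curve (amplitude $=\pi$) only if $\abs{\theta_1}\leq\pi$. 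In particular $\sr U_c=\emptyset$ whenever $\abs{\theta_1}\geq\pi$, and $\sr T=\emptyset$ whenever $\abs{\theta_1}>\pi$.

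For the first assertion, assume $\abs{\theta_1}\geq\pi$. Every curve in $\sr L_{-1}^{+1}(Q;\theta_1)$ is condensed, critical or diffuse, and since $\sr U_c=\emptyset$ this yields $\sr L_{-1}^{+1}(Q;\theta_1)=\sr T\sqcup\sr U_d$ as sets. By the remark following \pref{P:boundary}, the identity $\bd\sr U_d=\sr T$ holds with no restriction on $\theta_1$, so
\[
\bar{\sr U}_d=\sr U_d\cup\bd\sr U_d=\sr U_d\cup\sr T=\sr L_{-1}^{+1}(Q;\theta_1).
\]
Now $\sr U_d\neq\emptyset$ by \cref{C:top} and $\sr U_d\home\E$ by \tref{T:diffuse}; in particular $\sr U_d$ is connected, hence so is its closure, which is the whole space. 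This gives connectedness.

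For the second assertion, assume $\abs{\theta_1}>\pi$. Then both $\sr U_c=\emptyset$ and $\sr T=\emptyset$, so the trichotomy collapses to $\sr L_{-1}^{+1}(Q;\theta_1)=\sr U_d$, and $\sr U_d\home\E$ by \tref{T:diffuse}; since $\E$ is a separable Hilbert space it is contractible, and we are done. I do not expect any genuine obstacle here: all the substantive input---contractibility and nonemptiness of $\sr U_d$, and the identification $\bd\sr U_d=\sr T$---has already been established, and the only mildly delicate point is the amplitude inequality above together with the bookkeeping of which of $\sr U_c,\sr T$ vanishes in which range of $\abs{\theta_1}$.
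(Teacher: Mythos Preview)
Your proof is correct and follows essentially the same approach as the paper. The only cosmetic difference is that for the first assertion the paper simply invokes \tref{T:criterion} (whose conditions all require $\abs{\theta_1}<\pi$), whereas you unpack the argument directly via the identity $\bd\sr U_d=\sr T$ from \pref{P:boundary}; for the second assertion your argument and the paper's are identical.
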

\begin{proof}
	The first assertion is an immediate consequence of \tref{T:criterion}. If $\abs{\theta_1}>\pi$ then $\sr L_{-1}^{+1}(Q;\theta_1)$ can only contain diffuse curves, and we know from \tref{T:diffuse} that $\sr U_d$ is homeomorphic to $\E$.
\end{proof}
\begin{urem}
	The results of \S\ref{S:diffuse} go through to show that $\sr L_{-1}^{+1}(Q;\theta_1)=\sr T\cup \sr U_d$ is also contractible when $\theta_1=\pm \pi$. Of course, if $\abs{\theta_1}<\pi$ then $\sr L_{-1}^{+1}(Q;\theta_1)$ need not even be connected. We shall prove in the sequel \cite{SalZueh2} that it may also be contractible, or connected but not contractible, depending on $Q$.
\end{urem}

\begin{cor}
	Let $Q=(q,z)\in \C\times \Ss^1$ and $\theta_1\in \R$ satisfy $e^{i\theta_1}=z$. Then the subset $\sr L_{-1}^{+1}(Q;\theta_1)$ is either a connected component or the union of two contractible  components of $\sr L_{-1}^{+1}(Q)$. The latter can occur only if $\abs{\theta_1}<\pi$, that is, for at most one value of $\theta_1$. \qed
\end{cor}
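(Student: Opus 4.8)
The plan is to obtain this as a bookkeeping consequence of \tref{T:criterion}, \cref{C:topo} and the clopen decomposition $\sr L_{-1}^{+1}(Q)=\bigsqcup_{\theta_1}\sr L_{-1}^{+1}(Q;\theta_1)$ recorded in the introduction. First I would note the elementary topological fact that a nonempty subset $A$ of a space which is simultaneously open, closed and connected must be one of the connected components: if $C\supseteq A$ is the component containing $A$, then $A$ is clopen in $C$, and connectedness of $C$ forces $A=C$. To apply this I need that each $\sr L_{-1}^{+1}(Q;\theta_1)$ is clopen in $\sr L_{-1}^{+1}(Q)$, which holds because the total turning is a continuous, locally constant function on that space; and it is nonempty by \lref{L:nonempty}\?(b), since $(-1)(+1)<0$.

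Next I would split into cases. If $\sr L_{-1}^{+1}(Q;\theta_1)$ is connected, then by the remark above it is a connected component of $\sr L_{-1}^{+1}(Q)$, and the first alternative holds. If it is disconnected, then \tref{T:criterion} says it has exactly two components, $\sr U_c$ and $\sr U_d$, both homeomorphic to $\E$ and hence contractible. Since $\sr U_c$ and $\sr U_d$ are each open and closed in $\sr L_{-1}^{+1}(Q;\theta_1)$ --- open by definition, and closed in the disconnected case because $\bar{\sr U}_c\cup\bar{\sr U}_d=\sr L_{-1}^{+1}(Q;\theta_1)$ with $\bar{\sr U}_c\cap\bar{\sr U}_d=\sr T=\emptyset$ by \pref{P:boundary} and \tref{T:criterion} --- and $\sr L_{-1}^{+1}(Q;\theta_1)$ is itself clopen in $\sr L_{-1}^{+1}(Q)$, both $\sr U_c$ and $\sr U_d$ are clopen connected subsets of $\sr L_{-1}^{+1}(Q)$, hence connected components of it. This yields the claimed dichotomy.

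Finally, for the last sentence I would invoke \cref{C:topo} (equivalently the implication (i)$\Rar$(ii) of \tref{T:criterion}): disconnectedness of $\sr L_{-1}^{+1}(Q;\theta_1)$ forces $\abs{\theta_1}<\pi$. Among all real $\theta_1$ with $e^{i\theta_1}=z$, at most one satisfies $\abs{\theta_1}<\pi$ --- none if $z=-1$, exactly one otherwise --- so the two-component case occurs for at most one value of $\theta_1$. I do not anticipate a real obstacle here: the argument is a reorganization of already-established facts, and the only steps needing a line of justification are the standard observation that a connected clopen subset is a connected component, and the book-keeping that openness and closedness are inherited along the nested clopen inclusions $\sr U_c,\sr U_d\subseteq\sr L_{-1}^{+1}(Q;\theta_1)\subseteq\sr L_{-1}^{+1}(Q)$.
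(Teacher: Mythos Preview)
Your proposal is correct and is precisely the intended argument: the paper marks this corollary with \qed because it follows immediately from \tref{T:criterion}, \cref{C:topo}, \lref{L:nonempty}\?(b) and the clopen decomposition by total turning, exactly as you have spelled out. The only redundancy is your appeal to \pref{P:boundary} to show $\sr U_c$ and $\sr U_d$ are closed in the disconnected case; once \tref{T:criterion} tells you they are the two connected components of $\sr L_{-1}^{+1}(Q;\theta_1)$, they are automatically clopen there.
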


\begin{thm}\label{T:classification}
	Let $P=(p,w),\,Q=(q,z)\in \C\times \Ss^1$, $\ka_1<0<\ka_2$ and let $\theta_1$ satisfy $e^{i\theta_1}=z\bar w$. 
	\begin{enumerate}
		\item [(a)] If $\abs{\theta_1}\geq \pi$, then the subspace $\sr L_{\kappa_1}^{\kappa_2}(P,Q;\theta_1)$ consisting of all curves having total turning $\theta_1$ is a contractible  connected component of $\sr L_{\kappa_1}^{\kappa_2}(P,Q)$, homeomorphic to $\E$.
		\item [(b)]  If $\abs{\theta_1}<\pi$, then $\sr L_{\kappa_1}^{\kappa_2}(P,Q;\theta_1)$ has at most two components. It is disconnected if and only if any of the conditions in \tref{T:criterion} is satisfied for $\hat{Q}=(\hat{q},z\bar w)$, where
	\begin{equation*}
		\hat q=\tfrac{2}{\rho_2-\rho_1}\bar w\big[(q-p)+\tfrac{i}{2}(\rho_1+\rho_2)(z-w)\big] \qquad (\rho_i=\tfrac{1}{\ka_i}, i=1,2).
	\end{equation*}
	 In this case, one component consists of all condensed and the other of all diffuse curves in $\sr L_{\kappa_1}^{\kappa_2}(P,Q;\theta_1)$, and both are homeomorphic to $\E$.
	\end{enumerate}
%
\end{thm}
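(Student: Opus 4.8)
The plan is to reduce the statement to the normalized case $\ka_1=-1$, $\ka_2=+1$, $P=O$, which has already been settled by \tref{T:criterion} and \cref{C:topo}, and then push the conclusions back through the homeomorphism of \tref{T:normalized}. First I would dispose of the degenerate subcase in which both $\ka_1=-\infty$ and $\ka_2=+\infty$: there \tref{T:Smale} says that $\sr L_{-\infty}^{+\infty}(P,Q;\theta_1)$ is a weakly contractible connected component of $\sr L_{-\infty}^{+\infty}(P,Q)$, hence homeomorphic to $\E$ by \lref{L:Hilbert}, and in particular it is connected for every $\theta_1$; so (a) holds and (b) holds with the two-component alternative never occurring. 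From here on I would assume that at least one of $\ka_1,\ka_2$ is finite, which is exactly the hypothesis of \tref{T:normalized}\?(a).

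Next I would apply \tref{T:normalized}\?(a) to obtain a homeomorphism $\Psi$ from $\sr L_{\ka_1}^{\ka_2}(P,Q)$ onto $\sr L_{-1}^{+1}(\hat Q)$, where $\hat Q=(\hat q,z\bar w)$ and $\hat q$ is the expression in the statement (this is the point $Q_1$ appearing in \tref{T:normalized}\?(a)). By the last clause of \tref{T:normalized}, $\Psi$ preserves total turning, so it restricts to a homeomorphism $\sr L_{\ka_1}^{\ka_2}(P,Q;\theta_1)\home\sr L_{-1}^{+1}(\hat Q;\theta_1)$ for each $\theta_1$ with $e^{i\theta_1}=z\bar w$. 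I would then verify the one point not literally contained in the earlier results, namely that $\Psi$ also preserves the amplitude $\om=\sup\theta-\inf\theta$: $\Psi$ is a composition of a reparametrization, a normal translation, a dilatation and a Euclidean motion, and each of these alters the argument $\theta$ of the unit tangent only by reparametrizing its domain, by leaving the unit tangent itself unchanged up to reparametrization (normal translation, \lref{L:normal}\?(a)), or by adding a constant (the rotation part of the Euclidean motion); no orientation reversal occurs because case (a) of \tref{T:normalized} is one of the cases that preserves total turning. Consequently $\Psi$ sends condensed, critical and diffuse curves to condensed, critical and diffuse curves respectively, so that $\Psi^{-1}(\sr U_c)$ and $\Psi^{-1}(\sr U_d)$ are the sets of condensed and of diffuse curves in $\sr L_{\ka_1}^{\ka_2}(P,Q;\theta_1)$.

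It remains to read off the conclusions. Since total turning is locally constant, $\sr L_{-1}^{+1}(\hat Q;\theta_1)$ is a union of connected components of $\sr L_{-1}^{+1}(\hat Q)$, hence so is $\sr L_{\ka_1}^{\ka_2}(P,Q;\theta_1)$ in $\sr L_{\ka_1}^{\ka_2}(P,Q)$ via $\Psi$. If $\abs{\theta_1}>\pi$, \cref{C:topo} gives $\sr L_{-1}^{+1}(\hat Q;\theta_1)\home\E$; if $\abs{\theta_1}=\pi$, the remark following \cref{C:topo} gives $\sr L_{-1}^{+1}(\hat Q;\theta_1)=\sr T\cup\sr U_d$ contractible, and being an open and closed, weakly contractible separable Hilbert submanifold of $\E$ it is homeomorphic to $\E$ by \lref{L:Hilbert}; transporting by $\Psi^{-1}$ yields (a). If $\abs{\theta_1}<\pi$, \tref{T:criterion} tells us that $\sr L_{-1}^{+1}(\hat Q;\theta_1)$ has at most two components, that it is disconnected precisely when one — equivalently each — of its conditions (i)--(vi) holds for $\hat Q$, and that in the disconnected case its two components are $\sr U_c$ and $\sr U_d$, each homeomorphic to $\E$; applying $\Psi$ together with the amplitude-preservation above gives (b).

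I expect no serious obstacle here: all the substance lives in \tref{T:criterion}, \cref{C:topo} (and its remark) and \tref{T:normalized}. The only steps demanding a little attention are the amplitude-preservation of $\Psi$ — checking that every building block of $\Psi$ acts on the argument of the unit tangent by a reparametrization plus an additive constant, and in particular that no orientation reversal sneaks in — and, in the boundary case $\abs{\theta_1}=\pi$, invoking the remark after \cref{C:topo} together with the standard fact that an open-and-closed weakly contractible separable Hilbert submanifold of $\E$ is homeomorphic to $\E$.
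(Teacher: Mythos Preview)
Your argument is correct and follows exactly the route the paper takes: reduce via \tref{T:normalized}\?(a) to $\sr L_{-1}^{+1}(\hat Q;\theta_1)$ and then invoke \tref{T:criterion} and \cref{C:topo}. The extra care you take (treating the degenerate case $\ka_1=-\infty$, $\ka_2=+\infty$ via \tref{T:Smale}, checking that the homeomorphism preserves amplitude so that condensed/diffuse curves correspond, and handling $\abs{\theta_1}=\pi$ via the remark after \cref{C:topo} together with \lref{L:Hilbert}) fills in points the paper's one-line proof leaves implicit.
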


\begin{proof}
	This is just a corollary of \tref{T:normalized}\?(a)), \tref{T:criterion} and \cref{C:topo}
\end{proof}

We emphasize that the subspace of $\sr L_{\kappa_1}^{\kappa_2}(P,Q)$ which contains curves having least total turning, described in (b), does not have to be contractible even if it is connected. 
Observe also that we may replace $\sr L$ by $\sr C$ invoking \lref{L:C^2}.


\section{Homeomorphism class of $\sr L_{\ka_1}^{\ka_2}(P,Q)$ for $\ka_1\ka_2\geq 0$}\label{S:locallyconvex}
	An admissible plane curve $\ga$ is called locally convex if either $\ka_\ga>0$ a.e.~or $\ka_\ga<0$ a.e..  Notice that $\sr L_{\kappa_1}^{\kappa_2}(P,Q)$ consists of locally convex curves if and only if $\ka_1\ka_2\geq 0$. This corresponds to parts (b)--(e) of \tref{T:normalized}. The topology of these spaces is very simple. 
	
	Suppose that $\ga\colon [0,1]\to \C$ is an admissible curve such that $\ka_\ga>0$ a.e.~and $\Phi_\ga(0)=(0,1)$. By \eqref{E:rate}, any argument $\theta\colon [0,1]\to \R$ of $\ta_\ga$ must be strictly increasing; in particular, the total turning $\theta_1$ of $\ga$ is positive. Thus, $\ga$ may be \tdef{parametrized by} its \tdef{argument} $\theta\in [0,\theta_1]$. By the chain rule, 
	\begin{equation}\label{E:byargument}
		\dot\ga(\theta)=\rho(\theta)e^{i\theta}\quad (\theta\in [0,\theta_1]),
	\end{equation}
	where $\rho\colon [0,\theta_1]\to (0,+\infty)$ is the radius of curvature of $\ga$.\footnote{The idea of parametrizing a locally convex curve by the argument of its unit tangent vector is not new. It appears in \cite{Little}, where it is attributed to W.~Pohl. We do not know whether it is older than that.} 
\begin{thm}\label{T:convex}
	Let $P,\,Q\in UT\C$ and suppose that either $\ka_1\geq 0$ or $\ka_2\leq 0$. Then $\sr L_{\kappa_1}^{\kappa_2}(P,Q)$ has infinitely many connected components, one for each realizable total turning. All of these components are homeomorphic to $\E$, hence contractible.
\end{thm}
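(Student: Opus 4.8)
The plan is to reduce, via the normalization of \cref{C:normalized}, to the two model spaces $\sr L_0^{+\infty}(Q)$ and $\sr L_1^{+\infty}(Q)$, to split each into its fixed--total--turning pieces, and then to recognize each piece — after parametrizing its curves by the argument of the unit tangent, as in \eqref{E:byargument} — as a convex set in disguise, hence contractible; since these pieces are Hilbert manifolds, contractibility upgrades to homeomorphy with $\E$ by \lref{L:Hilbert}(b).

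First I would reduce. Reversing orientation if $\ka_2\leq 0$ and applying \cref{C:normalized}, there is no loss in assuming $P=O=(0,1)$ and $(\ka_1,\ka_2)$ equal to $(0,+\infty)$ or $(1,+\infty)$, so that $\rho_2=0$ and $\rho_1=\tfrac{1}{\ka_1}\in\{+\infty,1\}$. For every $\ga\in\sr L_{\ka_1}^{\ka_2}(Q)$ the curvature is positive a.e., so by \eqref{E:rate} any argument of $\ta_\ga$ is strictly increasing and the total turning $\theta_1$ of $\ga$ is positive. Since total turning is a continuous function on $\sr L_{\ka_1}^{\ka_2}(Q)$ and the realizable values (those $\theta_1>0$ with $e^{i\theta_1}=z$) form a discrete set, each connected component lies in a single $\sr L_{\ka_1}^{\ka_2}(Q;\theta_1)$; conversely distinct realizable $\theta_1$ give distinct components. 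By \lref{L:nonempty}(a) the space $\sr L_{\ka_1}^{\ka_2}(Q)$ is nonempty, and attaching loops of radius $\tfrac12$ to a curve of it raises the total turning by $2\pi$ each time, so $\sr L_{\ka_1}^{\ka_2}(Q;\theta_1)\neq\emptyset$ for all sufficiently large such $\theta_1$; in particular there are infinitely many components. It remains only to prove that each nonempty $\sr L_{\ka_1}^{\ka_2}(Q;\theta_1)$ is homeomorphic to $\E$.

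Fix a realizable $\theta_1$. By a construction parallel to \lref{L:reparametrize}, with arc length replaced by a multiple of the argument (legitimate precisely because $\theta_\ga\colon[0,1]\to[0,\theta_1]$ is a homeomorphism), the space $\sr L_{\ka_1}^{\ka_2}(Q;\theta_1)$ deformation retracts onto the set of curves with $\theta_\ga(t)=\theta_1 t$. By \eqref{E:byargument}, such a curve is recovered from its radius of curvature $\rho$ by $\ga(t)=\theta_1\int_0^t\rho(\tau)e^{i\theta_1\tau}\,d\tau$, its speed being $\theta_1\rho$, so this set is identified with
\[
\sr V_{\ka_1}^{\ka_2}(Q;\theta_1)=\Bigl\{\rho\colon\ \rho(t)\in(0,\rho_1)\ \text{a.e.},\ \bigl(h\circ(\theta_1\rho),\,h_{\ka_1,\ka_2}\circ\rho^{-1}\bigr)\in\E,\ \theta_1\!\!\int_0^1\!\!\rho(\tau)e^{i\theta_1\tau}\,d\tau=q\Bigr\},
\]
where the middle condition is just \eqref{E:Sobolev} rewritten in the argument parameter. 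This set is \emph{convex} in $\rho$: the constraint $\rho\in(0,\rho_1)$ is a pointwise interval condition, the endpoint equation is affine, and the two $L^2$ conditions are preserved by convex combinations. Granting this, $\sr L_{\ka_1}^{\ka_2}(Q;\theta_1)$ is homotopy equivalent to a convex subset of a topological vector space (indeed it contracts along the segment to a chosen base point, after reparametrization by argument), hence contractible. Being closed and open in the Hilbert manifold $\sr L_{\ka_1}^{\ka_2}(Q)$, it is itself a separable Hilbert manifold; the constant map to $\E$ is then a weak homotopy equivalence, so by \lref{L:Hilbert}(b) it is homotopic to a homeomorphism, whence $\sr L_{\ka_1}^{\ka_2}(Q;\theta_1)\home\E$. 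This yields the theorem: infinitely many components, one per realizable total turning, each homeomorphic to $\E$ and therefore contractible.

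The one point requiring genuine care is the convexity claim for the $L^2$ conditions, together with the continuity of the straight--line homotopy in $\sr V_{\ka_1}^{\ka_2}(Q;\theta_1)$. The functions $h$ and $h_{\ka_1,\ka_2}$ are \emph{concave} on the ranges in play, so Jensen's inequality points the wrong way; instead I would work pointwise, writing $h_{\ka_1,\ka_2}\circ\rho^{-1}$ (and likewise $h\circ(\theta_1\rho)$) as a sum of a "large'' linear part and a reciprocal "small'' part, using convexity of $t\mapsto t^{-1}$ to dominate $\rho_s^{-1}$ along a segment by the corresponding convex combination of the endpoint curvatures, and using — exactly as in \rref{R:preservesL2ness} — that each summand lies in $L^2$ separately and is bounded by an explicit $L^2$ function of the endpoint data. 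Dominated convergence then gives continuity of $s\mapsto\rho_s$ into $\E$, and the reduction closes up. A lesser, routine matter is the continuity of the reparametrization--by--argument map on $\sr L_{\ka_1}^{\ka_2}(Q;\theta_1)$, which is entirely analogous to the verification behind \lref{L:reparametrize}.
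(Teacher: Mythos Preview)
Your approach is essentially the paper's: reparametrize by the argument of $\ta_\ga$, observe that convex combination in the radius of curvature $\rho$ preserves the curvature bound (so $H(s,\ga)=(1-s)\ga_0+s\ga$ contracts the space), and invoke \lref{L:Hilbert}(b). Two minor differences: the paper works directly with arbitrary $\ka_1\geq 0$ after an Euclidean motion, skipping the detour through \cref{C:normalized}, and it leaves the $L^2$-admissibility and continuity of the homotopy implicit, whereas you flag and sketch these checks.
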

\begin{proof}
	 Using an Euclidean motion if necessary, we may assume that $P=(0,1)$. Further, by reversing the orientation of all curves, we pass from the case where $\ka_2\leq 0$ to the case where $\ka_1\geq 0$. 
	
	Let $Q=(q,z)$ and $e^{i\theta_1}=z$. The subspace $\sr L_{\ka_1}^{\ka_2}(Q;\theta_1)$ is both open and closed in $\sr L_{\ka_1}^{\ka_2}(Q)$ (but it may be empty, see \lref{L:nonempty}. In particular, two curves which have different total turnings cannot lie in the same component of $\sr L_{\kappa_1}^{\kappa_2}(Q)$. For any $k\in \N$, we may concatenate a curve in $\sr L_{\kappa_1}^{\kappa_2}(Q)$ with a circle of curvature in $(\ka_1,\ka_2)$ traversed $k$ times. This shows that the number of components is infinite.
	
	Suppose that $\sr L_{\ka_1}^{\ka_2}(Q;\theta_1)\neq \emptyset$. 
	Since $\ka_1\geq 0$ by hypothesis, we may reparametrize all curves in $\sr L_{\ka_1}^{\ka_2}(Q;\theta_1)$ by the argument $\theta\in [0,\theta_1]$ of their unit tangent vectors using \cref{C:reparametrize}. Choose any $\ga_0\in \sr L_{\ka_1}^{\ka_2}(Q;\theta_1)$ and define a map $H$ on $[0,1]\times \sr L_{\ka_1}^{\ka_2}(Q;\theta_1)$ by:
\begin{equation*}
	H(s,\ga)=\ga_s,\quad \ga_s(\theta)=(1-s)\ga_0(\theta)+s\ga(\theta)\quad (s\in [0,1],~\theta\in [0,\theta_1]).
\end{equation*}
Then $\ga_s(0)=0$, $\ga_s(\theta_1)=q$ and the unit tangent vector $\ta_{\ga_s}$ to $\ga_s$ satisfies
\begin{equation*}
	\ta_{\ga_s}(\theta)=e^{i\theta}\quad\text{ for all $\ga\in \sr L_{\ka_1}^{\ka_2}(Q;\theta_1)$, $s\in [0,1]$ and $\theta\in [0,\theta_1]$.}
\end{equation*}
Consequently, each $\ga_s$ has total turning $\theta_1$, $\Phi_{\ga_s}(0)=(0,1)$ and $\Phi_{\ga_s}(\theta_1)=Q$. Let $\rho_0,\,\rho\colon [0,1]\to (0,+\infty)$ denote the radii of curvature of $\ga_0,\,\ga$, respectively. It follows from \eqref{E:byargument} that the radius of curvature $\rho_s$ of $\ga_s$ is given by:
\begin{equation*}
	\rho_s=(1-s)\rho_0+s\rho.
\end{equation*}
Therefore, the curvature $\ka_s=\frac{1}{\rho_s}$ of $\ga_s$ takes values in $(\ka_1,\ka_2)$ and $H$ is a contraction of $\sr L_{\ka_1}^{\ka_2}(Q;\theta_1)$. We conclude that the latter is a connected component of $\sr L_{\kappa_1}^{\kappa_2}(Q)$ and, using \lref{L:Hilbert}\?(b), that it is homeomorphic to $\E$.
\end{proof}

\subsection*{Possible total turnings of a curve in $\sr L_{\kappa_1}^{\kappa_2}(P,Q)$ when $\ka_1\ka_2>0$}  Let $T$ denote the set of all total turnings which are realized by some curve in $\sr L_{\kappa_1}^{\kappa_2}(P,Q)$. If $P=(p,w),\,Q=(q,z)$, then obviously 
\begin{equation*}
	T\subs \set{\theta_1+2k\pi}{k\in \Z},\ \ \text{where $e^{i\theta_1}=z\bar w$}.
\end{equation*}
If $\ka_1\ka_2<0$, this inclusion is an equality by \lref{L:nonempty}\?(b). However, it must be proper when $\ka_1\ka_2\geq 0$. If $\ka_1,\ka_2$ are both positive, for instance, then, by \eqref{E:rate} and the second paragraph of the above proof, $T$ must have the form $\set{\mu+2k\pi}{k\in \N}$, where $\mu\in \R$ ($e^{i\mu}=z\bar w$) is the minimal attainable total turning in this space. It is possible to find the value of $\mu$ in terms of all parameters involved. Because this determination is of lesser interest and relatively technical, we shall not go into it here. However, interested readers can find the details, including the analogue for spaces of the form $\hat{\sr{L}}$, in \cite{SalZuehold}. We mention only that \tref{T:normalized} allows one to restrict attention to the two classes $\sr L_{1}^{+\infty}(Q)$ and $\sr L_{0}^{+\infty}(Q)$.

\section{Components of spaces of curves on complete flat surfaces}\label{S:flat}

By a \tdef{flat surface} we mean a connected Riemannian 2-manifold whose Gaussian curvature is identically zero; it will not be necessary to assume that $S$ is a submanifold of some Euclidean space. The \tdef{unit tangent} $\ta=\ta_\ga\colon [0,1]\to UTS$ to a regular curve $\ga\colon [0,1]\to S$ is defined as before, $\ta=\frac{\dot\ga}{\abs{\dot\ga}}$. If $S$ is orientable, the \tdef{unit normal} $\no=\no_\ga\colon [0,1]\to UTS$ to $\ga$ is defined by the condition that $(\ta(t),\no(t))$ should be a positively oriented orthonormal basis of $TS_{\ga(t)}$ for each $t\in [0,1]$. For $\ga$ of class $C^2$, we can then define its \tdef{curvature} $\ka_\ga\colon [0,1]\to \R$  by
\begin{equation*}
	\ka_\ga=\frac{1}{\abs{\dot\ga}}\Big\langle\frac{D\ta}{dt},\no\Big\rangle,
\end{equation*} 
where $D$ denotes covariant differentiation (along $\ga$).

If $S$ is nonorientable, we can still speak of the \tdef{unsigned curvature} $\ka_\ga\colon [0,1]\to [0,+\infty)$ of a curve $\ga\colon [0,1]\to S$, given by
\begin{equation*}
	\ka_\ga=\frac{1}{\abs{\dot\ga}}\Big\vert\Big\langle\frac{D\ta}{dt},\no\Big\rangle\Big\vert
\end{equation*}
where now $\no(t)$ denotes any of the two unit vectors in $TS_{\ga(t)}$ orthogonal to $\ta(t)$. 

\begin{defn}\label{D:curvespace}
	Let $S$ be an orientable flat surface, $u,v\in UTS$, $-\infty\leq \ka_1<\ka_2\leq +\infty$ and $2\leq r\in \N$. Define $\sr CS_{\ka_1}^{\ka_2}(u,v)$ to be the set of all $C^r$ regular curves $\ga\colon [0,1]\to S$ satisfying:
\begin{enumerate}
	\item [(i)] $\ta_\ga(0)=u$ and $\ta_\ga(1)=v$;
	\item [(ii)] $\ka_1<\ka_\ga(t)<\ka_2$ for each $t\in [0,1]$.
\end{enumerate}
In case $S$ is nonorientable, define $\sr CS_{-\ka_0}^{+\ka_0}(u,v)$ ($\ka_0>0$) as above, but replacing condition (ii) by:
\begin{enumerate}
	\item [(ii$'$)] $\ka_\ga(t)<\ka_0$ for each $t\in [0,1]$.
\end{enumerate}
In both cases, let $\sr C_{\kappa_1}^{\kappa_2}(u,v)$ be furnished with the $C^r$ topology.
\end{defn}


\begin{urem}
	A complete flat surface must be homeomorphic to one of the following five: $\C$ itself, a cylinder $ \Ss^1\times \R$, an open M\"obius band, a torus or a Klein bottle. This is essentially a corollary of the following  result, cf.~\cite{Hopf}, p.~319.
\end{urem}

\begin{thm}[Killing-Hopf]\label{T:KH}
	Any complete flat surface is isometric to the quotient of the Euclidean plane $\C$ by a group of isometries acting freely and properly discontinuously on $\C$. \qed
\end{thm}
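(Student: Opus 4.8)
The plan is to prove the Killing--Hopf theorem by the standard universal cover argument, using completeness to identify the developing map with an isometry onto $\C$. First I would recall that a flat surface $S$, being a Riemannian $2$-manifold of zero Gaussian curvature, is locally isometric to $\C$: around each point there is a normal coordinate chart in which the metric is Euclidean. I would then pass to the universal cover $\pi\colon \widetilde{S}\to S$, equipped with the pullback metric, so that $\widetilde{S}$ is again flat, simply connected, and complete (completeness lifts to covers, since geodesics lift). The deck transformation group $\Gamma=\pi_1(S)$ acts on $\widetilde{S}$ by isometries, freely and properly discontinuously, with quotient isometric to $S$; so it suffices to show $\widetilde{S}$ is isometric to $\C$.

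The heart of the argument is constructing the developing map $\operatorname{dev}\colon \widetilde{S}\to\C$. Fix a base point $\tilde p\in\widetilde{S}$ and a linear isometry $T_{\tilde p}\widetilde{S}\to\C$. For any $\tilde q\in\widetilde{S}$, choose a path from $\tilde p$ to $\tilde q$, cover it by finitely many flat coordinate charts, and analytically continue the initial local isometry chart-by-chart; simple connectedness of $\widetilde{S}$ together with the fact that on overlaps two flat charts differ by a Euclidean motion shows the result is independent of the path and of the chosen subdivision. This yields a well-defined local isometry $\operatorname{dev}\colon\widetilde{S}\to\C$. Next I would show $\operatorname{dev}$ is a covering map: a local isometry between Riemannian manifolds whose source is complete is a covering map (the path-lifting property follows from completeness of $\widetilde{S}$ — any straight segment in $\C$ issuing from $\operatorname{dev}(\tilde x)$ lifts to a geodesic in $\widetilde{S}$ defined for all time, since $\widetilde{S}$ is geodesically complete, and $\operatorname{dev}$ maps it onto the segment). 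Since $\C$ is simply connected, this covering is a homeomorphism, hence $\operatorname{dev}$ is a global isometry $\widetilde{S}\xrightarrow{\ \sim\ }\C$.

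Finally, transporting the $\Gamma$-action across $\operatorname{dev}$ gives a subgroup of $\operatorname{Isom}(\C)$ acting freely and properly discontinuously with quotient isometric to $S$, which is exactly the statement. I expect the main obstacle to be the verification that $\operatorname{dev}$ is a covering map rather than merely a local isometry — this is where completeness is genuinely used, and it requires the (standard but not entirely trivial) lemma that a local isometry out of a complete manifold has the path-lifting property and hence, for a connected simply connected target, is a diffeomorphism. The construction of $\operatorname{dev}$ itself and the descent to the quotient are routine once the covering-space machinery of the first paragraph is in place. (In the present paper this theorem is quoted from \cite{Hopf}, so a detailed proof is not actually reproduced; the sketch above is the classical argument behind it.)
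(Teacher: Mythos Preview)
Your sketch is correct and is the standard developing-map argument for the Killing--Hopf theorem. As you yourself note at the end, the paper does not prove this result at all: it is stated with a reference to \cite{Hopf} and marked with a \qed, so there is no proof in the paper to compare against.
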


Hence, if $S$ is a complete flat surface, there exists a covering map $\C\to S$ which is a local isometry. Any curve on $S$ may thus be lifted to a plane curve whose curvature is the same as that of the original curve, with the proviso that we ignore its sign if $S$ is nonorientable. 
Let $\pr\colon UT\C\to UTS$ denote the natural projection induced by the covering map. In what follows, when referring to $\sr CS_{\kappa_1}^{\kappa_2}(u,v)$, we adopt the convention that $\ka_1=-\ka_2<0$ if $S$ is nonorientable.

\begin{prop}\label{P:flat}
	Let $S$ be a complete flat surface, $u,v\in UTS$, $-\infty\leq \ka_1<\ka_2\leq +\infty$ and $P\in UT\C$ be a fixed element of $\pr^{-1}(u)$. Then $\sr CS_{\kappa_1}^{\kappa_2}(u,v)$ is homeomorphic to $\coprod_{Q\in \pr^{-1}(v)} \sr C_{\kappa_1}^{\kappa_2}(P,Q)$, where the homeomorphism maps a curve in the latter to its image under the quotient map $\C\to S$.\qed
\end{prop}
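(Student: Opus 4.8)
Write $\pi\colon\C\to S$ for the covering map provided by \tref{T:KH}; it is a local isometry, hence a local diffeomorphism, and the induced map $\pr\colon UT\C\to UTS$ is then a covering map as well, so every fibre $\pr^{-1}(v)$ is a discrete subset of $UT\C$ (in fact the orbit of $P$ under the deck group). In the orientable case we give $S$ the orientation pushed forward from $\C$ by $\pi$, so that $\pi$ preserves orientation. The plan is to show that
\[
 F\colon\ \coprod_{Q\in\pr^{-1}(v)}\sr C_{\ka_1}^{\ka_2}(P,Q)\ \longrightarrow\ \sr CS_{\ka_1}^{\ka_2}(u,v),\qquad \ga\ \longmapsto\ \pi\circ\ga,
\]
is a homeomorphism, its inverse being the lifting map of covering space theory. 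That $F$ is well defined rests on the elementary fact that a local isometry preserves arclength, unit tangents and curvature: one has $\ta_{\pi\circ\ga}=\pr\circ\Phi_\ga$, and the geodesic curvature of $\pi\circ\ga$ equals the curvature of $\ga$ — with its sign in the orientable case, in absolute value in the nonorientable one, which is all that is needed there since condition (ii$'$) is symmetric under $\ka\mapsto-\ka$. Hence, for $\ga\in\sr C_{\ka_1}^{\ka_2}(P,Q)$ with $Q\in\pr^{-1}(v)$, the curve $\pi\circ\ga$ is $C^r$, regular, satisfies $\ta_{\pi\circ\ga}(0)=\pr(P)=u$ and $\ta_{\pi\circ\ga}(1)=\pr(Q)=v$, and has curvature in the prescribed interval, so $\pi\circ\ga\in\sr CS_{\ka_1}^{\ka_2}(u,v)$. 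Continuity of $F$ on each summand is the standard fact that postcomposition with the smooth map $\pi$ is continuous in the $C^r$ topology, and a map defined on a disjoint union is continuous as soon as it is continuous on each summand.

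Next I would construct $F^{-1}$. Given $\ga\in\sr CS_{\ka_1}^{\ka_2}(u,v)$, let $\tilde\ga\colon[0,1]\to\C$ be the unique lift of $\ga$ with $\Phi_{\tilde\ga}(0)=P$; it exists and is unique by the path lifting property of $\pi$, using $\pr(P)=u=\ta_\ga(0)$. Since $\pi$ is a local diffeomorphism, $\tilde\ga$ is $C^r$ and has the same curvature as $\ga$, so $\tilde\ga\in\sr C_{\ka_1}^{\ka_2}(P,Q)$ with $Q:=\Phi_{\tilde\ga}(1)\in\pr^{-1}(v)$; and $\ga\mapsto\tilde\ga$ is a two-sided inverse of $F$ by uniqueness of lifts. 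To check continuity of this inverse at a given $\ga$, choose $0=t_0<t_1<\dots<t_N=1$ so that $\ga\big([t_{j-1},t_j]\big)$ lies in an evenly covered open set $U_j\subs S$, and then smooth local sections $\sigma_j\colon U_j\to\C$ of $\pi$ with $\sigma_1(\ga(0))=p$ (the $\C$-coordinate of $P=(p,w)$) and $\sigma_j(\ga(t_{j-1}))=\sigma_{j-1}(\ga(t_{j-1}))$ for $j\geq2$, so that $\tilde\ga=\sigma_j\circ\ga$ on $[t_{j-1},t_j]$. Any $\eta$ that is $C^r$-close to $\ga$ is in particular $C^0$-close, so its image still respects the same subdivision and the same sections apply, giving $\tilde\eta=\sigma_j\circ\eta$ on $[t_{j-1},t_j]$; since each $\sigma_j$ is smooth, $\tilde\eta$ is $C^r$-close to $\tilde\ga$, and in particular $\Phi_{\tilde\eta}(1)$ is close to $\Phi_{\tilde\ga}(1)=Q$, hence equal to $Q$ because $\pr^{-1}(v)$ is discrete. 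Thus a neighbourhood of $\ga$ is taken into the single summand $\sr C_{\ka_1}^{\ka_2}(P,Q)$, and continuously so, which is exactly what continuity of $F^{-1}$ into the coproduct demands.

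The substantive point, and the only one deserving care, is the continuity of the lifting map for the $C^r$ topology — the bare existence and uniqueness of lifts being classical — together with the observation that the summand index $Q=\Phi_{\tilde\ga}(1)$ is a locally constant function of $\ga$. Both are handled by the argument above: compactness of $[0,1]$ furnishes finitely many evenly covered sets and local sections which remain valid under small perturbations of $\ga$, and $\Phi_{\tilde\ga}(1)$, depending continuously on $\ga$ and valued in the discrete set $\pr^{-1}(v)$, cannot vary. With this in hand everything else is a routine verification invoking only that local isometries preserve unit tangents and curvature; as in several analogous places in the paper, the remaining checks are straightforward, and only the continuity-of-lifting paragraph is worth writing out in detail.
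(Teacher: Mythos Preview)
Your proof is correct and is precisely the argument the paper has in mind. Note that the paper gives no proof at all: the proposition ends with a \qed, and the preceding sentence (``Any curve on $S$ may thus be lifted to a plane curve whose curvature is the same as that of the original curve, with the proviso that we ignore its sign if $S$ is nonorientable'') is the entire justification offered. You have simply written out carefully what the authors regard as routine covering-space theory combined with the observation that a local isometry preserves curvature. The one part you flag as ``deserving care'' --- continuity of the lifting map in the $C^r$ topology and local constancy of the endpoint $Q$ --- is exactly the right place to focus, and your argument via finitely many evenly covered sets and smooth local sections is the standard and correct one.
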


Here $\coprod$ denotes topological sum. Clearly, this decomposition is sufficient to determine the connected components of $\sr CS_{\kappa_1}^{\kappa_2}(u,v)$ explicitly, using \lref{L:C^2} and \tref{T:classification} if $\ka_1\ka_2<0$ or  \tref{T:convex} if $\ka_1\ka_2\geq 0$. 


%

\begin{cor}
	Let $S$ be a complete flat surface, $\ka_1<\ka_2$ and $u,v\in UTS$. Then $\sr CS_{\kappa_1}^{\kappa_2}(u,v)$ is nonempty and has an infinite number of connected components. 
\end{cor}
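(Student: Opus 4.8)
The plan is to derive this immediately from Proposition~\ref{P:flat} together with the earlier structural results. By Proposition~\ref{P:flat}, there is a homeomorphism
\[
	\sr CS_{\kappa_1}^{\kappa_2}(u,v)\home \coprod_{Q\in \pr^{-1}(v)} \sr C_{\kappa_1}^{\kappa_2}(P,Q),
\]
where $P$ is any fixed element of $\pr^{-1}(u)$. Since $\pr^{-1}(v)\subs UT\C$ is a nonempty fiber of a covering map, it is a nonempty discrete set; moreover, since every complete flat surface other than $\C$ has infinite fundamental group (the deck group acting on $\C$ is infinite, as $\C$ is noncompact and the quotient is a surface — indeed $\pr^{-1}(v)$ is an orbit of this group), the fiber $\pr^{-1}(v)$ is \emph{infinite} unless $S=\C$. (When $S=\C$ the infinitude of components will come instead from the total turning, exactly as below.)

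First I would handle nonemptiness. By Lemma~\ref{L:C^2}, $\sr C_{\kappa_1}^{\kappa_2}(P,Q)$ is homeomorphic to $\sr L_{\kappa_1}^{\kappa_2}(P,Q)$, which is nonempty by Lemma~\ref{L:nonempty}\?(a) for \emph{every} $Q\in UT\C$; hence each summand above is nonempty, and so is $\sr CS_{\kappa_1}^{\kappa_2}(u,v)$. (Here, when $S$ is nonorientable, we are in the case $\ka_1=-\ka_2<0$, so $\ka_1<\ka_2$ is automatic.)

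Next I would count components. If $S\neq \C$, the fiber $\pr^{-1}(v)$ is infinite, so the topological sum already has infinitely many summands, each nonempty, hence $\sr CS_{\kappa_1}^{\kappa_2}(u,v)$ has infinitely many connected components. If $S=\C$, then $\pr$ is the identity and $\sr CS_{\kappa_1}^{\kappa_2}(u,v)=\sr C_{\kappa_1}^{\kappa_2}(P,Q)$ for the single pair $P=u$, $Q=v$; by Lemma~\ref{L:C^2} this is homeomorphic to $\sr L_{\kappa_1}^{\kappa_2}(P,Q)$, which decomposes as $\bigsqcup_{\theta_1}\sr L_{\kappa_1}^{\kappa_2}(P,Q;\theta_1)$ over all $\theta_1$ with $e^{i\theta_1}=z\bar w$. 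Each such subspace with $\ka_1\ka_2<0$ is nonempty by Lemma~\ref{L:nonempty}\?(b); if $\ka_1\ka_2\geq 0$ one uses instead that concatenating a fixed curve with $k$ copies of a circle of curvature in $(\ka_1,\ka_2)$ produces curves of infinitely many distinct total turnings (as in the proof of Theorem~\ref{T:convex}), and curves of different total turning lie in different components. Either way there are infinitely many nonempty closed-open pieces, hence infinitely many components.

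The only mildly delicate point — and the one I would state carefully — is the claim that $\pr^{-1}(v)$ is infinite whenever $S\neq\C$: this is just that the deck transformation group $\Ga$ of the covering $\C\to S=\C/\Ga$ from Theorem~\ref{T:KH} is infinite, which holds because a finite group acting freely and properly discontinuously on $\C$ must be trivial (e.g.\ $\C/\Ga$ would be a closed surface of nonnegative Euler characteristic covered finitely by a noncompact one — impossible). I do not expect any real obstacle here; the corollary is essentially a bookkeeping consequence of Proposition~\ref{P:flat}, Lemma~\ref{L:C^2}, and Lemma~\ref{L:nonempty}, with the $S=\C$ case falling back on the total-turning decomposition.
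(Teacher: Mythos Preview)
Your proof is correct, but it takes a slightly more circuitous route than the paper. The paper does not split into the cases $S=\C$ versus $S\neq\C$: instead it simply observes that each summand $\sr C_{\kappa_1}^{\kappa_2}(P,Q)$ in the decomposition of Proposition~\ref{P:flat} is nonempty (by Lemma~\ref{L:nonempty} and the remark following it), and then the infinitude of components follows at once because \emph{each} such summand already has infinitely many components --- this was established in the preceding sections (Lemma~\ref{L:nonempty}\?(b) together with the total-turning decomposition when $\ka_1\ka_2<0$, and Theorem~\ref{T:convex} when $\ka_1\ka_2\geq 0$). Your detour through the infinitude of the fiber $\pr^{-1}(v)$ for $S\neq\C$ is thus unnecessary, though harmless; the paper's argument is uniform in $S$.

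One minor correction: your parenthetical justification that a nontrivial finite $\Ga$ would make $\C/\Ga$ ``a closed surface \dots\ covered finitely by a noncompact one'' is not right --- a finite quotient of $\C$ is still noncompact. The correct reason a nontrivial finite group cannot act freely on $\C$ by isometries is that every finite-order isometry of $\C$ (a rotation or a reflection) has a fixed point; equivalently, one may simply invoke the classification of complete flat surfaces, all of which except $\C$ have infinite fundamental group. This does not affect the validity of your overall argument.
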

\begin{proof}
	By \lref{L:nonempty} and the remark which follows it, $\sr C_{\kappa_1}^{\kappa_2}(P,Q)$ is always nonempty. The assertion is thus an immediate consequence of \pref{P:flat}.
\end{proof}

	Notice that it is irrelevant here whether $S$ is compact. This should be compared to the case of $S=\Ss^2$, where, at least when $u=v$, the number of components of $\sr CS_{\kappa_1}^{\kappa_2}(u,v)$ is finite for any choice of $\ka_1<\ka_2$ (see \S7 of \cite{SalZueh}). This is actually not surprising, since the fundamental group of $UT\C$ is isomorphic to $\Z$, but that of $UT\Ss^2\home \SO_3$ is isomorphic to $\Z_2$. We remark without proof that $\sr CS_{\kappa_1}^{\kappa_2}(u,v)$ may be empty for more general surfaces (for instance, $\sr CS_{-1}^{+1}(u,u)=\emptyset$ when $S$ is the hyperbolic plane $\Hh^2$, for any $u\in UT\Hh^2$).

\begin{cor}
	Let $S$ be a complete flat surface and $u,v\in UTS$. Let $\eta \in \sr CS_{-\infty}^{+\infty}(u,v)$ and suppose that $\ka_1\ka_2<0$. Then there exists $\ga\in \sr CS_{\kappa_1}^{\kappa_2}(u,v)$ lying in the same component of $\sr CS_{-\infty}^{+\infty}(u,v)$ as $\eta$.
\end{cor}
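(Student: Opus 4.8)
The plan is to reduce the statement on a general complete flat surface $S$ to the already-proved case of the plane $\C$, using the structure results established above. First I would invoke \pref{P:flat}, which gives a homeomorphism
\[
\sr CS_{\ka_1}^{\ka_2}(u,v)\home \coprod_{Q\in \pr^{-1}(v)} \sr C_{\ka_1}^{\ka_2}(P,Q),
\]
where $P\in\pr^{-1}(u)$ is fixed and $\pr\colon UT\C\to UTS$ is the projection induced by the covering local isometry $\C\to S$ (which exists by \tref{T:KH}). The same statement holds with $-\infty,+\infty$ in place of $\ka_1,\ka_2$, compatibly with the inclusion $\sr CS_{\ka_1}^{\ka_2}(u,v)\inc\sr CS_{-\infty}^{+\infty}(u,v)$: a curve in $\sr CS_{-\infty}^{+\infty}(u,v)$ lifts, starting at $P$, to a curve ending at a \emph{unique} $Q\in\pr^{-1}(v)$, and this identifies the component of $\eta$ in $\sr CS_{-\infty}^{+\infty}(u,v)$ with (the image of) a single component of $\sr C_{-\infty}^{+\infty}(P,Q)$ for that particular $Q$.

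Having fixed $Q$, the problem becomes: given $\eta\in\sr C_{-\infty}^{+\infty}(P,Q)$, find $\ga\in\sr C_{\ka_1}^{\ka_2}(P,Q)$ in the same connected component of $\sr C_{-\infty}^{+\infty}(P,Q)$. By \lref{L:C^2} (and \lref{L:smoothie}/\cref{L:dense}) it is equivalent to work with the $\sr L$-spaces, so it suffices to produce $\ga\in\sr L_{\ka_1}^{\ka_2}(P,Q)$ lying in the component of $\eta$ in $\sr L_{-\infty}^{+\infty}(P,Q)$. By \tref{T:Smale}, the components of $\sr L_{-\infty}^{+\infty}(P,Q)$ are indexed by total turning, all contractible; so I only need \emph{some} curve in $\sr L_{\ka_1}^{\ka_2}(P,Q)$ with the same total turning $\theta_1$ as $\eta$. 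When $\ka_1\ka_2<0$ this is exactly \lref{L:nonempty}\?(b), which asserts $\sr L_{\ka_1}^{\ka_2}(P,Q;\theta_1)\neq\emptyset$ for every $\theta_1$ with $e^{i\theta_1}=z\bar w$. (For a nonorientable $S$ we have $\ka_1=-\ka_2<0$ by the standing convention, so this case applies.) Pick any such $\ga$; then $\ga$ and $\eta$ have equal total turning, hence lie in the same component of $\sr L_{-\infty}^{+\infty}(P,Q)$, and therefore of $\sr C_{-\infty}^{+\infty}(P,Q)$.

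Finally I would translate back through the homeomorphism of \pref{P:flat}: the image of $\ga$ under the quotient map $\C\to S$ is a curve in $\sr CS_{\ka_1}^{\ka_2}(u,v)$, and since the homeomorphisms are compatible with the inclusions into the unconstrained spaces and with passage to connected components, this image lies in the same component of $\sr CS_{-\infty}^{+\infty}(u,v)$ as $\eta$. I do not expect a serious obstacle here: the only point requiring a little care is the bookkeeping that the lift of $\eta$ determines a single index $Q\in\pr^{-1}(v)$ and that ``same component downstairs'' corresponds to ``same component upstairs for that $Q$'' — this is immediate from the fact that $\C\to S$ is a covering and that the relevant homeomorphisms in \pref{P:flat} are induced by it, so homotopies lift and descend. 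Everything else is a direct appeal to \tref{T:KH}, \pref{P:flat}, \lref{L:C^2}, \lref{L:nonempty}\?(b) and \tref{T:Smale}.
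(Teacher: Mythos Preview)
Your proposal is correct and follows essentially the same approach as the paper: lift $\eta$ to $\C$ via \pref{P:flat} to determine $Q$ and the total turning $\theta_1$, invoke \lref{L:nonempty}\?(b) to produce a curve with curvature in $(\ka_1,\ka_2)$ and the same total turning, then project back. The only cosmetic difference is that the paper stays in the $\sr C$ spaces throughout (using the remark after \lref{L:nonempty} rather than passing through $\sr L$ via \lref{L:C^2}), and it leaves the appeal to \tref{T:Smale} implicit when asserting that equal total turning implies same component of $\sr C_{-\infty}^{+\infty}(P,Q)$.
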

In other words, given a regular curve $\eta$ on $S$ with $\ta_\eta(0)=u$, $\ta_\eta(1)=v$, we may deform $\eta$ through regular curves, keeping $\ta(0)$, $\ta(1)$ fixed, to obtain a curve having curvature in $(\ka_1,\ka_2)$ everywhere.
\begin{proof}
	Take $P\in UT\C$ such that $\pr(P)=u$. Let $\te{\eta}$ be the lift of $\eta$ to $\C$ with initial frame $P$; let $Q$ be its final frame and $\theta_1$ its total turning. By \lref{L:nonempty}\?(b), $\sr C_{\kappa_1}^{\kappa_2}(P,Q;\theta_1)$ is nonempty. Let $\te{\ga}$ be one of its elements. Then the projection $\ga$ of $\te{\ga}$ on $S$ satisfies the conclusion of the corollary because of \pref{P:flat} and the fact that $\te\eta$, $\te\ga$ lie in the same component of $\sr C_{-\infty}^{+\infty}(P,Q)$. 
\end{proof}

Again, the analogue of this result does not hold for a general surface $S$, e.g., for $S=\mathbf{H}^2$. It is also false for a flat surface if $\ka_1\ka_2\geq 0$. To see this, let $P,Q\in UT\C$ satisfy $\pr(P)=u$, $\pr(Q)=v$, choose $\te\eta\in \sr C_{-\infty}^{+\infty}(P,Q)$ to have a total turning which is unattainable for curves in $\sr C_{\kappa_1}^{\kappa_2}(P,Q)$ and let $\eta$ be the projection of $\te\eta$ on $S$.


\section{Final remarks}\label{S:final}
\subsection*{Spaces of curves with curvature in a closed interval}
In both \cite{Dubins} and \cite{Dubins1}, Dubins worked with the set $\hat{\sr L}_{-\ka_0}^{+\ka_0}(Q)$ of definition \dref{D:main} (but with the $C^1$ topology), where the curvatures are restricted to lie in a \tsl{closed} interval. This choice is motivated by the fact that these spaces, unlike those of the form $\sr L_{-\ka_0}^{+\ka_0}(Q)$, always contain curves of minimal length (see Proposition 1 in \cite{Dubins}).  All of the main results in our paper concerning the topology of $\sr L_{\ka_1}^{\ka_2}(P,Q)$ have analogues for $\hat{\sr L}_{\ka_1}^{\ka_2}(P,Q)$.  We shall now briefly indicate the modifications which are necessary. 

Notice that $\hat{\sr L}_{\ka_1}^{\ka_2}(P,Q)$ is \tsl{not} a Banach manifold, and that the analogue of \lref{L:submersion} is false for these spaces, as shown by example  \eref{E:closed}. In contrast, \lref{L:reparametrize} and \cref{C:reparametrize} still hold when $\sr M=\hat{\sr L}_{\ka_1}^{\ka_2}(P,Q)$.  The important corollary \cref{C:normalized} has the following analogue, whose proof is essentially the same as that of \tref{T:normalized}, see remark \rref{R:obvious}.

 \begin{prop}\label{T:normalizedbar}
 	Let $P,\,Q\in UT\C$ and $\ka_1<\ka_2$ be finite. Then there exists a homeomorphism between $\hat{\sr L}_{\kappa_1}^{\kappa_2}(P,Q)$ and a space of type $\hat{\sr L}_{-1}^{+1}(Q_0)$, $\hat{\sr L}_{0}^{1}(Q_0)$ or $\hat{\sr L}_{1}^{2}(Q_0)$, according as $\ka_1\ka_2<0$, $\ka_1\ka_2=0$ or $\ka_1\ka_2>0$, respectively. Moreover, this homeomorphism preserves the total turning of curves unless $\ka_1<\ka_2\leq 0$, in which case it reverses the sign. \qed
 \end{prop}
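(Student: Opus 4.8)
The plan is to imitate the proof of \tref{T:normalized} verbatim, with the only substantial check being that normal translation carries $\hat{\sr L}$-type spaces to $\hat{\sr L}$-type spaces. Recall from \rref{R:obvious} that for curves which are $C^1$ regular (as those in $\hat{\sr L}_{\ka_1}^{\ka_2}(P,Q)$ are by \dref{D:main}) one need not reparametrize by arc-length before applying normal translation: the difficulty flagged in \rref{R:badspeed} (products of $L^2$ functions failing to be $L^2$) simply does not arise, because the speed $\sig=\abs{\dot\ga}$ of a $C^1$ curve is bounded on $[0,1]$, so $\sig(1-u\ka)$ is again the speed of a $C^1$ regular curve whenever $1-u\ka$ stays positive and bounded away from $0$. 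Thus normal translation by a suitable $u$ is an honest homeomorphism between the relevant $\hat{\sr L}$ spaces, not merely a homotopy equivalence, and one does not invoke \lref{L:Hilbert}\?(b) at all.

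First I would record the analogue of \lref{L:normal} for the $\hat{\sr L}$ spaces. The key point is condition (ii) of \dref{D:main}: for a curve $\ga$ parametrized by arc-length with $1$-Lipschitz-type argument, the slopes $\frac{\theta(s_1)-\theta(s_2)}{s_1-s_2}$ lie in $[\ka_1,\ka_2]$. Under normal translation by $u$ with $1-uk>0$ for all $k\in[\ka_1,\ka_2]$, the arc-length of $\ga_u$ is related to that of $\ga$ by $ds_u=(1-u\ka)\,ds$ (from $\abs{\dot\ga_u}=\abs{\dot\ga}(1-u\ka)$, eq.~\eqref{E:lambdao} with $\abs{\dot\ga}$ in place of $L$), while the unit tangent, and hence the argument $\theta$, is unchanged by \lref{L:normal}\?(a). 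A direct computation — essentially the chain rule applied to $\theta$ as a function of $s_u$ — shows that the difference quotients of $\theta$ with respect to the new arc-length parameter $s_u$ lie in $[\bar\ka_1,\bar\ka_2]$, with $\bar\ka_i=\ka_i/(1-u\ka_i)$ as in \eqref{E:barka}; equivalently, this follows from \lref{L:normal}\?(d), $\bar\ka(t)=\ka(t)/(1-u\ka(t))$, since $t\mapsto t/(1-ut)$ is monotone on $[\ka_1,\ka_2]$ and maps $[\ka_1,\ka_2]$ onto $[\bar\ka_1,\bar\ka_2]$ when $1-uk>0$ there. So $\ga_u\in\hat{\sr L}_{\bar\ka_1}^{\bar\ka_2}(\bar P,\bar Q)$, and by \lref{L:normal}\?(b) the inverse is normal translation by $-u$, giving a homeomorphism (continuity in both directions in the distance $d$ of \dref{D:main} is the straightforward verification left to the reader in \lref{L:inclusions}).

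Next I would run through the reduction exactly as in the proof of \tref{T:normalized}. In case $\ka_1<0<\ka_2$: translate by $u=\frac{\rho_1+\rho_2}{2}$ (the computation showing $1-uk>0$ for all $k\in[\ka_1,\ka_2]$ is the one already displayed in that proof, and here we need it on the closed interval, which is fine since it was proved as a strict inequality for all $k\in(\ka_1,\ka_2)$ and both endpoints are finite), landing in $\hat{\sr L}_{-\ka_0}^{+\ka_0}(\bar P,\bar Q)$ with $\ka_0=\frac{2}{\rho_2-\rho_1}$; then apply the dilatation $x\mapsto\ka_0 x$ — which scales curvature by $\ka_0^{-1}$, hence carries $[-\ka_0,\ka_0]$ to $[-1,1]$ and clearly respects the metric $d$ up to a constant factor — followed by a Euclidean motion to move the initial frame to $O$. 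This yields $\hat{\sr L}_{-1}^{+1}(Q_0)$. For $0<\ka_1<\ka_2$ translate by $u=\rho_2$ (so $\bar\ka_2=\rho_2^{-1}\cdot\text{(something)}$; more precisely $\bar\ka_1=\ka_1/(1-\rho_2\ka_1)$, $\bar\ka_2=+\infty$ formally, but since here we want a \emph{closed} bounded target I instead translate by $u$ slightly less than $\rho_2$, or better: translate so the resulting interval is $[\bar\ka_1,\bar\ka_2]$ with $\bar\ka_1/\bar\ka_2$ matching $\hat{\sr L}_1^2$; concretely choosing $u=\rho_2-\frac{\rho_1-\rho_2}{?}$…) — here is where a small case-by-case bookkeeping is needed to hit exactly the normalized targets $\hat{\sr L}_0^1$ and $\hat{\sr L}_1^2$ named in the statement, rather than some other positive interval, but any two bounded intervals $[\ka_1,\ka_2]$, $[\ka_1',\ka_2']$ with $0<\ka_1$ (resp.\ $0=\ka_1$) are related by a normal translation composed with a dilatation, so one simply chooses the translation and dilatation parameters to produce $[1,2]$ (resp.\ $[0,1]$). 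Cases $\ka_1<\ka_2\le 0$ follow from the positive cases by reversing orientation of all curves (which is a homeomorphism of $\hat{\sr L}$ spaces negating curvature and total turning), giving the sign-reversal clause. The last sentence of the proposition is then immediate: normal translations, dilatations and Euclidean motions all preserve total turning by \lref{L:normal}\?(a), and orientation reversal negates it.

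\textbf{Main obstacle.} The only real content is the $\hat{\sr L}$-analogue of \lref{L:normal}, i.e.\ checking that condition (ii) of \dref{D:main} is preserved under normal translation with the correct transformation $\ka_i\mapsto\bar\ka_i$ of the endpoints, together with continuity of the map and its inverse in the distance $d$; everything else is the formal reduction already carried out for \tref{T:normalized}. A minor fussy point is pinning down the precise normalized representatives $\hat{\sr L}_0^1$, $\hat{\sr L}_1^2$, but this is pure arithmetic with the translation and dilatation parameters and carries no real difficulty.
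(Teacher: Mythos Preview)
Your approach is exactly what the paper does: the proposition carries a \qed\ and is justified only by the sentence ``whose proof is essentially the same as that of \tref{T:normalized}, see remark \rref{R:obvious}'', together with the comment after the statement that ``We leave the task of determining $Q_0$ in the other two cases to the interested reader.'' You have correctly identified both the main content (the $\hat{\sr L}$-analogue of \lref{L:normal}, which goes through because curves in $\hat{\sr L}$ are $C^1$ regular and the map $k\mapsto k/(1-uk)$ sends the closed interval $[\ka_1,\ka_2]$ monotonically to $[\bar\ka_1,\bar\ka_2]$, so difference quotients transform correctly) and the minor arithmetic needed to hit the specific targets $[0,1]$ and $[1,2]$ rather than $[0,+\infty)$ and $[1,+\infty)$; the paper glosses over the latter entirely. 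One small slip: the continuity verification you attribute to \lref{L:inclusions} is not actually in that lemma (which concerns set inclusions, not normal translation), but it is indeed routine.
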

 
 In case $\ka_1\ka_2<0$, we actually have $\hat{\sr L}_{\kappa_1}^{\kappa_2}(P,Q)\home \hat{\sr L}_{-1}^{+1}(Q_1)$ with $Q_1$ as in the statement of \tref{T:normalized}. We leave the task of determining $Q_0$ in the other two cases to the interested reader. 

 Let us denote by $\hat{\sr U}_c$,  $\hat{\sr U}_d$ and $\hat{\sr T}\subs \hat{\sr L}_{-1}^{+1}(Q;\theta_1)$ the subspaces consisting of all condensed, diffuse and critical curves, where $Q=(q,z)$, $e^{i\theta_1}=z$ and $\hat{\sr L}_{-1}^{+1}(Q;\theta_1)$ consists of those curves in $\hat{\sr L}_{-1}^{+1}(Q)$ which have total turning $\theta_1$. The analogue of \tref{T:condensed}, stating that $\hat{\sr U}_c$ is either empty or contractible, is, naturally, \pref{P:excavator}, which was used to prove \tref{T:condensed}. The results and proofs  in \S\ref{S:diffuse} all need minimal or no modifications. In particular, $\hat{\sr U}_d$ is always nonempty and weakly contractible. 
 
 The proof that $\bd \hat{\sr U}_d=\hat{\sr T}$ is the same as the one given in \pref{P:boundary}.  The proof that $\bd\hat{\sr U}_c=\hat {\sr T}$, however, needs to be modified, since we have relied on \lref{L:submersion}. The idea is again to apply  construction \cref{C:excavator}, but to all of $\ga$, not just to some of its arcs as in the proof of \pref{P:boundary}. If $\ga$ is a critical curve, then the corresponding function $f$ (see Figure \ref{F:excavator}) will attain the values $\pm \infty$, and at these points we need to assign weights, corresponding to the lengths of the line segments where $\theta_\ga$ attains its maximum and minimum. Then we redefine $A(\mu_-,\mu_+)$ as the sum of the area under the graph of $f^{(\mu_-,\mu_+)}$ plus the weight at $+\infty$ minus the weight at $-\infty$. The process described in \cref{C:excavator} will transform $f$ into a bounded function of the same area, that is, it will decrease the amplitude of $\ga$, making it a condensed curve.
 
The proofs of \pref{P:condensedlocation}, \pref{P:criticallocation} and \cref{C:criticallocation}, which deal with the existence of condensed and critical curves, go through unchanged; the only difference in the conclusions is that the corresponding regions $R_{\hat{\sr U}_c}$, $R_\sig$ and $R_{\hat{\sr T}}$ of the plane are now closed, instead of open. Thus, in the analogue of \tref{T:criterion}, the region of Figure 1 should contain the two circles of radius 2, but not the circle of radius 4, and we cannot assert that $\hat{\sr U}_c$ and $\hat{\sr U}_d$ are homeomorphic to $\E$, only that they are weakly contractible. The rest of the statement and the proof hold without modifications. 

Similarly, the version of \tref{T:convex} for $\hat{\sr L}_{\ka_1}^{\ka_2}(P,Q)$ states that this space has one contractible connected component for each realizable total turning when $\ka_1\ka_2\geq 0$. The proof is the same as that of \tref{T:convex} if $\ka_1\ka_2>0$. If $\ka_1=0$, then we cannot really parametrize $\ga\in \hat{\sr L}_{\ka_1}^{\ka_2}(P,Q)$ by argument. Nevertheless, the proof still works if we replace $\rho(\theta)\?d\theta$ by a measure $\mu(\theta)$ on the Borel subsets of $[0,\theta_1]$ which has an atom at $\theta$ if the curvature of $\ga$ vanishes at $\ga(\theta)$; note that the convex combination of two measures is again a measure. The case where $\ka_2=0$ can be deduced from this one by reversing orientations. 


\subsection*{A few conjectures of Dubins}
All of the results in the next proposition were conjectured by Dubins in \S6 of \cite{Dubins1}.

\begin{prop}
Let $q\in \C$, $\theta_1\in \R$, $z=e^{i\theta_1}$ and $Q=(q,z)$. Then:
	\begin{enumerate}
		\item [(a)] The set of all $(q,\theta_1)\in \C\times \R$ such that $\hat{\sr L}_{-1}^{+1}(Q;\theta_1)$ is disconnected is a bounded subset of $\C\times \R$, neither open nor closed.\footnote{Actually, Dubins had guessed that this set would be bounded and open in $\C\times \R$.}
		\item [(b)] $\hat{\sr L}_{-1}^{+1}(Q;\theta_1)$ has at most two components.
		\item [(c)] If $\hat{\sr L}_{-1}^{+1}(Q;\theta_1)$ is disconnected, then one component $(\hat{\sr U}_d)$ contains curves of arbitrarily large length, while the supremum of the lengths of curves in the other component $(\hat{\sr U}_c)$ is finite.
		\item [(d)] Every point of $\C$ lies in the image of some $\ga\in \hat{\sr U}_d$, while the images of curves in $\hat{\sr U}_c$ are contained in a bounded subset of $\C$.
	\end{enumerate}
\end{prop}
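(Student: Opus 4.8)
The plan is to deduce all four items from the decomposition of $\sr X:=\hat{\sr L}_{-1}^{+1}(Q;\theta_1)$ into its subspaces $\hat{\sr U}_c,\hat{\sr U}_d,\hat{\sr T}$ of condensed, diffuse and critical curves, relying on the $\hat{\sr L}$-analogues --- described earlier in this section --- of \pref{P:excavator}, \tref{T:diffuse} and \pref{P:boundary}. First I would record that $\hat{\sr U}_c$ is contractible when nonempty (this is \pref{P:excavator} with $\ka_0=1$), that $\hat{\sr U}_d$ is always nonempty and weakly contractible, and that $\bd\hat{\sr U}_c=\bd\hat{\sr U}_d=\hat{\sr T}$, so that $\overline{\hat{\sr U}_c}$ and $\overline{\hat{\sr U}_d}$ are closed, connected subsets covering $\sr X$ with $\overline{\hat{\sr U}_c}\cap\overline{\hat{\sr U}_d}=\hat{\sr T}$. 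This yields (b) at once --- if $\hat{\sr T}\neq\emptyset$ the space is connected, and otherwise it has precisely the two components $\hat{\sr U}_c$ and $\hat{\sr U}_d$ --- and it identifies disconnectedness of $\sr X$ with the condition ``$\hat{\sr U}_c\neq\emptyset$ and $\hat{\sr T}=\emptyset$'', which by the $\hat{\sr L}$-versions of \pref{P:condensedlocation} and \cref{C:criticallocation} amounts to $\abs{\theta_1}<\pi$ together with $q\in R_{\hat{\sr U}_c}\ssm R_{\hat{\sr T}}$ (the regions now being closed).

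For (a) I would work from the explicit shapes of $R_{\hat{\sr U}_c}$ and $R_{\hat{\sr T}}$: each is bounded by one circular arc (of radius $2$, respectively $4$) together with the tangent lines to that circle at the two endpoints of the arc. The computation to carry out is that the two arcs share their endpoints and, at those endpoints, their tangent lines (for $\theta_1=0$ these are $\{y=4\}$ and $\{y=-4\}$, and the general case is the same bookkeeping on the comparison curves of \pref{P:condensedlocation} and \pref{P:criticallocation}). Hence the unbounded tangent-ray portions of the two boundaries cancel, $R_{\hat{\sr U}_c}\ssm R_{\hat{\sr T}}$ is the bounded lens enclosed between the radius-$2$ and radius-$4$ arcs, and adjoining $\abs{\theta_1}<\pi$ keeps the set bounded in $\C\times\R$. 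It is not open, because a $q$ in the relative interior of a radius-$2$ arc belongs to the closed region $R_{\hat{\sr U}_c}$ (so $(q,\theta_1)$ is in the set) while nearby points leave $R_{\hat{\sr U}_c}$, killing all condensed curves and hence disconnectedness; it is not closed, because a $q$ in the interior of the radius-$4$ arc belongs to the closed region $R_{\hat{\sr T}}$ (so $\sr X$ has a critical curve and is connected) yet is a limit of points of the lens.

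For (c), assuming $\sr X$ is disconnected: arbitrarily long curves in $\hat{\sr U}_d$ come from \dref{D:eighting} --- attaching $n$ figure-eights of radius $2$ (curvature $\pm\tfrac{1}{2}\in[-1,1]$ a.e.) to a fixed $\ga\in\hat{\sr U}_d$ gives $B_{\ga,n,\frac{1}{2}}\in\hat{\sr U}_d$ by \lref{L:basic}\?(c), with length growing linearly in $n$. For the bound on $\hat{\sr U}_c$ the point is that disconnectedness forces $q$ to lie \emph{strictly} inside $C_4\big(\sign(\theta_1)\,i(z-1)\big)$: by the geometry of (a) the lens $R_{\hat{\sr U}_c}\ssm R_{\hat{\sr T}}$ lies inside that circle and misses its bounding arc, which is part of $R_{\hat{\sr T}}$. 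Since $r(\om)=4\sin\!\big(\tfrac{\om}{2}\big)$ increases to $4$ as $\om\to\pi$, I can then choose $\om_0\in[\abs{\theta_1},\pi)$, close to $\pi$, with $q$ inside $C_{r(\om_0)}$, hence inside $C_{r(\om)}$ for every $\om\in[\om_0,\pi]$; by \lref{L:omega} no curve of $\sr X$ has amplitude in $[\om_0,\pi]$, so every condensed curve has amplitude $<\om_0<\pi$, and \lref{L:bounded} bounds the lengths (and hence confines the images) of curves in $\hat{\sr U}_c$. Item (d) then splits: its first half holds unconditionally since $\hat{\sr U}_d\neq\emptyset$ --- given $p\in\C$, route a smooth regular curve $\de$ from $O$ to $Q$ with total turning $\theta_1$ and $\de(\tfrac{1}{2})=p$, spread $n$ eights along it, and observe that $S_{\de,n}(k/n)=\de(k/n)$ wherever $\be$ vanishes, so $p=\de(\tfrac{1}{2})$ lies in the image of $S_{\de,n}$ for $n$ even, while for large $n$ the curve $S_{\de,n}$ lies in $\hat{\sr L}_{-1}^{+1}(Q;\theta_1)$ (the estimate of \lref{L:loopy}) and is diffuse (its argument oscillates by $2\pi$ inside each of the $n$ spread eights); its second half is immediate from the amplitude bound of (c) and \lref{L:bounded}.

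The step I expect to be the main obstacle is the plane geometry behind (a) and (c): pinning down how the closed regions $R_{\hat{\sr U}_c}$ and $R_{\hat{\sr T}}$ are nested, that their difference is exactly the bounded lens, and that this lens sits strictly inside $C_4\big(\sign(\theta_1)\,i(z-1)\big)$. This is elementary once the comparison families of \pref{P:condensedlocation} and \pref{P:criticallocation} are unwound, but it requires care with the several cases in the sign and size of $\theta_1$; everything else is bookkeeping on results already proved.
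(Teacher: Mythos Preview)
Your proposal is correct and follows essentially the same route as the paper: the decomposition $\hat{\sr U}_c\cup\hat{\sr T}\cup\hat{\sr U}_d$, the $\hat{\sr L}$-analogue of \tref{T:criterion} for (a) and (b), and \lref{L:omega} followed by \lref{L:bounded} for the $\hat{\sr U}_c$ assertions in (c) and (d). Your verification that the radius-$2$ and radius-$4$ arcs share endpoints and tangent lines is exactly the geometry underlying the paper's appeal to Figure~\ref{F:S0}; the paper simply quotes the region description rather than rederiving it.

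The one place where you diverge from the paper is in the $\hat{\sr U}_d$ halves of (c) and (d). The paper observes (via the $\hat{\sr L}$-version of \pref{P:diffuse}) that $\hat{\sr U}_d$ contains curves of amplitude $\geq 2\pi$, and onto any such curve one can \emph{graft} straight line segments of arbitrary direction and arbitrary length (\dref{D:graft}); this immediately gives both unbounded length and passage through any prescribed point of $\C$, while staying in $\hat{\sr U}_d$. Your constructions via $B_{\ga,n,\frac12}$ and $S_{\de,n}$ also work, but the grafting argument is shorter and avoids the asymptotic curvature estimate of \lref{L:loopy} and the check that $S_{\de,n}$ is diffuse.
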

\begin{proof}
	Parts (a) and (b) are immediate from the analogue of \tref{T:criterion} for $\hat{\sr L}$. As discussed above, $\hat{\sr L}_{-1}^{+1}(Q;\theta_1)$ is disconnected if and only if $\abs{\theta_1}<\pi$ and $q$ lies in the region in Figure \ref{F:S0} including the circles of radius 2 but not the circle of radius 4. Suppose that $q$ does lie in this region. Choose $\hat\om\in (\theta_1,\pi)$ such that
	\begin{equation*}
		\abs{q-\sign(\theta_1)i(z-1)}< 4\sin \Big(\frac{\hat\om}{2}\Big).
	\end{equation*}
	 Then \lref{L:omega} implies that there does not exist any curve in $\hat{\sr L}_{-1}^{+1}(Q;\theta_1)$ having amplitude in $[\hat\om,\pi]$. The assertions about $\hat{\sr U}_c$ in (c) and (d) now follow from \lref{L:bounded}. The assertions about $\hat{\sr U}_d$ are obvious, because, by (the version for $\hat{\sr L}$ of) \pref{P:diffuse}, this subspace always contains curves of amplitude $\geq 2\pi$, and onto such a curve we may graft line segments of any direction and arbitrary length.
\end{proof}


As expected, there is a version of the foregoing proposition for $\sr L_{-1}^{+1}(Q;\theta_1)$. The corresponding assertions in (a) and (b) are immediate from \tref{T:criterion}, and the assertions about $\sr U_d$ are again obvious. A curve in $\sr U_c$ parametrized by arc-length can also be considered as an element of $\hat{\sr U}_c$, so the properties stated in (c) and (d) for $\sr U_c$ follow from those for $\hat{\sr U}_c$ unless $q$ lies on the circle of radius 4 in Figure \ref{F:S0}. In this case, $\sr L_{-1}^{+1}(Q;\theta_1)$ is disconnected, but $\hat{\sr L}_{-1}^{+1}(Q;\theta_1)$ is not. One can prove directly that the length of any $\ga\in \sr U_c$ must be smaller than that of the ``canonical'' critical curves of type $\ty{+-}$ or $\ty{-+}$ that were constructed in the proof of \pref{P:criticallocation}.

\subsection*{Conjectures on minimal length}
	Let $L(\ga)$ denote the length of $\ga$ and suppose that $\hat{\sr L}_{-1}^{+1}(Q;\theta_1)$ is disconnected. We believe that the results developed here may be used to prove that if $m=\sup_{\ga\in \hat{\sr U}_c}L(\ga)$ and $M=\inf_{\ga\in \hat{\sr U}_d}L(\ga)$, then $m<M$; this is another conjecture of Dubins. It would be interesting, and probably useful for applications, to find the values corresponding to $m$ and $M$ for the more general spaces $\hat{\sr L}_{\kappa_1}^{\kappa_2}(Q)$. 
	
	We observed in \rref{R:respect} that normal translations, and hence the homeomorphisms of \tref{T:normalizedbar} need not preserve inequalities between lengths. Since they do map circles to circles and lines to lines, it could still be expected that the image of a curve which minimizes length under these homeomorphisms is likewise of minimal length. Unfortunately, this is false. Suppose for instance that we apply the homeomorphism $\hat{\sr L}_{-1}^{+1}(Q)\to \hat{\sr L}_{-1}^{100}(Q_0)$ to the Dubins path in Figure \ref{F:faster}\?(b). It should be clear that its image, which again consists of a line segment and two arcs of circles of opposite orientation with the same amplitude as before, does not minimize length in $\hat{\sr L}_{-1}^{100}(Q_0)$, since in the latter space it is generally much more efficient to curve to the left than to the right, even if this yields a path of greater total turning. 

In spite of this difficulty, we conjecture that Dubins' theorem that any shortest path in $\hat{\sr L}_{-\ka_0}^{+\ka_0}(P,Q)$ must be a concatenation of three pieces, each of which is either an arc of circle or a line segment, still holds for the spaces $\hat{\sr L}_{\kappa_1}^{\kappa_2}(P,Q)$, $\ka_1\ka_2<0$. For $\ka_1\ka_2> 0$, we conjecture that a curve of minimal length is  a concatenation of $n$ arcs of circles of curvature $\ka_1$ and $\ka_2$. However, for fixed $P,Q\in UT\C$, we must have $\lim_{\ka_1,\ka_2\to +\infty}n=\infty$. Indeed, a curve of this type has total turning at most $2n\pi$, and the minimal total turning of a curve in $\hat{\sr L}_{\ka_1}^{\ka_2}(Q)$ increases to infinity as $\ka_1>0$ increases to infinity (for fixed $Q=(q,z)$ with $q\neq 0$).

\subsection*{Acknowledgements}
The first author is partially supported by grants from \tsc{cnpq}, \tsc{capes} and \tsc{faperj} (Brazil). The second author gratefully acknowledges the financial support of \tsc{cnpq} and \tsc{fapesp}. Both authors would like to thank the anonymous referee, whose comments have helped to make the paper clearer.


\providecommand{\bysame}{\leavevmode\hbox to3em{\hrulefill}\thinspace}
\providecommand{\MR}{\relax\ifhmode\unskip\space\fi MR }
\providecommand{\MRhref}[2]{%
  \href{http://www.ams.org/mathscinet-getitem?mr=#1}{#2}
}
\providecommand{\href}[2]{#2}


\vspace{12pt} \noindent{\small \tsc{Departamento de Matem\'atica, Pontif\'icia
	Universidade Cat\'olica do Rio de Janeiro (PUC-Rio)\\ Rua
Marqu\^es de S\~ao Vicente 225, G\'avea -- Rio de Janeiro, RJ 22453-900, Brazil}}\\
\noindent{\ttt{nicolau@mat.puc-rio.br}}

\vspace{12pt} \noindent{\small \tsc{Instituto de Matem\'atica e Estat\'istica,
	Universidade de S\~ao Paulo (IME-USP) \\ Rua~do Mat\~ao 1010, Cidade
	Universit\'aria -- S\~ao Paulo, SP
05508-090, Brazil}}\\
\noindent{\ttt{pedroz@ime.usp.br}}

\vspace{2pt}

\end{document}